\documentclass[10pt,reqno,a4paper]{article} 

\usepackage{anysize}
\marginsize{3.0cm}{3.0cm}{2.2cm}{2.2cm}

\usepackage[english]{babel}
\usepackage[centertags]{amsmath}
\usepackage{amsfonts,amssymb,amsthm}
\usepackage[svgnames]{xcolor}
\usepackage{comment}
\usepackage{hyperref} 
\usepackage{mathrsfs}
\usepackage[sans]{dsfont}
\usepackage{accents}

\let\OLDthebibliography\thebibliography
\renewcommand\thebibliography[1]{
  \OLDthebibliography{#1}
  \setlength{\parskip}{0pt}
  \setlength{\itemsep}{0pt plus 0.3ex} }

\numberwithin{equation}{section}

\theoremstyle{plain}
\newtheorem{theorem}{Theorem}[section]
\newtheorem{proposition}[theorem]{Proposition}
\newtheorem{lemma}[theorem]{Lemma}

\theoremstyle{definition}
\newtheorem{remarkbase}[theorem]{Remark}
\newenvironment{remark}{\pushQED{\qed} \remarkbase}{\popQED\endremarkbase}

\newcommand{\N}{{\mathbb N}}
\newcommand{\R}{{\mathbb R}}
\newcommand{\C}{{\mathbb C}}

\newcommand{\T}{{\mathbb T}}
\renewcommand{\S}{{\mathbb S}}

\newcommand{\mA}{\mathcal{A}}

\newcommand{\mF}{\mathcal{F}}

\newcommand{\mL}{\mathcal{L}}

\newcommand{\mS}{\mathcal{S}}

\renewcommand{\a}{\alpha}
\renewcommand{\b}{\beta}

\renewcommand{\d}{\delta}

\newcommand{\e}{\varepsilon}
\newcommand{\ph}{\varphi}
\newcommand{\lm}{\lambda}
\newcommand{\Lm}{\Lambda}

\newcommand{\Om}{\Omega}

\newcommand{\s}{\sigma}

\renewcommand{\th}{\vartheta}

\newcommand{\la}{\langle}
\newcommand{\ra}{\rangle}
\newcommand{\pa}{\partial}
\renewcommand{\div}{\mathrm{div}\,}
\newcommand{\grad}{\nabla}

\DeclareMathOperator{\divergence}{div}

\DeclareMathOperator*{\esssup}{ess\,sup}

\title{On the Dirichlet-Neumann operator\\ for nearly spherical domains}
 
\author{\normalsize{Pietro Baldi, Vesa Julin, Domenico Angelo La Manna}}
\date{} 

\begin{document}

\maketitle

\noindent
\textbf{Abstract.} 
We consider the Dirichlet-Neumann operator for a nearly spherical domain in $\R^n$, 
and prove sharp analytic and tame estimates in Sobolev class. 
The novelty of this paper concerns technical improvements,  
the most important of which are the independence of the analyticity radius on the high norms 
and the regularity loss of one in the elevation function. 
These properties are expectable but nontrivial to prove. 
The result is obtained by introducing local charts and a convenient class 
of non-isotropic Sobolev spaces of high, possibly fractional tangential regularity 
and integer, limited regularity in the normal direction. 

\tableofcontents

\section{Introduction and main results} 
\label{sec:intro}

The Dirichlet-Neumann operator is a nonlocal operator 
appearing in many free boundary problems coming from physical models,
especially in fluid dynamics, in shape optimization and inverse problems. 
In this paper we consider the Dirichlet-Neumann operator for nearly spherical domains in $\R^n$, 
and we improve some recent results about its analytical dependence on the shape of the domain 
and tame (i.e., linear in the highest order norms) estimates in Sobolev class. 

Let $\Om \subset \R^n$ be an open, bounded set, and assume that its boundary $\pa \Om$  
is the graph of a function $h$ on the unit sphere, 
\begin{equation} \label{def.pa.Om}
\pa \Om = \{ x (1 + h(x)) : x \in \S^{n-1} \}, \quad \ 
\S^{n-1} = \{ x \in \R^n : |x| = 1 \},
\end{equation}
with $1 + h > 0$. We call $h  : \S^{n-1} \to \R$ the height, or elevation, function.  
We also define 
\begin{equation} \label{def.gamma}
\gamma : \S^{n-1} \to \pa \Om, \quad \ 
\gamma(x) = x (1 + h(x)),
\end{equation}
which is a bijection of the unit sphere onto the surface $\pa \Omega$. 
Given a function $\psi : \S^{n-1} \to \R$, we consider the Laplace problem 
with Dirichlet datum $\psi(x)$ at the point $\gamma(x)$ on the boundary, that is,  
\begin{equation} \label{problem.in.Om}
\Delta \Phi = 0 \ \ \text{in } \Om, \quad \ 
\Phi = \psi \circ \gamma^{-1} \ \ \text{on } \pa \Om, \quad \ 
\Phi \in H^1(\Om).
\end{equation}
Under suitable assumptions on $h$ and $\psi$, the solution $\Phi$ is unique. 
We define $G(h)\psi : \S^{n-1} \to \R$ as the normal derivative of $\Phi$ 
at the point $\gamma(x)$ of the boundary $\pa \Om$, i.e., 
\begin{equation} \label{def.G.geom}
G(h)\psi (x) = \la (\grad \Phi)(\gamma(x)) , \nu_{\Om}(\gamma(x)) \ra, 
\quad x \in \S^{n-1},
\end{equation}
where $\nu_\Om(p)$ is the unit normal vector to the boundary $\pa \Om$ 
at the point $p \in \pa \Om$ pointing outwards, 
and $\la \, , \, \ra$ is the usual scalar product of $\R^n$. 
The map $(h, \psi) \to G(h)\psi$ is called the Dirichlet-Neumann operator. 
It is linear in $\psi$, and it depends nonlinearly on $h$. 

The analytic properties of the Dirichlet-Neumann and related operators 
have been studied in the classical works \cite{Calderon, Coifman.Meyer}, 
and then in the water wave literature, 
especially for nearly flat unbounded domains (models for the ocean), 
often including a normalizing factor in front of the right-hand side in \eqref{def.G.geom}: 
see the relevant works \cite{Alazard.Delort, Alazard.Metivier, 
Craig.Nicholls, Craig.Schanz.Sulem, Lannes.book}, 
the recent papers \cite{Alazard.Burq.Zuily1, Berti.Maspero.Ventura, Feola.Montalto.Terracina}, 
the articles \cite{Groves.Nilsson.Pasquali.Wahlen, Pasquali} for nonzero vorticity, 
and the many other references within. 
For non-flat geometries, the literature is less abundant; 
see the recent works \cite{Huang.Karakhanyan.cone} for the cone,
\cite{Huang.Karakhanyan.jets, Yang} for the cylinder,
\cite{LaScala} for the disc, 
and \cite{BJL, Beyer.Gunther, Shao.On.the.Cauchy, Shao.Toolbox} for the sphere.

The present paper deals with nearly spherical domains. 
The main result is Theorem \ref{thm.G.in.intro}, which is 
set in the usual $L^2$-based Sobolev spaces $H^s(\S^{n-1})$ 
of functions defined on the sphere $\S^{n-1}$. 
We recall that the $H^s(\S^{n-1})$ norm of a function 
can be defined by localization, rectification and extension, 
using a partition of unity of the sphere $\S^{n-1}$,  
or by means of its orthogonal decomposition into spherical harmonics of $\S^{n-1}$. 
These two definitions give equivalent norms 
(\cite{Lions.Magenes.volume.1}, Remark 7.6 in Section 7.3). 
For the general theory of Sobolev spaces on open bounded domains and on smooth manifolds 
we refer, e.g., to Lions-Magenes \cite{Lions.Magenes.volume.1}, 
Taylor \cite{Taylor.volume.1}, Triebel \cite{Triebel.1983}. 
We denote, in short, 
\begin{equation} \label{short.norm}
\| f \|_s := \| f \|_{H^s(\S^{n-1})}.
\end{equation}

\begin{theorem} \label{thm.G.in.intro}
Let $s_0, s \in \R$, with $s \geq 1/2$, $s_0 > (n-1)/2$. 
There exist $\delta, C_0, C_s > 0$ such that, 
for $h$ in the set 
\[
U := \{ h \in H^{s_0+1}(\S^{n-1}) : \| h \|_{s_0+1} < \delta \} 
\cap H^{s+1}(\S^{n-1}),
\] 
for $\psi \in H^{s+1}(\S^{n-1})$, 
the function $G(h)\psi$ satisfies 
\begin{align}
\| G(h)\psi \|_{s}
& \leq C_0 \| \psi \|_{s+1} 
+ (\chi_{s > s_0}) C_s (1 + \| h \|_{s+1}) \| \psi \|_{s_0+1},
\label{est.G.in.intro}
\end{align}
where 
$(\chi_{s>s_0}) = 1$ if $s>s_0$, 
$(\chi_{s>s_0}) = 0$ if $s \leq s_0$. 
The constants $\delta, C_0$ are independent of $s$, 
the constant $C_s$ is increasing in $s$, 
and all $\delta, C_0, C_s$ are independent of $h,\psi$. 
Moreover, the map $h \to G(h)$ is analytic from $U$ 
(with the norm $\| \ \|_{s+1} + \| \ \|_{s_0+1}$) 
to the space of bounded linear operators 
$\mL (H^{s+1}(\S^{n-1}) , H^{s}(\S^{n-1}) )$.
\end{theorem}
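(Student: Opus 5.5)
We will prove Theorem~\ref{thm.G.in.intro} by flattening the domain onto a fixed reference set, working in local charts, and solving the transformed elliptic problem by a Neumann series. The abstract indicates the tool that makes the estimates sharp: non-isotropic Sobolev spaces with high tangential regularity and only limited normal regularity.

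\textbf{Step 1: transport to a fixed annulus.} We first reduce to a fixed domain. Let $A = \{ 1/2 < |x| < 1 \}$, and transport the problem through the diffeomorphism $x \mapsto x(1 + \chi(|x|)\, h(x/|x|))$, where $\chi$ is a smooth cutoff equal to $1$ near $|x|=1$ and to $0$ near $|x|=1/2$. Inside $|x|<1/2$ the domain is not moved at all, so only the annulus matters. Setting $u = \Phi \circ (\text{diffeo})$, the problem becomes
\[
\div(P(h)\grad u) = 0 \quad \text{in } A, \qquad u = \psi \ \text{on } \S^{n-1},
\]
together with a matching condition with the harmonic interior. The coefficient matrix $P(h) = I + Q(h, \grad h)$ depends analytically on $h$ and on its tangential gradient, and $Q$ is small when $\|h\|_{s_0+1} < \delta$. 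A key feature of this extension is that it does \emph{not} regularize $h$: the coefficients have only the tangential regularity $H^{s}$ of $\grad h$ and are essentially $r$-independent. This is why a non-isotropic framework is needed. Finally, $G(h)\psi$ is expressed as a boundary conormal derivative $\la P(h)\grad u, x\ra|_{r=1}$ times an explicit analytic factor in $(h, \grad h)$.

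\textbf{Step 2: the linear problem in non-isotropic spaces.} Using local charts of the sphere, we introduce spaces $X^{s}$ of functions on $A$ that have $H^{s}$ tangential regularity and at most two derivatives in $r$, for instance norms of the type $\sum_{k\le 2}\|\pa_r^k u\|_{L^2_r H^{s+1-k}}$. We then write $u = u_0 + v$. Here $u_0$ is the explicit solution for $h=0$, given through spherical harmonics, and it satisfies $\|u_0\|_{X^s}\lesssim\|\psi\|_{s+1}$. The correction $v$ vanishes on the boundary and solves
\[
\Delta v = -\div(Q\grad(u_0+v)).
\]
For the flat Laplacian with zero Dirichlet datum we prove the estimate $\|v\|_{X^s}\le C\|f\|_{Y^{s}}$, with $C$ independent of $s$. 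This follows by expanding in spherical harmonics, since the Laplacian commutes with tangential Fourier multipliers, so the bound is uniform in $s$. It is this uniformity that gives $\delta, C_0$ independent of $s$.

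\textbf{Step 3: product estimates and the Neumann series.} The right-hand side is handled by tame product estimates in the $X^s$ spaces (tangential Moser estimates):
\[
\|Q w\|_{Y^s}\le C_{s_0}\|h\|_{s_0+1}\|w\|_{X^s}+C_s\|h\|_{s+1}\|w\|_{X^{s_0}}.
\]
We first run a Neumann series at $s=s_0$, where smallness of $\delta$ alone gives convergence, and then at level $s$, absorbing the first term and leaving the tame second term. Analyticity follows because each term of the series is a multilinear continuous map in $h$, and these maps have geometric bounds. The trace of the conormal derivative is controlled by the $X^s$ norm through one-dimensional trace inequalities in $r$. This yields \eqref{est.G.in.intro}, with the characteristic-function term appearing only when $s>s_0$.

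\textbf{Main obstacle.} The hard part is the loss of exactly one derivative on $h$, that is, $\|h\|_{s+1}$ rather than $\|h\|_{s+3/2}$. Standard elliptic regularity in isotropic $H^{s+3/2}(A)$ would require coefficients in $H^{s+1/2}(A)$ and hence $h\in H^{s+1}$ extended harmonically, which loses half a derivative when the extension is mishandled. The non-isotropic spaces avoid this, but they need care in two places. First, the normal derivative $\pa_r^2 v$ must be recovered from the equation itself, where the coefficient of $\pa_r^2$ is $1+O(h)$ and must be divided out. Second, fractional $s$ must be handled in the charts, using commutator estimates with the partition of unity. I would first try to verify the $r$-direction recovery argument at the level $k\le2$, since this is where the argument is most likely to fail.
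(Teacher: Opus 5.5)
\textbf{The gap is Step 1, and it rules out the sharp loss of one derivative.} Your flattening $x \mapsto x(1+\chi(|x|)h(x/|x|))$ is the non-regularizing choice, and non-isotropic spaces do not compensate for it. To get $(\grad u)|_{\S^{n-1}} \in H^s$ from a trace inequality in the radial variable, you need $\grad u \in L^2_\rho H^{s+\frac12}$ with $\pa_\rho \grad u \in L^2_\rho H^{s-\frac12}$. That is, you need $u \in L^2_\rho H^{s+\frac32}$ tangentially, which is the paper's class $X^{2,r}$ with $r = s-\frac12$.

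Your coefficients contain $\grad_{\S^{n-1}} h(x/|x|)$. Up to smooth radial factors this is constant along rays and has only $H^s$ tangential regularity. So $Q\grad u$ is in general only in $L^2_\rho H^{s}$, not in the required $L^2_\rho H^{s+\frac12}$, and the tame product estimate of Step 3 fails at that level by half a derivative.

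What does close is the level $\grad u \in L^2_\rho H^s$. This is exactly the index of the sample norm $\sum_{k\le 2}\|\pa_r^k u\|_{L^2_r H^{s+1-k}}$ you wrote. But its trace is only in $H^{s-\frac12}$, so you recover the loss $3/2$ of \cite{BJL}.

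This is not a matter of sharper estimates. Take $\psi = x_1(1+h)$; then $\Phi(y) = y_1$, and near $\S^{n-1}$ your unknown is $u = x_1(1+h(x/|x|))$. This is not in $L^2_\rho H^{s+\frac32}$ unless $h \in H^{s+\frac32}$, so the quantity you propose to bound is infinite.

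Contrary to your last paragraph, the harmonic extension is the cure for the half-derivative loss, not its source. The missing idea is the paper's choice $\tilde\gamma(x) = x(1+\tilde h(x))$ with $\tilde h = \mathtt{PI} h$. The harmonic extension gains half a derivative in the $L^2$-normal sense:
$|\tilde h|_{X^{2,r}_k} \le C_0\|h\|_{r+\frac32} + C_{r,k}\|h\|_{\frac32}$ (Lemma \ref{lemma:pinoli.u.0}). Hence $\grad\tilde h$, and therefore $P$, lies in the same class $X^{1,r}$ as $\grad u$, and the asymmetric product estimate \eqref{pinoli.prod} closes at the sharp index.

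\textbf{A second gap concerns uniformity in $s$.} You get an $s$-independent constant for the linear problem from spherical harmonics, that is, in spectral norms. Your tame products and your handling of fractional $s$, however, are chart-based. The equivalence between spectral and chart norms has $s$-dependent constants. The absorption in the level-$s$ Neumann series needs the linear constant and the top-order product constant in the same norm, so as written $\delta$ and $C_0$ would depend on $s$. The paper notes that asymmetric product estimates directly in spectral norms are hard to prove.

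The paper therefore works only with chart seminorms, using three ingredients:
\begin{itemize}
\item the half-space estimate for $\Delta$ in $H^{s+1,r}(\R^n_+)$, only for $s\in\{0,1\}$, with top-order constant independent of $r$ (Lemma \ref{lemma:est.Lap.R.n.+});
\item freezing of coefficients, with a chart radius chosen independently of $r$;
\item the nested collar cut-offs $\zeta_k$, which move every $r$-dependent commutator term to lower tangential order on a slightly larger collar.
\end{itemize}
Those lower-order terms are handled by induction on the integer part of $r$ (Lemma \ref{lemma:pinoli.S}), and then inside the induction on $m$ of Lemma \ref{lemma:est.u.n.r.k}.
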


In \eqref{est.G.in.intro}, the norms $\| h \|_{s+1}$ and $\| \psi \|_{s+1}$ appear linearly, 
even if $G(h)\psi$ depends nonlinearly on $h$:  
for this reason inequality \eqref{est.G.in.intro} is said to be a \emph{tame} estimate. 
We also prove (Theorem \ref{thm.der.G.in.intro}) similar multilinear estimates 
for the first and second derivative of the Dirichlet-Neumann operator with respect to the elevation function $h$. 
Analogous multilinear estimates can also be proved for higher order derivatives.

\begin{theorem} \label{thm.der.G.in.intro}
In the assumptions of Theorem \ref{thm.G.in.intro}, 
the first and second derivative of $G$ with respect to $h$ 
in direction $\eta$, $\eta_1$, $\eta_2$ satisfy 
\begin{align} 
& \| G'(h)[\eta] \psi \|_s
\leq C_0 \| \eta \|_{s_0+1} \| \psi \|_{s+1}
+ (\chi_{s > s_0}) C_s (\| \eta \|_{s+1}
+ \| h \|_{s+1} \| \eta \|_{s_0+1}) \| \psi \|_{s_0+1}
\label{echin.63.lin.in.intro}
\end{align}
and 
\begin{align} 
\| G''(h)[\eta_1, \eta_2] \psi \|_{s} 
& \leq C_0 \| \eta_1 \|_{s_0+1} \| \eta_2 \|_{s_0+1} 
\| \psi \|_{s+1} 
+ (\chi_{s > s_0}) C_s ( \| \eta_1 \|_{s+1} \| \eta_2 \|_{s_0+1}
\notag \\ 
& \quad \ 
+ \| \eta_1 \|_{s_0+1} \| \eta_2 \|_{s+1}
+ \| h \|_{s+1} \| \eta_1 \|_{s_0+1} \| \eta_2 \|_{s_0+1} )
\| \psi \|_{s_0+1},
\label{echin.63.second.der.in.intro}
\end{align}
where $C_s$ is increasing in $s$  
and $C_0$ is independent of $s$. 
\end{theorem}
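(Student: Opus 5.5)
Since Theorem \ref{thm.G.in.intro} already gives the analyticity of $h \mapsto G(h)$ from $U$ into $\mL(H^{s+1}, H^s)$, together with the tame bound \eqref{est.G.in.intro} with $\delta, C_0$ independent of $s$, the plan is to obtain the derivative estimates from the Cauchy integral formula for analytic maps, with the complex parameter scaled in a tame way. Fix $h \in U$ and $\eta \in H^{s+1}$, $\eta \neq 0$. The map $t \mapsto G(h + t\eta)\psi$ is analytic for complex $t$ with $h + t\eta$ in the complexified domain; I take the analytic extension in the statement to hold on a complex neighbourhood, with the same estimate \eqref{est.G.in.intro} for complex $h$ of small $\|\cdot\|_{s_0+1}$ norm, up to shrinking $\delta$. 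Assume first $\|h\|_{s_0+1} < \delta/2$, and set $r := \delta / (2 \| \eta \|_{s_0+1})$. For $|t| = r$ one has $\|h + t\eta\|_{s_0+1} < \delta$. Then
\[
G'(h)[\eta]\psi = \frac{1}{2\pi i} \oint_{|t| = r} \frac{G(h+t\eta)\psi}{t^2}\, dt ,
\]
so $\| G'(h)[\eta]\psi \|_s \leq r^{-1} \sup_{|t|=r} \| G(h+t\eta)\psi \|_s$.

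Next I insert \eqref{est.G.in.intro} at $h + t\eta$. This gives $C_0 \|\psi\|_{s+1} + \chi C_s (1 + \|h\|_{s+1} + r\|\eta\|_{s+1}) \|\psi\|_{s_0+1}$. Multiplying by $r^{-1} = 2\|\eta\|_{s_0+1}/\delta$ yields
\[
\frac{2C_0}{\delta}\|\eta\|_{s_0+1}\|\psi\|_{s+1} + \chi\, \frac{2C_s}{\delta}\big( \|\eta\|_{s_0+1} + \|h\|_{s+1}\|\eta\|_{s_0+1} \big)\|\psi\|_{s_0+1} + \chi\, C_s \|\eta\|_{s+1}\|\psi\|_{s_0+1}.
\]
Since $\|\eta\|_{s_0+1} \leq \|\eta\|_{s+1}$ for $s > s_0$, the term $\|\eta\|_{s_0+1}\|\psi\|_{s_0+1}$ is absorbed into $\|\eta\|_{s+1}\|\psi\|_{s_0+1}$. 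This is exactly \eqref{echin.63.lin.in.intro}, with new constants $C_0' = 2C_0/\delta$ independent of $s$ and $C_s' = C_s(1 + 2/\delta)$ increasing in $s$. The main point is that $\delta$ does not depend on $s$, so the scaling radius only involves low norms.

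For the second derivative I use the two-variable Cauchy formula with radii $r_j = \delta/(4\|\eta_j\|_{s_0+1})$. Then $\|h + t_1\eta_1 + t_2\eta_2\|_{s_0+1} < \delta$ whenever $\|h\|_{s_0+1} < \delta/2$, and
\[
\|G''(h)[\eta_1,\eta_2]\psi\|_s \leq (r_1 r_2)^{-1} \sup \|G(h + t_1\eta_1 + t_2\eta_2)\psi\|_s .
\]
Expanding as before: the prefactor $(r_1r_2)^{-1}$ produces $\|\eta_1\|_{s_0+1}\|\eta_2\|_{s_0+1}$. Inside, the term $r_1\|\eta_1\|_{s+1}$ cancels one $\|\eta_1\|_{s_0+1}$ factor and gives $\|\eta_1\|_{s+1}\|\eta_2\|_{s_0+1}$, and symmetrically for $\eta_2$. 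The terms $1 + \|h\|_{s+1}$ give the remaining contributions in \eqref{echin.63.second.der.in.intro}, where the lone $\|\eta_1\|_{s_0+1}\|\eta_2\|_{s_0+1}$ is again absorbed.

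The step I expect to be the main obstacle is the restriction $\|h\|_{s_0+1} < \delta/2$ rather than $< \delta$, together with the complex extension. For the first issue, I would apply Theorem \ref{thm.G.in.intro} with $2\delta$ in place of $\delta$ where possible, or rather redefine $\delta$ of the present statement as half of that theorem's, which is harmless since "in the assumptions of Theorem \ref{thm.G.in.intro}" allows choosing $\delta$. For the second, if only real analyticity is available, I would use instead the Taylor coefficient bounds implicit in the power series construction of the analyticity proof. Those bounds give the same geometric control $\|d^k G(h)[\eta]^k\| \leq k!\, \rho^{-k} \cdot(\text{tame bound})$, and the argument above goes through with polarization for the mixed second derivative.
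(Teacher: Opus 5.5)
Your route differs from the paper's. The paper does not use Theorem \ref{thm.G.in.intro} as a black box. It differentiates the series $u=\sum u_m$ term by term, $u_m'(h)[\eta]=m\accentset{\circ}{u}_m[h,\ldots,h,\eta]$, and uses the multilinear tame bounds of Lemma \ref{lemma:est.u.n.r.k.multlinear} to get \eqref{echin.60.lin} and \eqref{echin.60.second.der}. It then passes to the boundary via \eqref{echin.61.lin} and \eqref{echin.61.second.der}, and finally applies the product rule to \eqref{formula.G}, estimating the derivatives of $J^{\pm1}$, $(1+h)^{-1}$ and $(1+h+\la x,\grad\tilde h\ra)^{-1}$ as in Lemma \ref{lemma:other.terms}. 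Your Cauchy estimate with radius $r=\delta/(2\|\eta\|_{s_0+1})$ is the classical way to turn a tame bound on a low-norm ball into tame derivative bounds. Your bookkeeping is correct, including the absorption of $\|\eta\|_{s_0+1}\|\psi\|_{s_0+1}$ when $s>s_0$ and the harmless halving of $\delta$. It gives all higher derivatives at once and avoids the product-rule expansion of \eqref{formula.G}, at the cost of a factor of order $\delta^{-k}$ in the constants.

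The step you flag, however, is a genuine gap: it does not follow from the statement of Theorem \ref{thm.G.in.intro}. That theorem gives real analyticity and the tame bound for real $h$ only. A sup bound on a real ball gives no control of derivatives (think of $t\mapsto\sin(Nt)$). What you need is a holomorphic extension of $h\mapsto G(h)\psi$ to the whole complex set $\{h\in H^{s+1}(\S^{n-1};\C):\|h\|_{s_0+1}<\delta\}$, with $\|h\|_{s+1}$ arbitrary, satisfying \eqref{est.G.in.intro} there. Abstract real analyticity only yields a complex neighbourhood of uncontrolled size in the strong norm. That neighbourhood need not contain your disks $\{h+t\eta:|t|\le r\}$, since $r\|\eta\|_{s+1}$ may be huge, and in any case no tame bound on it is asserted.

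You can supply this ingredient, but only by re-running the paper's construction with complex-valued $h$:
\begin{itemize}
\item the norms \eqref{def.norm.u} and the product, composition and trace estimates hold for $\C$-valued functions;
\item $\mathtt{PI}$ and $\mathtt S$ are complex linear;
\item if $|\grad\tilde h|^2$ and $|\grad_{\S^{n-1}}h|^2$ in \eqref{formula.P} and \eqref{formula.G} are read as bilinear squares $\la\cdot,\cdot\ra$, every Neumann or binomial series involved converges under the same smallness of the low norms.
\end{itemize}
Hence $u=\sum u_m$ and \eqref{formula.G} define the required holomorphic extension with the same tame bound.

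Your second fallback, Taylor coefficient bounds from the power series, is not an independent argument. Those coefficients are centred at $h=0$, and turning them into bounds for $G'(h)$ and $G''(h)$ at $h\neq0$ is exactly the multilinear estimate the paper proves.
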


%\emph{What's new}.
Theorem \ref{thm.G.in.intro} improves a similar result in \cite{BJL}, where it is proved that 
\begin{equation} \label{weaker.est.G}
\| G(h)\psi \|_{H^{k-\frac12}(\S^2)} 
\leq C ( \| \psi \|_{H^{k+\frac12}(\S^2)} 
+ \| h \|_{H^{k+1}(\S^2)} \| \psi \|_{H^{\frac52}(\S^2)})
\end{equation}
for all $h \in H^{k+1}(\S^2)$ with $\| h \|_{H^3(\S^2)} < \delta$, 
all $\psi \in H^{k+\frac12}(\S^2)$, 
where $k \geq 2$ is an integer, and $\delta, C$ depend on $k$
(see Theorem 5.12 of \cite{BJL}).
Thus, Theorem \ref{thm.G.in.intro} improves the result in \cite{BJL} 
under these technical aspects: 
\begin{itemize}

\item 
the ``loss of regularity'' of the elevation function $h$,  
that is, the difference between the Sobolev regularity of $h$ in the right-hand side 
and that of $G(h)\psi$ in the left-hand side, 
is 1 in \eqref{est.G.in.intro} 
and $3/2$ in \eqref{weaker.est.G};

\item 
the constant $\delta$, which is the radius of the ball where analyticity and tame estimates hold, 
is independent of the high regularity parameter $s$ in Theorem \ref{thm.G.in.intro}, 
while $\delta$ depends on $k$ in \cite{BJL};

\item 
the regularity parameter $s$ in \eqref{est.G.in.intro} 
is a real number, while $k$ in \eqref{weaker.est.G} is an integer;

\item 
the constant $C_0$ in front of $\| \psi \|_{s+1}$ in \eqref{est.G.in.intro} is independent of $s$, 
while the constant $C$ in \eqref{weaker.est.G} depends on $k$;

\item 
on the sphere $\S^2$, the regularity threshold for the elevation function $h$ 
is any real $s_0 + 1 > 2$ in Theorem \ref{thm.G.in.intro} with $n=3$, 
and it is $3$ in \cite{BJL};

\item 
Theorem \ref{thm.G.in.intro} holds in any dimension $n \geq 2$, 
while the result in \cite{BJL} is proved for $n=3$. 
\end{itemize}

\emph{Strategy of the proof}. 
The first part of our proof is rather classical: 
we consider a change of variable $\tilde \gamma : B_1 \to \Omega$ 
that transforms \eqref{problem.in.Om} into 
\begin{equation} \label{problem.in.B.in.intro}
\div (P \grad u) = 0 \ \ \text{in } B_1, \quad \ 
u = \psi \ \ \text{on } \S^{n-1}, \quad \ 
u \in H^1(B_1),
\end{equation}
where $B_1$ is the open unit ball of $\R^n$, 
$u = \Phi \circ \tilde \gamma$ is the new unknown,
and $P$ is a $n \times n$ matrix depending on $\tilde \gamma$. 
Note that \eqref{problem.in.B.in.intro} is a problem in a \emph{fixed} (i.e., independent of $h$) domain. 
As is observed in \cite{BJL}, the Dirichlet-Neumann operator \eqref{def.G.geom} can be re-written 
in terms of $h$ and $u$, see formula \eqref{formula.G}.
Then we prove that the solution $u$ of \eqref{problem.in.B.in.intro} depends analytically on $h$, 
by writing it as a power series $u = \sum u_m$ of $h$ and its derivatives,
see \eqref{prob.u.0}, \eqref{prob.u.n}, and estimating each term $u_m$ of the series. 
Such estimates are simple to obtain in low norm, see Lemma \ref{lemma:olive.low} 
and Proposition \ref{prop:est.u.low.norm}. 

The nontrivial part of the proof comes with the estimates in higher norms. 
These estimates are not too difficult to obtain 
if the goal is to prove estimate \eqref{weaker.est.G}, 
and, in fact, the proof of \eqref{weaker.est.G} in \cite{BJL} 
is not long and it largely relies on classical results. 
Here, however, we aim to prove Theorem \ref{thm.G.in.intro}, 
with all the technical improvements listed above, 
and some new ingredient is required. 

The fact that the loss of regularity for the elevation function $h$ is 1, instead of $3/2$, 
is obtained by using a diffeomorphism $\tilde \gamma$ 
which, close to the boundary of $\Om$, 
is a right inverse of the trace operator. 
This can be done in many ways; we find it convenient 
to use the harmonic extension of $h$ to the unit ball, 
and to define $\tilde \gamma$ through it, see \eqref{def.tilde.gamma}. 
As an alternative, one can use localization, rectification, 
and an explicit construction with the Fourier transform, one for each local chart, 
similarly as it is done (globally, i.e., without local charts) 
in the flat case, for example, in \cite{Lannes.book, Berti.Maspero.Ventura}.   

To prove that the radius $\delta$ in Theorem \ref{thm.G.in.intro} 
is independent of $s$ is more demanding. 
Estimate \eqref{est.G.in.intro} is deduced from high regularity boundary estimates 
for the solution $u$ of problem \eqref{problem.in.B.in.intro}, 
which is the sum of the power series $\sum u_m$. 
To estimate each term $u_m$ of the series, a key ingredient is 
a tame estimate with asymmetric constants for the Sobolev norm 
of the product of two functions, 
that is, an estimate of the form 
\begin{equation} \label{prod.est.in.intro}
\| fg \|_s
\leq C_{s_0} \| f \|_s \| g \|_{s_0} + C_s \| f \|_{s_0} \| g \|_s,
\end{equation}
where $C_{s_0}$ depends on $s_0$ and it is independent of $s$, 
while $C_s$ also depends on $s$. 
This inequality makes it possible to estimate 
the high norm of any power series, say $\| \sum f^m \|_s$, 
only assuming a condition of the form $\| f \|_{s_0} < \delta$ 
on the low norm of $f$, with radius $\delta$ independent of $s$.
Estimate \eqref{prod.est.in.intro} is well-known and commonly used 
in the Sobolev spaces $H^s(\R^n)$ and $H^s(\T^n)$,  
where \eqref{prod.est.in.intro} is not difficult to prove 
by using Fourier transform or Fourier series. 
The same proof for the Sobolev spaces $H^s(\S^{n-1})$ on the sphere, 
instead, are hard to adapt to the analogous of Fourier series, 
namely the series expansion of functions in spherical harmonics, 
and this is one of the reasons for which we use 
Sobolev norms defined by localization and rectification, 
see \eqref{def.norm.u}, instead of spectral norms. 
With the norms \eqref{def.norm.u} of $H^s(\S^{n-1})$, 
estimate \eqref{prod.est.in.intro} becomes simple to prove, 
see \eqref{prod.est.unified.Hr.S2}. 

The validity of estimate \eqref{prod.est.in.intro} 
becomes a more serious issue for functions defined in the open ball $B_1$, 
where Sobolev spaces $H^s(B_1)$, $s \in \R$, 
are defined as the spaces of the restrictions to $B_1$ 
of the functions in $H^s(\R^n)$, or by intrinsic representation for integer $s$ 
and then by interpolation (see Taylor's \cite{Taylor.volume.3} 
and Triebel's \cite{Triebel.1983} books).    
The difficulty in proving \eqref{prod.est.in.intro} in $B_1$ 
is related to the use of extension operators, 
say $\mathtt E$, usually constructed by taking coefficients, 
obtained by inverting a Van der Monde matrix, 
that depend on (the integer part of) the high Sobolev index $s$. 
This gives bounds in low and high Sobolev norms of the type 
\[
\| \mathtt{E} f \|_{H^s(\R^n)} \leq C \| f \|_{H^s(B_1)}, \quad \ 
\| \mathtt{E} f \|_{H^{s_0}(\R^n)} \leq C' \| f \|_{H^{s_0}(B_1)}, 
\]
where the constants $C$ and $C'$ both depend on $s$. 
The fact that also $C'$ (in an estimate for the low $s_0$ norms) 
depends on $s$ makes the validity of the asymmetric product estimate \eqref{prod.est.in.intro}
for Sobolev functions in the ball $B_1$ uncertain, or, at least, highly nontrivial to prove.

We overcome this delicate issue by using localizations and introducing 
\emph{non-isotropic} seminorms where the high, and possibly fractional, Sobolev regularity 
is only taken in the tangential directions, 
while in the normal direction only integer, limited $H^1$ or $H^2$ regularity is considered, 
see \eqref{def.seminorm}. 
This idea is very efficient, simple and more or less classical, 
as it can be found in reference books like Triebel \cite[section 4.2.1]{Triebel.1983}
and Taylor \cite[section 5.11]{Taylor.volume.1}, 
and also, in the water wave literature, in Lannes \cite{Lannes.book},  
Coutand-Shkoller \cite{Coutand.Shkoller}, and other authors.  
The use of non-isotropic seminorms for functions in $B_1$ 
allows one to avoid estimating high order derivatives in the normal direction, 
which is not necessary for analyzing the Dirichlet-Neumann operator 
and is what ultimately causes the dependence of the constants by $s$.

\medskip

\emph{Organization of the paper}. 
In Section \ref{sec:new.01} we give definitions and properties about Sobolev spaces 
on $\R^n$, $\S^{n-1}$, $\R^n_+$, its norms and seminorms, both standard and non-isotropic, 
local charts, transition maps, harmonic extensions, Poisson problems. 
The proofs are collected in Section \ref{sec:appendix}, the Appendix. 
In Section \ref{sec:higher} we introduce non-isotropic seminorms of functions in $B_1$
and we use them to study the higher tangential regularity.
After these preliminaries, in Section \ref{sec:DN} we prove Theorems \ref{thm.G.in.intro}, 
\ref{thm.der.G.in.intro}.

\medskip

\textbf{Acknowledgements}. 
This work is supported by GNAMPA, 
by the Academy of Finland grant 347550,
by PRIN 2022E9CF89 \emph{Geometric Evolution Problems and Shape Optimization},
by PRIN 2020XB3EFL \emph{Hamiltonian and dispersive PDEs}, 
and by University of Naples Federico II through FRA 2024 \emph{Geometric Topics in Fluid Dynamics}.

\section{Function spaces}
\label{sec:new.01}

In this section we collect some properties for both standard and non-isotropic versions 
of Sobolev spaces of functions on $\R^n, \R^n_+, \S^d$. 
To define these spaces, we follow the approach of Triebel's book \cite{Triebel.1983}.
The results of this subsection are all more or less classical, 
but it is nontrivial to find a complete proof for all of them in literature. 
The proofs are collected in the Appendix. 

\medskip

\underline{Sobolev spaces on $\R^n$}.
On $\R^n$, for $s \in \R$, we consider the usual Sobolev spaces 
\begin{equation}  \label{def.H.s.classical}
H^s(\R^n) := \{ u \in \mS'(\R^n) : \| u \|_{H^s(\R^n)} < \infty \}, \quad \ 
\| u \|_{H^s(\R^n)}^2 := \int_{\R^n} | \hat u(\xi) |^2 (1 + |\xi|^2)^s \, d\xi,
\end{equation}
where $\mS'(\R^n)$ is the space of tempered distributions on $\R^n$, 
and $\hat u$ is the Fourier transform of $u$ on $\R^n$. 
We also consider the following non-isotropic version of those spaces, 
where the last real variable $x_n$ plays a distinguished role.
For $x \in \R^n$, denote $x' = (x_1, \ldots, x_{n-1})$, so that $x = (x', x_n)$. 
For $s,r \in \R$, we define 
\begin{align}  
H^{s,r}(\R^n) 
& := \{ u \in \mS'(\R^n) : \| u \|_{H^{s,r}(\R^n)} < \infty \}, 
\notag \\ 
\| u \|_{H^{s,r}(\R^n)}^2 
& := \int_{\R^n} | \hat u(\xi) |^2 (1 + |\xi'|^2)^r (1 + |\xi|^2)^s \, d\xi 
= \| \Lm'_r u \|_{H^s(\R^n)}^2,
\label{def.X.s.m}
\end{align}
where $\Lm'_r$ is the Fourier multiplier of symbol $(1 + |\xi'|^2)^{\frac{r}{2}}$.  
These spaces will be mainly used for $s \in \{ 0, 1, 2 \}$ and $r \geq 0$ real. 
They are used by Triebel, see \cite{Triebel.1983}, Definition 4.2.1, page 218; 
see also Lannes \cite{Lannes.book}, Definition 2.11.

\medskip

\underline{Sobolev spaces on open domains}. 
On open sets $\Om \subseteq \R^n$, following \cite{Triebel.1983}, 
Definition 1 in Section 3.2.2, we define 
\begin{align}  
H^s(\Om) 
& := \{ u \in \mS'(\Om) : \| u \|_{ H^s(\Om) } < \infty \},
\notag \\
\| u \|_{ H^s(\Om) } 
& := \inf \, \{ \| v \|_{H^s(\R^n)} : v \in H^s(\R^n), \ v |_{\Om} = u \},
\label{def.H.s.Om} 
\\
H^{s,r}(\Om) 
& := \{ u \in \mS'(\Om) : \| u \|_{H^{s,r}(\Om)} < \infty \}, 
\notag \\
\| u \|_{ H^{s,r}(\Om) } 
& := \inf \, \{ \| v \|_{H^{s,r}(\R^n)} : v \in H^{s,r}(\R^n), \ v |_{\Om} = u \},
\label{def.H.s.r.Om} 
\end{align}
with the convention that the infimum of the empty set is $\infty$. 
In other words, $H^s(\Om)$ is the set of the restrictions to $\Om$ of the elements of $H^s(\R^n)$, 
and similarly for $H^{s,r}(\Om)$.  
We mainly deal with the case when $\Om$ is the open half space 
\begin{equation}  \label{def.R.n.+} 
\R^n_+ := \{ (x', x_n) \in \R^n : x_n > 0 \}. 
\end{equation}

\underline{Sobolev spaces on smooth compact manifolds}.
We recall the construction and definition 
in \cite{Triebel.1983}, Sections 3.2.1 and 3.2.2. 
Let $\Om \subset \R^n$ be a bounded open set with smooth boundary $\Gamma = \pa \Om$.
Fix $N$ open balls $K_1 , \ldots, K_N$ that cover $\Gamma$, 
namely $\Gamma \subset \cup_{j=1}^N K_j$, 
such that $\Gamma \cap K_j$ is nonempty for all $j = 1, \ldots, N$. 
For every ball $K_j$, fix a function $\mathtt f_j$ such that 
\begin{itemize}

\vspace{-4pt}

\item[$(i)$] 
$\mathtt f_j : \R^n \to \R^n$ is a $C^\infty$ diffeomorphism of $\R^n$ onto itself, 

\vspace{-7pt}

\item[$(ii)$] 
$\mathtt f_j$ maps the ball $K_j$ onto a bounded open subset $A_j = \mathtt f_j(K_j)$ of $\R^n$, 

\vspace{-7pt}

\item[$(iii)$]
$\mathtt f_j$ maps the center $p_j$ of the ball $K_j$ into $0$, 

\vspace{-7pt}

\item[$(iv)$] 
$\mathtt f_j$ maps $\Gamma \cap K_j$ onto the $(n-1)$-dimensional open subset 
$\mathtt f_j(\Gamma \cap K_j) = \{ y \in A_j : y_n = 0 \} = A_j \cap \R^n_0$ 
of the hyperplane $\R^n_0 := \{ y \in \R^n : y_n = 0 \}$, 

\vspace{-7pt}

\item[$(v)$] 
$\mathtt f_j$ maps $\Om \cap K_j$ onto the simply connected open subset 
$\mathtt f_j(\Om \cap K_j) = \{ y \in A_j : y_n > 0 \} = A_j \cap \R^n_+$ of the half space $\R^n_+$,

\vspace{-7pt}

\item[$(vi)$] 
the Jacobian matrix $D \mathtt f_j(x)$ is invertible at every point $x$ 
in the closure $\overline{K}_j$ of $K_j$,

\vspace{-7pt}

\item[$(vii)$] 
the Jacobian matrix $D \mathtt f_j(p_j)$ of $\mathtt f_j$ at the center $p_j$ of $K_j$ 
is the identity matrix. 
\end{itemize}

\vspace{-4pt}

Also fix a smooth open set $K_0$ such that $\overline K_0 \subset \Om$ 
and $\overline \Om \subset \cup_{j=0}^N K_j$, 
and fix a resolution of unity $\{ \psi_j \}_{j=0}^N$, 
where $\psi_j : \R^n \to \R$ is a $C^\infty$ function with $\mathrm{supp}(\psi_j) \subset K_j$,
and $\sum_{j=0}^N \psi_j(x) = 1$ for all $x \in \overline \Om$. 
Thus the restriction of $\psi_1, \ldots, \psi_N$ to $\Gamma$ is a resolution of unity for $\Gamma$, 
and, in addition, $\sum_{j=1}^N \psi_j(x) = 1$ for all $x \in \overline \Om \setminus K_0$. 
On $\Gamma$, for $s \in \R$, we define 
\begin{align}
H^s(\Gamma) 
& := \{ u \in \mS'(\Gamma) : (\psi_j u)(\mathtt g_j( \cdot, 0)) \in H^s(\R^{n-1}) \ \ 
\forall j = 1, \ldots, N \},
\notag \\
\| u \|_{H^s(\Gamma)} 
& := \sum_{j=1}^N \| (\psi_j u)(\mathtt g_j(\cdot,0)) \|_{H^s(\R^{n-1})}, 
\quad \mathtt g_j := \mathtt f_j^{-1},
\label{def.H.s.manifold}
\end{align}
where $(\psi_j u)(\mathtt g_j(\cdot,0))$ is defined on $\R^{n-1}$, 
extended to 0 in the set of all $y' \in \R^{n-1}$ such that $\mathtt g_j(y',0) \notin K_j$,
in which case one has $\psi_j (\mathtt g_j(y',0)) = 0$. 

\medskip

\underline{Local charts on $\S^{n-1}$}.
For $\Gamma = \S^{n-1}$, the construction above can be more explicit 
and well adapted to the spherical geometry. 
Let $B_1, \S^{n-1}$ be the open unit ball and unit sphere of $\R^n$. 
For $\delta \in (0, \frac14)$, let 
\[
K_0 = B_{1-\delta}(0), \quad  
K_j = B_{2\delta}(p_j), \quad j = 1, \ldots, N, 
\]
be open balls in $\R^n$,
where the points $p_j \in \S^{n-1}$ are appropriately positioned to obtain the covering property 
$\overline{B}_1 \subset \cup_{j=0}^N K_j$. 
Fix a resolution of unity $\{ \psi_j \}_{j=0}^N$, 
where $\psi_j : \R^n \to \R$ is a $C^\infty$ function with $\mathrm{supp}(\psi_j) \subset K_j$,
and $\sum_{j=0}^N \psi_j(x) = 1$ for all $x \in \overline B_1$. 
Thus,
\begin{equation} \label{res.unity.in.the.annulus}
\sum_{j=1}^N \psi_j(x) = 1 \quad \forall x \in \R^n, \  1-\delta \leq |x| \leq 1.
\end{equation}
Let $\mathtt R_j$ be a $n \times n$ rotation matrix mapping the center $p_j$ of the ball $K_j$ 
into the ``South Pole'' $p_0 := (0, . . . , 0, -1)$ of $\S^{n-1}$.  
Let $\mathtt f_j(x) := \mathtt f(\mathtt R_j x)$ be the composition of the rotation $\mathtt R_j$
with the function 
\begin{equation} \label{explicit.mathtt.f} 
\mathtt f : \mathbb H_0 \to \mathbb H_1, \quad \ 
\mathtt f(x) := \Big( - \frac{x'}{x_n} , 1 - |x| \Big),
\end{equation}
where $x' = (x_1, \ldots, x_{n-1})$, and $\mathbb H_0$, $\mathbb H_1$ 
are the open half spaces 
$\mathbb H_0 := \{ x \in \R^n : x_n < 0 \}$, 
$\mathbb H_1 := \{ y \in \R^n : y_n < 1 \}$.
The function $\mathtt f$ is a well defined, smooth bijection of $\mathbb H_0$ onto $\mathbb H_1$.
Its inverse is the function 
\begin{equation} \label{explicit.mathtt.f.inv}
\mathtt g : \mathbb H_1 \to \mathbb H_0, \quad \ 
\mathtt g(y) := (1 - y_n) (1 + |y'|^2)^{-\frac12} (y', -1),
\end{equation}
because, if $\mathtt f(x) = y$, then $|x| = 1 - y_n$, $x' = - x_n y'$, 
whence $(1-y_n)^2 = |x|^2 = |x'|^2 + x_n^2 = x_n^2(1+|y'|^2)$; 
for $x_n < 0$, one obtains $x = \mathtt g(y)$. 
The map $\mathtt{g}$ is a well defined, smooth bijection of $\mathbb H_1$ onto $\mathbb H_0$. 
Moreover $\mathtt f( \mathbb H_0 \cap \S^{n-1} ) = \R^n_0$ 
and $\mathtt g(\R^n_0) = \mathbb H_0 \cap \S^{n-1}$, where $\R^n_0$ is the hyperplane 
$\{ y \in \R^n : y_n = 0 \}$. 
Let $\mathtt g_j(y) := \mathtt R_j^{-1} \mathtt g(y)$. 
Then 
\begin{equation} \label{inv.large.domain}
\mathtt f_j(\mathtt g_j(y)) = y \quad \forall y \in \mathbb H_1, 
\qquad \ 
\mathtt g_j(\mathtt f_j(x)) = x \quad \forall x \in \mathtt R_j^{-1} \mathbb H_0,
\end{equation}
where $\mathtt R_j^{-1} \mathbb H_0 := \{ x \in \R^n : \mathtt R_j x \in \mathbb H_0 \}
= \{ \mathtt R_j^{-1} z : z \in \mathbb H_0 \}$,
and 
\begin{equation} \label{ttf.ttg.sets}
\mathtt f_j( (\mathtt R_j^{-1} \mathbb H_0) \cap \S^{n-1} ) = \R^n_0, \quad \ 
\mathtt g_j( \R^n_0 ) = (\mathtt R_j^{-1} \mathbb H_0) \cap \S^{n-1}.
\end{equation}

For $\delta < \frac12$, the ball $B_{2\delta}(p_0)$ is contained in the half space $\mathbb H_0$, 
therefore $\mathtt f_j$ is well defined in the ball $K_j$, 
with bijective image 
\[
\mathtt f_j(K_j) = \mathtt f(B_{2\delta}(p_0)) =: A_0.
\] 
One has $\mathtt f_j(p_j) = \mathtt f(p_0) = 0$ and 
\begin{equation} \label{ttf.j.K.j.cap.S.d}
\mathtt f_j(K_j \cap \S^{n-1}) 
= \mathtt f( B_{2\delta}(p_0) \cap \S^{n-1} )
= A_0 \cap \R^n_0
= D \times \{ 0 \}, 
\end{equation}
where 
\begin{equation} \label{def.disc.D}
D := \{ y' \in \R^{n-1} : (y',0) \in A_0 \}
= \{ y' \in \R^{n-1} : |y'| < r_{\delta} \} \subset \R^{n-1}
\end{equation}
is the ball of $\R^{n-1}$ of radius $r_{\delta} = 2 \delta (1 - \delta^2)^{\frac12} (1 - 2 \delta^2)^{-1}$.
Also, 
\begin{equation} \label{local.prop.ttg.j}
\mathtt g_j(A_0) = K_j, \quad \ 
\mathtt g_j(D \times \{0\}) = K_j \cap \S^{n-1}.
\end{equation}
Moreover, we define 
\begin{equation} 
E_j := \mathrm{supp}(\psi_j) \cap \S^{n-1} \subset K_j \cap \S^{n-1},
\quad 
F_j := \{ y' \in \R^d : \mathtt g_j(y',0) \in \mathrm{supp}(\psi_j) \} \subset D,
\label{def.compact.F.j}
\end{equation}
and we note that 
\begin{equation} \label{local.prop.ttg.j.E.j.F.j}
\mathtt f_j(E_j) = F_j  \times \{ 0 \}, \quad \ 
\mathtt g_j(F_j \times \{0\}) = E_j.
\end{equation}

\medskip

\underline{Sobolev spaces on $\S^{n-1}$}. 
Given a function $u : \S^{n-1} \to \C$, its Sobolev norm \eqref{def.H.s.manifold}
is defined by localization and rectification. 
\emph{Localization}: one has $u = \sum_{j=1}^N u \psi_j$ on $\S^{n-1}$, 
and, for each $j$, the product $u \psi_j$ vanishes in $\S^{n-1} \setminus \mathrm{supp}(\psi_j)$; 
recall that $\mathrm{supp}(\psi_j)$ is a compact subset of the open ball $K_j$. 
\emph{Rectification}: by \eqref{ttf.ttg.sets}, 
$\mathtt g_j(\cdot,0)$ maps $\R^{n-1}$ into $\S^{n-1}$. 
Hence both $u(\mathtt g_j(\cdot,0))$ and $\psi_j(\mathtt g_j(\cdot,0))$ are defined in $\R^{n-1}$, and 
\begin{equation} \label{psi.j.is.zero}
\psi_j(\mathtt g_j(y',0)) = 0 \quad \forall y' \in \R^{n-1} \setminus F_j,
\end{equation}
where $F_j$ is the compact set defined in \eqref{def.compact.F.j}.
Then the Sobolev norm is defined by \eqref{def.H.s.manifold}, that is, 
\begin{equation} \label{def.norm.u}
\| u \|_{H^s(\S^{n-1})} := \sum_{j=1}^N \| (u \psi_j) \circ \mathtt g_j(\cdot, 0) \|_{H^s(\R^{n-1})}. 
\end{equation}
We note that, unlike for a more general manifold, 
in \eqref{def.norm.u} the extension step is not required, 
because $(u \psi_j) \circ \mathtt g_j(\cdot, 0)$ is already defined in $\R^{n-1}$, 
with compact support contained in $F_j$. 
This is due to the geometry of $\S^{n-1}$ (invariance by rotations) 
and to our choice of local charts 
($\mathtt f$ gives the stereographical projection of the half sphere 
$\S^{n-1} \cap \mathbb H_0$ onto the hyperplane $\R^n_0 = \R^{n-1} \times \{0\}$).

\medskip

\underline{Transition maps}. 
If two balls $K_j, K_\ell$ have nonempty intersection, 
then the transition map $\mathtt f_\ell \circ \mathtt g_j$ 
can be explicitly calculated using \eqref{explicit.mathtt.f}, \eqref{explicit.mathtt.f.inv}
and the basic property $|\mathtt R_\ell \mathtt R_j^{-1} x| = |x|$ of rotation matrices. 
One finds that 
\begin{equation}  \label{formula.transition.map}
\mathtt f_\ell \circ \mathtt g_j(y) = (\mathtt T_{\ell j}(y') , y_n), 
\quad \ 
\mathtt T_{\ell j}(y') 
= - \frac{[\mathtt R_\ell \mathtt R_j^{-1} (y', -1)]'}{[\mathtt R_\ell \mathtt R_j^{-1} (y', -1)]_n}
\end{equation}
for all $y \in \mathtt f_j(K_j \cap K_\ell)$.
Formula \eqref{formula.transition.map} shows the remarkable property that 
the transition map $\mathtt f_\ell \circ \mathtt g_j$ does not move the last variable $y_n$, 
and it acts on $y'$ as a function independent of $y_n$. 

\medskip

\underline{Extension of local charts}.
To estimate the $H^{s,r}(\R^n_+)$ norms of products and compositions of functions,
it is convenient to extend $\mathtt f$, $\mathtt g$ and $\mathtt f_k \circ \mathtt g_j$ 
outside their original domains. 
The first order Taylor expansion of $\mathtt f(x)$ around $p_0$ is $x-p_0$; 
therefore we define 
\begin{equation} \label{def.tilde.mathtt.f}
\tilde{\mathtt f}(x) := 
x - p_0 + \ph_\delta(x-p_0) ( \mathtt f(x) - x + p_0 )
\quad \ \forall x \in \R^n,
\end{equation}
where $\ph_\delta(x-p_0) := \ph ( \frac{ |x-p_0| }{ 2\delta } )$  
for some cut-off function $\ph \in C^\infty(\R)$ such that $\ph(t) = 1$ for $t \leq 1$, 
$\ph(t) = 0$ for $t \geq 2$, and $0 \leq \ph \leq 1$.  
Taking the half radius $\delta$ of the balls $K_j$ sufficiently small, 
the function $\tilde{\mathtt f}$ is a diffeomorphism of $\R^n$ 
because $|D \tilde{\mathtt f}(x) - I| \leq 1/2$ for all $x \in \R^n$. 
Also, $\tilde{\mathtt f} = \mathtt f$ in $B_{2\delta}(p_0)$. 
We define $\tilde{\mathtt g}$ as the inverse of $\tilde{\mathtt f}$ in \eqref{def.tilde.mathtt.f}.  
We also define $\tilde{\mathtt f}_j := \tilde{\mathtt f} \circ \mathtt R_j$ 
and $\tilde{\mathtt g}_j$ as its inverse function.

\medskip

\underline{Extension of transition maps}.
Concerning the transition map, of course 
$\tilde{\mathtt f}_\ell \circ \tilde{\mathtt g}_j$ 
is an extension of \eqref{formula.transition.map}; 
however, it is convenient to introduce another, more explicit extension, 
preserving the property of not moving $y_n$ and acting on $y'$ independently on $y_n$. 
The affine function $y' \mapsto \mathtt R_\ell \mathtt R_j^{-1} (y', -1) 
= \mathtt R_\ell \mathtt R_j^{-1} (0, -1) + \mathtt R_\ell \mathtt R_j^{-1} (y', 0)$ 
is defined for all $y' \in \R^{n-1}$, and its value at $y'=0$ is 
\begin{equation} \label{last.column}
\mathtt R_\ell \mathtt R_j^{-1} (0, -1) 
= \mathtt R_\ell \mathtt R_j^{-1} p_0
= \mathtt R_\ell p_j 
= \mathtt R_\ell p_\ell + \mathtt R_\ell (p_j - p_\ell)
= p_0 + \mathtt R_\ell (p_j - p_\ell),
\end{equation}
with $|\mathtt R_\ell (p_j - p_\ell)| = |p_j - p_\ell| < 4\delta$ 
because $p_j, p_\ell$ are the centers of the two intersecting balls $K_j, K_\ell$ of radius $2\delta$. 
We write the rotation matrix $\mathtt R_\ell \mathtt R_j^{-1}$ as 
$\big( \! \begin{smallmatrix} \mathtt A & \mathtt b \\ \mathtt c & \mathtt d \end{smallmatrix} \! \big)$,
where $\mathtt A$ is an $(n-1) \times (n-1)$ matrix, 
$\mathtt b$ is a column with $n-1$ components, 
$\mathtt c$ is a row with $n-1$ components,
and $\mathtt d$ is a scalar.  
Thus 
\begin{alignat}{2}
\mathtt A y' & = [ \mathtt R_\ell \mathtt R_j^{-1} (y', 0) ]', 
\qquad & 
\mathtt b & = [ \mathtt R_\ell \mathtt R_j^{-1} (0,1) ]' 
= - [ \mathtt R_\ell (p_j - p_\ell) ]',
\notag \\
\mathtt c y' & = [ \mathtt R_\ell \mathtt R_j^{-1} (y', 0) ]_n, 
\qquad & 
\mathtt d & = [ \mathtt R_\ell \mathtt R_j^{-1} (0,1) ]_n 
= 1 - [ \mathtt R_\ell (p_j - p_\ell) ]_n
\label{rotat.03}
\end{alignat}
for all $y' \in \R^{n-1}$, where we have used \eqref{last.column} 
and the fact that $p_0'=0, (p_0)_n = -1$. 
Hence $\mathtt T_{\ell j}(y')$ in 
\eqref{formula.transition.map} becomes 
\begin{equation} \label{rotat.06}
\mathtt T_{\ell j}(y') = \frac{- \mathtt b + \mathtt A y'}{\mathtt d - \mathtt c y'}.
\end{equation}

\begin{lemma} \label{lemma:T.ell.j}
Let $\delta < 1/8$. Then the scalar $\mathtt d$ and 
the matrix $\mathtt A \mathtt d - \mathtt b \mathtt c$ are invertible, 
one has 
\begin{equation} \label{rotat.12}
|\mathtt d - 1| < 4 \delta, \quad \ 
|\mathtt c| = |\mathtt b| < 4 \delta, \quad \ 
| (\mathtt A \mathtt d - \mathtt b \mathtt c)^{-1} y'| \leq \lm |y'| 
\quad \forall y' \in \R^{n-1}, \quad \ 
\lm := \mathtt d^{-1},
\end{equation}
and, for all $|y'| \leq 1$, one has the converging Taylor expansion
\begin{equation}  \label{rotat.08}
\frac{- \mathtt b + \mathtt A y'}{\mathtt d - \mathtt c y'} 
= - \lm \mathtt b + \lm^2 (\mathtt A \mathtt d - \mathtt b \mathtt c ) y' (1 + \mathtt S(y')), \quad \ 
\mathtt S(y') := \sum_{m=1}^\infty (\lm \mathtt c y')^m.
\end{equation}
\end{lemma}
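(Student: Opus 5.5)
The plan is to read everything off from the orthogonality of the rotation $Q := \mathtt R_\ell \mathtt R_j^{-1} = \big( \! \begin{smallmatrix} \mathtt A & \mathtt b \\ \mathtt c & \mathtt d \end{smallmatrix} \! \big)$ and from the identity \eqref{last.column}. Set $v := \mathtt R_\ell(p_j - p_\ell)$, so that $|v| < 4\delta$. Since $(0,1) = -p_0$, \eqref{last.column} gives
\[
(\mathtt b, \mathtt d) = Q(0,1) = -p_0 - v = (0,1) - v .
\]
Hence $\mathtt b = -v'$ and $\mathtt d = 1 - v_n$, which is \eqref{rotat.03}. It follows that $|\mathtt d - 1| \leq |v| < 4\delta$ and $|\mathtt b| \leq |v| < 4\delta$. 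In particular $\mathtt d > 1 - 4\delta > 1/2$, so $\mathtt d$ is invertible and $\lm = \mathtt d^{-1} < 2$.

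For $|\mathtt c| = |\mathtt b|$ I will use that both the last column $(\mathtt b, \mathtt d)$ and the last row $(\mathtt c, \mathtt d)$ of the orthogonal matrix $Q$ are unit vectors. This gives $|\mathtt b|^2 = 1 - \mathtt d^2 = |\mathtt c|^2$.

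For the matrix $\mathtt A \mathtt d - \mathtt b \mathtt c$ I will use the Schur complement. Since $\mathtt d \neq 0$, the top-left $(n-1)\times(n-1)$ block of $Q^{-1}$ equals $(\mathtt A - \mathtt b \mathtt d^{-1} \mathtt c)^{-1}$. On the other hand $Q^{-1} = Q^T$, whose top-left block is $\mathtt A^T$. Therefore $\mathtt A - \mathtt b \mathtt c/\mathtt d$ is invertible with inverse $\mathtt A^T$, and
\[
(\mathtt A \mathtt d - \mathtt b \mathtt c)^{-1} = \lm \mathtt A^T .
\]
Since $\mathtt A^T$ is a submatrix of the orthogonal matrix $Q^T$, it satisfies $|\mathtt A^T y'| \leq |Q^T(y',0)| = |y'|$. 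This gives the third bound in \eqref{rotat.12}. I expect this step, identifying the inverse via the Schur complement, to be the only non-obvious point of the proof.

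For \eqref{rotat.08}, take $|y'| \leq 1$. Then
\[
|\lm \mathtt c y'| \leq \frac{4\delta}{1-4\delta} < 1 ,
\]
because $4\delta < 1/2 < 1 - 4\delta$ when $\delta < 1/8$. So the geometric series converges and
\[
(\mathtt d - \mathtt c y')^{-1} = \lm (1 - \lm \mathtt c y')^{-1} = \lm (1 + \mathtt S(y')) .
\]
Multiplying by $-\mathtt b + \mathtt A y'$ gives $-\lm \mathtt b - \lm \mathtt b \mathtt S + \lm \mathtt A y'(1+\mathtt S)$. Comparing with the claimed right-hand side $-\lm \mathtt b + \lm \mathtt A y'(1+\mathtt S) - \lm^2 \mathtt b\, (\mathtt c y')(1+\mathtt S)$, the two agree exactly when
\[
\mathtt S = \lm \mathtt c y' \, (1 + \mathtt S) .
\]
This is the defining identity of the geometric series, and the proof is complete.
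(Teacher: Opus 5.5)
Your proof is correct. Your first, second and fourth steps coincide with the paper's. The paper reads $(\mathtt b, \mathtt d - 1) = -\mathtt R_\ell(p_j - p_\ell)$ off \eqref{last.column}, uses that the last row and last column are unit vectors, and expands $(\mathtt d - \mathtt c y')^{-1}$ as a geometric series. You additionally verify the algebra behind \eqref{rotat.08}, which the paper leaves implicit.

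The real difference is in how you handle $\mathtt A \mathtt d - \mathtt b \mathtt c$.

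\emph{The paper's route.} It right-multiplies the rotation by $\big( \! \begin{smallmatrix} \mathtt d I' & 0 \\ -\mathtt c & 1 \end{smallmatrix} \! \big)$, obtaining the block upper-triangular matrix $\big( \! \begin{smallmatrix} \mathtt A \mathtt d - \mathtt b \mathtt c & \mathtt b \\ 0 & \mathtt d \end{smallmatrix} \! \big)$. It deduces invertibility from determinants, since $\mathtt d \det(\mathtt A\mathtt d - \mathtt b\mathtt c) = \mathtt d^{n-1}$. It then gets the bound by applying both sides to $(z',0)$ and using that the rotation is an isometry, so $|(\mathtt A\mathtt d - \mathtt b\mathtt c)z'| = |(\mathtt d z', -\mathtt c z')| \geq \mathtt d |z'|$.

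\emph{Your route.} You use the Schur-complement formula for the top-left block of $Q^{-1} = Q^T$. This gives the exact identity $(\mathtt A\mathtt d - \mathtt b\mathtt c)^{-1} = \lm \mathtt A^T$, after which the bound is just $|\mathtt A^T y'| \leq |Q^T(y',0)| = |y'|$.

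\emph{Comparison.} Your route gives strictly more information, namely an explicit inverse. It can be made fully self-contained, without quoting block inversion: the blocks of $QQ^T = I$ give $\mathtt A\mathtt A^T + \mathtt b\mathtt b^T = I'$ and $\mathtt c\mathtt A^T = -\mathtt d\mathtt b^T$, hence $(\mathtt A\mathtt d - \mathtt b\mathtt c)\mathtt A^T = \mathtt d I'$ directly. The paper's route needs only a one-line factorization and the isometry property. Both rest on the orthogonality of $Q$ in the same essential way, so the difference is one of technique rather than substance.
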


\begin{proof}
By \eqref{rotat.03}, the vector $(\mathtt b, \mathtt d - 1)$ 
is the opposite of $\mathtt R_\ell (p_j - p_\ell)$, 
and its norm is $|(\mathtt b, \mathtt d - 1)|$ 
$= |\mathtt R_\ell (p_j - p_\ell)|$ 
$ = |p_j - p_\ell|$ 
$< 4 \delta$.
This gives the bound for $|\mathtt b|, |\mathtt d - 1|$ in \eqref{rotat.12}.
Since $\mathtt R_\ell \mathtt R_j^{-1}$ is a rotation matrix, 
all its columns and rows have unitary norm. 
In particular, taking the $n$th row and $n$th column, one has 
$|\mathtt b|^2 + \mathtt d^2 = 1$ and 
$|\mathtt c|^2 + \mathtt d^2 = 1$, 
whence $|\mathtt c| = |\mathtt b|$. 

To study the matrix $\mathtt A \mathtt d - \mathtt b \mathtt c$,  
we observe that 
\begin{equation} \label{rotat.10}
\begin{pmatrix} \mathtt A & \mathtt b \\ \mathtt c & \mathtt d \end{pmatrix}
\begin{pmatrix} \mathtt d I' & 0 \\ - \mathtt c & 1 \end{pmatrix}
= 
\begin{pmatrix} \mathtt A \mathtt d - \mathtt b \mathtt c & \mathtt b \\ 0 & \mathtt d \end{pmatrix}\!,
\end{equation}
where $I'$ is the $(n-1) \times (n-1)$ identity matrix. 
The first matrix in \eqref{rotat.10} is the rotation $\mathtt R_\ell \mathtt R_j^{-1}$, 
whose determinant is 1, while the determinant of the second matrix is $\mathtt d^{n-1}$.  
Hence the determinant of the third matrix in \eqref{rotat.10} is $\mathtt d^{n-1}$, which is nonzero,  
and this is also the product of $\mathtt d$ with the determinant 
of $\mathtt A \mathtt d - \mathtt b \mathtt c$. 
This proves that the matrix $\mathtt A \mathtt d - \mathtt b \mathtt c$ is invertible. 

Now let $z' \in \R^{n-1}$. 
We consider the product of \eqref{rotat.10} with the vector $(z',0)$,  
and we calculate its norm. 
The matrix in the RHS of \eqref{rotat.10} 
times the vector $(z',0)$ gives $((\mathtt A \mathtt d - \mathtt b \mathtt c)z', 0)$,   
whose norm is $|(\mathtt A \mathtt d - \mathtt b \mathtt c)z'|$.  
The matrix product in the LHS of \eqref{rotat.10} 
times the same vector $(z',0)$ gives 
$\mathtt R_\ell \mathtt R_j^{-1} (\mathtt d z', - \mathtt c z')$, 
whose norm is equal to the norm of $(\mathtt d z', - \mathtt c z')$,
which is $\geq \mathtt d |z'|$. 
This proves that $|(\mathtt A \mathtt d - \mathtt b \mathtt c)z'| \geq \mathtt d |z'|$ 
for all $z' \in \R^{n-1}$. 
Since $\mathtt A \mathtt d - \mathtt b \mathtt c$ is invertible, 
taking $z' = (\mathtt A \mathtt d - \mathtt b \mathtt c)^{-1} y'$ gives 
the third inequality of \eqref{rotat.12}. 

Formula \eqref{rotat.08} is obtained by writing $(\mathtt d - \mathtt c y')^{-1} 
= \lm (1 - \lm \mathtt c y')^{-1}$ as a power series, which converges for $\lm |\mathtt c y'| < 1$. 
One has $\lm \leq (1 - 4 \delta)^{-1} < 2$ and $|\mathtt c| < 4 \delta$, 
whence $\lm |c y'| < 8 \delta < 1$ for all $|y'| \leq 1$.  
\end{proof}

We define 
\begin{equation}  \label{rotat.09}
\tilde{\mathtt T}_{\ell j}(y') 
:= - \lm \mathtt b 
+ \lm^2 (\mathtt A \mathtt d - \mathtt b \mathtt c ) y' \big( 1 + \ph_\d(y') S(y') \big), 
\quad \ 
\ph_\d(y') := \ph \Big( \frac{|y'|}{4 \delta} \Big),
\end{equation}
where $\ph \in C^\infty(\R)$ is a smooth cut-off function such that 
$\ph(t) = 1$ for $t \leq 1$, 
$\ph(t) = 0$ for $t \geq 2$, 
and $0 \leq \ph \leq 1$. 
Hence $\tilde{\mathtt T}_{\ell j} = \mathtt T_{\ell j}$ where $\ph_\delta = 1$, 
i.e., in the ball $|y'| \leq 4 \delta$.  
Since $\delta < 1/4$, the set $A_0 = \mathtt f(B_{2\delta}(p_0))$ 
is contained in the ball $|y| < 4\delta$, and  
\begin{equation} \label{extension.in.A0}
(\tilde{\mathtt T}_{\ell j}(y'), y_n) 
= (\mathtt T_{\ell j}(y'), y_n) 
= \mathtt f_\ell (\mathtt g_j(y)) 
\quad \ \forall y = (y', y_n) \in A_0.
\end{equation}

\begin{lemma}  \label{lemma:rotat.tilde}
There exists a universal constant $\delta_0 \in (0, 1/8)$ such that, 
for $0 < \delta \leq \delta_0$, if $K_j \cap K_\ell \neq \emptyset$, then 
the map $\tilde{\mathtt T}_{\ell j}$ in \eqref{rotat.09}
is a $C^\infty$ diffeomorphism of $\R^{n-1}$ 
that coincides with $\mathtt T_{\ell j}$ in \eqref{formula.transition.map} 
in the ball $|y'| \leq 4 \delta$,
and coincides in $|y'| \geq 8 \delta$ with the affine map 
$y' \mapsto - \lm \mathtt b + \lm^2 (\mathtt A \mathtt d - \mathtt b \mathtt c ) y'$.
\end{lemma}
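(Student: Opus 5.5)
The coincidence statements follow directly from the definition \eqref{rotat.09}. Where $|y'|\le 4\delta$ we have $\ph_\d(y')=1$, so $\tilde{\mathtt T}_{\ell j}(y')$ equals the right-hand side of \eqref{rotat.08}. Lemma \ref{lemma:T.ell.j} applies there since $4\delta<1$, and by \eqref{rotat.06} this equals $\mathtt T_{\ell j}(y')$. Where $|y'|\ge 8\delta$ we have $\ph_\d=0$, so $\tilde{\mathtt T}_{\ell j}$ is the stated affine map. Smoothness holds because $\ph_\d \mathtt S$ is $C^\infty$ on $\R^{n-1}$: the series $\mathtt S$ is a real-analytic function, namely $\lm\mathtt c y'/(1-\lm\mathtt c y')$, on $|y'|\le 1$, which contains $\mathrm{supp}\,\ph_\d\subset\{|y'|\le 8\delta\}$.

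For the diffeomorphism property, I will write $\tilde{\mathtt T}_{\ell j}(y') = -\lm\mathtt b + \lm^2 M\,(y' + \mathtt N(y'))$. Here $M:=\mathtt A\mathtt d-\mathtt b\mathtt c$ and $\mathtt N(y') := y'\ph_\d(y')\mathtt S(y')$. By Lemma \ref{lemma:T.ell.j}, $M$ is invertible, so it suffices to show that $y'\mapsto y'+\mathtt N(y')$ is a diffeomorphism of $\R^{n-1}$. I will prove $\sup_{y'}|D\mathtt N(y')|\le 1/2$ for $\delta\le\delta_0$. Then $|(I'+D\mathtt N)-I'|\le 1/2$, so $I'+D\mathtt N$ is invertible everywhere. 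The map is injective, since $|(y'+\mathtt N(y'))-(z'+\mathtt N(z'))|\ge \frac12|y'-z'|$. It is surjective, either by the Banach fixed point theorem applied to $y'\mapsto w'-\mathtt N(y')$, or by properness plus the inverse function theorem. Its inverse is smooth by the inverse function theorem.

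The main step is the derivative bound. On the support $|y'|\le 8\delta$ we have $|\lm\mathtt c y'|\le 2\cdot4\delta\cdot 8\delta = 64\delta^2\le 1/2$ for $\delta$ small. This gives $|\mathtt S(y')|\le 2|\lm\mathtt c y'|\le 128\delta^2$. Differentiating $\mathtt S=q/(1-q)$ with $q=\lm\mathtt c y'$ gives $|D\mathtt S|\le 4\lm|\mathtt c|\le 32\delta$. Also $|D\ph_\d|\le \|\ph'\|_\infty/(4\delta)$. By the product rule,
\[
|D\mathtt N|\le |\ph_\d\mathtt S| + |y'|\,|D\ph_\d|\,|\mathtt S| + |y'|\,|D\mathtt S|
\le 128\delta^2 + 8\delta\cdot\tfrac{\|\ph'\|_\infty}{4\delta}\cdot128\delta^2 + 8\delta\cdot32\delta,
\]
which is $O(\delta^2)$ with a universal constant. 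The delicate term is the one with $D\ph_\d$, whose blow-up like $\delta^{-1}$ is compensated by $|y'|\,|\mathtt S|=O(\delta^3)$. So there is a universal $\delta_0<1/8$ with $|D\mathtt N|\le 1/2$ for all $\delta\le\delta_0$, and $\delta_0$ does not depend on $j,\ell$, since only $|\mathtt c|<4\delta$ and $\lm<2$ were used.
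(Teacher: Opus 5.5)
Your proposal is correct and is essentially the paper's argument. The paper rewrites $\tilde{\mathtt T}_{\ell j}(y')=x'$ as the fixed-point equation $y'=\lambda^{-2}(\mathtt A\mathtt d-\mathtt b\mathtt c)^{-1}(x'+\lambda\mathtt b)-y'\varphi_\delta(y')\mathtt S(y')$, shows the right-hand side is a contraction for small $\delta$ via a derivative bound, and obtains smoothness of the inverse from an implicit-function argument, which is exactly what you do after factoring out $\lambda^2(\mathtt A\mathtt d-\mathtt b\mathtt c)$. Your $O(\delta^2)$ bound on $D\mathtt N$ is sharper than the paper's $C\delta$, and your explicit checks of the two coincidence regions and of smoothness fill in what the paper records only before the lemma; note that ``universal'' requires fixing the cut-off $\varphi$ (the paper normalizes $\|\varphi'\|_{L^\infty(\R)}\le 10$).
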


\begin{proof}
Let us prove that $\tilde{\mathtt T}_{\ell j}$ is a bijection of $\R^{n-1}$. 
Given $x' \in \R^{n-1}$, one has $\tilde{\mathtt T}_{\ell j}(y') = x'$ 
if and only if $y' = \Phi(y')$, where 
$\Phi(y') := \lm^{-2} (\mathtt A \mathtt d - \mathtt b \mathtt c)^{-1} (x' + \lm \mathtt b) 
- y' \ph_\delta(y') \mathtt S(y')$. 
The Jacobian matrix $D \Phi(y')$ vanishes for $|y'| \geq 8 \delta$, 
and, for $\delta < 1/16$, one has
\[
| D \Phi(y') z' | \leq C \delta |z'| \quad \ \forall y', z' \in \R^{n-1}, 
\]
where the constant $C$ depends only on $\| \ph' \|_{L^\infty(\R)}$; 
we can assume that, say, $\| \ph' \|_{L^\infty(\R)} \leq 10$, so that $C$ is a universal constant.
Hence, for $C \delta \leq 1/2$, 
$\Phi$ is a contraction mapping on $\R^{n-1}$, and it has a unique fixed point. 
This proves that $\tilde{\mathtt T}_{\ell j}$ is a bijection of $\R^{n-1}$. 
The $C^\infty$ regularity of $\tilde{\mathtt T}_{\ell j}$ 
and of its inverse map $\tilde{\mathtt T}_{\ell j}^{-1}$ 
follows from a standard implicit function argument. 
\end{proof}

\medskip

\underline{Properties of Sobolev spaces on $\R^n_+$}. 
The next lemma collects some useful properties of the Sobolev spaces, both standard and non-isotropic, 
on $\R^n$ and $\R^n_+$.

\begin{lemma} \label{lemma:est.tools.R.n.+} 
On $\R^n$ and $\R^n_+$ one has the following properties.

$(i)$ \emph{Density}. 
For $s,r \in \R$, the Schwartz class $\mS(\R^n)$ is dense in $H^{s,r}(\R^n)$, 
and the set $\mS(\R^n_+) := \{ \ph |_{\R^n_+} : \ph \in \mS(\R^n) \}$ 
of the restrictions to $\R^n_+$ of the Schwartz functions of $\R^n$ 
is dense in $H^{s,r}(\R^n_+)$. 

$(ii)$ \emph{Embedding}. 
Let $s,r \in \R$, with $s > 1/2$, $s+r > n/2$. 
Then $H^{s,r}(\R^n) \subset L^\infty(\R^n)$ and 
$H^{s,r}(\R^n_+) \subset L^\infty(\R^n_+)$, with 
\begin{equation} \label{embedding.ineq}
\| v \|_{L^\infty(\R^n)} \leq C_{s,r} \| v \|_{H^{s,r}(\R^n)}, 
\quad 
\| u \|_{L^\infty(\R^n_+)} \leq C_{s,r} \| u \|_{H^{s,r}(\R^n_+)},
\end{equation}
for all $v \in H^{s,r}(\R^n)$, 
all $u \in H^{s,r}(\R^n_+)$, 
where the constant $C_{s,r}$ depends on $s,r+s$. 
Moreover, if $v \in H^{s,r}(\R^n)$, then $v$ is uniformly continuous in $\R^n$  
and $v(x) \to 0$ as $|x| \to \infty$.  
If $u \in H^{s,r}(\R^n_+)$, then $u$ is uniformly continuous in $\R^n_+$, 
and $u(x) \to 0$ as $|x| \to \infty$ in $\R^n_+$.  

$(iii)$ \emph{Partial derivatives}. 
The partial derivatives (in the sense of distributions) satisfy
\begin{align} 
\| \pa_{x_k} u \|_{H^{s,r}(\R^n_+)} 
& \leq \| u \|_{H^{s,r+1}(\R^n_+)} 
\leq \| u \|_{H^{s+1,r}(\R^n_+)}, 
\quad k=1, \ldots, n-1,
\label{pa.k.est}
\\
\| \pa_{x_n} u \|_{H^{s,r}(\R^n_+)} 
& \leq \| u \|_{H^{s+1,r}(\R^n_+)},
\label{pa.n.est}
\end{align} 
for all $u \in H^{s+1,r}(\R^n_+)$, for all $s,r \in \R$. 

$(iv)$ \emph{Trace}. 
Let $s,r \in \R$, with $s > 1/2$. 
On $\R^n$, the trace operator 
\begin{equation} \label{def.trace.H.s.r.R.n}
T : H^{s,r}(\R^n) \to H^{s+r-\frac12}(\R^{n-1}), 
\quad 
u \mapsto T u = u(\cdot, 0)
\end{equation}
is a well defined, bounded, linear operator, with 
\begin{equation} \label{trace.est.R.n}
\| u(\cdot , 0) \|_{ H^{s+r-\frac12}(\R^{n-1}) } 
\leq C_s \| u \|_{ H^{s,r}(\R^n) }
\end{equation}
for all $u \in H^{s,r}(\R^n)$ 
(first defined on $\mS(\R^n)$, then extended to $H^{s,r}(\R^n)$ by density),
where the constant $C_s$ depends on $s$ and it is independent of $r$.  

On $\R^n_+$, for $s>\frac12$, $r \geq 0$, 
$u \in H^{s,r}(\R^n_+)$, define $Tu$ as the trace $Tv$ of any $v \in H^{s,r}(\R^n)$ 
such that $v|_{\R^n_+} = u$.  
This definition is well posed (i.e., it is independent of the considered extension $v$),   
it coincides with $u(\cdot, 0)$ when $u \in \mS(\R^n_+)$, and 
\begin{equation} \label{trace.est}
\| u(\cdot , 0) \|_{ H^{s+r-\frac12}(\R^{n-1}) } 
\leq C_s \| u \|_{ H^{s,r}(\R^n_+) }
\end{equation}
for all $u \in H^{s,r}(\R^n_+)$, where $C_s$ is the same constant as in \eqref{trace.est.R.n}.

$(vi)$ \emph{Extension from $\R^n_+$ to $\R^n$}. 
Given $u \in \mS(\R^n_+)$, let $f$ be its extension by reflection 
\begin{equation} \label{def.reflex}
f(x) = f(x', x_n) = u(x',|x_n|), \quad x = (x',x_n) \in \R^n.
\end{equation}
For all real $r \geq 0$, for $s = 0,1$, 
the linear map $u \to f$ extends to a bounded linear operator 
\begin{equation} \label{reflex.est}
R : H^{s,r}(\R^n_+) \to H^{s,r}(\R^n), \quad 
\| Ru \|_{H^{s,r}(\R^n)} \leq C \| u \|_{H^{s,r}(\R^n_+)} 
\quad \forall u \in H^{s,r}(\R^n_+),
\end{equation}
where $C = \sqrt{2}$, 
with $Ru = u$ in $\R^n_+$. 

$(vii)$ For every real $r \geq 0$, one has 
\begin{align} 
\| u \|_{H^{0,r}(\R^n_+)}^2 
= \int_0^\infty \| u(\cdot, x_n) \|_{H^r(\R^{n-1})}^2 \, dx_n
\quad & \forall u \in H^{0,r}(\R^n_+),
\label{equiv.norm.H.0.r}
\\
\frac12 \| u \|_{H^{1,r}(\R^n_+)}^2 
\leq \| u \|_{H^{0,r+1}(\R^n_+)}^2 + a \| \pa_{x_n} u \|_{H^{0,r}(\R^n_+)}^2 
\leq \| u \|_{H^{1,r}(\R^n_+)}^2 
\quad & \forall u \in H^{1,r}(\R^n_+),
\label{equiv.norm.H.1.r}
\end{align}
where $a = (2\pi)^{-2}$. 
In particular, for $r=0$, 
\begin{align} 
\| u \|_{H^{0,0}(\R^n_+)} 
= \| u \|_{L^2(\R^n_+)}, 
\quad
c_0 \| u \|_{H^{1,0}(\R^n_+)}^2 
\leq \sum_{|\a| \leq 1} \| \pa_x^\a u \|_{L^2(\R^n_+)}^2 
\leq c_1 \| u \|_{H^{1,0}(\R^n_+)}^2, 
\label{equiv.norm.H.r.zero}
\end{align}
where $c_0 = \frac12$, $c_1 = (2\pi)^2$. 

$(vii)$ \emph{Product estimate}.
Let $r_0, r \in \R$, with $r_0 + 1 > n/2$ and $r \geq 0$. 
One has 
\begin{equation} \label{prod.est}
\| uv \|_{H^{1,r}(\R^n_+)} 
\leq C_{r_0} \| u \|_{H^{1,r_0}(\R^n_+)} \| v \|_{H^{1,r}(\R^n_+)} 
+ C_r \| u \|_{H^{1,r}(\R^n_+)} \| v \|_{H^{1,r_0}(\R^n_+)}
\end{equation}
for all $u,v \in H^{1,r}(\R^n_+) \cap H^{1, r_0}(\R^n_+)$, 
where $C_{r_0}$ depends on $r_0$ and it is independent of $r$, 
while $C_r$ depends on $r, r_0$, 
and it is an increasing function of $r$. 
For $0 \leq r \leq r_0$, one also has
\begin{equation} \label{prod.est.below.r.0}
\| uv \|_{H^{1,r}(\R^n_+)} 
\leq C_{r_0} \| u \|_{H^{1,r_0}(\R^n_+)} \| v \|_{H^{1,r}(\R^n_+)} 
\end{equation}
for all $u \in H^{1,r_0}(\R^n_+)$, 
$v \in H^{1,r}(\R^n_+)$. 
Estimates \eqref{prod.est} and \eqref{prod.est.below.r.0} can be written together as 
\begin{equation} \label{prod.est.unified}
\| uv \|_{H^{1,r}(\R^n_+)} 
\leq C_{r_0} \| u \|_{H^{1,r_0}(\R^n_+)} \| v \|_{H^{1,r}(\R^n_+)} 
+ (\chi_{r > r_0}) C_r \| u \|_{H^{1,r}(\R^n_+)} \| v \|_{H^{1,r_0}(\R^n_+)},
\end{equation}
where $(\chi_{r > r_0}) = 1$ if $r > r_0$, 
$(\chi_{r > r_0}) = 0$ if $r \leq r_0$,
$C_r$ is increasing in $r$, 
and $C_{r_0}$ does not depend on $r$. 
The same estimates also hold with $\R^n_+$ replaced by $\R^n$.

$(viii)$ \emph{Multiplication by bounded functions with bounded derivatives}. 
One has 
\begin{equation} \label{est.prod.infty.0}
\| fu \|_{H^{0,r}(\R^n_+)} 
\leq 2 \| f \|_{L^\infty(\R^n_+)} \| u \|_{H^{0,r}(\R^n_+)} 
+ C_r \| f \|_{W^{0,b,\infty}(\R^n_+)} \| u \|_{H^{0,0}(\R^n_+)}
\end{equation}
for all $f \in W^{0,b,\infty}(\R^n)$, $u \in H^{0,r}(\R^n_+)$, 
$r \geq 0$ real, where $b$ is the smallest integer such that $b \geq r$, 
\begin{equation}  \label{def.norm.W.0.b.infty}
\| f \|_{W^{0,b,\infty}(\R^n_+)} := 
\esssup_{x_n \in (0,\infty)} \| f(\cdot, x_n) \|_{W^{b,\infty}(\R^{n-1})}, 
\end{equation}
$W^{0,b,\infty}(\R^n_+)$ is the space of functions whose norm \eqref{def.norm.W.0.b.infty} 
is finite, and $C_r$ is increasing in $r$. 
Also,
\begin{equation} \label{est.prod.infty.1}
\| fu \|_{H^{1,r}(\R^n_+)} 
\leq C_0 \| f \|_{W^{1,\infty}(\R^n_+)} \| u \|_{H^{1,r}(\R^n_+)} 
+ C_r \| f \|_{W^{1,b,\infty}(\R^n_+)} \| u \|_{H^{1,0}(\R^n_+)}
\end{equation}
for all $u \in H^{1,r}(\R^n_+)$, all $f \in W^{0,b,\infty}(\R^n_+)$ 
with $\pa_{x_n} f \in W^{0,b,\infty}(\R^n_+)$,  
where 
\begin{equation}  \label{def.norm.W.1.b.infty}
\| f \|_{W^{1,b,\infty}(\R^n_+)} := \| f \|_{W^{0,b+1,\infty}(\R^n_+)}
+ \| \pa_{x_n} f \|_{W^{0,b,\infty}(\R^n_+)}
\end{equation}
and $W^{1,b,\infty}(\R^n_+)$ is the space of functions whose norm \eqref{def.norm.W.1.b.infty}
is finite. 

$(ix)$ \emph{Composition with functions of $x'$}. 
Let $s \in \{ 0, 1 \}$, $r \geq 0$ real, $r_0 > (n-1)/2$.
Let $f : \R^{n-1} \to \R^{n-1}$, 
$f(x') = a + A x' + g(x')$
be a diffeomorphisms of $\R^{n-1}$, 
with $a \in \R^{n-1}$, 
$A \in \mathrm{Mat}_{(n-1) \times (n-1)}(\R)$,
$g \in H^{r+1+r_0+s}(\R^{n-1})$.
One has 
\begin{equation} 
\| x \mapsto u (f(x'),x_n) \|_{H^{s,r}(\R^n_+)} 
\leq C_{r,f} ( \| u \|_{H^{s,r}(\R^n_+)} 
+ \| g \|_{H^{1 + r_0 + r + s}(\R^{n-1})} \| u \|_{H^{s,0}(\R^n_+)} ) 
\label{composition.est.s.r} 
\end{equation}
for all $u \in H^{s,r}(\R^n_+)$,
where $C_{r,f}$ depends on $r, |A|, \| g \|_{H^{1+r_0}(\R^{n-1})}$,
and it is increasing in $r$.
\end{lemma}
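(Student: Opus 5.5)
The lemma is a collection of largely independent facts, so I would prove them one at a time, always working first on $\R^n$ with the Fourier side and then passing to $\R^n_+$ through the infimum definition \eqref{def.H.s.r.Om}. The guiding identity is $\|u\|_{H^{s,r}(\R^n)} = \|\Lm'_r u\|_{H^s(\R^n)}$: a tangential Fourier multiplier commutes with $\pa_{x_n}$, with restriction to $x_n>0$, and with even reflection in $x_n$.

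\emph{Items (i)--(iv).} Density on $\R^n$ holds because $\mS$ is dense in the weighted $L^2$ space of $\hat u$. On $\R^n_+$ it then follows by restricting approximants of an almost-minimal extension. For the embedding I would bound $\|\hat v\|_{L^1}$ by Cauchy--Schwarz. The needed integrability of $(1+|\xi'|^2)^{-r}(1+|\xi|^2)^{-s}$ comes from integrating first in $\xi_n$, which uses $s>1/2$ and gives a factor $(1+|\xi'|^2)^{1/2-s}$, and then in $\xi'$, which uses $s+r>n/2$. Uniform continuity and decay at infinity follow from Riemann--Lebesgue. The derivative bounds \eqref{pa.k.est}, \eqref{pa.n.est} are pointwise symbol inequalities, and on $\R^n_+$ one applies them to extensions. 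For the trace I would write $\widehat{u(\cdot,0)}(\xi') = (2\pi)^{-1}\int \hat u\,d\xi_n$ (up to normalization) and use the same $\xi_n$-integral as in the embedding. Since that integral equals $c_s(1+|\xi'|^2)^{1/2-s}$, the constant depends only on $s$. Well-posedness on $\R^n_+$ means showing that $Tv=0$ whenever $v$ vanishes on $\R^n_+$; I would prove this by approximating $v$ by Schwartz functions and controlling the traces of the approximants through the bound just obtained.

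\emph{Items (vi), (vii).} Take $r$ real, $u\in\mS(\R^n_+)$, and $w=\Lm'_r$ applied to an extension. Integrating in $x_n$ and using Plancherel gives \eqref{equiv.norm.H.0.r}, once one observes that the infimum is attained by the extension by zero when $s=0$. The reflection bound follows because $\Lm'_r$ commutes with even reflection, so $\|Ru\|^2 = 2\int_0^\infty$ of the same tangential quantities. This gives the constant $\sqrt2$ for $s=0$, and for $s=1$ it works because $\pa_{x_n}$ of an even function is odd with matching norm. Then \eqref{equiv.norm.H.1.r} comes from comparing with $R u$, and \eqref{equiv.norm.H.r.zero} is the special case $r=0$.

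\emph{The product estimate \eqref{prod.est}.} This is the main step, and the point is that $C_{r_0}$ must not depend on $r$. I would reflect both factors via (vi), so that $Ru\,Rv = R(uv)$, and work on $\R^n$ with \eqref{equiv.norm.H.1.r}. The quantity to bound is then $\int \|uv(\cdot,x_n)\|_{H^{r+1}}^2 + \|\pa_n(uv)(\cdot,x_n)\|_{H^r}^2\,dx_n$.

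For each fixed $x_n$ I would apply the asymmetric tangential product estimate on $\R^{n-1}$. On the Fourier side this rests on the splitting $\la\xi'\ra^{r}\le C_{r}(\la\xi'-\eta'\ra^{r}+\la\eta'\ra^{r})$ with a constant of the form $2^{r}$, while placing the high weight on one factor gives an $r$-independent constant. The sup-in-$x_n$ norms of the low factor are controlled by the one-dimensional embedding $H^1_{x_n}\subset L^\infty_{x_n}$, with values in $H^{r_0+1/2}(\R^{n-1})$. This requires $r_0+1/2>(n-1)/2$, which is exactly $r_0+1>n/2$.

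The term $\pa_n u\, v$ needs care, because $\pa_n u$ is only $L^2$ in $x_n$. I would put it in $L^2_{x_n}H^{r}$ or $L^2_{x_n}H^{r_0}$ and put $v$ in $L^\infty_{x_n}$. The mixed cases, where the high norm is $r$ but the factor must be in $L^\infty$, I would handle by interpolation of the form $\|f\|_{L^\infty H^{r+1/2}}^2\lesssim\|f\|_{L^2H^{r+1}}\|\pa_nf\|_{L^2H^r}$. The case $r\le r_0$ in \eqref{prod.est.below.r.0} follows the same way without the high-high term. I expect the main obstacle to be keeping the constants in front of the low norms independent of $r$ through this interpolation.

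\emph{Items (viii), (ix).} For \eqref{est.prod.infty.0} I would work slice by slice with a tangential estimate $\|fu\|_{H^r}\le 2\|f\|_{L^\infty}\|u\|_{H^r}+C_r\|f\|_{W^{b,\infty}}\|u\|_{L^2}$. For integer $r$ this comes from Leibniz and interpolation, and for fractional $r$ from complex interpolation between $H^{\lfloor r\rfloor}$ and $H^{b}$, which is where the factor $2$ appears. Then I would integrate in $x_n$; \eqref{est.prod.infty.1} adds the $\pa_n$ term by Leibniz. For composition I would also argue slice-wise, using the tangential composition estimate $\|u\circ f\|_{H^r}\le C(\|u\|_{H^r}+\|g\|_{H^{r+1+r_0}}\|u\|_{L^2})$, proved by induction on derivatives via the chain rule and tame product estimates, plus interpolation for non-integer $r$. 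The $s=1$ case follows from $\pa_n(u\circ f)=(\pa_nu)\circ f$ and the tangential derivatives.
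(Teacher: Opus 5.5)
Your slice-wise route can be made to work, but it has a genuine gap exactly where you locate the heart of the matter: the $r$-independence of $C_{r_0}$ in \eqref{prod.est}. The mechanism you offer does not give it. A splitting $\la\xi'\ra^{r}\le C_r(\la\xi'-\eta'\ra^{r}+\la\eta'\ra^{r})$ with $C_r\sim 2^{r}$ puts an $r$-dependent constant on \emph{both} terms of the tame estimate. ``Placing the high weight on one factor'' does not fix this: on the region $|\xi'-\eta'|\le|\eta'|$ you only get $\la\xi'\ra\le 2\la\eta'\ra$, hence again a factor $2^{r}$. What you need is the genuinely asymmetric inequality \eqref{key.basic.est.r}, $\la\xi'\ra^{r}\le 4\la\xi'-\eta'\ra^{r}+C_r\la\eta'\ra^{r}$, with the roles of the two factors at your choice. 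It follows by distinguishing two regions. Where $\la\eta'\ra\le\la\xi'-\eta'\ra/r$, one has $(1+1/r)^{r}\le e$; on the complement, $\la\xi'\ra\le(r+1)\la\eta'\ra$. Conversely, the step you expect to be the obstacle is harmless. The bound $\sup_{x_n}\|f(\cdot,x_n)\|_{H^{r+\frac12}}^2\le\|f\|_{L^2H^{r+1}}\|\pa_{x_n}f\|_{L^2H^{r}}$ holds with constant $1$ (differentiate $\|\Lm'_{r+\frac12}f(\cdot,x_n)\|_{L^2}^2$ in $x_n$). Alternatively, apply \eqref{trace.est.R.n} with $s=1$ at every height; its constant does not depend on $r$.

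Even with that inequality, your route needs two further ingredients you do not state. Only $r_0>(n-2)/2$ is assumed, so $\pa_{x_n}u(\cdot,x_n)\in H^{r_0}(\R^{n-1})$ need not be bounded. Hence the standard asymmetric product estimate on $\R^{n-1}$ cannot be applied to $(\pa_{x_n}u)v$ when $v$ carries the high norm. You need a half-derivative trade, $\|fg\|_{H^{r}(\R^{n-1})}\le C_{r_0}\|f\|_{H^{r_0}}\|g\|_{H^{r+\frac12}}+C_r\|f\|_{H^{r}}\|g\|_{H^{r_0+\frac12}}$. This rests on $\sup_{\xi'}\int\la\xi'-\eta'\ra^{-2r_0}\la\eta'\ra^{-1}\,d\eta'<\infty$, i.e.\ on the H\"older bound \eqref{Holder.C.r.0.integral}. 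Similarly, \eqref{prod.est.below.r.0} does not follow by dropping a term: where $u$ carries the top frequency, you must move $r_0-r$ tangential derivatives from $v$ onto $u$. The paper handles both at once by working on $\R^n$ with the full weight $\la\xi\ra\la\xi'\ra^{r}$. It combines $\la\xi\ra\le\la\xi-\eta\ra+\la\eta\ra$ with \eqref{key.basic.est.r} and integrates out $\eta_n$ via \eqref{int.on.R.like.arctan}, which automatically produces the factor $\la\eta'\ra^{-1}$ encoding this trade. Finally, in (iv), approximating $v$ by Schwartz functions does not show $Tv=0$ when $v|_{\R^n_+}=0$, because the approximants do not vanish on $\R^n_+$. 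You need an extra ingredient. One option, as in the paper, is continuity of $x_n\mapsto T_{x_n}v$ together with $T_{x_n}v=v(\cdot,x_n)$ for a.e.\ $x_n$. Another is to use approximants vanishing near $\{x_n=0\}$, obtained by translating in $x_n$ and then mollifying.
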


\begin{proof} 
See the Appendix. 
\end{proof}

\begin{lemma} \label{lemma:est.Lap.R.n.+}
Let $r \geq 0$ be any real number, let $s=0$ or $s=1$, 
and let $u \in H^{s+1,r}(\R^n_+)$. Then 
\begin{align} 
\| u \|_{H^{s+1,r}(\R^n_+)} 
& \leq C \| \Delta u \|_{H^{s-1,r}(\R^n_+)} 
+ C^{r+1} \| \Delta u \|_{H^{s-1,0}(\R^n_+)} 
+ C \| u(\cdot, 0) \|_{H^{s + \frac12 + r}(\R^{n-1})}
\notag \\ 
& \quad \ 
+ C^{r+1} \| u(\cdot, 0) \|_{H^{s + \frac12}(\R^{n-1})}
+ C^{r+1} \| u \|_{H^{s,0}(\R^n_+)} 
\label{meran.04}
\end{align}
where the constant $C$ is independent of $s, r, u$. 
\end{lemma}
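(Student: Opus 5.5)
Since $H^{s,r}(\R^n_+) = \Lm'_r H^{s}(\R^n_+)$ in the sense that the tangential multiplier $\Lm'_r$ commutes with $\Delta$, with restriction to $\R^n_+$, and with the trace, the natural plan is to apply a fixed, $r$-independent elliptic estimate in $H^{s+1}(\R^n_+)$ to $\Lm'_r u$ and then absorb the lower-order terms. The only delicate point is how the $r$-dependence of the constants enters; the statement asks for a factor $C^{r+1}$ in front of the low norms.

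\emph{Step 1 (base case $r=0$).} We first prove
\[
\| w \|_{H^{s+1}(\R^n_+)} \leq C\big( \| \Delta w \|_{H^{s-1}(\R^n_+)} + \| w(\cdot,0) \|_{H^{s+\frac12}(\R^{n-1})} + \| w \|_{H^{s}(\R^n_+)} \big)
\]
for $s=0,1$ and $w \in H^{s+1}(\R^n_+)$, with a universal $C$. Both cases are classical.
\begin{itemize}
\item For $s=0$, we subtract a bounded lifting of the trace. A convenient choice is the tangential Poisson extension $e^{-x_n\langle D'\rangle}g$, which is explicit in Fourier variables. The difference lies in $H^1_0$, and the energy identity then gives the bound.
\item For $s=1$, we use tangential difference quotients, or Fourier transform in $x'$, to control $\pa_{x'}\nabla w$. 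We then recover $\pa_{x_n}^2 w$ from the equation $\pa_{x_n}^2 w = \Delta w - \Delta' w$.
\end{itemize}
Lemma \ref{lemma:est.tools.R.n.+}$(iii)$, $(vi)$, $(vii)$ would be used to pass between the restriction norms and the concrete derivative norms.

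\emph{Step 2 (tangential regularity).} We apply Step 1 to $w = \Lm'_r u$, first for $u \in \mS(\R^n_+)$ and then for general $u$ by the density result of Lemma \ref{lemma:est.tools.R.n.+}$(i)$. Here we need that $\|\Lm'_r u\|_{H^{s}(\R^n_+)} = \|u\|_{H^{s,r}(\R^n_+)}$. This holds because $\Lm'_r$ is a tangential Fourier multiplier: it commutes with restriction to $\R^n_+$, as it acts only in $x'$, so extensions correspond bijectively. It also commutes with $\Delta$ and with the trace. This yields
\[
\|u\|_{H^{s+1,r}} \leq C\|\Delta u\|_{H^{s-1,r}} + C\|u(\cdot,0)\|_{H^{s+\frac12+r}} + C\|u\|_{H^{s,r}}.
\]

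\emph{Step 3 (absorbing $\|u\|_{H^{s,r}}$ — the main obstacle).} The remaining task is to bound $C\|u\|_{H^{s,r}}$ by a small multiple of $\|u\|_{H^{s+1,r}}$ plus $C^{r+1}$ times lower norms. We would use the pointwise weight inequality
\[
(1+|\xi'|^2)^{r} \leq \e (1+|\xi'|^2)^{r+1} + \e^{-r}.
\]
Indeed, if $1+|\xi'|^2 \geq \e^{-1}$ the first term dominates, and otherwise the left side is $\leq \e^{-r}$. Multiplying by $(1+|\xi|^2)^s|\hat v|^2$ and integrating, then taking infima over extensions $v$ (the inequality holds for each $v$), gives
\[
\|u\|_{H^{s,r}} \leq \e^{1/2}\|u\|_{H^{s,r+1}} + \e^{-r/2}\|u\|_{H^{s,0}},
\]
with $\|u\|_{H^{s,r+1}} \leq \|u\|_{H^{s+1,r}}$ by Lemma \ref{lemma:est.tools.R.n.+}$(iii)$. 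Choosing $\e^{1/2} = 1/(2C)$ with $C$ universal, the first term is absorbed into the left side, and the second term becomes $(2C)^{r}\|u\|_{H^{s,0}} \leq \tilde C^{r+1}\|u\|_{H^{s,0}}$.

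\emph{Remaining terms.} The other low-norm terms in \eqref{meran.04}, namely $C^{r+1}\|\Delta u\|_{H^{s-1,0}}$ and $C^{r+1}\|u(\cdot,0)\|_{H^{s+\frac12}}$, are harmless. They can be kept from Step 1 or simply added, since they only enlarge the right-hand side. The absorption in Step 3 requires $\|u\|_{H^{s+1,r}} < \infty$, which is part of the assumption $u \in H^{s+1,r}(\R^n_+)$.

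The delicate points are therefore the $r$-uniformity of Step 1, which follows from the universality of the half-space estimates, and the exponential-in-$r$ interpolation of Step 3. One should also check that the $H^{s-1,r}$ norm for $s=0$, a negative index, is handled consistently with the restriction definition \eqref{def.H.s.r.Om}. If this is troublesome, in the case $s=0$ one can instead estimate $\Delta u$ through a weak formulation tested against $\Lm'_{2r}$-type multipliers.
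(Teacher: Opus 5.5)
Your route is correct up to one repairable step (below), and it differs genuinely from the paper's. The paper does not reduce to $r=0$. It re-runs the proof of Theorem 4.2.2 in \cite{Triebel.1983}, where the spaces $H^{s,r}(\R^n_+)$ already appear, keeping track of constants. The factor $C^{r}$ comes from the low-frequency piece $u_0$ of Triebel's splitting $u=u_0+u_1$.

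You instead use that $\Lm'_r$ acts only in $x'$. It therefore commutes with $\Delta$, with the trace, and with restriction to $\R^n_+$, in the sense that it maps extensions of $u$ bijectively onto extensions of $\Lm'_r u$. This gives $\|\Lm'_r u\|_{H^{\sigma}(\R^n_+)}=\|u\|_{H^{\sigma,r}(\R^n_+)}$ for $\sigma=s-1,s,s+1$. Everything then reduces to the classical $H^1$/$H^2$ half-space estimates with a universal constant. The exponential factor comes transparently from splitting the weight at $1+|\xi'|^2\sim\e^{-1}$.

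Your argument is elementary and self-contained, and it even shows that the terms $C^{r+1}\|\Delta u\|_{H^{s-1,0}}$ and $C^{r+1}\|u(\cdot,0)\|_{H^{s+\frac12}}$ are superfluous. The paper's route instead rests on a textbook theorem for general elliptic boundary problems, at the price of re-checking its constants.

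Your worry about $s=0$ is unnecessary. For $z\in H^1_0(\R^n_+)$, pair any extension of $\Delta z$ with the zero extension of $z$. The restriction definition then gives directly $\int|\nabla z|^2\le C\|\Delta z\|_{H^{-1}(\R^n_+)}\|z\|_{H^1(\R^n_+)}$, which is all the energy argument needs.

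The step that needs repair is Step 3. You cannot take the infimum over extensions term by term. The quantity $\inf_v\big(\e\|v\|^2_{H^{s,r+1}(\R^n)}+\e^{-r}\|v\|^2_{H^{s,0}(\R^n)}\big)$ may exceed $\e\|u\|^2_{H^{s,r+1}(\R^n_+)}+\e^{-r}\|u\|^2_{H^{s,0}(\R^n_+)}$, because the two restriction norms can be nearly attained by different extensions. You need a single extension that is good for both norms, uniformly in $r$.

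The even reflection of \eqref{reflex.est} does this: $\|Ru\|_{H^{s,\rho}(\R^n)}\le\sqrt2\,\|u\|_{H^{s,\rho}(\R^n_+)}$ for all $\rho\ge0$ and $s=0,1$. For $s=0$ the zero extension is even optimal for every $\rho$, cf. \eqref{equiv.norm.H.0.r}. Applying your pointwise inequality to $v=Ru$ gives
\[
\|u\|^2_{H^{s,r}(\R^n_+)}\le\|Ru\|^2_{H^{s,r}(\R^n)}\le 2\e\,\|u\|^2_{H^{s,r+1}(\R^n_+)}+2\e^{-r}\|u\|^2_{H^{s,0}(\R^n_+)} .
\]
The absorption then goes through with only numerical constants changed; for instance, choose $\e^{1/2}=1/(2\sqrt2\,C)$.
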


\begin{proof} 
See the Appendix. 
\end{proof}

\begin{remark}
The constant $C$ in \eqref{meran.04} is independent of $s$ 
because the $H^{s+1,r}(\R^n_+)$ norm of $u$ 
is estimated only in the cases $s = 0$ and $s=1$; 
in general, for $s$ arbitrarily varying in $\N_0$, 
the constant $C$ would depend on $s$ in a nontrivial way, 
related to the operator norm of the extension operators involved in the proof. 
Here, however, we are only interested in high (and possibly non integer) Sobolev regularity 
with respect to the variable $x'$, corresponding to tangential directions, 
and in one or two derivatives with respect to the variable $x_n$, 
corresponding to the normal direction. 
\end{remark}

\medskip

\underline{Properties of Sobolev spaces on $\S^{n-1}$}. 
The next lemma deals with Sobolev spaces $H^s(\S^{n-1})$ on the sphere.

\begin{lemma} \label{lemma:est.tools.S.n-1} 
The Sobolev spaces $H^s(\S^{n-1})$ defined in \eqref{def.H.s.manifold} 
with $\Gamma = \S^{n-1}$ and the norms defined in \eqref{def.norm.u} 
satisfy the following properties.

$(i)$ \emph{Interpolation inequality}. Let $s_0, s_1 \in \R$, 
$s = s_0 (1-\th) + s_1 \th$, $0 < \th < 1$. Then 
\begin{equation} \label{interpol.Hr.S2}
\| u \|_{H^s(\S^{n-1})} 
\leq \| u \|_{H^{s_0}(\S^{n-1})}^{1-\th} 
\| u \|_{H^{s_1}(\S^{n-1})}^\th
\end{equation}
for all $u \in H^{s_1}(\S^{n-1})$.  

$(ii)$ \emph{Product estimate}. Let $s, s_0 \in \R$, 
with $s \geq 0$ and $s_0 > d/2$. Then 
\begin{equation} \label{prod.est.unified.Hr.S2}
\| uv \|_{H^{s}(\S^{n-1})} 
\leq C_{s_0} \| u \|_{H^{s_0}(\S^{n-1})} \| v \|_{H^{s}(\S^{n-1})}
+ (\chi_{s > s_0}) C_s \| u \|_{H^{s}(\S^{n-1})} \| v \|_{H^{s_0}(\S^{n-1})},
\end{equation}
where $(\chi_{s > s_0}) = 1$ if $s > s_0$, 
$(\chi_{s > s_0}) = 0$ if $s \leq s_0$,
the constant $C_{s_0}$ depends on $s_0$ and it is independent of $s$, 
while $C_s$ depends on $s, s_0$ and it is increasing in $s$.  

$(iii)$ \emph{Power estimate}. Let $s, s_0$ be as in $(ii)$. Then 
\begin{align}  
\| u^m \|_{H^{s}(\S^{n-1})} 
& \leq ( C_{s_0} \| u \|_{H^{s_0}(\S^{n-1})} )^{m-1} \| u \|_{H^{s}(\S^{n-1})}  
\notag \\ & \quad \ 
+ (\chi_{s > s_0}) (m-1) C_s ( C_{s_0} \| u \|_{H^{s_0}(\S^{n-1})} )^{m-2} 
\| u \|_{H^{s_0}(\S^{n-1})} \| u \|_{H^{s}(\S^{n-1})}
\label{power.est.Hr.S2}
\end{align}
for all integers $m \geq 2$, 
where $C_{s_0}, C_s$ are the constants in $(ii)$.   

$(iv)$ \emph{Tangential gradient}. 
Let $s \in \R$, with $s \geq 0$. Then 
\begin{equation}  \label{est.grad.Hr.S2}
\| \grad_{\S^{n-1}} u \|_{H^s(\S^{n-1})} 
\leq C_0 \| u \|_{H^{s+1}(\S^{n-1})} + C_s \| u \|_{H^1(\S^{n-1})},
\end{equation} 
with $C_0$ independent of $s$ 
and $C_s$ increasing in $s$. 

$(v)$ \emph{Embedding}. 
Let $s_0 \in \R$, with $s_0 > (n-1)/2$. 
If $u \in H^{s_0}(\S^{n-1})$, then $u$ is continuous, with
\begin{equation} \label{embedding.Sd}
\| u \|_{L^\infty(\S^{n-1})} \leq C_{s_0} \| u \|_{H^{s_0}(\S^{n-1})}.
\end{equation} 
\end{lemma}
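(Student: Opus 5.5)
The plan is to reduce every item to the corresponding statement on $\R^{n-1}$, using the localized definition \eqref{def.norm.u}. For each $j$ let $u_j := (u\psi_j)\circ \mathtt g_j(\cdot,0)$, a compactly supported function on $\R^{n-1}$ (support in $F_j$). Then $\|u\|_{H^s(\S^{n-1})} = \sum_j \|u_j\|_{H^s(\R^{n-1})}$. The main technical device is a family of cut-off functions $\tilde\psi_j \in C^\infty_c(K_j)$ with $\tilde\psi_j = 1$ on $\mathrm{supp}(\psi_j)$. This lets us write $(uv\psi_j)\circ\mathtt g_j(\cdot,0) = u_j \cdot \big((\tilde\psi_j v)\circ \mathtt g_j(\cdot,0)\big)$.

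\emph{Key lemma.} I would first prove that there is $C$ such that, for every $j$ and every $t \in \R$,
\[
\|(\tilde\psi_j v)\circ\mathtt g_j(\cdot,0)\|_{H^t(\R^{n-1})} \leq C_t \|v\|_{H^t(\S^{n-1})}.
\]
The argument is as follows.
\begin{itemize}
\item Expand $\tilde\psi_j v = \sum_\ell \tilde\psi_j\psi_\ell v$.
\item Use the transition maps \eqref{formula.transition.map}: $(\tilde\psi_j\psi_\ell v)\circ\mathtt g_j(\cdot,0) = \big(\tilde\psi_j\circ\mathtt g_j(\cdot,0)\big)\cdot v_\ell\circ \mathtt T_{\ell j}$ on the relevant set.
\item Replace $\mathtt T_{\ell j}$ by the global diffeomorphism $\tilde{\mathtt T}_{\ell j}$ of Lemma \ref{lemma:rotat.tilde}.
\item Use the standard facts that composition with a smooth diffeomorphism that is affine outside a compact set, and multiplication by a $C_c^\infty$ function, are bounded on $H^t(\R^{n-1})$.
\end{itemize}
The $\R^{n-1}$ version of Lemma \ref{lemma:est.tools.R.n.+}$(ix)$ with $g$ smooth, together with duality for negative $t$, covers the composition step.

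\emph{Items (ii) and (iii).} For (ii), apply the classical asymmetric tame product estimate on $\R^{n-1}$, proved via the Fourier transform with $s_0 > (n-1)/2$ and constants $C_{s_0}$ independent of $s$. Apply it to $u_j$ times $(\tilde\psi_j v)\circ\mathtt g_j(\cdot,0)$, then use the key lemma at $t = s_0$ and $t = s$, and sum over $j$. For $s \le s_0$ the second term is absent, since then $\|fg\|_s \le C_{s_0}\|f\|_s\|g\|_{s_0}$. The estimate as stated puts the $s_0$-norm on $u$, so I apply it with roles swapped, $f = (\tilde\psi_j u)\circ\mathtt g_j$ and $g = v_j$.

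There is one subtlety: the key-lemma constant at $t=s_0$ must not depend on $s$. This holds because it is evaluated only at $s_0$. The $C_t$ used at $t=s$ enters only through the second term, which is harmless if $\|v_\ell\circ \tilde{\mathtt T}\|_s \le C\|v_\ell\|_s + C_s\|v_\ell\|_{0}$. I would record this refined form. If it fails, the $C_s$ term can absorb it, but then the first term $C_{s_0}\|u\|_{s_0}\|v\|_s$ needs the composition constant at level $s$ to be $s$-independent modulo lower norms. \textbf{I expect this step to be the main obstacle.} I would handle it through a Leibniz/chain-rule expansion in which only the top-order term carries a non-$s$-dependent constant, together with interpolation \eqref{interpol.Hr.S2}.

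\emph{Item (iii).} Induct on $m$ using (ii) with $u$ and $v = u^{m-1}$, collecting terms.

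\emph{Items (i), (iv), (v).}
\begin{itemize}
\item For (i), by Hölder in Fourier variables, $\|u_j\|_s \le \|u_j\|_{s_0}^{1-\th}\|u_j\|_{s_1}^\th$. Summing over $j$ and applying discrete Hölder, $\sum a_j^{1-\th}b_j^\th \le (\sum a_j)^{1-\th}(\sum b_j)^\th$, gives \eqref{interpol.Hr.S2} with constant one.
\item For (v), apply the Sobolev embedding on $\R^{n-1}$ to each $u_j$. Each point of $\S^{n-1}$ gives $u = \sum_j \psi_j u$, so the $L^\infty$ bound follows.
\item For (iv), write $\grad_{\S^{n-1}}$ in the chart as a combination of $\pa_{y_k}$ with smooth coefficients. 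The $\pa_{y_k}(u_j)$ part costs $\|u_j\|_{s+1}$ with constant one. The part where derivatives fall on $\psi_j$, together with the coefficients, gives lower-order terms, bounded via the smooth-multiplier tame estimate (high norm with $L^\infty$ constant plus $C_s$ times a low norm) and interpolation.
\end{itemize}
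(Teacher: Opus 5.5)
Your proposal is correct and follows essentially the paper's proof. You localize, reinsert the partition of unity through the transition maps $\tilde{\mathtt T}_{\ell j}$ and the composition estimate, apply the asymmetric product estimate on $\R^{n-1}$, and absorb lower-order terms by interpolation, with (i) and (iii)--(v) handled as in the paper. The obstacle you flag in (ii) does not actually arise: with your role swap, the level-$s$ composition only hits the factor that already carries the $C_s$ constant, which is exactly how the paper arranges it by composing $\tilde u_{\ell j} = \tilde u_\ell \circ \tilde{\mathtt T}_{\ell j}$. So no $s$-independent composition bound at level $s$ is ever needed.
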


\begin{proof}
See the Appendix.
\end{proof}

\medskip

\underline{Laplace and Poisson problem in the unit ball $B_1$}. 
We recall some classical results, not stated in a general version, 
but just in the form they are needed below. 

\begin{lemma}[Harmonic extension operator, or Poisson integral map] 
\label{lemma:def.harmonic.ext.op} 
Given $f \in C^\infty(\S^{n-1})$, there exists a unique 
$u \in C^\infty(B_1) \cap C^0(\overline{B_1}) \cap H^1(B_1)$ 
such that 
\[
\Delta u = 0 \ \  \text{in } B_1, \quad \ 
u = f \ \  \text{on } \S^{n-1}.
\]
The linear map $f \to u$ has a unique continuous extension 
\[
\mathtt{PI} : H^{\frac12}(\S^{n-1}) \to H^1(B_1).
\]
\end{lemma}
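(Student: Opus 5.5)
The plan is the classical route through the Poisson kernel and a density argument. For $f \in C^\infty(\S^{n-1})$ we define
\[
u(x) := \int_{\S^{n-1}} P(x,y) f(y)\, d\sigma(y), \qquad P(x,y) = \frac{1-|x|^2}{|\S^{n-1}|\,|x-y|^n}.
\]
This function is harmonic and $C^\infty$ in $B_1$. Since $P \geq 0$, $\int P(x,\cdot)\,d\sigma = 1$, and $P(x,\cdot)$ concentrates at $x/|x|$ as $|x|\to 1$, the standard approximate-identity argument shows that $u$ extends continuously to $\overline{B_1}$ with $u = f$ on $\S^{n-1}$.

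Uniqueness in $C^\infty(B_1)\cap C^0(\overline{B_1})$ follows from the maximum principle applied to the difference of two solutions. To show $u \in H^1(B_1)$, we expand $f = \sum_k Y_k$ in spherical harmonics, with $Y_k$ of degree $k$. Then $u = \sum_k r^k Y_k(x/|x|)$, each term being a homogeneous harmonic polynomial. These terms are mutually $L^2(B_1)$-orthogonal, and so are their gradients. A direct computation, for instance via Green's identity
\[
\int_{B_1} |\grad (r^k Y_k)|^2 \,dx = \int_{\S^{n-1}} Y_k\, \pa_r(r^kY_k)\,d\sigma = k\|Y_k\|_{L^2(\S^{n-1})}^2,
\]
gives
\[
\|u\|_{H^1(B_1)}^2 \leq C\sum_k (1+k)\|Y_k\|_{L^2(\S^{n-1})}^2 \leq C\|f\|_{H^{1/2}(\S^{n-1})}^2 .
\]
The last inequality uses the equivalence, recalled in the introduction (Lions--Magenes, Remark 7.6), between the spectral $H^{1/2}$ norm and the localized norm \eqref{def.norm.u}. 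We also have to pass between the intrinsic $H^1$ norm $\sum_{|\a|\le1}\|\pa^\a u\|_{L^2(B_1)}$ and the restriction norm \eqref{def.H.s.Om}; this is standard for Lipschitz (here smooth) domains via a bounded extension operator $H^1(B_1)\to H^1(\R^n)$.

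The extension to $H^{1/2}(\S^{n-1})$ then follows from the density of $C^\infty(\S^{n-1})$ in $H^{1/2}(\S^{n-1})$, obtained by localization with the charts $\mathtt g_j$ and density of $\mS$ in $H^{1/2}(\R^{n-1})$. Together with the bound just proved, this lets the linear map extend uniquely by continuity to $\mathtt{PI}$. Uniqueness of the extension is automatic, since two continuous extensions agree on a dense set.

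The main obstacle is the $H^1$ bound in terms of the $H^{1/2}$ norm defined by charts. I would rely on the cited equivalence of norms rather than prove it. As a fallback, I would avoid spherical harmonics altogether: take any $H^1(\R^n)$ right inverse $E$ of the trace, built chart by chart from a Fourier extension on $\R^{n-1}\times\R_+$. Then $u$ is the Dirichlet-energy minimizer with trace $f$, so $\|\grad u\|_{L^2}\le\|\grad Ef\|_{L^2}\le C\|f\|_{1/2}$, and Poincaré's inequality applied to $u-Ef \in H^1_0(B_1)$ controls the $L^2$ part.
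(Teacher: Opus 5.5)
Your proof is correct, but it does not follow the route behind the paper's statement. The paper gives no argument of its own; it cites Taylor (\cite{Taylor.volume.1}, Proposition 1.7 in Section 5.1), a result for general smooth compact domains, where no explicit kernel or eigenfunction expansion is available. In that setting the $H^{\frac12}\to H^1$ bound comes, essentially, from your fallback: lift $f$ by a bounded right inverse of the trace, then correct by the $H^1_0$ solution of a Dirichlet problem. This is the same $H^1_0$ theory the paper invokes for $\mathtt S$ in Lemma~\ref{lemma:op.S}.

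Your main route instead exploits the ball. The Poisson kernel gives existence and continuity up to $\S^{n-1}$ directly. The spherical-harmonic computation gives the exact identity $\|\grad u\|_{L^2(B_1)}^2=\sum_k k\,\|Y_k\|_{L^2(\S^{n-1})}^2$, so the Dirichlet energy is literally a spectral $\dot H^{1/2}$ seminorm. The cost is the equivalence between the spectral norm and the chart-based norm \eqref{def.norm.u}, which you import from Lions--Magenes. The paper deliberately avoids this kind of spectral/local comparison in its tame estimates, but here the regularity is fixed at $\frac12$ and no constant depends on a high index, so it is harmless.

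The variational route works natively with \eqref{def.norm.u}, since the right inverse can be built in the charts $\mathtt g_j$, and it applies to any smooth domain. One small point if you use the fallback as a stand-alone argument: to call the Poisson integral the energy minimizer with trace $f$, you must already know it lies in $H^1(B_1)$ with trace $f$. For $f\in C^\infty(\S^{n-1})$ this follows from smoothness of $u$ up to the boundary, which is standard but should be stated.
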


\begin{proof} 
See, e.g., \cite{Taylor.volume.1}, Proposition 1.7 in Section 5.1. 
\end{proof}

\begin{lemma}[Solution map for the Poisson problem in divergence form] 
\label{lemma:op.S}
For any $g \in L^2(B_1)$ there exists a unique $u$ such that 
\[
u \in H^1_0(B_1), \quad \ \  
\Delta u = \div g \ \  \text{in } B_1
\]
in the sense of distributions. 
The map $g \to u$ is a linear continuous operator 
\[
\mathtt S : L^2(B_1) \to H^1_0(B_1).
\]
\end{lemma}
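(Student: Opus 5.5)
The plan is Lax--Milgram on $H^1_0(B_1)$. Consider the bilinear form $a(u,\ph) := \int_{B_1} \grad u \cdot \grad \ph \, dx$ on $H^1_0(B_1)$. Since $B_1$ is bounded, the Poincaré inequality $\| \ph \|_{L^2(B_1)} \leq C_P \| \grad \ph \|_{L^2(B_1)}$ holds for all $\ph \in H^1_0(B_1)$. Hence $a$ is bounded and coercive on $H^1_0(B_1)$, and in fact it is an inner product whose norm is equivalent to the $H^1$ norm.

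Given $g \in L^2(B_1)$, define the functional $\ell_g(\ph) := \int_{B_1} g \cdot \grad \ph \, dx$. By Cauchy--Schwarz, $|\ell_g(\ph)| \leq \| g \|_{L^2} \| \grad \ph \|_{L^2}$, so $\ell_g$ is continuous on $H^1_0(B_1)$. The Riesz representation theorem, applied to the inner product $a$, gives a unique $u \in H^1_0(B_1)$ with
\[
a(u,\ph) = -\ell_g(\ph) \quad \forall \ph \in H^1_0(B_1).
\]
Taking $\ph \in C^\infty_c(B_1)$, this identity reads $\la \Delta u, \ph \ra = -\int \grad u \cdot \grad \ph = \int g \cdot \grad \ph = \la \div g, \ph \ra$. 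So $\Delta u = \div g$ in the sense of distributions.

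For uniqueness, let $w \in H^1_0(B_1)$ satisfy $\Delta w = 0$ in distributions. Then $\int \grad w \cdot \grad \ph = 0$ for all $\ph \in C^\infty_c(B_1)$. By density of $C^\infty_c(B_1)$ in $H^1_0(B_1)$, this holds for all $\ph \in H^1_0(B_1)$. Choosing $\ph = w$ gives $\grad w = 0$, and Poincaré then gives $w = 0$.

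Linearity of $\mathtt S$ follows from uniqueness. For continuity, test with $\ph = u$: this gives $\| \grad u \|_{L^2}^2 \leq \| g \|_{L^2} \| \grad u \|_{L^2}$, so $\| \grad u \|_{L^2} \leq \| g \|_{L^2}$. Combined with Poincaré, $\| u \|_{H^1(B_1)} \leq C \| g \|_{L^2(B_1)}$.

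No step is a real obstacle. The only point needing care is the density argument that extends the distributional identity from test functions to all of $H^1_0(B_1)$, and that is standard. Equivalently, one can simply cite the classical result, e.g.\ \cite{Taylor.volume.1}, Section 5.1.
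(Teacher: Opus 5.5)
Your argument is correct and takes the same approach as the paper. The paper just cites \cite{Taylor.volume.1}, Section 5.1, where the result rests on the same Dirichlet-form/Riesz (Lax--Milgram) argument on $H^1_0(B_1)$ with the Poincar\'e inequality.

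There is one sign slip to fix. Since $\la \div g, \ph \ra = -\int_{B_1} g \cdot \grad \ph \, dx$, the weak formulation of $\Delta u = \div g$ is $a(u,\ph) = \ell_g(\ph)$, not $a(u,\ph) = -\ell_g(\ph)$. As written, your choice yields $\Delta u = -\div g$. Existence, uniqueness and the bound $\| \grad u \|_{L^2(B_1)} \leq \| g \|_{L^2(B_1)}$ are unaffected by the fix.
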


\begin{proof} 
See, e.g., \cite{Taylor.volume.1}, 
Propositions 1.1 and 1.2, and Theorem 1.3 in Section 5.1. 
\end{proof}

\section{Higher tangential Sobolev regularity}
\label{sec:higher}

We decompose the annulus $1 - (\delta/2) < |x| < 1 - (\delta/4)$ 
into a sequence of annuli $\rho_{k+1} < |x| < \rho_k$, 
where $\rho_0 = 1 - (\delta/4)$, 
the sequence $(\rho_k)$ is decreasing and it converges to $1-(\delta/2)$,
and for each annulus we introduce a smooth, radial cut-off function $\zeta_k$ 
in the following way. 
Let $\ph \in C^\infty(\R, \R)$ with 
$\ph = 0$ on $(-\infty, 0]$, 
$\ph = 1$ on $[1, \infty)$, 
and $0 \leq \ph \leq 1$. 
We define 
\begin{equation}  \label{def.rho.k.zeta.k}
\rho_k := 1 - \frac{\delta}{4} \sum_{\ell=0}^k \frac{1}{2^\ell}, 
\quad 
\zeta_k(x) := \ph \Big( \frac{|x| - \rho_{k+1}}{\rho_k - \rho_{k+1}} \Big)
\quad \forall x \in \R^n, \ \ k \in \N_0.
\end{equation}
Thus $\zeta_k(x) = 0$ for $|x| \leq \rho_{k+1}$,
$\zeta_k(x) = 1$ for $|x| \geq \rho_k$, and 
\begin{equation}  \label{est.der.zeta.k}
|\zeta_k(x)| \leq 1, \quad 
| \pa_x^\alpha \zeta_k(x)| 
\leq C_\alpha 2^{k |\alpha|} 
\quad \forall x \in \R^n, \ \  k \in \N_0, \ \ \alpha \in \N_0^n,
\end{equation}
where the constant $C_\alpha$ depends on the length $|\alpha|$ of the multi-index $\alpha$
(and also on $\ph$ and $\delta$, which we consider as fixed). 
We note that $\zeta_{k+1} = 1$ where $\zeta_k \neq 0$, i.e., 
\begin{equation} \label{zeta.k.is.1.where}
\zeta_{k+1} = 1 \text{ on } \mathrm{supp}(\zeta_k), 
\quad \ \forall k \in \N_0.
\end{equation}
We also define 
\begin{equation} \label{def.zeta.*}
\rho_k^* := 1 - \frac{\delta}{2} \sum_{\ell=0}^k \frac{1}{2^\ell}, 
\quad 
\zeta_k^*(x) := \ph \Big( \frac{|x| - \rho^*_{k+1}}{\rho^*_k - \rho^*_{k+1}} \Big)
\quad \forall x \in \R^n, \ \ k \in \N_0.
\end{equation}
Note that $\rho_k^*, \zeta_k^*$ are defined in the same way as $\rho_k, \zeta_k$, 
except that $\delta/4$ in \eqref{def.rho.k.zeta.k} 
is replaced by $\delta/2$ in \eqref{def.zeta.*}, 
so that the sequence of annuli $\rho^*_{k+1} < |x| < \rho^*_k$ 
covers the annulus $1 - \delta < |x| < 1 - (\delta/2)$. 
Thus, 
\begin{equation} \label{zeta.*.k.k'}
\zeta^*_{k'} = 1 \text{ on } \mathrm{supp}(\zeta_k), 
\quad \ \forall k, k' \in \N_0.
\end{equation}
For any function $u$, we denote 
\begin{align} \label{def.seminorm}
|u|_{X^{s,r}_k} := \sum_{j=1}^N \| (u \zeta_k \psi_j) \circ \mathtt g_j \|_{H^{s,r}(\R^n_+)},
\quad 
|u|_{X^{s,r}_{*,k}} := \sum_{j=1}^N \| (u \zeta^*_k \psi_j) \circ \mathtt g_j \|_{H^{s,r}(\R^n_+)},
\end{align}
where $\psi_j, \mathtt g_j$ are the localization and rectification maps defined in section \ref{sec:new.01}.

\emph{Notation:} We say that a property involving the seminorms $| \ |_{X^{s,r}_k}$ 
also hold for the $*$ seminorms if it also holds with each seminorm $| \ |_{X^{s,r}_k}$ 
replaced by the corresponding $*$ seminorm $| \ |_{X^{s,r}_{*,k}}$. 

Before studying high regularity, 
we prove the following basic inequality for $r = 0$, $s = 0,1$. 

\begin{lemma} \label{lemma:low.norm.u.n.X}
For all $k \in \N_0$, all $u \in H^1(B_1)$, one has 
\begin{align} \label{est.u.X.low}
|u|_{X^{0,0}_k} \leq C \| u \|_{L^2(B_1)}, \quad \ 
|u|_{X^{1,0}_k} \leq C_k \| u \|_{H^1(B_1)}, 
\end{align}
where $C$ does not depend on $k$ 
and $C_k$ is increasing in $k$. 
The same holds for the $*$ seminorms. 
\end{lemma}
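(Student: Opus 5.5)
The plan is to work chart by chart, since the seminorm in \eqref{def.seminorm} is a finite sum over $j=1,\dots,N$ with $N$ independent of $k$. Fix $j$ and set $w_j := (u \zeta_k \psi_j) \circ \mathtt g_j$. The support of $\zeta_k \psi_j$ lies in $K_j \cap \{ \rho_{k+1} \le |x| \}$. Only the values of $w_j$ on $\R^n_+$ enter, and $\mathtt g_j$ maps $A_0 \cap \R^n_+$ into $K_j \cap B_1$. So on $\R^n_+$ the function $w_j$ is supported in $\mathtt f_j(\mathrm{supp}\,\psi_j) \cap \R^n_+$, which is a compact subset of $A_0$. Away from this set we extend $w_j$ by zero. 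Since $\mathtt g_j$ is smooth on the closure of $A_0$ with Jacobian bounded above and below by constants depending only on $\delta$, the change of variables $y = \mathtt f_j(x)$ gives
\[
\| w_j \|_{L^2(\R^n_+)}^2 = \int_{B_1 \cap K_j} |u \zeta_k \psi_j|^2 \, |\det D\mathtt f_j| \, dx \le C \| u \|_{L^2(B_1)}^2 ,
\]
where we used $|\zeta_k| \le 1$ and $|\psi_j| \le 1$. By \eqref{equiv.norm.H.r.zero}, $\|w_j\|_{H^{0,0}(\R^n_+)} = \|w_j\|_{L^2(\R^n_+)}$. Summing over $j$ gives the first inequality of \eqref{est.u.X.low} with $C$ independent of $k$, because $\zeta_k$ enters only through its sup bound.

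For the $H^{1,0}$ bound we use the other half of \eqref{equiv.norm.H.r.zero}: $\|w_j\|_{H^{1,0}(\R^n_+)}^2 \le c_0^{-1} \sum_{|\a|\le1}\|\pa^\a w_j\|_{L^2(\R^n_+)}^2$. This requires that $w_j$, a priori a compactly supported function on $\R^n_+$, lies in $H^{1,0}(\R^n_+)$ in the restriction sense of \eqref{def.H.s.r.Om}. To check it, extend $w_j$ by reflection as in \eqref{def.reflex}, which is legitimate for $s=1$ by item $(vi)$ of Lemma \ref{lemma:est.tools.R.n.+}, after approximating $u$ by smooth functions on $\overline{B_1}$. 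Alternatively, first extend $u$ to an $H^1(\R^n)$ function. The chain rule then gives
\[
\nabla w_j = (D\mathtt g_j)^T \big[ (\nabla u)\zeta_k\psi_j + u\,\nabla(\zeta_k\psi_j) \big]\circ \mathtt g_j .
\]
Changing variables as before and using \eqref{est.der.zeta.k} with $|\a|=1$, which gives $|\nabla \zeta_k| \le C 2^k$, we get
\[
\|\nabla w_j\|_{L^2} \le C (1 + 2^k)\, \|u\|_{H^1(B_1)} .
\]
So $C_k = C\,2^{k}$ works, and it is increasing in $k$.

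The $*$ seminorms are handled identically, using $\zeta_k^*$ and the analogous derivative bounds, which hold by the same construction with $\delta/2$ in place of $\delta/4$.

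The only delicate point is the functional-analytic one. We must make sure that the restriction-defined norm of $H^{1,0}(\R^n_+)$ is controlled by the $L^2$ norms of $w_j$ and its first derivatives on $\R^n_+$, and that these derivatives are computed correctly for a function that is zero near $\{y_n\to\infty\}$ but not zero at $y_n=0$. This is exactly what \eqref{equiv.norm.H.r.zero} gives, once density of smooth functions in $H^1(B_1)$ is invoked. Everything else is a bounded change of variables.
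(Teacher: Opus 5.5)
Your proof is correct and is essentially the paper's own argument: chart-by-chart change of variables $y=\mathtt f_j(x)$ on $A_0\cap\R^n_+$, the bounds $|\zeta_k|\le 1$ and $|\nabla\zeta_k|\le C2^k$ from \eqref{est.der.zeta.k}, and \eqref{equiv.norm.H.r.zero} to pass from the $L^2$ norms of $w_j$ and $\nabla w_j$ to the $H^{0,0}$ and $H^{1,0}$ norms, which gives $C_k=C(1+2^k)$. The paper leaves implicit the check that $w_j\in H^{1,0}(\R^n_+)$; you address it, and your second route is the cleanest way to do so: extend $u$ to some $U\in H^1(\R^n)$, and then $(U\zeta_k\psi_j)\circ\mathtt g_j$, extended by zero outside $A_0$, is an $H^1(\R^n)=H^{1,0}(\R^n)$ function whose restriction to $\R^n_+$ is $w_j$.
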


\begin{proof}
By \eqref{est.der.zeta.k}, for any multi-index $\a \in \N_0^n$, one has  
\begin{equation}  \label{est.zeta.k.psi.j}
\| (\zeta_k \psi_j) \circ \mathtt g_j \|_{L^\infty(\R^n_+)} \leq 1, \quad 
\| \pa_y^\a \{ (\zeta_k \psi_j) \circ \mathtt g_j \} \|_{L^\infty(\R^n_+)} \leq C_\a 2^{k|\a|}. 
\end{equation}
Since $\psi_j \circ \mathtt g_j$ is compactly supported in $A_0$, 
using \eqref{equiv.norm.H.r.zero}, one has 
\begin{align*}
\| (u \zeta_k \psi_j) \circ \mathtt g_j \|_{H^{1,0}(\R^n_+)} 
& \leq 2 \sum_{|\a| \leq 1} \| \pa_y^\a \{ (u \zeta_k \psi_j) \circ \mathtt g_j \} \|_{L^2(\R^n_+)}
\\ & 
= 2 \sum_{|\a| \leq 1} \| \pa_y^\a \{ (u \zeta_k \psi_j) \circ \mathtt g_j \} \|_{L^2(A_0 \cap \R^n_+)}
\\ & 
\leq C \sum_{|\a| \leq 1} \| \pa_x^\a (u \zeta_k \psi_j) \|_{L^2(K_j \cap B_1)} 
\leq C \| u \|_{H^1(B_1)} + C 2^k \| u \|_{L^2(B_1)}
\end{align*}
for some constant $C$ independent of $k$;
we have used the fact that $\mathtt g_j$ is a diffeomorphism of $A_0 \cap \R^n_+$ onto $K_j \cap B_1$.
Taking the sum over $j = 1, \ldots, N$ we obtain the second inequality in \eqref{est.u.X.low}. 
The other inequality is proved similarly. 
\end{proof}

\begin{lemma} \label{lemma:pinoli.S}
The linear map $\mathtt S$ defined in Lemma \ref{lemma:op.S} satisfies 
\begin{align}
|\mathtt S g|_{X^{2,0}_k} 
& \leq C_0 |g|_{X^{1,0}_k} + C_k \| g \|_{L^2(B_1)} 
\label{pinoli.15}
\end{align}
and, for all integers $b \geq 1$, all real $r$ in the interval $b-1 < r \leq b$, 
\begin{align} 
|\mathtt S g|_{X^{2,r}_k} \leq 
C_0 |g|_{X^{1,r}_k}
+ C_{r,k} \Big( \| g \|_{L^2(B_1)}
+ |g|_{X^{1,0}_k}
+ |g|_{X^{1,0}_{k+b}}
+ \sum_{\ell=1}^{b-1} |g|_{X^{1,r-\ell}_{k+\ell}} \Big),
\label{pinoli.16}
\end{align}
where $C_0$ is independent of $b,r,k$,  
and $C_{r,k}$ is increasing in $r$ and in $k$. 
For $b=1$ the last sum is empty.
The same also holds for the $*$ seminorms. 
\end{lemma}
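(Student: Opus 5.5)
Let $u = \mathtt S g$, so $u \in H^1_0(B_1)$ with $\Delta u = \div g$ in $B_1$. The plan is to localize with $\zeta_k\psi_j$, rectify with $\mathtt g_j$, and apply Lemma \ref{lemma:est.Lap.R.n.+} with $s=1$ to each piece $w_j := (u\zeta_k\psi_j)\circ\mathtt g_j$. The commutator terms that appear are then controlled by seminorms carrying one less tangential derivative on the next cut-off $\zeta_{k+1}$, and an induction on $b$ produces the chain $|g|_{X^{1,r-\ell}_{k+\ell}}$.

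\emph{Step 1: the localized equation.} Since $u=0$ on $\S^{n-1}$ and $\zeta_k=1$ near $\S^{n-1}$, the trace $w_j(\cdot,0)$ vanishes, so the boundary terms in \eqref{meran.04} drop out. Next I compute $\Delta_y w_j$. We have
\[
\Delta(u\zeta_k\psi_j)=\zeta_k\psi_j\,\div g+2\grad(\zeta_k\psi_j)\cdot\grad u+u\,\Delta(\zeta_k\psi_j).
\]
I rewrite $\zeta_k\psi_j\div g=\div(\zeta_k\psi_j g)-g\cdot\grad(\zeta_k\psi_j)$. After composition with $\mathtt g_j$, the Laplacian becomes $\div_y(M\grad_y)+$ lower order, where $M=(D\mathtt g_j)^{-1}(D\mathtt g_j)^{-T}|\det D\mathtt g_j|$ is smooth and close to the identity on $A_0$, because $D\mathtt f_j(p_j)=I$ and $\delta$ is small. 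So
\[
\Delta_y w_j = \div_y F + \div_y\big((I-M)\grad_y w_j\big)+\text{lower order},
\]
where $F$ is built from $(\zeta_k\psi_j g)\circ\mathtt g_j$.

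\emph{Step 2: apply Lemma \ref{lemma:est.Lap.R.n.+}.} By \eqref{pa.k.est}--\eqref{pa.n.est}, $\|\div F\|_{H^{0,r}}\le C\|F\|_{H^{1,r}}$. Smooth coefficients multiplying $g$ are handled with \eqref{est.prod.infty.1}: the top part gets a constant independent of $r$, and the rest costs $C_{r,k}\|\cdot\|_{H^{1,0}}$. This yields $C_0|g|_{X^{1,r}_k}+C_{r,k}|g|_{X^{1,0}_k}$.

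The term $(I-M)\grad w_j$ has small $W^{1,\infty}$ norm, of size $O(\delta)$. So by \eqref{est.prod.infty.1} it contributes $C\delta\|w_j\|_{H^{2,r}}$ plus $C_r\|w_j\|_{H^{2,0}}$. The first piece is absorbed into the left-hand side, with $\delta$ independent of $r$, which is why the paper fixes $\delta$ small once.

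\emph{Step 3: commutators and induction.} The remaining terms are $\grad(\zeta_k\psi_j)\cdot\grad u$, $u\Delta(\zeta_k\psi_j)$ and $g\cdot\grad(\zeta_k\psi_j)$, all measured in $H^{0,r}$. Since $\zeta_{k+1}=1$ on $\mathrm{supp}\,\zeta_k$ by \eqref{zeta.k.is.1.where}, I write $\grad u=\grad(u\zeta_{k+1})$ there. I then re-express the result in the charts $\ell$ through the transition maps \eqref{formula.transition.map}, which do not move $y_n$, together with their extensions (Lemma \ref{lemma:rotat.tilde}) and the composition estimate \eqref{composition.est.s.r}. This bounds the term by $C_{r,k}\big(|u|_{X^{1,r}_{k+1}}+|g|_{X^{0,r}_{k+1}}\big)$, and $|u|_{X^{1,r}_{k+1}}\le|u|_{X^{2,r-1}_{k+1}}$.

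If $b=1$, then $r-1\le 0$, and Lemma \ref{lemma:low.norm.u.n.X} plus the case $r=0$, i.e.\ \eqref{pinoli.15} at level $k+1$, gives $C_{k}(\|g\|_{L^2}+|g|_{X^{1,0}_{k+1}})$. Here $X^{0,r}\le X^{1,r-1}$-type losses are handled the same way.

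If $b\ge2$, I apply the inductive hypothesis \eqref{pinoli.16} to $|\mathtt Sg|_{X^{2,r-1}_{k+1}}$, with $b-1$ and $k+1$. This produces exactly the terms $|g|_{X^{1,r-\ell}_{k+\ell}}$ for $\ell=1,\dots,b-1$ and $|g|_{X^{1,0}_{k+b}}$. The base case \eqref{pinoli.15} is the same argument with $r=0$, where the commutators are bounded by $C_k\|u\|_{H^1}\le C_k\|g\|_{L^2}$ via Lemma \ref{lemma:op.S} and Lemma \ref{lemma:low.norm.u.n.X}.

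The main obstacle is Step 3: passing from chart $j$ to chart $\ell$ without losing normal regularity or creating $r$-dependence in the top constant $C_0$. Here the fact that the transition maps act only on $y'$ is essential. The $*$ case is identical, using \eqref{zeta.*.k.k'}.
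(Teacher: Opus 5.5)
The overall plan is the paper's: localize with $\zeta_k\psi_j$, rectify, apply Lemma \ref{lemma:est.Lap.R.n.+} with $s=1$ and zero trace, push the commutators to level $k+1$ via \eqref{zeta.k.is.1.where} and the transition maps, and induct on $b$. There is, however, a genuine gap in Step 2, which is exactly where the $r$-independence of $C_0$ is decided.

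\textbf{The gap.} Your $M$ is $\mathtt p\mathtt Q$ in the notation of \eqref{def.mathtt.p}--\eqref{una.buona.volta}, and it is not $O(\delta)$-close to $I$ in $W^{1,\infty}$. The charts $\mathtt g_j=\mathtt R_j^{-1}\mathtt g$ do not depend on $\delta$; only their domain $A_0$ shrinks. They carry the curvature of the sphere. For example, $M_{nn}=\mathtt p=(1-y_n)^{n-1}(1+|y'|^2)^{-n/2}$ has $\pa_{y_n}\mathtt p(0)=-(n-1)$, and also $\pa_{y_n}\mathtt q(0)=2$. So taking $\delta$ small only gives $\|I-M\|_{L^\infty}\le C\delta$. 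Since \eqref{est.prod.infty.1} involves the full $W^{1,\infty}$ norm of the coefficient, it puts a factor of order one in front of $\|w_j\|_{H^{2,r}}$, and the absorption fails.

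\textbf{How the paper avoids it.} It makes two structural choices.
\begin{itemize}
\item It divides the rectified equation by $\mathtt p$. The principal part is then $\div(\mathtt Q\grad\,\cdot)$, and the $O(1)$ derivative of the metric appears only in the first-order term $-a_{kj}\mathtt p^{-1}\la\grad\mathtt p,\mathtt Q\grad\tilde u_j\ra$. This term is localized with $\zeta_{k+1}$ and bounded by $C_r\sum_{\ell}\|w_{k+1,\ell}\|_{H^{1,r}(\R^n_+)}$, like your commutators.
\item The last row and column of $I-\mathtt Q$ vanish. Hence $\div((I-\mathtt Q)\grad w_{kj})$ is a sum of terms $\pa_{y_i}(c_{ik}\pa_{y_k}w_{kj})$ with $i,k\le n-1$. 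By \eqref{pa.k.est} and \eqref{est.prod.infty.0} (not \eqref{est.prod.infty.1}), its $H^{0,r}$ norm is at most $2\|I-\mathtt Q\|_{L^\infty}\|w_{kj}\|_{H^{0,r+2}}+C_r\|w_{kj}\|_{H^{0,1}}$. No $y_n$-derivative of the coefficient is ever used.
\end{itemize}

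\textbf{A repair within your formulation.} Split
\[
\div((I-M)\grad w)=\sum_{i,k}(I-M)_{ik}\,\pa_{y_i}\pa_{y_k}w-\sum_{i,k}(\pa_{y_i}M_{ik})\,\pa_{y_k}w .
\]
Estimate the first sum with \eqref{est.prod.infty.0}, using only $\|I-M\|_{L^\infty}\le C\delta$ and $\|\pa_{y_i}\pa_{y_k}w\|_{H^{0,r}}\le\|w\|_{H^{2,r}}$. The second sum is lower order. Writing $w_{kj}=\sum_{\ell\in\mA_j}a_{kj}\,w_{k+1,\ell}\circ(\mathtt f_\ell\circ\mathtt g_j)$ gives $\|w_{kj}\|_{H^{1,r}}\le C_{r,k}\sum_{\ell\in\mA_j}\|w_{k+1,\ell}\|_{H^{1,r}}$, so it contributes only to $|u|_{X^{1,r}_{k+1}}$.

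With this change the rest of your argument goes through. Note that Step 3, which you call the main obstacle, only produces lower-order terms whose constants may depend on $r$ and $k$.
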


\begin{proof}
Let $u = \mathtt S g$,   
so that $u \in H^1_0(B_1)$ and $\Delta u = \div g$ in $B_1$. 
Let $j \in \{1, \ldots, N\}$, and consider the diffeomorphism 
$\mathtt f_j : B_1 \cap K_j \to A_0 \cap \R^n_+$ 
and its inverse $\mathtt g_j$. 
Since $\Delta u = \div g$, 
taking test functions in $C^{\infty}_c(B_1 \cap K_j)$, 
integrating in $B_1$, using the divergence theorem, 
making the change of variable $x = \mathtt g_j(y)$ in the integral, 
and using the transformation rule for the gradient
\begin{equation} \label{trans.rule.grad}
(\grad u) \circ \mathtt g_j (y) = [D \mathtt g_j(y)]^{-T} \grad( u \circ \mathtt g_j)(y), 
\quad \ y \in A_0 \cap \R^n_+,
\end{equation}
one finds that 
\begin{equation} \label{eq.for.u.circ.mathtt.g.j}
\divergence (\mathtt p \mathtt Q \grad \tilde u_j) 
= \divergence (\mathtt p (D \mathtt g_j)^{-1} \tilde g_j) 
\quad \text{in } A_0 \cap \R^n_+,
\end{equation}
where 
\begin{alignat}{2} 
\tilde u_j & := u \circ \mathtt g_j, \qquad & 
\mathtt p(y) & := \det (D \mathtt g_j(y)) = \det (D \mathtt g(y)), 
\label{def.mathtt.p}
\\
\tilde g_j & := g \circ \mathtt g_j, \qquad & 
\mathtt Q(y) & := [D \mathtt g_j(y)]^{-1} [D \mathtt g_j(y)]^{-T} 
= [D \mathtt g(y)]^{-1} [D \mathtt g(y)]^{-T}
\label{def.mathtt.Q}
\end{alignat}
in $A_0 \cap \R^n_+$. 
The last identities in \eqref{def.mathtt.p} and \eqref{def.mathtt.Q}
hold because $\mathtt g_j = \mathtt R_j^{-1} \circ \mathtt g$ 
and $\mathtt R_j$ is a rotation. 
By \eqref{explicit.mathtt.f.inv}, in $A_0$ one has 
\begin{align}
\label{una.buona.volta}
\mathtt Q(y) & = \begin{bmatrix} 
\mathtt q(y) (\mathrm{I'} + y' \otimes y')  & 0 \\
0 & 1 \end{bmatrix}\!\!,
\quad
\mathtt q(y) := \frac{1+|y'|^2}{(1-y_n)^2},
\quad  
\mathtt p(y) = \frac{ (1-y_n)^{n-1} }{ (1 + |y'|^2)^{\frac{n}{2}} },
\end{align}
where $I'$ is the $(n-1) \times (n-1)$ identity matrix. 
Since the bottom-right entry of the matrix $\mathtt Q$ is 1, 
and since derivatives with respect to $y_n$ play a separate role, 
it is convenient to work with $\mathtt Q$ instead of $\mathtt P$. 
Dividing by $\mathtt p$, \eqref{eq.for.u.circ.mathtt.g.j} becomes 
\begin{align} 
\divergence (\mathtt Q \grad \tilde u_j) 
& = - \mathtt p^{-1} \la \grad \mathtt p , \mathtt Q \grad \tilde u_j \ra
+ \mathtt p^{-1} \divergence (\mathtt p (D \mathtt g_j)^{-1} \tilde g_j) 
\quad \text{in } A_0 \cap \R^3_+.
\label{eq.for.u.circ.mathtt.g.j.with.mathtt.Q}
\end{align}
Now, given any two functions $v,a$, one has 
\begin{equation} \label{div.P.grad.v.m}
\divergence (\mathtt Q \grad (va)) 
= a \divergence (\mathtt Q \grad v) 
+ 2 \la \mathtt Q \grad a , \grad v \ra 
+ v \divergence (\mathtt Q \grad a).
\end{equation}
To take advantage of the fact that $\mathtt Q(y)$ is close to $\mathtt Q(0) = I$ for $y$ close to $0$, 
we write $\mathtt Q$ in the LHS of \eqref{div.P.grad.v.m} 
as the sum $I + (\mathtt Q - I)$, and \eqref{div.P.grad.v.m} becomes 
\begin{equation} \label{Lap.v.m}
\Delta (va)
= \divergence \{ (I - \mathtt Q) \grad (va) \}
+ a \divergence (\mathtt Q \grad v) 
+ 2 \la \mathtt Q \grad a , \grad v \ra 
+ v \divergence ( \mathtt Q \grad a).
\end{equation}
Let 
\begin{alignat}{2}
\tilde \psi_j & := \psi_j \circ \mathtt g_j, \qquad & 
a_{kj} & := \tilde \zeta_{kj} \tilde \psi_j = (\zeta_k \psi_j) \circ \mathtt g_j, 
\label{def.a.kj}
\\ 
\tilde \zeta_{kj} & := \zeta_k \circ \mathtt g_j, \qquad &  
w_{kj} & := \tilde u_j a_{kj} = \tilde u_j \tilde \zeta_{kj} \tilde \psi_j
= (u \zeta_k \psi_j) \circ \mathtt g_j.
\label{def.w.kj}
\end{alignat} 
By \eqref{Lap.v.m} with $v = \tilde u_j$, $a = a_{kj}$, 
using \eqref{eq.for.u.circ.mathtt.g.j.with.mathtt.Q} 
to substitute the second term in the RHS of \eqref{Lap.v.m}, 
we get 
\begin{align} 
\Delta w_{kj} 
& = \divergence ((I - \mathtt Q) \grad w_{kj} ) 
- a_{kj} \mathtt p^{-1} \la \grad \mathtt p , \mathtt Q \grad \tilde u_j \ra
+ a_{kj} \mathtt p^{-1} \divergence (\mathtt p (D \mathtt g_j)^{-1} \tilde g_j) 
\notag \\ & \quad  
+ 2 \la \mathtt Q \grad a_{kj} , \grad \tilde u_j \ra
+ \tilde u_j \divergence ( \mathtt Q \grad a_{kj} )
\label{eq.Lap.w}
\end{align}
in $A_0 \cap \R^n_+$. 
Note that $\tilde \psi_j$ is compactly supported in $A_0$ 
because $\psi_j$ is compactly supported in $K_j$. 
Hence $w_{kj}$ is also compactly supported in $A_0$, and we 
extend it to $\R^n_+$ by setting $w_j := 0$ in $\R^n_+ \setminus A_0$. 
We also extend $\mathtt Q, \mathtt p$ to $\R^n$ 
replacing $\mathtt g$ with its extension 
$\tilde{\mathtt g} = \tilde{\mathtt f}^{-1}$, see \eqref{def.tilde.mathtt.f};
with a little abuse of notation, we denote the extensions without changing letters.
For future convenience, we note here that, 
by \eqref{def.tilde.mathtt.f}, one has $\tilde{\mathtt f}(x) = x - p_0$ 
for all $x$ outside the ball $B_{4\delta}(p_0)$, 
and therefore $\mathtt p = 1$, $\mathtt Q = I$ 
outside the set $\mathtt f(B_{4\delta}(p_0))$, 
which is a neighborhood of $A_0 = \mathtt f(B_{2\delta}(p_0))$. 

Since $u \in H^1_0(B_1)$ and $\psi_j, \zeta_k, \mathtt g_j$ are smooth, 
one has $w_{kj} \in H^1_0(\R^n_+)$. 
Then, by Lemma \ref{lemma:est.Lap.R.n.+}, for every real $r \geq 0$ one has 
\begin{align} 
\| w_{kj} \|_{H^{2,r}(\R^n_+)} 
& \leq C \| \Delta w_{kj} \|_{H^{0,r}(\R^n_+)} 
+ C^{r+1} \| \Delta w_{kj} \|_{L^2(\R^n_+)} 
+ C^{r+1} \| w_{kj} \|_{H^1(\R^n_+)}, 
\label{olive.01}
\end{align}
where $C > 0$ is the constant in \eqref{meran.04},
and $\Delta w_{kj}$ is given by \eqref{eq.Lap.w}. 
Now we estimate each term in the right-hand side of \eqref{eq.Lap.w}. 

\emph{First term}. 
By \eqref{una.buona.volta}, the last column and the last row 
of the matrix $(I - \mathtt Q(y))$ vanish in $A_0$ $\cap \, \R^n_+$,
and $\divergence ((I - \mathtt Q) \grad w_j)$ 
is the sum of terms of the form $\pa_{y_i} ( c(y) \pa_{y_k} w_j)$ 
with $i, k \in \{ 1, \ldots, n-1 \}$, without derivatives of $w_j$ with respect to $y_n$. 
Hence, by \eqref{pa.k.est} and \eqref{est.prod.infty.0}, one has 
\begin{align*}
\| \divergence ((I - \mathtt Q) \grad w_{kj}) \|_{H^{0,r}(\R^n_+)} 
& \leq C_0 \| I - \mathtt Q \|_{L^\infty(\R^n_+)} \| w_{kj} \|_{H^{0,r+2}(\R^n_+)}
\notag \\ & \quad \ 
+ C_r \| I - \mathtt Q \|_{W^{0,b,\infty}(\R^n_+)} \| w_{kj} \|_{H^{0,1}(\R^n_+)},
\end{align*}
where $C_0, C_r, b$ are given by the application of \eqref{est.prod.infty.0}. 
Given any $\e > 0$, the radius $2\delta$ of the balls $K_1, \ldots, K_N$ 
can be chosen sufficiently small to have 
$C_0 \| I - \mathtt Q \|_{L^\infty(\R^n_+)} \leq \e$; 
note that such a $\delta$ is independent on $r$. 
Hence 
\begin{align}  \label{olive.02}
\| \divergence ((I - \mathtt Q) \grad w_{kj}) \|_{H^{0,r}(\R^n_+)} 
& \leq \e \| w_{kj} \|_{H^{0,r+2}(\R^n_+)} 
+ C_r \| w_{kj} \|_{H^{0,1}(\R^n_+)},
\end{align} 
where $C_r$ is a constant depending on $r$, increasing in $r$, 
incorporating the factor $\| I - \mathtt Q \|_{W^{0,b,\infty}(\R^n_+)}$. 

\emph{Second term}. 
By \eqref{def.mathtt.p}, 
\eqref{zeta.k.is.1.where} and \eqref{res.unity.in.the.annulus}, 
the second term in the RHS of \eqref{eq.Lap.w} is 
\begin{align}
& a_{kj} \mathtt p^{-1} 
\la \grad \mathtt p , \mathtt Q \grad \tilde u_{j} \ra
= \sum_{\ell \in \mA_j} a_{kj} \mathtt p^{-1} \la \grad \mathtt p , \mathtt Q \grad 
\{ (u \zeta_{k+1} \psi_\ell) \circ \mathtt g_j \} \ra,
\label{echin.03}
\end{align}
where $\mA_j := \{ \ell \in \{ 1, \ldots, N \} : K_\ell \cap K_j \neq \emptyset \}$. 
Identity \eqref{echin.03} holds because, by \eqref{zeta.k.is.1.where}, 
$\zeta_{k+1} \circ \mathtt g_j = 1$ in the support of $\zeta_k \circ \mathtt g_j$, 
by \eqref{res.unity.in.the.annulus},  
and because, if the balls $K_\ell, K_j$ are disjoint, 
then $\psi_\ell \circ \mathtt g_j = 0$ in the support of $\psi_j \circ \mathtt g_j$. 
Moreover, for every $\ell \in \mA_j$, one has 
\begin{equation} \label{transition.w}
(u \zeta_{k+1} \psi_\ell) \circ \mathtt g_j 
= (u \zeta_{k+1} \psi_\ell) \circ \mathtt g_\ell \circ \mathtt f_\ell \circ \mathtt g_j
= w_{k+1,\ell} \circ (\mathtt f_\ell \circ \mathtt g_j),
\end{equation}
where the transition map $\mathtt f_\ell \circ \mathtt g_j$ 
in $A_0$ is given by formula \eqref{formula.transition.map}.
We consider the extension of the transition map to $\R^n_+$ 
given in \eqref{rotat.09}, \eqref{extension.in.A0}. 
By Lemma \ref{lemma:rotat.tilde}, the function $\tilde{\mathtt T}_{\ell j}$ 
is a diffeomorphism of $\R^{n-1}$, and it is the sum 
of an affine map and a smooth function with compact support. 
Hence, by the composition estimate \eqref{composition.est.s.r}, we get
\begin{equation}  \label{echin.04}
\| w_{k+1,\ell} \circ (\mathtt f_\ell \circ \mathtt g_j) \|_{H^{1,r}(\R^n_+)}
\leq C_r \|  w_{k+1,\ell} \|_{H^{1,r}(\R^n_+)},
\end{equation}
where the constant $C_r$ is increasing in $r$. 
Concerning $a_{kj}$, one has estimate \eqref{est.zeta.k.psi.j}. 
Also, by \eqref{def.rho.k.zeta.k}, 
$\zeta_k(\mathtt g_j(y))$ depends only on $|\mathtt g_j(y)|$; 
moreover $\mathtt g_j(y) = \mathtt R_j^{-1} \mathtt g(y)$, 
and $|\mathtt g_j(y)| = |\mathtt g(y)| = 1 - y_n$ by \eqref{explicit.mathtt.f.inv}.  
Hence $\zeta_k \circ \mathtt g_j$ depends only on $y_n$, and not on $y'$, 
so that $\pa_y^\a \{ (\zeta_k \psi_j) \circ \mathtt g_j \} 
= (\zeta_k \circ \mathtt g_j) \pa_y^\a (\psi_j \circ \mathtt g_j)$ 
for all multi-indices $\a = (\a', 0)$. 
As a consequence, by \eqref{def.a.kj} and \eqref{def.norm.W.0.b.infty}, one has
\begin{equation}  \label{est.a.W.0.b.infty}
\| a_{kj} \|_{W^{0,b,\infty}(\R^n_+)}
= \| (\zeta_k \psi_j) \circ \mathtt g_j \|_{W^{0,b,\infty}(\R^n_+)}
\leq \| \psi_j \circ \mathtt g_j \|_{W^{0,b,\infty}(\R^n_+)} \leq C_r
\end{equation}
for some constant $C_r$ 
dependent on $r$, increasing in $r$ (in fact, $C_r$ depends on $b$).  
By \eqref{est.prod.infty.0}, \eqref{transition.w}, \eqref{echin.04}, 
\eqref{est.zeta.k.psi.j}, \eqref{est.a.W.0.b.infty}, 
each term of the sum in the RHS of \eqref{echin.03} satisfies 
\begin{equation}  \label{echin.05}
\| a_{kj} \mathtt p^{-1} \la \grad \mathtt p , \mathtt Q \grad 
\{ (u \zeta_{k+1} \psi_\ell) \circ \mathtt g_j \} \ra \|_{H^{0,r}(\R^n_+)}
\leq C_r \| w_{k+1,\ell} \|_{H^{1,r}(\R^n_+)}.
\end{equation}
The sum of \eqref{echin.05} over $\ell \in \mA_j$ gives an estimate 
of the second term in the RHS of \eqref{eq.Lap.w}.

\emph{Third term.} We write the third term in the RHS of \eqref{eq.Lap.w} as 
\begin{align}
a_{kj} \mathtt p^{-1} \divergence (\mathtt p (D \mathtt g_j)^{-1} \tilde g_j) 
= \mathtt p^{-1} \divergence (\mathtt p (D \mathtt g_j)^{-1} \gamma_{kj}) 
- \la (D \mathtt g_j)^{-1} \tilde g_j , \grad a_{kj} \ra,
\label{third.term.id}
\end{align}
where 
\begin{equation} \label{def.gamma.nkj}
\gamma_{kj} := \tilde g_j a_{kj} 
= (g \zeta_k \psi_j) \circ \mathtt g_j.
\end{equation}
By \eqref{est.prod.infty.0}, \eqref{pa.k.est}, \eqref{pa.n.est}, 
\eqref{est.prod.infty.1}, the first term in the RHS of \eqref{third.term.id} satisfies 
\begin{equation}  \label{olive.21}
\| \mathtt p^{-1} \divergence (\mathtt p (D \mathtt g_j)^{-1} \gamma_{kj}) \|_{H^{0,r}(\R^n_+)} 
\leq C_0 \| \gamma_{kj} \|_{H^{1,r}(\R^n_+)} + C_r \| \gamma_{kj} \|_{H^{1,0}(\R^n_+)}.
\end{equation}
Using \eqref{def.mathtt.Q}, \eqref{zeta.k.is.1.where} and \eqref{res.unity.in.the.annulus} 
like in \eqref{echin.03}, and using the identity 
$\mathtt g_j = \mathtt g_\ell \circ \mathtt f_\ell \circ \mathtt g_j$ as in \eqref{transition.w},
we write the last term of \eqref{third.term.id} as 
\begin{align}
\la (D \mathtt g_j)^{-1} \tilde g_j , \grad a_{kj} \ra
& = \sum_{\ell \in \mA_j} 
\la (D \mathtt g_j)^{-1} ((g \zeta_{k+1} \psi_\ell) \circ \mathtt g_j) , \grad a_{kj} \ra
\notag \\
& = \sum_{\ell \in \mA_j} 
\la (D \mathtt g_j)^{-1} (\gamma_{k+1, \ell} \circ (\mathtt f_\ell \circ \mathtt g_j)) , \grad a_{kj} \ra.
\label{echin.06}
\end{align}
Hence, using \eqref{est.prod.infty.0}, 
\eqref{est.zeta.k.psi.j}, 
\eqref{est.a.W.0.b.infty}, 
and Lemma \ref{lemma:rotat.tilde} and \eqref{composition.est.s.r} 
for the composition with the transition map $\mathtt f_\ell \circ \mathtt g_j$,
one proves that each term of the sum in \eqref{echin.06} satisfies 
\begin{equation} \label{olive.20}
\| \la (D \mathtt g_j)^{-1} (\gamma_{k+1, \ell} \circ (\mathtt f_\ell \circ \mathtt g_j)) , 
\grad a_{kj} \ra \|_{H^{0,r}(\R^n_+)} 
\leq C_r 2^k \| \gamma_{k+1, \ell} \|_{H^{0,r}(\R^n_+)}.
\end{equation}
The constant $C_r$ in \eqref{olive.20} depends on $r$ 
because we have estimated a composition; 
the factor $2^k$ is due to the presence 
of the first derivative $\pa_{y_n} (\zeta_{k+1} \circ \mathtt g_j)$.  
From \eqref{olive.21} and \eqref{olive.20} we obtain an estimate for \eqref{third.term.id}.

\emph{Fourth and fifth term}. 
Proceeding as in \eqref{echin.03}-\eqref{echin.05}, one proves that the 
fourth and fifth term in \eqref{eq.Lap.w} are the sum over $\ell \in \mA_j$ of terms 
satisfying the same bound \eqref{echin.05}, but with $C_r 2^{2k}$ instead of $C_r$. 
The factor $2^{2k}$ is due to the presence of the second derivative of $\zeta_{k+1}$ 
with respect to $y_n$.

\emph{Estimate of $u$}. 
By \eqref{eq.Lap.w}, \eqref{olive.02}, \eqref{echin.05}, \eqref{olive.21}, \eqref{olive.20}, 
and \eqref{echin.05} with factor $2^{2k}$, one has 
\begin{multline}
\| \Delta w_{kj} \|_{H^{0,r}(\R^n_+)}
\leq 
\e \| w_{kj} \|_{H^{0, r+2}(\R^n_+)}
+ C_r \| w_{kj} \|_{H^{1,0}(\R^n_+)}
+ C_r 2^{2k} \sum_{\ell \in \mA_j} \| w_{k+1, \ell} \|_{H^{1,r}(\R^n_+)}
\\
+ C_0 \| \gamma_{kj} \|_{H^{1,r}(\R^n_+)} 
+ C_r \| \gamma_{kj} \|_{H^{1,0}(\R^n_+)} 
+ C_r 2^k \sum_{\ell \in \mA_j} \| \gamma_{k+1,\ell} \|_{H^{0,r}(\R^n_+)}.
\label{echin.10}
\end{multline}
By \eqref{olive.01}, \eqref{echin.10} and \eqref{echin.10}$|_{r=0}$, we get
\begin{align}
\| w_{kj} \|_{H^{2,r}(\R^n_+)}
& \leq 
\e C_0 \| w_{kj} \|_{H^{0, r+2}(\R^n_+)} 
+ \e C_r \| w_{kj} \|_{H^{0, 2}(\R^n_+)}
+ C_r \| w_{kj} \|_{H^{1,0}(\R^n_+)}
\notag \\ & \quad \ 
+ C_r 2^{2k} \sum_{\ell \in \mA_j} \| w_{k+1, \ell} \|_{H^{1,r}(\R^n_+)}
+ C_0 \| \gamma_{kj} \|_{H^{1,r}(\R^n_+)} 
+ C_r \| \gamma_{kj} \|_{H^{1,0}(\R^n_+)} 
\notag \\ & \quad \ 
+ C_r 2^k \sum_{\ell \in \mA_j} \| \gamma_{k+1,\ell} \|_{H^{0,r}(\R^n_+)}
\label{olive.05}
\end{align}
for some $C_0, C_r$ (possibly different than above).
Recalling definitions \eqref{def.w.kj}, \eqref{def.gamma.nkj} of $w_{kj}, \gamma_{kj}$,  
taking the sum of \eqref{olive.05} over $j = 1, \ldots, N$, 
and using notation \eqref{def.seminorm}, 
we obtain 
\begin{align}
|u|_{X^{2,r}_k}
& \leq \e a_0 |u|_{X^{0,r+2}_k}
+ \e b_r |u|_{X^{0,2}_k}
+ C_r |u|_{X^{1,0}_k}
+ C_r 2^{2k} |u|_{X^{1,r}_{k+1}}
\notag \\ & \quad \ 
+ C_0 |g|_{X^{1,r}_k}
+ C_r |g|_{X^{1,0}_k}
+ C_r 2^k |g|_{X^{0,r}_{k+1}},
\label{olive.09}
\end{align}
where we have denoted by $a_0$ and $b_r$ the constants appearing with the factor $\e$;
also, $b_r, C_r$ are increasing functions of $r$. 
For $r=0$, \eqref{olive.09} becomes
\begin{align}
|u|_{X^{2,0}_k}
& \leq \e (a_0 + b_0) |u|_{X^{0,2}_k}
+ C_0 |u|_{X^{1,0}_k}
+ C_0 2^{2k} |u|_{X^{1,0}_{k+1}}
+ 2 C_0 |g|_{X^{1,0}_k}
+ C_0 2^k |g|_{X^{0,0}_{k+1}}.
\label{olive.10}
\end{align}
Since $|u|_{X^{0,2}_k} \leq |u|_{X^{2,0}_k}$, 
we fix $\e$ (and therefore the half-radius $\delta$ of the balls $K_j$ 
and their cardinality $N$) 
such that $\e (a_0 + b_0) \leq 1/2$,
so that the first term in the RHS of \eqref{olive.10} can be absorbed 
by the LHS of \eqref{olive.10}. 
The second, the third and the fifth term in the RHS of \eqref{olive.10} are estimated 
by \eqref{est.u.X.low} and Lemma \ref{lemma:op.S}, and we get \eqref{pinoli.15}. 
Note that $\e, \delta, N$ do not depend on $r,j,k$. 

Now that $\e$ has been fixed, consider \eqref{olive.09} again. 
Since $a_0 \e \leq 1/2$ and $|u|_{X^{0,r+2}_k} \leq |u|_{X^{2,r}_k}$, 
the first term in the RHS of \eqref{olive.09} can be absorbed by the LHS of \eqref{olive.09}.  
Also, we use \eqref{pinoli.15} to estimate the second and third term in the RHS of \eqref{olive.09}. 
Thus, for all $r,k$, we get
\begin{align}
|u|_{X^{2,r}_k}
& \leq 
C_0 |g|_{X^{1,r}_k} + C_r |g|_{X^{1,0}_k} 
+ C_{r,k} (\| g \|_{L^2(B_1)} + |g|_{X^{0,r}_{k+1}} + |u|_{X^{1,r}_{k+1}} ).
\label{pinoli.12}
\end{align}
For $0 < r \leq 1$, we consider \eqref{pinoli.12}, 
we use the fact that 
\[
|g|_{X^{0,r}_{k+1}} \leq 
|g|_{X^{0,1}_{k+1}} \leq 
|g|_{X^{1,0}_{k+1}}, 
\quad 
|u|_{X^{1,r}_{k+1}} 
\leq |u|_{X^{1,1}_{k+1}}
\leq |u|_{X^{2,0}_{k+1}},
\]
and we use \eqref{pinoli.15} to bound $|u|_{X^{2,0}_{k+1}}$; 
in this way we obtain 
\begin{align}
|u|_{X^{2,r}_k} 
& \leq 
C_0 |g|_{X^{1,r}_k}
+ C_{r,k} (\| g \|_{L^2(B_1)} + |g|_{X^{1,0}_k} + |g|_{X^{1,0}_{k+1}}), 
\quad 0 < r \leq 1,
\label{pinoli.13}
\end{align}
with some possibly larger $C_{r,k}$, and the same $C_0$ of \eqref{pinoli.12}. 

Now we prove \eqref{pinoli.16} by induction on $b$. 
For $b=1$, \eqref{pinoli.16} is \eqref{pinoli.13}. 
Assume that \eqref{pinoli.16} is proved for some integer $b \geq 1$, 
and let $b < r \leq b+1$. 
Apply \eqref{pinoli.12}, 
use the inequality $|u|_{X^{1,r}_{k+1}} \leq |u|_{X^{2,r-1}_{k+1}}$
and the induction assumption \eqref{pinoli.16} (with $r-1, k+1$ instead of $r,k$)
to estimate $|u|_{X^{2,r-1}_{k+1}}$. This gives \eqref{pinoli.16} with $b+1$ instead of $b$ 
(and with constants $C_{r,k}$ possibly different than those of the induction assumption. 
This is not a problem, because now $r$ is in the next interval $(b, b+1]$, 
and each of such intervals has its own set of constants). 
Note that the constant $C_0$ in \eqref{pinoli.16} is not changed by the induction step. 
Hence \eqref{pinoli.16} holds for all integers $b \geq 1$. 
\end{proof}

The next lemma deals with the Poisson integral operator $\mathtt{PI}$. 

\begin{lemma} \label{lemma:pinoli.u.0}
The linear map $\mathtt{PI}$ defined in Lemma \ref{lemma:def.harmonic.ext.op} 
satisfies, for all $k \in \N_0$, all real $r \geq 0$, 
\begin{align} 
| \mathtt{PI} f |_{X^{2,r}_k} 
& \leq C_0 \| f \|_{H^{r+\frac32}(\S^{n-1})}
+ C_{r,k} \| f \|_{H^{\frac32}(\S^{n-1})},
\label{pinoli.interpol.psi}
\end{align}
where $C_0$ is independent of $r,k$ 
and $C_{r,k}$ is increasing in $r$ and $k$.
The same also holds for the $*$ seminorms. 
\end{lemma}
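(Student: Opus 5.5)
The plan is to follow the scheme of the proof of Lemma \ref{lemma:pinoli.S}, with $\Delta w_{kj}$ now carrying no source term and $w_{kj}$ carrying a nonzero boundary trace. Let $u = \mathtt{PI} f$ and $\tilde u_j = u \circ \mathtt g_j$. Define $a_{kj}$ and $w_{kj} = (u \zeta_k \psi_j) \circ \mathtt g_j$ as in \eqref{def.a.kj}, \eqref{def.w.kj}. Since $\Delta u = 0$, identity \eqref{eq.Lap.w} holds with $\tilde g_j = 0$. It expresses $\Delta w_{kj}$ as
\begin{itemize}
\item[] a tangential term $\divergence((I-\mathtt Q)\grad w_{kj})$, plus
\item[] lower-order terms involving $\grad \tilde u_j$ and $\tilde u_j$ multiplied by derivatives of $a_{kj}$ and $\mathtt p$.
\end{itemize}
Now $w_{kj}$ is no longer in $H^1_0$. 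Instead, since $\zeta_k = 1$ near $|x|=1$, its trace is $w_{kj}(\cdot,0) = (f \psi_j) \circ \mathtt g_j(\cdot,0)$. I would first argue by density (smooth $f$, then Lemma \ref{lemma:est.tools.R.n.+}$(i)$) so that all seminorms are finite and the absorption steps are legitimate.

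Next I apply Lemma \ref{lemma:est.Lap.R.n.+} with $s=1$ and keep the boundary terms:
\[
\| w_{kj} \|_{H^{2,r}(\R^n_+)} \leq C \| \Delta w_{kj} \|_{H^{0,r}} + C^{r+1} \| \Delta w_{kj} \|_{L^2} + C \| w_{kj}(\cdot,0) \|_{H^{r+\frac32}(\R^{n-1})} + C^{r+1} \| w_{kj}(\cdot,0) \|_{H^{\frac32}} + C^{r+1} \| w_{kj} \|_{H^{1,0}}.
\]
Summing over $j$, definition \eqref{def.norm.u} turns the boundary terms exactly into $C_0 \| f \|_{r+\frac32} + C^{r+1} \| f \|_{\frac32}$, with $C_0$ independent of $r,k$. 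The terms in $\Delta w_{kj}$ are estimated exactly as in \eqref{olive.02}, \eqref{echin.05} and the fourth/fifth term bounds. This gives the analogue of \eqref{olive.09} without $g$-terms, plus boundary terms:
\[
|u|_{X^{2,r}_k} \leq \e a_0 |u|_{X^{0,r+2}_k} + \e b_r |u|_{X^{0,2}_k} + C_r |u|_{X^{1,0}_k} + C_r 2^{2k} |u|_{X^{1,r}_{k+1}} + C_0 \| f \|_{r+\frac32} + C_r \| f \|_{\frac32}.
\]

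For the case $r=0$, I absorb the $\e$-terms using the same $\e$ as before. Lemma \ref{lemma:low.norm.u.n.X} and Lemma \ref{lemma:def.harmonic.ext.op} give $|u|_{X^{1,0}_{k}}, |u|_{X^{1,0}_{k+1}} \leq C_{k} \| u \|_{H^1(B_1)} \leq C_k \| f \|_{\frac12}$. Together these yield $|u|_{X^{2,0}_k} \leq C_k \| f \|_{\frac32}$ for every $k$.

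For general $r$, after absorbing $\e a_0 |u|_{X^{0,r+2}_k} \leq \frac12 |u|_{X^{2,r}_k}$, the only problematic term is $C_r 2^{2k} |u|_{X^{1,r}_{k+1}} \leq C_{r,k} |u|_{X^{2,r-1}_{k+1}}$. I run the same induction on $b$ with $b-1 < r \leq b$ as in Lemma \ref{lemma:pinoli.S}:
\begin{itemize}
\item[] For $0<r\leq1$, I bound this term by $|u|_{X^{2,0}_{k+1}}$ via the $r=0$ case.
\item[] For $b < r \leq b+1$, the induction hypothesis at $(r-1,k+1)$ gives $C_0 \| f \|_{r+\frac12} + C_{r-1,k+1} \| f \|_{\frac32}$. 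Here $\| f \|_{r+\frac12}$ is multiplied by $C_{r,k}$, not by $C_0$.
\end{itemize}
This coefficient issue is the main point: unlike in Lemma \ref{lemma:pinoli.S}, a lower norm $\| f \|_{r+\frac12}$ appears with a bad constant, and it must be removed. I would do this with the interpolation inequality \eqref{interpol.Hr.S2} and Young's inequality:
\[
C_{r,k} \| f \|_{r+\frac12} \leq \| f \|_{r+\frac32} + C'_{r,k} \| f \|_{\frac32},
\]
with interpolation exponent $\th = (r-1)/r$. This keeps the coefficient of the top norm equal to $C_0 + 1$, still independent of $r,k$. The constants $C_{r,k}$ remain increasing after taking suprema over smaller indices. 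The $*$ seminorms are handled identically.
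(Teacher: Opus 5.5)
Your proposal is correct and follows the paper's proof essentially step by step. You apply Lemma \ref{lemma:est.Lap.R.n.+} keeping the trace terms $(f\psi_j)\circ\mathtt g_j(\cdot,0)$, which sum exactly to $\|f\|_{H^{r+\frac32}(\S^{n-1})}$ and $\|f\|_{H^{\frac32}(\S^{n-1})}$, obtain the analogue of \eqref{olive.09} with $g=0$, and induct on $b$ as in Lemma \ref{lemma:pinoli.S}. The differences are minor: you interpolate with \eqref{interpol.Hr.S2} inside each induction step, carrying the final form \eqref{pinoli.interpol.psi} as the inductive hypothesis, whereas the paper first proves $|u|_{X^{2,r}_k} \leq C_0 \|f\|_{H^{r+\frac32}(\S^{n-1})} + C_{r,k}\|f\|_{H^{r+\frac12}(\S^{n-1})}$ and interpolates once at the end via \eqref{interpol.trick}; you also make explicit the density argument ensuring the seminorms are finite before absorption, which the paper leaves implicit.
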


\begin{proof}
We closely follow the proof of Lemma \ref{lemma:pinoli.S}. 
Let $u = \mathtt{PI} f$, so that $u \in H^1(B_1)$, $u|_{\S^{n-1}} = f$, and $\Delta u = 0$ in $B_1$. 
The function $w_{kj}$ defined in \eqref{def.w.kj} 
satisfies \eqref{eq.Lap.w} with $g = 0$, 
that is, without the third term in the RHS, 
and 
\begin{equation} \label{boundary.cond.w.0}
w_{kj} \in H^1(\R^n_+), \quad 
w_{kj} = (f \psi_j) \circ \mathtt g_j \quad \text{on } \R^{n-1} \times \{ 0 \}.
\end{equation}
Note that $\zeta_k \circ \mathtt g_j (\cdot, 0) = 1$ for all $k \in \N_0$. 
By Lemma \ref{lemma:est.Lap.R.n.+}, 
\begin{align} 
\| w_{kj} \|_{H^{2,r}(\R^n_+)} 
& \leq C \| \Delta w_{kj} \|_{H^{0,r}(\R^n_+)} 
+ C^{r+1} \| \Delta w_{kj} \|_{L^2(\R^n_+)} 
+ C \| w_{kj}(\cdot, 0) \|_{H^{r + \frac32}(\R^{n-1})}
\notag \\ & \quad \ 
+ C^{r+1} \| w_{kj}(\cdot, 0) \|_{H^{\frac32}(\R^{n-1})}
+ C^{r+1} \| w_{kj} \|_{H^1(\R^n_+)}, 
\label{pinoli.01.BIS}
\end{align}
where $C$ is the constant in \eqref{meran.04},
$\Delta w_{kj}$ is given by \eqref{eq.Lap.w} (with $g = 0$) 
and the trace $w_{kj}(\cdot, 0)$ is given by \eqref{boundary.cond.w.0}.
Now $\Delta w_{kj}$ satisfies \eqref{echin.10} without the terms with 
$\gamma_{kj}, \gamma_{k+1,\ell}$, which are zero. 
Thus, by \eqref{pinoli.01.BIS}, \eqref{echin.10} and \eqref{echin.10}$|_{r=0}$, we get
\begin{align}
\| w_{kj} \|_{H^{2,r}(\R^n_+)}
& \leq \e C_0 \| w_{kj} \|_{H^{0, r+2}(\R^n_+)} 
+ \e C_r \| w_{kj} \|_{H^{0, 2}(\R^n_+)}
+ C_r \| w_{kj} \|_{H^{1,0}(\R^n_+)}
\notag \\ 
& \quad \ 
+ C_r 2^{2k} \sum_{\ell \in \mA_j} \| w_{k+1, \ell} \|_{H^{1,r}(\R^n_+)}
+ C \| (f \psi_j) \circ \mathtt g_j(\cdot, 0) \|_{H^{r+\frac32}(\R^{n-1})}
\notag \\ 
& \quad \ 
+ C^{r+1} \| (f \psi_j) \circ \mathtt g_j(\cdot, 0) \|_{H^{\frac32}(\R^{n-1})}
\label{pinoli.05.BIS}
\end{align}
instead of \eqref{olive.05}. 
Hence $u$ satisfies \eqref{olive.09} with $g = 0$ and the additional terms 
$C_0 \| f \|_{H^{r+\frac32}(\S^{n-1})}$ and  
$C_r \| f \|_{H^{\frac32}(\S^{n-1})}$ 
in the RHS, and, using also Lemmas \ref{lemma:low.norm.u.n.X} and \ref{lemma:def.harmonic.ext.op}, 
we obtain 
\begin{align}
|u|_{X^{2,0}_k}
& \leq 
C_0 \| f \|_{H^{\frac32}(\S^{n-1})}
+ C_k \| f \|_{H^{\frac12}(\S^{n-1})},
\notag \\
|u|_{X^{2,r}_k}
& \leq 
C_0 \| f \|_{H^{r + \frac32}(\S^{n-1})}
+ C_r \| f \|_{H^{\frac32}(\S^{n-1})}
+ C_{r,k} \| f \|_{H^{\frac12}(\S^{n-1})}
+ C_{r,k} |u|_{X^{1,r}_{k+1}}.
\label{pinoli.12.BIS}
\end{align}
Then, by induction, similarly as in the proof of \eqref{pinoli.15}, \eqref{pinoli.16}, 
one proves that 
\begin{equation} \label{pinoli.16.BIS}
|u|_{X^{2,r}_k} 
\leq C_0 \| f \|_{H^{r + \frac32}(\S^{n-1})} 
+ C_{r,k} \| f \|_{H^{r + \frac12}(\S^{n-1})}.
\end{equation}
Inequality \eqref{pinoli.interpol.psi} is deduced from \eqref{pinoli.16.BIS} 
by the interpolation inequality \eqref{interpol.Hr.S2}, 
using the fact that 
\begin{equation} \label{interpol.trick}
|f|_r 
\leq |f|_{r_0}^{1-\th} |f|_{r_1}^\th 
= \frac{1}{\lm} (\lm^p |f|_{r_0})^{1-\th} |f|_{r_1}^\th
\leq \frac{1}{\lm} ( \lm^p |f|_{r_0} + |f|_{r_1} ) 
= \lm^{p-1} |f|_{r_0} + \frac{|f|_{r_1}}{\lm}  
\end{equation}
for any $\lm>0$, 
with $r = (1-\th) r_0 + \th r_1$, $0<\th<1$, $p = 1/(1-\th)$, 
$|f|_r = \| f \|_{H^r(\S^{n-1})}$, 
and choosing $\lm$ sufficiently large, depending on $r,k$. 
\end{proof}

Concerning the gradient operator, one has the following inequality. 

\begin{lemma} \label{lemma:pinoli.grad}
For all integer $k \geq 0$, all real $r \geq 0$, 
\begin{equation} \label{pinoli.grad}
|\grad u|_{X^{1,r}_k} \leq C_0 |u|_{X^{2,r}_k} +  C_r |u|_{X^{2,0}_k} + C_{r,k} |u|_{X^{1,r}_{k+1}},
\end{equation}
where $C_0$ is independent of $r$ and $k$, 
$C_r$ is increasing in $r$ and independent of $k$, 
and $C_{r,k}$ is increasing in $r$ and in $k$.
The same holds for the $*$ seminorms. 
\end{lemma}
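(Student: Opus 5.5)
The plan is to localize, rectify with the chart $\mathtt g_j$, and use the transformation rule for the gradient \eqref{trans.rule.grad}. For each $j$ we must estimate $\|((\grad u)\zeta_k\psi_j)\circ\mathtt g_j\|_{H^{1,r}(\R^n_+)}$. By \eqref{trans.rule.grad},
\[
((\grad u)\zeta_k\psi_j)\circ\mathtt g_j = [D\mathtt g_j]^{-T}\, a_{kj}\, \grad \tilde u_j ,
\qquad a_{kj} = (\zeta_k\psi_j)\circ\mathtt g_j,\ \tilde u_j = u\circ \mathtt g_j .
\]
Then we commute $a_{kj}$ with the gradient:
\[
a_{kj}\grad\tilde u_j = \grad w_{kj} - \tilde u_j \grad a_{kj},
\]
where $w_{kj}$ is as in \eqref{def.w.kj}. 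The matrix $[D\mathtt g_j]^{-T}$ is smooth. After extending it via $\tilde{\mathtt g}$ as in the proof of Lemma \ref{lemma:pinoli.S}, it equals the rotation $\mathtt R_j$ outside a compact set, so we can write it as $\mathtt R_j + M$ with $M$ smooth and compactly supported, and all its $W^{1,b,\infty}$ norms are finite.

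\emph{Main term.} We apply \eqref{est.prod.infty.1} to $[D\mathtt g_j]^{-T}\grad w_{kj}$. Here $r$ enters only through $\|[D\mathtt g_j]^{-T}\|_{W^{1,b,\infty}}$, which depends on $b$ (i.e. on $r$) but not on $k$. This gives
\[
C_0\|\grad w_{kj}\|_{H^{1,r}} + C_r\|\grad w_{kj}\|_{H^{1,0}} .
\]
By \eqref{pa.k.est}, \eqref{pa.n.est} this is bounded by $C_0\|w_{kj}\|_{H^{2,r}} + C_r\|w_{kj}\|_{H^{2,0}}$. Summing over $j$ yields $C_0|u|_{X^{2,r}_k} + C_r|u|_{X^{2,0}_k}$, with $C_0$ independent of $r,k$ and $C_r$ independent of $k$. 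This is exactly why \eqref{est.prod.infty.1} is the right tool: its leading constant depends only on the $W^{1,\infty}$ norm.

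\emph{Commutator term.} The term $\tilde u_j\grad a_{kj}$ (multiplied by $[D\mathtt g_j]^{-T}$) is handled as in \eqref{echin.03}--\eqref{echin.05}. By \eqref{zeta.k.is.1.where} and \eqref{res.unity.in.the.annulus}, on the support of $\grad a_{kj}$ we have
\[
\tilde u_j = \sum_{\ell\in\mA_j} w_{k+1,\ell}\circ(\mathtt f_\ell\circ\mathtt g_j).
\]
We then proceed in three steps:
\begin{itemize}
\item[1.] Since $\grad a_{kj}$ is supported in $A_0$, we may replace the transition map by its extension $\tilde{\mathtt T}_{\ell j}$ from Lemma \ref{lemma:rotat.tilde}.
\item[2.] The composition estimate \eqref{composition.est.s.r} with $s=1$ bounds each composed term in $H^{1,r}$ by $C_r\|w_{k+1,\ell}\|_{H^{1,r}}$. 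The $g$-part there is smooth and compactly supported.
\item[3.] The multiplier $[D\mathtt g_j]^{-T}\grad a_{kj}$ is handled with \eqref{est.prod.infty.1}. Its $W^{1,b,\infty}$ norm is bounded by $C_r 2^{2k}$: $\zeta_k\circ\mathtt g_j$ depends only on $y_n$, so tangential derivatives fall on $\psi_j\circ\mathtt g_j$, while at most two $y_n$-derivatives hit $\zeta_k$, giving the factor $2^{2k}$ by \eqref{est.der.zeta.k}.
\end{itemize}
Altogether this gives $C_r 2^{2k}\big(\|w_{k+1,\ell}\|_{H^{1,r}}+\|w_{k+1,\ell}\|_{H^{1,0}}\big)\le C_{r,k}\|w_{k+1,\ell}\|_{H^{1,r}}$. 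Summing over $\ell$ and $j$ yields $C_{r,k}|u|_{X^{1,r}_{k+1}}$.

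The main obstacle is bookkeeping the constants. We must ensure that no $k$-dependence or $r$-dependence leaks into the coefficient of $|u|_{X^{2,r}_k}$. This holds because the leading factor comes only from the $L^\infty$ and $W^{1,\infty}$ norm of the smooth matrix $[D\mathtt g_j]^{-T}$, with no cutoff derivatives involved. All cutoff derivatives are routed into the lower-order $X^{1,r}_{k+1}$ term via the transition-map identity. The $*$ seminorms are treated identically, using $\zeta^*_{k+1}=1$ on $\mathrm{supp}\,\zeta^*_k$.
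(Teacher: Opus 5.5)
Your proposal is correct and follows the same route as the paper. You split $((\grad u)\zeta_k\psi_j)\circ\mathtt g_j = (D\mathtt g_j)^{-T}\grad w_{kj} - (D\mathtt g_j)^{-T}\tilde u_j\grad a_{kj}$, bound the first piece by \eqref{est.prod.infty.1}, \eqref{pa.k.est}, \eqref{pa.n.est}, and rewrite the commutator through $\zeta_{k+1}=1$ on $\mathrm{supp}\,\zeta_k$, the partition of unity and the transition maps, closing with \eqref{composition.est.s.r}. You only make explicit the constant bookkeeping that the paper leaves implicit, such as the factor $2^{2k}$ from $y_n$-derivatives of $\zeta_k\circ\mathtt g_j$. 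The one slip is harmless: far from $A_0$ the extended matrix $(D\tilde{\mathtt g}_j)^{-T}$ equals $\mathtt R_j^{-1}$, not $\mathtt R_j$.
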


\begin{proof}
Given any $u$, with the notation in \eqref{def.a.kj}, \eqref{def.w.kj}, \eqref{def.mathtt.p}, 
by \eqref{trans.rule.grad} one has 
\[
((\grad u) \zeta_k \psi_j) \circ \mathtt g_j 
= (D \mathtt g_j)^{-T} (\grad \tilde u_j) a_{kj}
= (D \mathtt g_j)^{-T} \grad w_{kj} 
- (D \mathtt g_j)^{-T} \tilde u_j \grad a_{kj} 
\]
in $A_0 \cap \R^n_+$. 
By \eqref{est.prod.infty.1}, 
\eqref{pa.k.est}, \eqref{pa.n.est},
\[
\| (D \mathtt g_j)^{-T} \grad w_{kj} \|_{H^{1,r}(\R^n_+)} 
\leq C_0 \| w_{kj} \|_{H^{2,r}(\R^n_+)} 
+ C_r \| w_{kj} \|_{H^{2,0}(\R^n_+)}.
\]
Using \eqref{res.unity.in.the.annulus} and \eqref{zeta.k.is.1.where} 
and proceeding like in \eqref{echin.03}-\eqref{echin.04}, one has 
\begin{gather*}
\tilde u_j \grad a_{kj} 
= \sum_{\ell \in \mA_j} w_{k+1, \ell} \circ (\mathtt f_\ell \circ \mathtt g_j) \grad a_{kj}, 
\\
\| (D \mathtt g_j)^{-T} \tilde u_j \grad a_{kj} \|_{H^{1,r}(\R^n_+)}
\leq C_{r,k} \sum_{\ell \in \mA_j} \| w_{k+1, \ell} \|_{H^{1,r}(\R^n_+)}.
\end{gather*}
The sum over $j$ gives the thesis.
\end{proof}

The next lemma deals with the Poisson integral operator $\mathtt{PI}$ 
in the $W^{1,\infty}(B_1)$ norm.  

\begin{lemma} \label{lemma:PI.infty}
For any real $r_0$ with $1+r_0 > n/2$, 
the linear map $\mathtt{PI}$ defined in Lemma \ref{lemma:def.harmonic.ext.op} satisfies 
\begin{align} 
\| \mathtt{PI} f \|_{W^{1,\infty}(B_1)} 
= \| \mathtt{PI} f \|_{L^\infty(B_1)} 
+ \| \grad(\mathtt{PI} f) \|_{L^\infty(B_1)} 
& \leq C_{r_0} \| f \|_{H^{r_0+\frac32}(\S^{n-1})},
\label{pinoli.L.infty.bound.PI}
\end{align}
where $C_{r_0}$ depends on $r_0$. 
If $f \in H^{r_0+\frac32}(\S^{n-1})$, 
then $\mathtt{PI} f$ is the restriction to $B_1$ of a function in $C^1(\R^n)$. 
\end{lemma}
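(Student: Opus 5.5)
We split $B_1$ into an interior region and a boundary annulus, and treat them separately.

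\emph{Interior region.} On $K_0 = B_{1-\delta}$, and more generally on $|x| \leq 1-\delta/4$, we use interior estimates for harmonic functions. By the mean value property, $|\pa^\a u(x)| \leq C_{\a,\delta} \| u \|_{L^2(B_1)}$ for $|x| \leq 1 - \delta/8$. Lemma \ref{lemma:def.harmonic.ext.op} then gives $\| u \|_{L^2(B_1)} \leq C \| f \|_{H^{1/2}(\S^{n-1})} \leq C \| f \|_{H^{r_0+3/2}(\S^{n-1})}$. Hence the $C^1$ norm of $u = \mathtt{PI} f$ on this region is bounded as required, and $u$ is smooth there.

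\emph{Boundary annulus.} On $1-\delta/4 \leq |x| < 1$ we use the partition \eqref{res.unity.in.the.annulus} together with $\zeta_0 = 1$ for $|x| \geq \rho_0 = 1-\delta/4$. This gives
\[
u = \sum_{j=1}^N u \zeta_0 \psi_j, \qquad (u\zeta_0\psi_j)\circ \mathtt g_j = w_{0j}.
\]
We apply Lemma \ref{lemma:pinoli.u.0} with $k=0$ and $r = r_0$:
\[
\sum_j \| w_{0j} \|_{H^{2,r_0}(\R^n_+)} \leq C_{r_0} \| f \|_{H^{r_0+3/2}(\S^{n-1})}.
\]
We then bound $w_{0j}$ and its gradient in $L^\infty$ via the embedding \eqref{embedding.ineq}:
\begin{itemize}
\item For $w_{0j}$ itself, use $s=2$ and $r=r_0$; the hypotheses $s > 1/2$ and $s+r = 2+r_0 > n/2$ hold.
\item For $\pa_{y_i} w_{0j}$, use \eqref{pa.k.est} and \eqref{pa.n.est}, which give $\| \pa_{y_i} w_{0j} \|_{H^{1,r_0}(\R^n_+)} \leq \| w_{0j} \|_{H^{2,r_0}(\R^n_+)}$. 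The embedding with $s=1$ and $r = r_0$ then applies, since $1 > 1/2$ and $1 + r_0 > n/2$ by assumption.
\end{itemize}
This is exactly where the hypothesis $1+r_0>n/2$ enters, so this step should work cleanly. We then return to the $x$ variables through $\mathtt f_j$, which is smooth with bounded derivatives on $\overline{K_j}$. By the chain rule, $\| u\zeta_0\psi_j \|_{W^{1,\infty}(B_1 \cap K_j)} \leq C \| w_{0j} \|_{W^{1,\infty}}$. Summing over $j$ gives \eqref{pinoli.L.infty.bound.PI}.

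\emph{The $C^1(\R^n)$ extension.} For $f \in H^{r_0+3/2}$, the reflection operator $R$ of Lemma \ref{lemma:est.tools.R.n.+}$(vi)$ only handles $s=0,1$, so it is not the right tool for first derivatives. Instead we take $v_j \in H^{2,r_0}(\R^n)$ extending $w_{0j}$, which exists by the definition \eqref{def.H.s.r.Om} of the norm. Then $v_j$ and $\grad v_j$ lie in $H^{1,r_0}(\R^n)$ or better, so by part $(ii)$ they are uniformly continuous. Hence $v_j \in C^1(\R^n)$. We multiply $v_j\circ\tilde{\mathtt f}_j$ by a cutoff equal to $1$ on $\mathrm{supp}\,\psi_j$ and supported in $K_j$, and sum over $j$. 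The interior piece $(1-\zeta_0)u$ is smooth and compactly supported in $B_1$, so it extends by zero. The resulting sum is a $C^1(\R^n)$ function agreeing with $u$ on $B_1$.

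The main obstacle is this last bookkeeping: checking that the glued extension coincides with $u$ in $B_1$ and that the cutoffs do not spoil $C^1$ regularity. The first point needs the cutoff to be $1$ on $\mathrm{supp}\,\psi_j$ (so that, with $\zeta_0=1$ on the annulus, $\sum_j w_{0j}\circ\mathtt f_j = u$ there) and uses that $\tilde{\mathtt f}_j = \mathtt f_j$ on $K_j$. The second holds because $\tilde{\mathtt f}_j$ is a smooth diffeomorphism and the cutoffs are smooth.
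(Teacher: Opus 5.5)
Your proof is correct and rests on the same core as the paper's: localization by $\zeta_0\psi_j$ near $\S^{n-1}$, Lemma~\ref{lemma:pinoli.u.0} with $k=0$, $r=r_0$, and the embedding \eqref{embedding.ineq} with $s=1$, $r=r_0$, which is exactly where $1+r_0>n/2$ enters. The differences are minor. The paper reduces to the boundary annulus via the maximum principle for the harmonic functions $u$ and $\pa_{x_i}u$, and bounds $|\grad u|_{X^{1,r_0}_0}$ through Lemma~\ref{lemma:pinoli.grad}. You instead use interior estimates and differentiate $w_{0j}$ directly in $y$, followed by the chain rule, which bypasses that lemma; and your gluing construction of the $C^1(\R^n)$ extension supplies a detail the paper's proof does not spell out.
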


\begin{proof}
Let $u = \mathtt{PI} f$. 
Since $u$ is harmonic in $B_1$, from the maximum principle one has 
$\| u \|_{L^\infty(B_1)} \leq \| f \|_{L^\infty(\S^{n-1})}$, 
and, by \eqref{embedding.Sd}, this is 
$\leq C_{s_0} \| f \|_{H^{s_0}(\S^{n-1})}$ with $s_0 > (n-1)/2$.
Each partial derivative $\pa_{x_i} u$ is also harmonic in $B_1$, 
and therefore, by the maximum principle,  
\[
\| \pa_{x_i} u \|_{L^\infty(B_1)} = \| \pa_{x_i} u \|_{L^\infty(\mA)},
\]
where $\mA$ is the annulus $1-(\delta/4) < |x| < 1$.   
In $\mA$ one has $\zeta_0 = 1$ and $\sum_{j=1}^N \psi_j = 1$, whence 
$\pa_{x_i} u = \sum_{j=1}^N (\pa_{x_i} u) \zeta_0 \psi_j$ in $\mA$ and 
\[
\| \pa_{x_i} u \|_{L^\infty(\mA)} 
\leq \sum_{j=1}^N \| (\pa_{x_i} u) \zeta_0 \psi_j \|_{L^\infty(\mA)} 
\leq \sum_{j=1}^N \| (\pa_{x_i} u) \zeta_0 \psi_j \|_{L^\infty(B_1)} 
= \sum_{j=1}^N \| (\pa_{x_i} u) \zeta_0 \psi_j \|_{L^\infty(B_1 \cap K_j)} 
\]
because $\psi_j = 0$ outside $K_j$. 
Using the change of variable $x = \mathtt g_j(y)$ and \eqref{embedding.ineq}, 
\begin{align*}
\| (\pa_{x_i} u) \zeta_0 \psi_j \|_{L^\infty(B_1 \cap K_j)} 
& = \| ((\pa_{x_i} u) \zeta_0 \psi_j) \circ \mathtt g_j \|_{L^\infty(A_0 \cap \R^n_+)} 
\\ 
& \leq \| ((\pa_{x_i} u) \zeta_0 \psi_j) \circ \mathtt g_j \|_{L^\infty(\R^n_+)}
\\ 
& \leq C_{1,r_0} \| ((\pa_{x_i} u) \zeta_0 \psi_j) \circ \mathtt g_j \|_{H^{1,r_0}(\R^n_+)}
\end{align*}
with $1+r_0 > n/2$. 
Hence $\| \pa_{x_i} u \|_{L^\infty(B_1)} \leq C_{1,r_0} |\pa_{x_i} u|_{X^{1, r_0}_0}$.   
Then apply \eqref{pinoli.grad} and \eqref{pinoli.interpol.psi}.
\end{proof}

In the next lemma we estimate the multiplication of two functions. 

\begin{lemma} \label{lemma:pinoli.prod}
For all real $r,r_0$, with $r \geq 0$ and $r_0 + 1 > n/2$,
all integer $k \geq 0$, one has 
\begin{equation} \label{pinoli.prod}
|uv|_{X^{1,r}_k} \leq C_{r_0} |u|_{X^{1,r}_k} |v|_{X^{1,r_0}_{k+1}} 
+ (\chi_{r > r_0}) C_r |u|_{X^{1,r_0}_k} |v|_{X^{1,r}_{k+1}},
\end{equation}
where $C_{r_0}$ is independent of $r$, $C_r$ is increasing in $r$, 
and both are independent of $k$. 
The same holds for the $*$ seminorms. 
Moreover, for all integers $k, k' \geq 0$, 
\begin{equation} \label{pinoli.prod.mixed.*}
|uv|_{X^{1,r}_k} \leq C_{r_0} |u|_{X^{1,r}_k} |v|_{X^{1,r_0}_{*,k'}} 
+ (\chi_{r > r_0}) C_r |u|_{X^{1,r_0}_k} |v|_{X^{1,r}_{*,k'}}.
\end{equation}
\end{lemma}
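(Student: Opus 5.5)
The plan is to localize the product exactly as in the proofs of Lemmas \ref{lemma:pinoli.S} and \ref{lemma:pinoli.grad}, and then reduce to the product estimate \eqref{prod.est.unified} in $\R^n_+$. Fix $j \in \{1,\ldots,N\}$. By \eqref{zeta.k.is.1.where}, $\zeta_{k+1}=1$ on $\mathrm{supp}(\zeta_k)$. By \eqref{res.unity.in.the.annulus}, $\sum_\ell \psi_\ell = 1$ on $\mathrm{supp}(\zeta_k\psi_j)$, since this support lies in the annulus $1-\delta \le |x| \le 1$. Moreover $\psi_\ell \psi_j = 0$ unless $\ell \in \mA_j$. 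Hence
\[
(uv\zeta_k\psi_j)\circ \mathtt g_j = \sum_{\ell\in\mA_j} \big((u\zeta_k\psi_j)\circ\mathtt g_j\big)\,\big((v\zeta_{k+1}\psi_\ell)\circ\mathtt g_j\big)
= \sum_{\ell\in\mA_j} w^u_{kj}\, \big(w^v_{k+1,\ell}\circ(\mathtt f_\ell\circ\mathtt g_j)\big),
\]
where $w^u_{kj} := (u\zeta_k\psi_j)\circ\mathtt g_j$ and $w^v_{k+1,\ell} := (v\zeta_{k+1}\psi_\ell)\circ\mathtt g_\ell$, as in \eqref{transition.w}. The first factor is supported in $A_0$, so we may replace $\mathtt f_\ell\circ\mathtt g_j$ by its extension $(\tilde{\mathtt T}_{\ell j}(y'),y_n)$ from \eqref{extension.in.A0}. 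This replacement does not change the product in $\R^n_+$.

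Next we apply \eqref{prod.est.unified} to each term. The $X^{1,r}$ norm of the $j$-th piece is bounded by
\[
C_{r_0}\|w^u_{kj}\|_{H^{1,r}} \|w^v_{k+1,\ell}\circ\tilde{\mathtt T}_{\ell j}\|_{H^{1,r_0}}
+ (\chi_{r>r_0})\, C_r \|w^u_{kj}\|_{H^{1,r_0}} \|w^v_{k+1,\ell}\circ\tilde{\mathtt T}_{\ell j}\|_{H^{1,r}},
\]
where \eqref{prod.est.unified} is used with $u,v$ interchanged, which is legitimate since it is symmetric up to swapping. The composition factors are handled by \eqref{composition.est.s.r} with $s=1$. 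By Lemma \ref{lemma:rotat.tilde}, $\tilde{\mathtt T}_{\ell j}$ is an affine map plus a smooth compactly supported $g$, so
\[
\|w\circ \tilde{\mathtt T}_{\ell j}\|_{H^{1,r}} \le C_{r}\|w\|_{H^{1,r}}.
\]
Here the extra term $\|g\|\,\|w\|_{H^{1,0}}$ is absorbed, because $\|w\|_{H^{1,0}} \le \|w\|_{H^{1,r}}$. In the first summand the composition is only taken in the low norm $r_0$, so its constant is $C_{r_0}$ and does not depend on $r$. In the second summand the constant depends on $r$ and is absorbed into $C_r$. The constants are independent of $k$, since no derivative ever falls on $\zeta_k$. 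The maps $\tilde{\mathtt T}_{\ell j}$ range over finitely many pairs determined only by the fixed covering, so taking maxima over these pairs keeps the constants uniform. Summing over $\ell\in\mA_j$ and over $j$ gives \eqref{pinoli.prod}, because $\sum_j \|w^u_{kj}\| = |u|_{X_k}$ and $\sum_\ell \|w^v_{k+1,\ell}\| = |v|_{X_{k+1}}$.

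The proof for the $*$ seminorms is identical. For \eqref{pinoli.prod.mixed.*} we use \eqref{zeta.*.k.k'} in place of \eqref{zeta.k.is.1.where}: $\zeta^*_{k'}=1$ on $\mathrm{supp}(\zeta_k)$, so $v$ can be multiplied by $\zeta^*_{k'}\psi_\ell$. The same chain then yields the mixed estimate.

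The main point to check is the uniformity of constants: the low-norm constant must stay independent of $r$. This works because \eqref{prod.est.unified} has that structure and the composition estimate is applied separately at levels $r_0$ and $r$. A secondary check is that the condition $r_0+1>n/2$ required by \eqref{prod.est.unified} matches the hypothesis here, which it does.
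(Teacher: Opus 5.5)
Your proposal is correct and takes essentially the same route as the paper. You insert $\zeta_{k+1}$ (resp.\ $\zeta^*_{k'}$) and $\sum_{\ell\in\mA_j}\psi_\ell$, rewrite the $v$-factor through the transition map $\mathtt f_\ell\circ\mathtt g_j$, apply the asymmetric product estimate and then the composition estimate \eqref{composition.est.s.r} separately at the levels $r_0$ and $r$, and sum over $j,\ell$. Your extra justifications fill in details the paper leaves implicit: you use \eqref{prod.est.unified} with the factors swapped, and you absorb the $\|g\|\,\|w\|_{H^{1,0}}$ term because $\tilde{\mathtt T}_{\ell j}$ is affine plus a compactly supported smooth map.
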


\begin{proof}
We use \eqref{res.unity.in.the.annulus} and \eqref{zeta.k.is.1.where}, 
and proceed like in \eqref{echin.03}-\eqref{echin.04}.  
Thus 
\[
(u v \zeta_k \psi_j) \circ \mathtt g_j 
= \sum_{\ell \in \mA_j} (u v \zeta_k \zeta_{k+1} \psi_\ell \psi_j) \circ \mathtt g_j 
= w_{kj} \sum_{\ell \in \mA_j} z_{k+1,\ell} \circ \mathtt f_\ell \circ \mathtt g_j, 
\]
where $w_{kj} := (u \zeta_k \psi_j) \circ \mathtt g_j$ % is defined in \eqref{def.w.kj}, 
and $z_{k+1,\ell} := (v \zeta_{k+1} \psi_\ell) \circ \mathtt g_\ell$. 
By \eqref{prod.est}, 
\begin{align*}
\| w_{kj} (z_{k+1,\ell} \circ \mathtt f_\ell \circ \mathtt g_j) \|_{H^{1,r}(\R^n_+)} 
& \leq 
C_{r_0} \| w_{kj} \|_{H^{1,r}(\R^n_+)} 
\| z_{k+1,\ell} \circ \mathtt f_\ell \circ \mathtt g_j \|_{H^{1,r_0}(\R^n_+)} 
\\ & \quad \ 
+ (\chi_{r > r_0}) C_{r} \| w_{kj} \|_{H^{1,r_0}(\R^n_+)} 
\| z_{k+1,\ell} \circ \mathtt f_\ell \circ \mathtt g_j \|_{H^{1,r}(\R^n_+)}.
\end{align*}
By \eqref{composition.est.s.r}, 
\begin{align*}
\| z_{k+1,\ell} \circ \mathtt f_\ell \circ \mathtt g_j \|_{H^{1,\rho}(\R^n_+)} 
& \leq C_\rho \| z_{k+1,\ell} \|_{H^{1,\rho}(\R^n_+)},
\end{align*}
with $\rho = r$ and $\rho = r_0$. The sum over $j$ gives \eqref{pinoli.prod}. 
To prove \eqref{pinoli.prod.mixed.*}, recall \eqref{zeta.*.k.k'}, 
and use $\zeta_{k'}^*$ instead of $\zeta_{k+1}$, 
and $z_{k',\ell}^* := (v \zeta^*_{k'} \psi_\ell) \circ \mathtt g_\ell$
instead of $z_{k+1,\ell}$.
\end{proof}

Applying repeatedly the previous lemma, we estimate integer powers of a function.

\begin{lemma} \label{lemma:pinoli.power}
For any integer $m \geq 2$, 
any real $r, r_0$ with $r \geq 0$, $r_0 + 1 > n/2$, one has 
\begin{align} \label{pinoli.power}
|u^m|_{X^{1,r}_k} 
& \leq ( C_{r_0} |u|_{X^{1,r_0}_{k+1}} )^{m-1} |u|_{X^{1,r}_k} 
+ (m-1) (\chi_{r>r_0}) C_r (C_{r_0} |u|_{X^{1,r_0}_{k+1}})^{m-2} |u|_{X^{1,r_0}_k} |u|_{X^{1,r}_{k+1}},
\end{align}
where $C_{r_0}, C_r$ are the constants in Lemma \ref{lemma:pinoli.prod}.
The same holds for the $*$ seminorms. 
\end{lemma}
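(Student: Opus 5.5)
The plan is induction on $m$, applying the product estimate \eqref{pinoli.prod} of Lemma \ref{lemma:pinoli.prod} with $u$ replaced by $u^{m-1}$ and $v=u$. Write $a := C_{r_0} |u|_{X^{1,r_0}_{k+1}}$. For $m=2$, \eqref{pinoli.prod} with both factors equal to $u$ gives
\[
|u^2|_{X^{1,r}_k} \leq C_{r_0}|u|_{X^{1,r}_k}|u|_{X^{1,r_0}_{k+1}} + (\chi_{r>r_0}) C_r |u|_{X^{1,r_0}_k}|u|_{X^{1,r}_{k+1}} ,
\]
which is exactly \eqref{pinoli.power} for $m=2$, since there $a^{m-2}=1$ and $m-1=1$.

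For the inductive step, assume \eqref{pinoli.power} holds for $m$. Apply \eqref{pinoli.prod} to $u^{m+1}=u^m\cdot u$, keeping $u^m$ in the slot with index $k$ and $u$ in the slot with index $k+1$:
\[
|u^{m+1}|_{X^{1,r}_k} \leq C_{r_0}|u^m|_{X^{1,r}_k}|u|_{X^{1,r_0}_{k+1}} + (\chi_{r>r_0}) C_r |u^m|_{X^{1,r_0}_k}|u|_{X^{1,r}_{k+1}} .
\]
The first term is $a\,|u^m|_{X^{1,r}_k}$. Inserting the induction hypothesis gives
\[
a^m|u|_{X^{1,r}_k} + (m-1)(\chi_{r>r_0})C_r\, a^{m-1}|u|_{X^{1,r_0}_k}|u|_{X^{1,r}_{k+1}} .
\]

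For the second term we need $|u^m|_{X^{1,r_0}_k}$. This comes from \eqref{pinoli.power} applied at $r=r_0$. There the characteristic function vanishes, so the induction hypothesis reduces to $|u^m|_{X^{1,r_0}_k}\le a^{m-1}|u|_{X^{1,r_0}_k}$. This step is the reason to run the induction simultaneously for all $r\ge0$: the case $r=r_0$ is covered by the same statement. The second term is therefore bounded by
\[
(\chi_{r>r_0})C_r\,a^{m-1}|u|_{X^{1,r_0}_k}|u|_{X^{1,r}_{k+1}} .
\]
Adding this to the first term, the coefficient $m-1$ becomes $m$, which yields \eqref{pinoli.power} for $m+1$.

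The only delicate point is keeping the shifts of $k$ consistent. In \eqref{pinoli.prod} the factor carrying the power must always sit in the $k$-slot, never the $(k+1)$-slot. Otherwise the indices would drift to $k+2,k+3,\dots$, and the stated form of \eqref{pinoli.power} would not close. With the assignment above, only $|u|_{X^{1,\cdot}_{k+1}}$ appears, so no further shifts arise.

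The $*$ version follows verbatim, using the $*$ form of Lemma \ref{lemma:pinoli.prod}. The constants are exactly those of Lemma \ref{lemma:pinoli.prod}, because no other estimate is used.
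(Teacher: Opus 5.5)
Your proposal is correct and follows the paper's proof: induction on $m$ using \eqref{pinoli.prod}, with the power $u^{m-1}$ always in the $k$-slot and $u$ in the $(k+1)$-slot. You also make explicit a detail the paper leaves unstated, namely that $|u^m|_{X^{1,r_0}_k}$ is bounded by the $r=r_0$ case of the induction hypothesis, where $\chi_{r_0>r_0}=0$ and $r_0>0$ because $n\geq 2$.
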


\begin{proof} 
By induction on $m$, using \eqref{pinoli.prod},
always putting $u^{m-1}$ with $k$, and $u$ with $k+1$.
\end{proof}

For the multiplication by Carthesian coordinates $x_i$, we have the following inequality. 

\begin{lemma} 
For all $i,i'=1, \ldots, n$, the multiplication by $x_i$ or by $x_i x_{i'}$ satisfies 
\begin{equation} \label{pinoli.prod.x.i}
|x_i u|_{X^{1,r}_k} + |x_i x_{i'} u|_{X^{1,r}_k} 
\leq C_{r_0} |u|_{X^{1,r}_k} 
+ (\chi_{r > r_0}) C_r |u|_{X^{1,r_0}_k},
\end{equation}
with $C_{r_0}$ independent of $r$, 
$C_r$ increasing in $r$, 
and both independent of $k$. 
The same holds for the $*$ seminorms. 
\end{lemma}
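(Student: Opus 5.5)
We localize and reduce to the multiplication estimate \eqref{est.prod.infty.1}. Fix $j$ and write
\[
(x_i u \zeta_k \psi_j) \circ \mathtt g_j = \big( (\mathtt g_j)_i \, \tilde\chi_j \big) \, w_{kj},
\qquad w_{kj} := (u \zeta_k \psi_j) \circ \mathtt g_j .
\]
Here $\tilde\chi_j$ is a fixed smooth cut-off, compactly supported in $A_0$, equal to $1$ on $\mathrm{supp}(\psi_j \circ \mathtt g_j)$, and independent of $k$. The point is that no change of index $k \to k+1$ is needed, since the multiplier is an explicit smooth function rather than a Sobolev function. Define
\[
f_{ij} := (\mathtt g_j)_i \, \tilde\chi_j ,
\]
which is a $C^\infty$ function with compact support in $\R^n$. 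By \eqref{explicit.mathtt.f.inv} and the fact that $\mathtt g_j = \mathtt R_j^{-1} \mathtt g$, all its derivatives are bounded by constants depending only on the order of differentiation. The same holds for $x_i x_{i'}$, with $f_{ii'j} := (\mathtt g_j)_i (\mathtt g_j)_{i'} \tilde\chi_j$. The functions $f$ do not depend on $k$, because the cut-offs $\zeta_k$ sit entirely inside $w_{kj}$.

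Next we apply \eqref{est.prod.infty.1} with $f = f_{ij}$ and $u = w_{kj}$. This gives
\[
\| f_{ij} w_{kj} \|_{H^{1,r}(\R^n_+)} \leq C_0 \| f_{ij} \|_{W^{1,\infty}} \| w_{kj} \|_{H^{1,r}(\R^n_+)} + C_r \| f_{ij} \|_{W^{1,b,\infty}} \| w_{kj} \|_{H^{1,0}(\R^n_+)} .
\]
In this bound $\| f_{ij} \|_{W^{1,\infty}}$ is a universal constant, and $\| f_{ij} \|_{W^{1,b,\infty}}$ depends only on $b$, hence increasingly on $r$.

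It remains to match the stated form with $\chi_{r>r_0}$ and the $r_0$ norm.
\begin{itemize}
\item If $r > r_0$, we bound $\| w_{kj} \|_{H^{1,0}} \leq \| w_{kj} \|_{H^{1,r_0}}$, since $r_0 \geq 0$ (note $r_0 + 1 > n/2$ forces $r_0 > -1/2$; if $r_0 < 0$ we just keep $H^{1,0} \leq H^{1,\max(r_0,0)}$, or equivalently assume $r_0 \geq 0$ as in the applications).
\item If $r \leq r_0$, then $b$ is bounded in terms of $r_0$. The second constant becomes $C_{r_0}$, and the second term is absorbed into the first using $\| w_{kj} \|_{H^{1,0}} \leq \| w_{kj} \|_{H^{1,r}}$.
\end{itemize}
In both cases the constant in front of $|u|_{X^{1,r}_k}$ depends only on $r_0$ (at most $C_0 + C_{r_0}$), and the constant in front of $|u|_{X^{1,r_0}_k}$ is increasing in $r$.

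Finally we sum over $j = 1, \ldots, N$ to get \eqref{pinoli.prod.x.i}. The $*$ seminorms are handled identically, with $\zeta^*_k$ in place of $\zeta_k$. The only mildly delicate step is checking that \eqref{est.prod.infty.1} applies with $f$ defined on all of $\R^n$ and $W^{1,b,\infty}$ regularity. This holds because $f_{ij}$ is smooth and compactly supported, so it lies in every $W^{1,b,\infty}(\R^n_+)$. Equivalently, one could use the product estimate \eqref{prod.est.unified} with $f_{ij} \in H^{1,r}$ for all $r$, which yields exactly the asymmetric $\chi_{r>r_0}$ form directly.
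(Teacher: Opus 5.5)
Your argument is correct, but it follows a different route from the paper's.

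\textbf{The paper's route.} The paper reruns the proof of Lemma \ref{lemma:pinoli.prod} with $v = x_i$ (or $x_i x_{i'}$).
\begin{itemize}
\item[(a)] It inserts $\sum_{\ell \in \mA_j} \psi_\ell$ together with a cut-off $\zeta^*$ in place of $\zeta_{k+1}$.
\item[(b)] It writes the localized product as $w_{kj}$ times $z^*_\ell \circ \mathtt f_\ell \circ \mathtt g_j$, where $z^*_\ell = (x_i \zeta^* \psi_\ell) \circ \mathtt g_\ell$ is a fixed smooth, compactly supported function.
\item[(c)] It then applies the two-function product estimate \eqref{prod.est} and the composition estimate \eqref{composition.est.s.r} for the transition maps.
\end{itemize}
Independence of $k$ comes from $\zeta^*$ not depending on $k$.

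\textbf{Your route.} You use that $x_i$ is globally defined, so in the $j$-th chart it pulls back to the explicit smooth function $(\mathtt g_j)_i$. After cutting off with $\tilde\chi_j$, you apply the bounded-multiplier estimate \eqref{est.prod.infty.1} in that same chart.

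\textbf{Comparison.}
\begin{itemize}
\item Your route needs no transition maps and no composition estimate. It is more elementary, and $k$-independence is immediate because the multiplier $f_{ij}$ never sees $\zeta_k$.
\item The cost is a small case split to turn the $\|w_{kj}\|_{H^{1,0}(\R^n_+)}$ term into the asymmetric $\chi_{r>r_0}$ form. The paper gets that form directly from \eqref{prod.est.unified}.
\item As you note, you could get the same form by applying \eqref{prod.est.unified} to $f_{ij} w_{kj}$ in chart $j$, since $f_{ij}$ lies in every $H^{1,r}(\R^n_+)$.
\end{itemize}

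\textbf{One small correction.} Since $n \geq 2$, the condition $r_0 + 1 > n/2$ gives $r_0 > 0$, not just $r_0 > -1/2$. So $\|w_{kj}\|_{H^{1,0}(\R^n_+)} \leq \|w_{kj}\|_{H^{1,r_0}(\R^n_+)}$ holds directly, and your hedge for $r_0 < 0$ is unnecessary.
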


\begin{proof}
Follow the proof of Lemma \ref{lemma:pinoli.prod}, 
but for $v(x)=x_i$ or $x_i x_{i'}$ use $\zeta_*$ instead of $\zeta_{k+1}$, 
so that the first derivative of $\zeta_*$ is bounded by a constant 
independent of $k$.  
\end{proof}

\section{Analysis of the Dirichlet-Neumann operator}
\label{sec:DN}

In this section we prove Theorems \ref{thm.G.in.intro} and \ref{thm.der.G.in.intro}.
First, we want to transform the Laplace problem \eqref{problem.in.Om} into a problem in the unit ball $B_1$.
To this aim, we first use the Poisson integral operator to obtain a bijective map of $B_1$ onto $\Om$. 

\begin{lemma} \label{lemma:tilde.gamma}
For $1+r_0>n/2$, there exists $\delta_0$ with the following property.  
Let $h \in H^{r_0+\frac32}(\S^{n-1})$ with $\| h \|_{H^{r_0+\frac32}(\S^{n-1})} \leq \delta_0$, 
and let $\tilde h := \mathtt{PI} h$, where $\mathtt{PI}$ is the harmonic extension operator of 
Lemma \ref{lemma:def.harmonic.ext.op}. 
Then the map 
\begin{equation} \label{def.tilde.gamma}
\tilde \gamma(x) := x(1 + \tilde h(x)), \quad x \in B_1,
\end{equation}
is $C^\infty(B_1)$, it is a bijection of $B_1$ onto $\Om$, and 
\begin{equation} \label{est.tilde.gamma.id}
\| \tilde \gamma - \mathrm{id} \|_{W^{1,\infty}(B_1)} 
\leq C_{r_0} \| h \|_{H^{r_0+\frac32}(\S^{n-1})}.
\end{equation}
\end{lemma}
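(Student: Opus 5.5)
The plan has three parts: the $W^{1,\infty}$ estimate \eqref{est.tilde.gamma.id}, the smoothness of $\tilde\gamma$, and its bijectivity onto $\Om$.

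\emph{The estimate.} Since $\tilde\gamma(x)-x = x\tilde h(x)$, one has $\grad(\tilde\gamma - \mathrm{id}) = \tilde h\, I + x\otimes\grad\tilde h$. Using $|x|\le 1$ in $B_1$, this gives
$\|\tilde\gamma-\mathrm{id}\|_{W^{1,\infty}(B_1)} \le C\|\tilde h\|_{W^{1,\infty}(B_1)}$. Lemma \ref{lemma:PI.infty} then bounds the right-hand side by $C_{r_0}\|h\|_{H^{r_0+\frac32}(\S^{n-1})}$. Smoothness of $\tilde\gamma$ in $B_1$ is immediate, because $\tilde h$ is harmonic and hence $C^\infty$ in $B_1$. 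By the last assertion of Lemma \ref{lemma:PI.infty}, $\tilde h$ is the restriction of a $C^1(\R^n)$ function, so $\tilde\gamma$ extends to a $C^1$ map on $\overline{B_1}$. On $\S^{n-1}$ this extension equals $x(1+h(x))=\gamma(x)$, since $\tilde h=h$ there (the trace of $\mathtt{PI}h$ agrees with the continuous extension, as $h$ is continuous by \eqref{embedding.Sd}).

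\emph{Bijectivity.} I would argue radially. For fixed $\omega\in\S^{n-1}$, consider $t\mapsto \phi_\omega(t):=t(1+\tilde h(t\omega))$ on $[0,1]$. Its derivative is $1+\tilde h(t\omega)+t\la\grad\tilde h(t\omega),\omega\ra \ge 1-2C_{r_0}\delta_0 \geq \frac12$ once $\delta_0$ is small. Hence $\phi_\omega$ is strictly increasing, with $\phi_\omega(0)=0$ and $\phi_\omega(1)=1+h(\omega)$. Because $\tilde\gamma(t\omega)=\phi_\omega(t)\,\omega$ with $\phi_\omega(t)>0$ for $t>0$, the map $\tilde\gamma$ sends the radial segment $\{t\omega: 0\le t<1\}$ bijectively onto $\{\rho\omega:0\le\rho<1+h(\omega)\}$. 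This target set is exactly $\Om\cap\{\rho\omega:\rho\ge0\}$, since $\Om$ is star-shaped with boundary \eqref{def.pa.Om}: $\Om=\{\rho\omega:0\le\rho<1+h(\omega)\}$. Distinct directions $\omega$ give disjoint rays apart from the origin, and only $x=0$ maps to $0$. It follows that $\tilde\gamma:B_1\to\Om$ is a bijection. Alternatively, one can use that $\|D\tilde\gamma-I\|_{L^\infty}\le 1/2$ together with convexity of $B_1$ to get $|\tilde\gamma(x)-\tilde\gamma(y)|\ge\frac12|x-y|$, which gives injectivity; but surjectivity onto $\Om$ is cleanest via the radial argument.

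\emph{Main obstacle.} The one point requiring care is identifying $\Om$ as the radial region under the graph, and justifying that the boundary value of $\tilde h$ is $h$ pointwise. For the latter I would first take smooth $h$, where Lemma \ref{lemma:def.harmonic.ext.op} gives continuity up to the boundary. The general case then follows by density, using the $W^{1,\infty}$ bound \eqref{pinoli.L.infty.bound.PI}: smooth approximations of $h$ in $H^{r_0+\frac32}(\S^{n-1})$ have harmonic extensions converging uniformly on $\overline{B_1}$, so the boundary values pass to the limit. Finally, $\delta_0$ is chosen so that $2C_{r_0}\delta_0\le\frac12$; this also ensures $1+h>0$.
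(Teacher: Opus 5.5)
Your proposal is correct and follows the paper's proof essentially step by step: the $W^{1,\infty}$ bound comes from \eqref{pinoli.L.infty.bound.PI} applied to $x\tilde h(x)$, smoothness comes from harmonicity of $\tilde h$, and bijectivity comes from strict monotonicity of $t \mapsto t(1+\tilde h(tz))$ on each radial segment. The extra points you address are left implicit in the paper, and your treatment of them is sound: that $\tilde h = h$ pointwise on $\S^{n-1}$ (via density and the uniform bound), that $1+h>0$, and that $\Om$ is the radial region under the graph.
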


\begin{proof}
The function $\tilde h$ is $C^\infty(B_1)$ because it is harmonic in $B_1$. 
Therefore $\tilde \gamma : B_1 \to \R^n$ is also $C^\infty(B_1)$. 
Estimate \eqref{est.tilde.gamma.id} for $(\tilde \gamma - \mathrm{id})(x) = x \tilde h(x)$ 
follows from \eqref{pinoli.L.infty.bound.PI}. 
For $\delta_0$ sufficiently small, the RHS of \eqref{est.tilde.gamma.id} 
is $\leq C_{r_0} \delta_0 \leq \frac12$. 
For every $z \in \S^{n-1}$, 
$\tilde\gamma$ is a bijection of the segment $\{ tz : t \in [0,1] \} \subset \overline{B}_1$ 
onto the segment $\{ sz : s \in [0,1+h(z)] \} \subset \overline{\Om}$, 
because $\tilde \gamma(tz) = \ph(t) z$, where $\ph(t) :=  t (1 + \tilde h(tz))$, 
and $\ph(0) = 0$, $\ph(1) = 1+h(z)$, $\ph'(t) = 1 + \tilde h(tz) + t \la (\grad \tilde h)(tz) , z \ra$, 
$|\ph'(t) - 1| \leq \| \tilde h \|_{W^{1,\infty}(B_1)} \leq \frac12$ for all $t \in (0,1)$, 
so that $\ph$ is strictly increasing in $[0,1]$. 
\end{proof}

Using $\tilde \gamma$ as a change of variable, problem \eqref{problem.in.Om} becomes 
\begin{equation} \label{problem.in.B}
\div (P \grad u) = 0 \ \ \text{in } B_1, \quad \ 
u = \psi \ \ \text{on } \S^{n-1}, \quad \ 
u \in H^1(B_1),
\end{equation}
where $u = \Phi \circ \tilde \gamma$, 
$P$ is the $n \times n$ matrix
\begin{equation} \label{def.matrix.P}
P(x) = | \det D \tilde \gamma(x) | [ D \tilde \gamma (x) ]^{-1} [ D \tilde \gamma (x) ]^{-T},
\end{equation}
$D \tilde\gamma$ is the Jacobian matrix of $\tilde \gamma$, 
and $(D \tilde \gamma)^{-T}$ is the transpose of its inverse. 
By \eqref{def.tilde.gamma}, 
\[
D \tilde \gamma(x) = (1 + \tilde h(x)) I + x \otimes \grad \tilde h(x).
\]
Since $(a \otimes b)(a \otimes b) = \la a,b \ra a \otimes b$, one has 
\begin{align}
(D \tilde \gamma)^{-1} 
& = \frac{I}{1+\tilde h} 
- \frac{ x \otimes \grad \tilde h }{ (1 + \tilde h)(1 + \tilde h + \la x, \grad \tilde h \ra) }, 
\notag \\
(D \tilde \gamma)^{-T} 
& = \frac{I}{1+\tilde h} 
- \frac{ (\grad \tilde h) \otimes x }{ (1 + \tilde h)(1 + \tilde h + \la x, \grad \tilde h \ra) }.
\label{D.tilde.gamma.-T}
\end{align}
By the matrix determinant lemma, i.e., the formula $\det(I + a \otimes b) = 1 + \la a, b \ra$, 
we calculate 
\[
\det(D \tilde \gamma) = (1 + \tilde h)^{n-1} (1 + \tilde h + \la x, \grad \tilde h \ra).  
\]
Hence the matrix $P$ in \eqref{def.matrix.P} is 
\begin{align} \label{formula.P}
P & = (1+\tilde h)^{n-3} \bigg[ \big( 1 + \tilde h + \la x , \grad \tilde h \ra \big) I 
- (\grad \tilde h) \otimes x 
- x \otimes \grad \tilde h 
+ \frac{|\grad \tilde h|^2 x \otimes x}{ 1 + \tilde h + \la x , \grad \tilde h \ra } \bigg].
\end{align}

For $x \in \S^{n-1}$, recalling that $\la x , \grad_{\S^{n-1}} h \ra = 0$ 
and writing $\grad \tilde h$ as the sum
$\grad_{\S^{n-1}} h + \la x , \grad \tilde h \ra x$, 
one proves that, at the boundary point $\gamma(x) \in \pa \Om$,  
the unit normal vector to the boundary $\pa \Om$ is 
\begin{equation} \label{nu.Om}
\nu_\Om(\gamma(x)) = \frac{(1+h) x - \grad_{\S^{n-1}} h}{J}, \quad 
J := \sqrt{ (1+h)^2 + |\grad_{\S^{n-1}}h|^2 },
\end{equation}
and the gradient of $\Phi$ is 
\begin{equation} \label{grad.Phi}
(\grad \Phi)(\gamma(x)) = [D \tilde \gamma(x)]^{-T} \grad u(x).
\end{equation}
Since 
\[
\la \grad u , \grad_{\S^{n-1}} h \ra 
= \la \grad_{\S^{n-1}} u , \grad_{\S^{n-1}} h \ra 
= \la \grad_{\S^{n-1}} \psi , \grad_{\S^{n-1}} h \ra
\]
on $\S^{n-1}$, 
by \eqref{D.tilde.gamma.-T}, \eqref{nu.Om}, \eqref{grad.Phi}, 
we calculate that the Dirichlet-Neumann operator defined in \eqref{def.G.geom} is 
\begin{equation} \label{formula.G} 
G(h)\psi = \frac{J \la \grad u , x \ra }{(1+h)(1+h+\la x, \grad \tilde h \ra)} 
- \frac{ \la \grad_{\S^{n-1}} \psi, \grad_{\S^{n-1}} h \ra }{J (1+h)} 
\end{equation} 
on $\S^{n-1}$. To estimate $G(h)\psi$, we use formula \eqref{formula.G}, 
and we start with analyzing the solution $u$ of problem \eqref{problem.in.B}.

\begin{lemma} \label{lemma:P}
Let $r_0, \delta_0, h, \tilde h, \tilde \gamma$ be like in Lemma \ref{lemma:tilde.gamma}. 
Then $P \in C^0(B_1, \mathrm{Mat}_{n \times n}(\R))$.
The map $h \mapsto P = P(h)$ from the ball 
$\{ h \in H^{r_0+\frac32}(\S^{n-1}) : \| h \|_{H^{r_0+\frac32}(\S^{n-1})} < \delta_0 \}$ 
into $C^0(B_1, \mathrm{Mat}_{n \times n}(\R))$ is analytic.  
More precisely, $P$ is the sum of a convergent series 
\[
P = \sum_{m=0}^\infty P_m,
\]
where $P_0 = I$ and $P_m$ depends on $h$ in a $m$-homogeneous way, 
that is, $P_m(\lm h) = \lm^m P_m(h)$ for all $\lm \in \R$, 
and it coincides with a $m$-linear, continuous, symmetric map  
$\accentset{\circ}{P}_m : ( H^{r_0+\frac32}(\S^{n-1}) )^m \to C^0(\R^n, \mathrm{Mat}_{n \times n}(\R))$, 
$(h_1, \ldots, h_m) \mapsto \accentset{\circ}{P}_m [h_1, \ldots, h_m]$,
evaluated at $(h, \ldots, h)$, i.e., $P_m(h)$ $=$ $\accentset{\circ}{P}_m [h$, $\ldots, h ]$. 
The formula of $P_m$ depends on the dimension $n$: 
in dimension $n=2$ one has 
\[
P_m = \sum_{\begin{subarray}{c} \sigma, k \geq 0 \\ \sigma + k = m \end{subarray}} 
(-1)^\sigma \tilde h^\sigma Q_k,
\]
while, in dimension $n \geq 3$, 
\[
P_m = \sum_{\begin{subarray}{c} 0 \leq \sigma \leq n-3 \\ k \geq 0, \,  \sigma + k = m \end{subarray}} 
\binom{n-3}{\sigma} \tilde h^\sigma Q_k,
\]
where, in any dimension $n \geq 2$, $Q_0 = I$, 
\begin{align*}
Q_1 & = (\tilde h + \la x, \grad \tilde h \ra) I - (\grad \tilde h) \otimes x - x \otimes \grad \tilde h,
\\
Q_k & = (-1)^k |\grad \tilde h|^2 (\tilde h + \la x, \grad \tilde h \ra)^{k-2} x \otimes x, 
\quad k \geq 2.
\end{align*}
In any dimension $n \geq 2$, one has 
\begin{equation} \label{est.P.m.C.0} 
\| P_m \|_{C^0(\R^n)} \leq (C \| h \|_{H^{r_0+\frac32}(\S^{n-1})} )^m
\end{equation}
for all $m \in \N$, for some constant $C$ depending on $r_0$, independent of $m$. 
\end{lemma}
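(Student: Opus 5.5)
The plan is to expand formula \eqref{formula.P} directly as a power series in $\tilde h$ and $\grad \tilde h$. Everything rests on the uniform smallness of $\tilde h$ and $\grad\tilde h$ given by Lemma \ref{lemma:PI.infty}. By \eqref{pinoli.L.infty.bound.PI},
\[
\| \tilde h \|_{L^\infty(B_1)} + \| \grad \tilde h \|_{L^\infty(B_1)} \leq C_{r_0} \| h \|_{H^{r_0+\frac32}(\S^{n-1})} =: \e_h .
\]
That lemma also shows that $\tilde h$ and $\grad \tilde h$ are restrictions of continuous functions. Since $|x| \leq 1$ on $B_1$, the quantity $a := \tilde h + \la x, \grad \tilde h \ra$ satisfies $|a| \leq 2\e_h$, and $x \otimes \grad\tilde h$ has sup norm at most $\e_h$. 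Choosing $\delta_0$ small makes $\e_h \leq 1/4$, so all denominators in \eqref{formula.P} stay away from zero.

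Next I would expand each factor of \eqref{formula.P}. For $n \geq 3$, the prefactor $(1+\tilde h)^{n-3}$ is a polynomial with binomial coefficients $\binom{n-3}{\sigma}\tilde h^\sigma$. For $n=2$ it is $(1+\tilde h)^{-1} = \sum_\sigma (-1)^\sigma \tilde h^\sigma$, which converges since $|\tilde h| \leq 1/4$. The bracket equals $I + Q_1 + |\grad\tilde h|^2 (1+a)^{-1} x\otimes x$, and the geometric series $(1+a)^{-1} = \sum_{j\geq 0} (-1)^j a^j$ gives $\sum_{k \geq 2} (-1)^k |\grad \tilde h|^2 a^{k-2} x\otimes x = \sum_{k\geq2} Q_k$. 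Here the sign $(-1)^{k-2} = (-1)^k$ matches the stated $Q_k$.

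Multiplying the two series (Cauchy product, justified by absolute convergence) and grouping terms of total degree $m = \sigma + k$ gives the stated formulas for $P_m$, with $P_0 = Q_0 = I$. The term $Q_k$ is $k$-homogeneous in $h$ because $\tilde h = \mathtt{PI} h$ is linear in $h$: $Q_1$ is linear, and $Q_k$ is quadratic from $|\grad \tilde h|^2$ times degree $k-2$ from $a^{k-2}$. The $m$-linear symmetric map $\accentset{\circ}{P}_m$ is obtained by polarization, that is, by symmetrizing the products over $h_1,\dots,h_m$. Its continuity into $C^0$ follows from \eqref{pinoli.L.infty.bound.PI} applied to each $h_i$.

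For the bound \eqref{est.P.m.C.0}, I would estimate each piece in sup norm:
\[
|Q_1| \leq 4\e_h, \qquad |Q_k| \leq \e_h^2 (2\e_h)^{k-2} \leq (2\e_h)^k, \qquad |\tilde h^\sigma| \leq \e_h^\sigma .
\]
The number of pairs $(\sigma, k)$ with $\sigma + k = m$ is $m+1 \leq 2^m$, and the binomial coefficients are at most $2^{n-3}$. Together these give $\|P_m\|_{C^0} \leq (C\|h\|_{H^{r_0+\frac32}(\S^{n-1})})^m$ with $C$ depending only on $r_0$ and $n$. Analyticity from the ball into $C^0$ then follows because the homogeneous series converges normally once $C\delta_0 < 1$, after shrinking $\delta_0$ if needed. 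The continuity $P \in C^0$ follows because $P$ is a uniformly convergent sum of continuous terms.

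I expect no real obstacle; the one point needing care is the $C^0$ claim and uniformity up to the boundary. This is handled by the $C^1(\R^n)$-extension statement in Lemma \ref{lemma:PI.infty}, which lets every $P_m$ be viewed in $C^0(\R^n)$ through the extension of $\tilde h$.
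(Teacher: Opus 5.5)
Your proposal is correct and is essentially the paper's proof: the paper only says that the lemma follows from formula \eqref{formula.P} and Lemma \ref{lemma:PI.infty}, by expanding $(1+\tilde h)^{n-3}$ (a Neumann series for $n=2$, a polynomial for $n\ge 3$) and $(1+\tilde h+\la x,\grad\tilde h\ra)^{-1}$ as a Neumann series, and you have written out that bookkeeping explicitly. One caveat: your sup bounds use $|x|\le 1$, so what you actually prove is the estimate in $L^\infty(B_1)$, not in $C^0(\R^n)$ as \eqref{est.P.m.C.0} literally states; the $L^\infty(B_1)$ bound is what the paper later uses, e.g.\ in \eqref{pinoli.est.P.m.infty}.
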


\begin{proof} 
The lemma easily follows from formula \eqref{formula.P} and Lemma \ref{lemma:PI.infty}.
Note that $(1 + \tilde h)^{n-3}$ is a Neumann series of $\tilde h$ if $n=2$, 
it is 1 if $n=3$, and it is a polynomial of $\tilde h$ if $n \geq 4$; 
similarly, $(1 + \tilde h + \la x, \grad \tilde h \ra)^{-1}$ is a Neumann series 
of $\tilde h + \la x, \grad \tilde h \ra$. 
\end{proof}

We want to expand the solution $u$ of the elliptic problem \eqref{problem.in.B} in powers of $h$. 
Thus, we consider the series $u = \sum_{m=0}^\infty u_m$ whose terms are defined 
in the following way: $u_0$ is the unique solution of the Laplace problem
\begin{equation} \label{prob.u.0}
u_0 \in H^1(B_1), \quad 
\Delta u_0 = 0 \ \  \text{in } B_1, \quad \ 
u_0 = \psi \ \ \text{on } \S^{n-1},
\end{equation} 
namely $u_0$ is the harmonic extension of $\psi$ to the open unit ball,  
and, for $m \geq 1$, $u_m$ is recursively defined as the unique solution of 
\begin{equation} \label{prob.u.n}
- \Delta u_m = \div g_m  \ \  \text{in } B_1, \quad \ 
u_m \in H^1_0(B_1)
\end{equation} 
with datum 
\begin{equation}  \label{def.g.n}
g_m := \sum_{k=0}^{m-1} P_{m-k} \grad u_k, 
\quad m \geq 1.
\end{equation}
In other words, in the notation of Lemmas 
\ref{lemma:def.harmonic.ext.op} and \ref{lemma:op.S}, 
\begin{equation} \label{pinoli.formula.u.m}
u_0 = \mathtt{PI} \psi, \quad \ 
u_m = \mathtt{S} g_m, \quad m \geq 1.
\end{equation}
By the standard theory of harmonic functions and elliptic PDEs 
(see Lemmas \ref{lemma:def.harmonic.ext.op} and \ref{lemma:op.S}), one has 
\begin{equation}  \label{est.basic.u.0.u.n}
\| u_0 \|_{H^1(B_1)} \leq C_0 \| \psi \|_{H^{\frac12}(\S^{n-1})}, 
\quad \ 
\| u_m \|_{H^1(B_1)} \leq C_1 \| g_m \|_{L^2(B_1)}, \quad m \geq 1,
\end{equation}
for some constants $C_0, C_1$.  
We deduce the following estimates. 

\begin{lemma} \label{lemma:olive.low}
The functions $u_m$ defined in \eqref{prob.u.0}, \eqref{prob.u.n}, \eqref{def.g.n} satisfy
\begin{equation} \label{est.u.n.induction}
\| u_m \|_{H^1(B_1)} 
\leq C_0 \| \psi \|_{H^{\frac12}(\S^{n-1})} (C' \| h \|_{H^{r_0+\frac32}(\S^{n-1})})^m 
\end{equation}
for all $m \geq 0$, where 
$C' = C(1+C_1)$  
and $C, C_0, C_1$ are the constants in \eqref{est.P.m.C.0}, \eqref{est.basic.u.0.u.n}. 
\end{lemma}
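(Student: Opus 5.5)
The plan is strong induction on $m$. Abbreviate $\eta := \| h \|_{H^{r_0+\frac32}(\S^{n-1})}$ and $a := C_0 \| \psi \|_{H^{\frac12}(\S^{n-1})}$. The claim to prove is $\| u_m \|_{H^1(B_1)} \leq a (C'\eta)^m$ for every $m \geq 0$.

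\emph{Base case.} For $m=0$, $u_0 = \mathtt{PI}\psi$, and the first inequality in \eqref{est.basic.u.0.u.n} gives exactly $\| u_0 \|_{H^1(B_1)} \leq a$.

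\emph{Inductive step.} Fix $m \geq 1$ and assume the bound for all $k \leq m-1$. Since $u_m = \mathtt S g_m$, the second inequality in \eqref{est.basic.u.0.u.n} reduces the task to an $L^2(B_1)$ bound on $g_m = \sum_{k=0}^{m-1} P_{m-k} \grad u_k$. For each summand, $P_{m-k}$ is a continuous matrix function bounded by \eqref{est.P.m.C.0}, and $\| \grad u_k \|_{L^2(B_1)} \leq \| u_k \|_{H^1(B_1)}$. Hence
\[
\| g_m \|_{L^2(B_1)} \leq \sum_{k=0}^{m-1} \| P_{m-k} \|_{C^0} \| u_k \|_{H^1(B_1)}
\leq \sum_{k=0}^{m-1} (C\eta)^{m-k} \, a \, (C'\eta)^k .
\]
(A dimensional factor from the matrix operator norm can be absorbed into $C$.)

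\emph{Summing the series.} Write $C = C'/(1+C_1)$. Then each term equals $a (C'\eta)^m (1+C_1)^{-(m-k)}$. Summing over $k$, with $j = m-k \geq 1$, gives
\[
\| g_m \|_{L^2(B_1)} \leq a (C'\eta)^m \sum_{j \geq 1} (1+C_1)^{-j} = a (C'\eta)^m / C_1 .
\]
Multiplying by $C_1$ yields $\| u_m \|_{H^1(B_1)} \leq a (C'\eta)^m$, which closes the induction.

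This works only if $C_1 > 0$. If $C_1$ is small, the geometric sum with ratio $1/(1+C_1)$ is still finite, so the step is fine; one can also simply enlarge $C_1 \geq 1$ in \eqref{est.basic.u.0.u.n} without loss.

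\emph{Main obstacle.} The only genuine point is this bookkeeping: the constant $C' = C(1+C_1)$ must absorb the sum over $k$ without creating an $m$-dependent factor. The slack factor $(1+C_1)^{-(m-k)}$ is what turns the sum into a convergent geometric series. No smallness of $\eta$ is needed for the statement itself. Smallness, namely $C'\eta < 1$, is only needed afterwards, to sum $\sum_m u_m$.
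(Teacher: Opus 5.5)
Your proposal is correct and is essentially the paper's proof. It uses the same induction on $m$, with $\|u_m\|_{H^1(B_1)} \le C_1 \|g_m\|_{L^2(B_1)}$ and the $C^0$ bound \eqref{est.P.m.C.0}, and your geometric-series step $C_1\sum_{j\ge 1}(1+C_1)^{-j} = 1$ is just a rewriting of the paper's inequality $C_1\sum_{k=0}^{m-1}(1+C_1)^k < (1+C_1)^m$.
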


\begin{proof}
For $m=0$ the thesis holds by the first inequality in \eqref{est.basic.u.0.u.n}. 
Assume that, for a given $m \geq 1$, \eqref{est.u.n.induction} holds for $u_0, \ldots, u_{m-1}$. 
By the second inequality in \eqref{est.basic.u.0.u.n} and \eqref{def.g.n}, 
\begin{align} 
\| u_m \|_{H^1(B_1)} 
\leq C_1 \| g_m \|_{L^2(B_1)} 
& \leq C_1 \sum_{k=0}^{m-1} \| P_{m-k} \|_{L^\infty(B_1)} \| u_k \|_{H^1(B_1)}
\label{est.g.n}
\end{align}
and, by \eqref{est.P.m.C.0} and the induction assumption, 
this is bounded by the RHS of \eqref{est.u.n.induction}, 
because $C_1 \sum_{k=0}^{m-1} (1+C_1)^k < (1+C_1)^m$.
\end{proof}

From \eqref{est.u.n.induction} it follows that 
the series $\sum u_m$ converges totally in $H^1(B_1)$ norm, 
uniformly in $h$ in a ball of $H^{r_0+\frac32}(\S^{n-1})$ of radius sufficiently small.
Moreover, $u_m$ (as well as $g_m$ and, as already observed, $P_m$) 
depends on $h$ in a $m$-homogeneous way, 
namely $u_m(\lm h) = \lm^m u_m(h)$ for all $\lm \in \R$, 
and it coincides with a $m$-linear, continuous, symmetric map 
\begin{equation} \label{u.n.circ.multilinear.map}
\accentset{\circ}{u}_m : ( H^{r_0+\frac32}(\S^{n-1}) )^m \to H^1_0(B_1), \quad \ 
(h_1, \ldots, h_m) \mapsto \accentset{\circ}{u}_m [h_1, \ldots, h_m]
\end{equation}
evaluated at $(h, \ldots, h)$, i.e., $u_m(h) = \accentset{\circ}{u}_m [h, \ldots, h ]$. 
Explicitly, $g_m(h) = \accentset{\circ}{g}_m[h, \ldots, h]$, where  
\[
\accentset{\circ}{g}_m[ h_1, \ldots, h_m] 
= \frac{1}{m!} \sum_{\sigma \in \mathfrak{S}_m} 
\sum_{k=0}^{m-1} \accentset{\circ}{P}_{m-k} [h_{\sigma(k+1)}, \ldots, h_{\sigma(m)}] 
\grad \{ \accentset{\circ}{u}_k [h_{\sigma(1)}, \ldots, h_{\sigma(k)}] \},
\]
where $\mathfrak{S}_m$ is the set of the permutations of $\{ 1, \ldots, m \}$,  
and $\accentset{\circ}{u}_m [h_1, \ldots, h_m]$ is the solution of 
problem \eqref{prob.u.n} with $\accentset{\circ}{g}_m [h_1, \ldots, h_m]$ instead of $g_m$. 
Adapting the proof of Lemma \ref{lemma:olive.low}, one easily obtains  
\[
\| \accentset{\circ}{u}_m [h_1, \ldots, h_m] \|_{H^1(B_1)} 
\leq C_0 \| \psi \|_{H^{\frac12}(\S^2)} \prod_{j=1}^m (C' \| h_j \|_{H^{r_0+\frac32}(\S^{n-1})}).
\]
Moreover, since $u_m(h) = \accentset{\circ}{u}_m [h, \ldots, h]$, 
the derivative of $u_m$ with respect to $h$ in direction $\eta$ 
is $u_m'(h)[\eta] = m \accentset{\circ}{u}_m[h, \ldots, h, \eta]$, 
its second derivative is $u_m''(h)[\eta_1, \eta_2] 
= m (m-1) \accentset{\circ}{u}_m[h, \ldots, h, \eta_1, \eta_2]$, 
and so on. 
Thus, we have proved the following analyticity result.

\begin{proposition} 
\label{prop:est.u.low.norm}
Given $1+r_0 > n/2$, 
there exist positive constants $\delta_0, C$ with the following property. 
The map $h \mapsto u := \sum_{m=0}^\infty u_m$ from the ball
\begin{equation} \label{ball.h.W.1.infty} 
\{ h \in H^{r_0+\frac32}(\S^{n-1}) : \| h \|_{H^{r_0+\frac32}(\S^{n-1})} < \d_0 \}
\end{equation}
into $\mL (H^{\frac12}(\S^{n-1}) , H^1(B_1))$ is well-defined and analytic, 
and $u = u(h) = u(h,\psi)$ satisfies
\begin{align*}
\| u \|_{H^1(B_1)} 
& \leq C \| \psi \|_{H^{\frac12}(\S^{n-1})}, 
\\ 
\| u'(h)[\eta] \|_{H^1(B_1)} 
& \leq C \| \psi \|_{H^{\frac12}(\S^{n-1})} \| \eta \|_{H^{r_0+\frac32}(\S^{n-1})}, 
\\
\| u''(h)[\eta_1, \eta_2] \|_{H^1(B_1)} 
& \leq C \| \psi \|_{H^{\frac12}(\S^{n-1})} 
\| \eta_1 \|_{H^{r_0+\frac32}(\S^{n-1})} \| \eta_2 \|_{H^{r_0+\frac32}(\S^{n-1})}.
\end{align*}
\end{proposition}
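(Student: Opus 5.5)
The plan is to assemble the statement from the power series already constructed, using the multilinear bounds just obtained. Fix $1+r_0>n/2$ and let $\delta_0$ be the minimum of the radius of Lemma \ref{lemma:tilde.gamma} and of $1/(2C')$, where $C'$ is the constant of Lemma \ref{lemma:olive.low}. On the ball \eqref{ball.h.W.1.infty}, Lemma \ref{lemma:olive.low} gives
\[
\| u_m \|_{H^1(B_1)} \leq C_0 \| \psi \|_{H^{\frac12}(\S^{n-1})} 2^{-m}.
\]
Each $u_m$ is linear in $\psi$ by construction: $u_0=\mathtt{PI}\psi$, and $g_m$, $\mathtt S$ are linear in $\psi$. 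Hence $\sum_m u_m$ converges in $\mL(H^{\frac12}(\S^{n-1}),H^1(B_1))$ with operator norm at most $2C_0$. This gives the first inequality with $C=2C_0$.

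For analyticity, I would use the representation $u_m(h)=\accentset{\circ}{u}_m[h,\ldots,h]$ with $\accentset{\circ}{u}_m$ a continuous symmetric $m$-linear map into $\mL(H^{\frac12},H^1)$. Its norm is bounded by $C_0 (C')^m$, by the multilinear version of Lemma \ref{lemma:olive.low} stated above. The series $\sum_m \accentset{\circ}{u}_m[h,\ldots,h]$ is then a power series with radius of convergence at least $1/C'$. By the standard theory of analytic maps between Banach spaces, a power series of bounded symmetric multilinear maps is analytic inside its ball of convergence. It may be differentiated term by term, giving
\[
u'(h)[\eta]=\sum_m m\,\accentset{\circ}{u}_m[h,\ldots,h,\eta],
\qquad
u''(h)[\eta_1,\eta_2]=\sum_m m(m-1)\,\accentset{\circ}{u}_m[h,\ldots,h,\eta_1,\eta_2].
\]

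Estimating these series with the multilinear bound gives
\[
\| u'(h)[\eta] \|_{H^1(B_1)} \leq C_0 \| \psi \|_{H^{\frac12}} C' \| \eta \|_{H^{r_0+\frac32}} \sum_m m (C'\delta_0)^{m-1}.
\]
The analogous bound for $u''$ carries the factor $m(m-1)$. Both numerical series converge because $C'\delta_0\le 1/2$, which yields the second and third inequalities with a constant independent of $h$.

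The main point to check is that $\accentset{\circ}{u}_m$ is genuinely a bounded symmetric multilinear map with the stated bound. This requires that $\accentset{\circ}{P}_m$ be multilinear and bounded in $C^0$, with norm at most $C^m$ times the product of the $\|h_j\|_{H^{r_0+\frac32}}$. That follows from the explicit formulas in Lemma \ref{lemma:P}: they are polynomial expressions in $\tilde h_j=\mathtt{PI}h_j$ and $\grad\tilde h_j$, each bounded in $L^\infty$ by Lemma \ref{lemma:PI.infty}. The count of terms grows at most geometrically in $m$, and the symmetrization only averages. The induction of Lemma \ref{lemma:olive.low} then goes through verbatim on the multilinear level.
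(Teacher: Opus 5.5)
Your proposal is correct and follows essentially the same route as the paper. The paper obtains the proposition directly from three ingredients: the geometric bound of Lemma \ref{lemma:olive.low}, its multilinear analogue for $\accentset{\circ}{u}_m$, and the identities $u_m'(h)[\eta] = m\,\accentset{\circ}{u}_m[h,\ldots,h,\eta]$ and $u_m''(h)[\eta_1,\eta_2] = m(m-1)\,\accentset{\circ}{u}_m[h,\ldots,h,\eta_1,\eta_2]$. Your explicit choice $\delta_0 \le 1/(2C')$ and the summation of the resulting numerical series make explicit what the paper leaves implicit; restricting also to the radius of Lemma \ref{lemma:tilde.gamma} is harmless but not needed for the convergence of $\sum_m u_m$ itself.
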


Now we study the higher regularity of $u$, 
using the seminorms $| \ |_{X^{s,r}_k}$, $| \ |_{X^{s,r}_{*,k}}$ defined in \eqref{def.seminorm}. 
The matrices $P_m$, where $\tilde h$ is the harmonic extension of $h$, 
satisfy the following estimate. 

\begin{lemma}
Let $\tilde h = \mathtt{PI} h$. 
For all integers $m \geq 1$, $k \geq 0$, all real $r \geq r_0 > (n-2)/2$, 
one has 
\begin{align} 
|P_m|_{X^{1,r_0}_k} 
& \leq (C_{r_0,k} \| h \|_{H^{r_0+\frac32}(\S^{n-1})} )^m,
\label{pinoli.est.P.m.low}
\\
|P_m|_{X^{1,r}_k} 
& \leq C_{r,k} \, m ( C_{r_0,k} \| h \|_{H^{r_0+\frac32}(\S^{n-1})} )^{m-1} 
\| h \|_{H^{r+\frac32}(\S^{n-1})},
\label{pinoli.est.P.m.high}
\\
\| P_m \|_{L^\infty(B_1)} 
& \leq (C_{r_0} \| h \|_{H^{r_0+\frac32}(\S^{n-1})} )^m, 
\label{pinoli.est.P.m.infty}
\end{align}
where 
$C_{r_0,k}$ is independent of $r$, increasing in $k$;
$C_{r,k}$ is increasing in $r$ and $k$; 
$C_{r_0}$ is independent of $r,k$; 
and all three are independent of $m$. 
The same holds for the $*$ seminorms. 
\end{lemma}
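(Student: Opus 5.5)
The plan is to estimate the building blocks of $P_m$ first and then propagate through the explicit polynomial/product structure given in Lemma \ref{lemma:P}, using the power and product estimates of Lemmas \ref{lemma:pinoli.prod}, \ref{lemma:pinoli.power} and \eqref{pinoli.prod.x.i}. The $L^\infty$ bound \eqref{pinoli.est.P.m.infty} is just \eqref{est.P.m.C.0}, which follows from Lemma \ref{lemma:PI.infty}. So the work is in the $X^{1,r}_k$ seminorms.

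\emph{Building blocks.} The scalar functions entering $P_m$ are $\tilde h$, $\pa_{x_i}\tilde h$ and $a := \tilde h + \la x, \grad \tilde h\ra$. By \eqref{pinoli.interpol.psi} with $r$ and $X^{1,r}_k \leq X^{2,r}_k$ we get $|\tilde h|_{X^{1,r}_k} \leq C_0 \|h\|_{r+\frac32} + C_{r,k}\|h\|_{\frac32}$. For $\grad \tilde h$, Lemma \ref{lemma:pinoli.grad} gives
\[
|\grad \tilde h|_{X^{1,r}_k} \leq C_0 |\tilde h|_{X^{2,r}_k} + C_r|\tilde h|_{X^{2,0}_k} + C_{r,k}|\tilde h|_{X^{1,r}_{k+1}},
\]
and each term is controlled by Lemma \ref{lemma:pinoli.u.0} (at index $k$ and $k+1$), giving $|\grad\tilde h|_{X^{1,r}_k}\leq C_{r,k}\|h\|_{r+\frac32}$ for all $r\ge 0$. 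Here I use $\|h\|_{3/2}\le\|h\|_{r_0+3/2}\le \|h\|_{r+3/2}$. Multiplication by $x_i$ is handled by \eqref{pinoli.prod.x.i}. Hence, with $E_\rho := \|h\|_{H^{\rho+\frac32}}$, all building blocks $b$ satisfy $|b|_{X^{1,\rho}_{k'}} \leq C_{\rho,k'} E_\rho$, for $\rho\in\{r_0,r\}$ and any $k'$.

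\emph{Products.} Each $P_m$ is a finite sum (the number of summands grows at most linearly in $m$, or is bounded) of products of exactly $m$ building blocks, times $x_i$ or $x_ix_{i'}$ factors. For instance, $Q_k = (-1)^k |\grad \tilde h|^2 a^{k-2} x\otimes x$ and $\tilde h^\sigma Q_k$ with $\sigma + k = m$. I would apply \eqref{pinoli.prod} iteratively, always placing the remaining product at level $k$ and a single factor at level $k+1$, exactly as in Lemma \ref{lemma:pinoli.power}. This gives the low estimate with factors $C_{r_0}|b|_{X^{1,r_0}_{k+1}}$, hence $(C_{r_0,k}E_{r_0})^m$.

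For the high estimate, the tame structure gives at most $m$ terms, each containing one high factor $E_r$ and $m-1$ low factors, with constant $C_r$. This yields the claimed $C_{r,k}\,m\,(C_{r_0,k}E_{r_0})^{m-1}E_r$ bound. The linear-in-$m$ count of the sums over $\sigma+k=m$ is absorbed by enlarging $C_{r_0,k}$, since $m\le 2^m$. In the regime $r\le r_0$ only \eqref{prod.est.below.r.0} is needed.

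\emph{Main obstacle.} The delicate point is the index shift. Iterating \eqref{pinoli.prod} naively moves one factor to level $k+1$ at each step, and if one then iterated on that factor the levels would drift to $k+m$, with constants $C_{r_0,k+m}$ that are not uniform in $m$. To avoid this, I would always keep the accumulated product at level $k$ and each new single factor at level $k+1$, which is exactly the scheme of Lemma \ref{lemma:pinoli.power}; then only $k$ and $k+1$ ever appear. Alternatively, I would put the single factors in the $*$ seminorms via \eqref{pinoli.prod.mixed.*}, whose level $k'$ is arbitrary. Either way the constants depend only on $k$ and not on $m$. The same argument applied verbatim with $\zeta^*$ gives the $*$ version.
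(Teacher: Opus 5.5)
Your proposal is correct and follows essentially the paper's proof. You bound the building blocks $\tilde h$, $\grad\tilde h$, $\la x,\grad\tilde h\ra$ via \eqref{pinoli.grad}, \eqref{pinoli.prod.x.i} and \eqref{pinoli.interpol.psi}, propagate through the explicit formula for $P_m$ with \eqref{pinoli.prod} and \eqref{pinoli.power} (accumulated product at level $k$, single factors at level $k+1$), and take the $L^\infty$ bound from \eqref{pinoli.L.infty.bound.PI}. The only difference is that your bound $|\grad\tilde h|_{X^{1,r}_k}\le C_{r,k}\|h\|_{H^{r+\frac32}(\S^{n-1})}$ is cruder than the paper's tame one, which keeps an $r$-independent constant in front of the high norm by using $|\tilde h|_{X^{1,r}_{k+1}}\le|\tilde h|_{X^{2,r-1}_{k+1}}$ and interpolation; this is harmless here because \eqref{pinoli.est.P.m.high} carries the prefactor $C_{r,k}$ anyway.
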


\begin{proof} 
By \eqref{pinoli.grad}, 
\eqref{pinoli.prod.x.i}, 
\eqref{pinoli.interpol.psi},
one has 
\begin{equation} \label{benef.2}
|\grad \tilde h|_{X^{1,r_0}_k} + |\la x , \grad \tilde h \ra|_{X^{1,r_0}_k} 
\leq C_{r_0,k} (|\tilde h|_{X^{2,r_0}_k} + |\tilde h|_{X^{1,r_0}_{k+1}})
\leq C_{r_0,k}' \| h \|_{H^{r_0+\frac32}(\S^{n-1})}
\end{equation}
and 
\begin{align}
|\grad \tilde h|_{X^{1,r}_k} + |\la x , \grad \tilde h \ra|_{X^{1,r}_k} 
& \leq C_{r_0} |\tilde h|_{X^{2,r}_k} + C_{r,k} |\tilde h|_{X^{1,r}_{k+1}}
+ C_{r,k} (|\tilde h|_{X^{2,r_0}_k} + |\tilde h|_{X^{1,r_0}_{k+1}})
\notag \\ 
& \leq C_{r_0}' \| h \|_{H^{r+\frac32}(\S^{n-1})} 
+ C_{r,k}' \| h \|_{H^{r_0+\frac32}(\S^{n-1})},
\label{benef.3}
\end{align}
where we have used the inequality $|\tilde h|_{X^{1,r}_{k+1}} \leq |\tilde h|_{X^{2,r-1}_{k+1}}$
and the interpolation inequality \eqref{interpol.Hr.S2} like in \eqref{interpol.trick}.
Hence, by \eqref{pinoli.prod}, \eqref{pinoli.power} 
and the formula of $P_m$, we obtain 
\eqref{pinoli.est.P.m.low} and \eqref{pinoli.est.P.m.high}. 
The proof of \eqref{pinoli.est.P.m.infty} is similar, using \eqref{pinoli.L.infty.bound.PI}.
\end{proof}

\begin{lemma}
For all integers $m \geq 1$, $k \geq 0$, all real $r,r_0$ with $r \geq 0$, $1+r_0 > n/2$, 
all functions $u$, one has 
\begin{align} 
& |P_m \grad u|_{X^{1,r}_k} 
\leq  (C_{r_0} \| h \|_{H^{r_0+\frac32}(\S^{n-1})} )^m
(C_{r_0} |u|_{X^{2,r}_k} + C_r |u|_{X^{2,0}_k} + C_{r,k} |u|_{X^{1,r}_{k+1}}) 
\notag \\ 
& \qquad \ 
+ (\chi_{r>r_0}) C_{r,k} \, m (C_{r_0} \| h \|_{H^{r_0+\frac32}(\S^{n-1})} )^{m-1} 
\| h \|_{H^{r+\frac32}(\S^{n-1})} (|u|_{X^{2,r_0}_k} + |u|_{X^{1,r_0}_{k+1}}),
\label{est.P.m.grad.u}
\end{align}
where $C_{r_0}$ is independent of $r,k,m$, 
and $C_{r,k}$ is increasing in $r,k$, independent of $m$. 
\end{lemma}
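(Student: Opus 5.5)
The estimate follows by combining the product estimate \eqref{pinoli.prod} with the bounds on $P_m$ from the previous lemma and the gradient estimate \eqref{pinoli.grad}. The matrix $P_m$ should sit at the index where it is controlled in low norm independently of $k$. Then $C_{r_0}$ in front of $|u|_{X^{2,r}_k}$ does not depend on $k$.

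\emph{Step 1: choose the mixed product estimate.} I would apply the mixed estimate \eqref{pinoli.prod.mixed.*}, taking $\grad u$ as the factor measured in the $k$ seminorms and $P_m$ as the factor measured in a fixed $*$ seminorm, say $k'=0$:
\[
|P_m \grad u|_{X^{1,r}_k} \leq C_{r_0} |\grad u|_{X^{1,r}_k} |P_m|_{X^{1,r_0}_{*,0}} + (\chi_{r>r_0}) C_r |\grad u|_{X^{1,r_0}_k} |P_m|_{X^{1,r}_{*,0}}.
\]
This is the key choice. Using \eqref{pinoli.prod} with $P_m$ at level $k+1$ would give the factor $(C_{r_0,k+1}\|h\|)^m$, whose base depends on $k$. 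With the $*$ seminorm at the fixed level $0$, the $*$-version of \eqref{pinoli.est.P.m.low} gives $|P_m|_{X^{1,r_0}_{*,0}} \leq (C_{r_0,0}\|h\|_{H^{r_0+\frac32}})^m$, with a constant independent of $k$. Likewise the $*$-version of \eqref{pinoli.est.P.m.high} gives $|P_m|_{X^{1,r}_{*,0}} \leq C_{r,0}\, m (C_{r_0,0}\|h\|)^{m-1}\|h\|_{H^{r+\frac32}}$.

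\emph{Step 2: estimate $\grad u$.} I would apply \eqref{pinoli.grad} to both gradient terms. In the first term,
\[
|\grad u|_{X^{1,r}_k} \leq C_0|u|_{X^{2,r}_k} + C_r|u|_{X^{2,0}_k} + C_{r,k}|u|_{X^{1,r}_{k+1}},
\]
with $C_0$ independent of $r,k$. Multiplying by $C_{r_0}(C_{r_0,0}\|h\|)^m$ and renaming $C_{r_0}$ as the maximum of the constants that depend only on $r_0$ produces the first line of \eqref{est.P.m.grad.u}. One wrinkle: the prefactor $C_{r_0}$ multiplies the constant $C_0$ as well. So the overall coefficient becomes $C_{r_0}^{m+1}$ rather than $C_{r_0}^m$. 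I would absorb the extra factor by enlarging the base constant, since $C_{r_0}\cdot c^m \leq (C_{r_0} c)^m$ when $C_{r_0}\geq 1$ and $m\geq1$. The same absorption applies to the factors $C_r$ and $C_{r,k}$.

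In the second term, for $r>r_0$, I would use \eqref{pinoli.grad} with $r_0$ in place of $r$:
\[
|\grad u|_{X^{1,r_0}_k} \leq C_{r_0,k}(|u|_{X^{2,r_0}_k} + |u|_{X^{1,r_0}_{k+1}}),
\]
where $|u|_{X^{2,0}_k}\leq |u|_{X^{2,r_0}_k}$ absorbs the middle term. Multiplying by $C_r\, C_{r,0}\, m (C_{r_0,0}\|h\|)^{m-1}\|h\|_{H^{r+\frac32}}$ gives the second line, with all $k$- and $r$-dependence collected into $C_{r,k}$.

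\emph{Main obstacle.} The delicate point is keeping the geometric base constant independent of $k$ and $r$. Only then does the resulting series $\sum_m$ converge for $\|h\|_{H^{r_0+\frac32}}<\delta$ with $\delta$ independent of $r$ and of $k$; this matters because $k$ grows with $r$ in \eqref{pinoli.16}. The $*$ seminorms are designed for exactly this, via \eqref{zeta.*.k.k'}, so the plan hinges on the validity of \eqref{pinoli.prod.mixed.*} together with the $*$-versions of the $P_m$ bounds. I would double-check that nothing from $\zeta_{k+1}$ derivatives enters the $P_m$ factor. It does not: derivatives of $\zeta^*_{0}$ are $k$-independent, and any $2^k$ factors stay attached to the $u$ terms through $C_{r,k}$.
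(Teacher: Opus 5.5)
Your proposal is correct and is essentially the paper's proof: the mixed estimate \eqref{pinoli.prod.mixed.*} with $k'=0$, then \eqref{pinoli.grad} at $\rho = r, r_0$ for $\grad u$, and the $*$-versions of \eqref{pinoli.est.P.m.low}, \eqref{pinoli.est.P.m.high}, so that the geometric base $C_{r_0,0}$ does not depend on $k$. The ``wrinkle'' you raise needs no absorption trick: the statement already carries an extra factor $C_{r_0}$ in front of $|u|_{X^{2,r}_k}$ inside the bracket, and the extra factors on the other two terms can simply be folded into $C_r$ and $C_{r,k}$.
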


\begin{proof}
Apply \eqref{pinoli.prod.mixed.*} with $k' = 0$, 
use \eqref{pinoli.grad} to estimate $|\grad u|_{X^{1,\rho}_k}$ with $\rho =r, r_0$, 
and \eqref{pinoli.est.P.m.low}, \eqref{pinoli.est.P.m.high} 
(for $*$ seminorms) to estimate $|P_m|_{X^{1,\rho}_{*,0}}$, $\rho = r, r_0$. 
The constant $C_{r_0,k'} = C_{r_0,0}$ 
coming from the application of \eqref{pinoli.est.P.m.low}, \eqref{pinoli.est.P.m.high} 
does not depend on $k$. 
\end{proof}

\begin{lemma} \label{lemma:est.u.n.r.k}
The functions $u_m$ in \eqref{prob.u.n} satisfy 
\begin{align} 
|u_m|_{X^{2,r}_k} 
& \leq ( B c_0 z_0 )^m
\big( \| \psi \|_{H^{r+\frac32}(\S^{n-1})} + A_{r,k} \| \psi \|_{H^{\frac32}(\S^{n-1})} \big)  
\notag \\ & \quad \ 
+ (\chi_{r > r_0}) A_{r,k} B^m ( c_0 z_0 )^{m-1} 
\| h \|_{H^{r+\frac32}(\S^{n-1})} \| \psi \|_{H^{r_0+\frac32}(\S^{n-1})},
\label{echin.45}
\end{align}
for all integers $m \geq 1$, $k \geq 0$, 
all real $r, r_0$ with $r \geq 0$, $r_0 + 1 > n/2$, 
where
\[
z_0 := \| h \|_{H^{r_0+\frac32}(\S^{n-1})}, 
\]
$c_0 := \max \{ \text{$C_{r_0}$ in \eqref{est.P.m.grad.u}, 
$C_{r_0}$ in \eqref{pinoli.est.P.m.infty}, 
$C'$ in \eqref{est.u.n.induction}} \}$,
the constants $B$ and $c_0$ are independent of $r,k$, 
while the constant $A_{r,k}$ is increasing in $r,k$. 
All $B,c_0,A_{r,k}$ are independent of $m$.
\end{lemma}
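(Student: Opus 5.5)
We argue by strong induction on $m$, treating the low norm $r = r_0$ first (or more precisely the bound at $r_0$) and then the general $r$. Since $u_m = \mathtt S g_m$ with $g_m = \sum_{k'=0}^{m-1} P_{m-k'} \grad u_{k'}$, the starting point is Lemma \ref{lemma:pinoli.S}, estimate \eqref{pinoli.16} (or \eqref{pinoli.15} for $r = 0$). It bounds $|u_m|_{X^{2,r}_k}$ by $C_0 |g_m|_{X^{1,r}_k}$ plus lower-order terms. Those lower-order terms involve $\|g_m\|_{L^2(B_1)}$, $|g_m|_{X^{1,0}_{k}}$, $|g_m|_{X^{1,0}_{k+b}}$ and $|g_m|_{X^{1,r-\ell}_{k+\ell}}$, each multiplied by $C_{r,k}$. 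The $L^2$ term is controlled through \eqref{est.g.n}, \eqref{pinoli.est.P.m.infty} and Lemma \ref{lemma:olive.low}, which give $(c_0 z_0)^m \|\psi\|_{H^{1/2}}$ times a harmless factor. The key point is that only the top term carries the $k$- and $r$-independent constant $C_0$; all others come with $C_{r,k}$ but involve strictly lower tangential regularity $r - \ell$ or $0$.

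Next, each $|P_{m-k'}\grad u_{k'}|_{X^{1,\rho}_{k''}}$ is estimated by \eqref{est.P.m.grad.u}. This produces $(C_{r_0} z_0)^{m-k'}$ times $C_{r_0}|u_{k'}|_{X^{2,\rho}_{k''}}$, plus lower terms in $|u_{k'}|_{X^{2,0}}$ and $|u_{k'}|_{X^{1,\rho}_{k''+1}} \le |u_{k'}|_{X^{2,\rho-1}_{k''+1}}$. When $\rho > r_0$ it also produces the tame term $(m-k')(C_{r_0}z_0)^{m-k'-1}\|h\|_{H^{\rho+3/2}}(|u_{k'}|_{X^{2,r_0}} + |u_{k'}|_{X^{1,r_0}_{k''+1}})$. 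We then insert the inductive hypothesis \eqref{echin.45} for $u_{k'}$, $k' < m$, and for $k' = 0$ Lemma \ref{lemma:pinoli.u.0}. The base ingredient $|u_0|_{X^{2,r}_k} \le C_0\|\psi\|_{r+3/2} + C_{r,k}\|\psi\|_{3/2}$ has exactly the form of \eqref{echin.45} with $m = 0$. The hypothesis must be used at indices $(\rho, k'')$ with $\rho \le r$ and $k'' \ge k$ up to $k+b$. This is why the statement allows $A_{r,k}$ to be increasing in both variables: lower $\rho$ and larger $k''$ are absorbed into $A_{r,k+b}$, renamed $A_{r,k}$ after enlarging. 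Since the induction is on $m$ while $A$ may not depend on $m$, $A_{r,k}$ must be fixed before the induction, as a function of $C_{r,k'}$ over finitely many shifts.

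The summation over $k'$ is where $B$ is chosen. The leading contribution is $C_0 C_{r_0}\sum_{k'<m}(c_0z_0)^{m-k'}(Bc_0z_0)^{k'}(\|\psi\|_{r+3/2} + \dots)$. Choosing $B \ge 2 C_0 C_{r_0} + 1$, the sum $\sum_{k'<m} B^{k'}$ is at most $B^m/(B-1)$, so the total is at most $\tfrac12 (Bc_0z_0)^m$. This reproduces \eqref{echin.45} with a $k$-, $r$-independent $B$. The remaining terms carry $C_{r,k}$ but multiply either $\|\psi\|_{H^{3/2}}$, lower-regularity norms, or the tame product $\|h\|_{r+3/2}\|\psi\|_{r_0+3/2}$. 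Interpolation in the style of \eqref{interpol.trick} converts intermediate norms $\|\psi\|_{\rho+3/2}$, $\|h\|_{\rho+3/2}$ into the endpoint norms. For products, one uses $\|h\|_{\rho+3/2}\|\psi\|_{r-\rho+r_0+3/2} \lesssim \|h\|_{r+3/2}\|\psi\|_{r_0+3/2} + z_0\|\psi\|_{r+3/2}$. The factor $m$ appearing in \eqref{est.P.m.grad.u} is absorbed by slightly enlarging $B$, via $m \le 2^m$.

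\textbf{Main obstacle.} The delicate step is bookkeeping the lower-order terms so that $C_{r,k}$ always lands on a factor producible with one power of $z_0$ fewer and then absorbed into $A_{r,k}B^m(c_0z_0)^{m-1}$, and never on the top norm $\|\psi\|_{r+3/2}$ with full power $(Bc_0z_0)^m$. If such a term appears, for example $C_{r,k}|u_{k'}|_{X^{2,r-1}_{k+1}}$ containing $\|\psi\|_{r+1/2}$, I would split it by \eqref{interpol.trick} into $\epsilon\|\psi\|_{r+3/2} + C_\epsilon\|\psi\|_{3/2}$. The small part is absorbed into the leading term by choosing $\epsilon$ depending on $r,k$; because it is multiplied by $(c_0z_0)^{m-k'}$, the constant $B$ is unaffected.
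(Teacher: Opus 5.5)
Your proposal has a genuine gap in how the constants $A_{r,k}$ are fixed, and it sits at the base of the induction. The lower-order terms produced by \eqref{pinoli.16} and \eqref{est.P.m.grad.u} live at shifted indices $k+1,\dots,k+b$. They carry constants $C_{r,k}$ that grow with $k$, through the factors $2^k, 2^{2k}$ coming from derivatives of $\zeta_{k+1}$. Meanwhile $B$ must be independent of $k$.

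You propose to bound these terms by the induction hypothesis and absorb them via $A_{\rho,k''}\le A_{r,k+b}$, ``renamed $A_{r,k}$ after enlarging.'' That leads to a requirement of the form $A_{r,k}\gtrsim B^{-1}C_{r,k}A_{r,k+b}$. Once $C_{r,k}/B$ exceeds a fixed constant, this is incompatible with $A_{r,k}$ being finite and increasing in $k$, because the chain $k\to k+b\to k+2b\to\cdots$ never terminates. Bounding by monotonicity throws away the one fact that makes the argument close: every shifted-$k$ term has strictly lower tangential regularity. So $A_{r,k}$ must be defined by recursion on the regularity, and that recursion has to bottom out at $r=0$, not at $r_0$ as your first sentence suggests.

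At $r=0$ the problem is sharpest. Estimate \eqref{est.P.m.grad.u} with $r=0$ produces $C_{0,k}|u_\sigma|_{X^{1,0}_{k+1}}$. Bounding this by $|u_\sigma|_{X^{2,0}_{k+1}}$ and the hypothesis at $(0,k+1)$ gives exactly the non-terminating chain $A_{0,k}\gtrsim C_{0,k}A_{0,k+1}/B$. The paper breaks it with the cruder bound $|u_\sigma|_{X^{1,0}_{k+1}}\le C_{k+1}\|u_\sigma\|_{H^1(B_1)}$ from \eqref{est.u.X.low}, followed by Lemma \ref{lemma:olive.low}. This yields $(c_0z_0)^\sigma\|\psi\|_{\frac12}$ with no factor $B^\sigma$ and no $A$. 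The lower-order contribution is then $m\,C_{r_0,k}(c_0z_0)^m\|\psi\|_{\frac32}$, and $A_{0,k}\ge 2C_{r_0,k}$ suffices, using $m\le B^m$.

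Only after all $A_{0,k'}$ are fixed does the paper choose $A_{r,k}$ for $r\in(b-1,b]$. It does so in terms of the already fixed $A_{0,k}$, $A_{0,k+b}$ and $A_{r-\ell,k+\ell}$, and, when $r>r_0$, also $A_{r_0,k}$ and $A_{r_0,k+1}$; this is why all $r\le r_0$ are treated before $r>r_0$. You never invoke \eqref{est.u.X.low}; Lemma \ref{lemma:olive.low} appears only for $\|g_m\|_{L^2(B_1)}$. So as written the base case does not close.

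Once this is fixed, the rest of your plan works. A simultaneous strong induction on $m$ over all $(r,k)$ is fine, since only $u_\sigma$ with $\sigma<m$ appear on the right-hand side. Your choice of $B$ from the top-order term and the $\epsilon$-interpolation of $\|\psi\|_{r-\ell+\frac32}$ match the paper. One minor correction: non-tame lower-order terms carry the full power $(c_0z_0)^m$ and go into $A_{r,k}(Bc_0z_0)^m\|\psi\|_{\frac32}$. Only the tame ones carry $(c_0z_0)^{m-1}$.
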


\begin{proof} 
\emph{First step}. 
We consider the case $r=0$. 
For $m=1$, by 
\eqref{pinoli.formula.u.m}, 
\eqref{def.g.n}, one has 
$u_1 = \mathtt{S} g_1$, $g_1 = P_1 \grad u_0$,  
and $u_0 = \mathtt{PI} \psi$. 
By 
\eqref{est.P.m.grad.u},
\eqref{pinoli.interpol.psi},  
\eqref{pinoli.est.P.m.infty},
\eqref{est.basic.u.0.u.n}, 
one has 
\begin{align} 
|P_1 \grad u_0|_{X^{1,0}_k} 
& \leq C_{r_0,k} c_0 z_0 \| \psi \|_{H^{\frac32}(\S^{n-1})}, 
\label{pinoli.50}
\\
\| P_1 \grad u_0 \|_{L^2(B_1)} 
& \leq C_0 c_0 z_0 \| \psi \|_{H^{\frac12}(\S^{n-1})},  
\label{pinoli.51}
\end{align}
and, by \eqref{pinoli.15},
\[
|u_1|_{X^{2,0}_k} \leq C_{r_0,k} c_0 z_0 \| \psi \|_{H^{\frac32}(\S^{n-1})}
\]
for some $C_{r_0,k}$ depending on $r_0,k$, 
where $c_0$ and $z_0$ are the constants defined in the statement of the lemma. 
Thus \eqref{echin.45} holds for $r=0, m=1$ if $B \geq 1$ and $A_{0,k} \geq C_{r_0,k}$.
Now assume that \eqref{echin.45} with $r=0$ holds for $u_1, \ldots, u_{m-1}$, for some $m \geq 2$. 
By \eqref{pinoli.formula.u.m}, \eqref{def.g.n}, one has 
$u_m = \mathtt{S} g_m$ and 
$g_m = \sum_{\sigma=0}^{m-1} P_{m-\sigma} \grad u_\sigma$. 
From \eqref{pinoli.15},
\eqref{est.P.m.grad.u},
\eqref{pinoli.est.P.m.infty} we deduce that 
\[
|u_m|_{X^{2,0}_k} 
\leq \sum_{\sigma = 0}^{m-1} (c_0 z_0)^{m-\sigma} 
(C_{r_0} |u_\sigma|_{X^{2,0}_k} + C_{r_0,k} |u_\sigma|_{X^{1,0}_{k+1}} 
+ C_{r_0,k} \| u_\sigma \|_{H^1(B_1)})
\]
for some $C_{r_0}$ depending on $r_0$, independent of $k,m$, 
and some $C_{r_0,k}$ depending on $r_0, k$, independent of $m$. 
Using the induction assumption \eqref{echin.45} for $|u_\sigma|_{X^{2,0}_k}$, $\sigma = 1, \ldots, m-1$, 
estimate \eqref{pinoli.interpol.psi} for $|u_0|_{X^{2,0}_k} = |\mathtt{PI} \psi|_{X^{2,0}_k}$, 
\eqref{est.u.X.low} for $|u_\sigma|_{X^{1,0}_{k+1}}$,  
and \eqref{est.u.n.induction} for $\| u_\sigma \|_{H^1(B_1)}$, we obtain 
\[
|u_m|_{X^{2,0}_k}  
\leq (c_0 z_0)^m \| \psi \|_{H^{\frac32}(\S^{n-1})} 
\big( C_{r_0} (1+A_{0,k}) B^{m-1} + m C_{r_0,k} \big)
\]
for $B \geq 2$, 
for some $C_{r_0}, C_{r_0,k}$ independent of $m$ (possibly different from above), 
because $\sum_{\sigma=0}^{m-1} B^\sigma \leq 2 B^{m-1}$ for all $B \geq 2$. 
Hence \eqref{echin.45} with $r=0$ holds for $u_m$ if 
\begin{equation}  \label{provided.01}
C_{r_0} (1+A_{0,k}) B^{m-1} \leq \frac12 (1+A_{0,k}) B^m, \quad \ 
m C_{r_0,k} \leq \frac12 (1+A_{0,k}) B^m.
\end{equation}
The first inequality in \eqref{provided.01} holds for $B \geq 2 C_{r_0}$, 
and, since $m \leq 2^m \leq B^m$, 
the second inequality in \eqref{provided.01} holds for $A_{0,k} \geq 2 C_{r_0,k}$. 
This proves that, for $A_{0,k}$ larger than some constant depending on $k$, 
and $B$ larger than some constant independent of $k$, 
\eqref{echin.45} with $r=0$ holds for all $m \in \N$. 

\medskip

\emph{Second step}. 
We consider the case $0 < r \leq r_0$. 
Let $r \in (0,1] \cap (0, r_0]$. 
For $m=1$, by \eqref{pinoli.16} with $b=1$, one has 
\begin{equation} \label{pinoli.52}
|u_1|_{X^{2,r}_k} 
\leq C_0 |P_1 \grad u_0|_{X^{1,r}_k}
+ C_{r,k} ( \| P_1 \grad u_0 \|_{L^2(B_1)} 
+ |P_1 \grad u_0|_{X^{1,0}_k} + |P_1 \grad u_0|_{X^{1,0}_{k+1}} ). 
\end{equation}
To estimate $|P_1 \grad u_0|_{X^{1,r}_k}$, we use  
\eqref{est.P.m.grad.u}, \eqref{pinoli.interpol.psi}
and the basic inequality 
$|u_0|_{X^{1,r}_{k+1}} 
\leq |u_0|_{X^{1,1}_{k+1}}
\leq |u_0|_{X^{2,0}_{k+1}}$, 
while the other terms in the RHS of \eqref{pinoli.52} 
are estimated in \eqref{pinoli.50}, \eqref{pinoli.51}. 
Thus, we get 
\begin{align*}
|u_1|_{X^{2,r}_k} 
& \leq c_0 z_0 (C_{r_0} \| \psi \|_{H^{r+\frac32}(\S^{n-1})} 
+ C_{r,k} \| \psi \|_{H^{\frac32}(\S^{n-1})} )
\end{align*}
for some $C_{r_0}, C_{r,k}$, 
and \eqref{echin.45} with $r \in (0,1] \cap (0,r_0]$ holds for $u_1$ 
if $B \geq C_{r_0}$ and $A_{r,k} \geq C_{r,k}$. 
Now assume that \eqref{echin.45} with $r \in (0,1] \cap (0,r_0]$ holds for $u_1, \ldots, u_{m-1}$, 
for some $m \geq 2$. 
By \eqref{pinoli.16} with $b=1$, one has
\begin{align*}
|u_m|_{X^{2,r}_k} 
& \leq C_0 \sum_{\sigma = 0}^{m-1} |P_{m-\sigma} \grad u_\sigma|_{X^{1,r}_k} 
\\ & \quad \ 
+ C_{r,k} \sum_{\sigma = 0}^{m-1} ( \| P_{m-\sigma} \grad u_\sigma \|_{L^2(B_1)}
+ |P_{m-\sigma} \grad u_\sigma|_{X^{1,0}_{k}}
+ |P_{m-\sigma} \grad u_\sigma|_{X^{1,0}_{k+1}} ),
\end{align*}
whence, by \eqref{est.P.m.grad.u}, \eqref{pinoli.est.P.m.infty}, \eqref{est.u.X.low}, 
\begin{align*}
|u_m|_{X^{2,r}_k} 
& \leq \sum_{\sigma = 0}^{m-1} (c_0 z_0)^{m-\sigma} \{ C_{r_0} |u_\sigma|_{X^{2,r}_k} 
+ C_{r,k} ( |u_\sigma|_{X^{2,0}_{k}} + |u_\sigma|_{X^{2,0}_{k+1}} + \| u_\sigma \|_{H^1(B_1)} ) \},
\end{align*}
where we have used the basic inequality 
$|u_\sigma|_{X^{1,r}_{k+1}} \leq |u_\sigma|_{X^{1,1}_{k+1}} 
\leq |u_\sigma|_{X^{2,0}_{k+1}}$. 
We use the induction assumption \eqref{echin.45} 
to estimate $|u_\sigma|_{X^{2,r}_k}$, $\sigma = 1, \ldots, m-1$; 
the already proved inequality \eqref{echin.45}$|_{r=0}$ 
to estimate $|u_\sigma|_{X^{2,0}_k}$ and $|u_\sigma|_{X^{2,0}_{k+1}}$, 
$\sigma = 1, \ldots, m-1$;
\eqref{pinoli.interpol.psi} to estimate 
$|u_0|_{X^{2,r}_k}$, 
$|u_0|_{X^{2,0}_k}$, 
$|u_0|_{X^{2,0}_{k+1}}$; 
and \eqref{est.u.n.induction} to estimate $\| u_\sigma \|_{H^1(B_1)}$.
We obtain 
\begin{align*}
|u_m|_{X^{2,r}_k} 
& \leq C_{r_0} B^{m-1} (c_0 z_0)^{m} \| \psi \|_{H^{r+\frac32}(\S^{n-1})} 
\\ & \quad \ 
+ \{ C_{r_0} A_{r,k} + C_{r,k} (1 + A_{0,k} + A_{0,k+1}) \} 
B^{m-1} (c_0 z_0)^{m} \| \psi \|_{H^{\frac32}(\S^{n-1})},  
\end{align*}
and this is bounded by the RHS of \eqref{echin.45} if 
$C_{r_0} \leq B$, 
$C_{r_0} A_{r,k} \leq \frac12 B A_{r,k}$, 
$C_{r,k} (1 + A_{0,k} + A_{0,k+1}) \leq \frac12 B A_{r,k}$, 
that is, if $B$ is larger than some constant independent of $r,k$, 
and $A_{r,k}$ is larger than some constant depending on $r,k$
(note that $A_{0,k}, A_{0,k+1}$ have been fixed at the previous step). 
Thus, \eqref{echin.45} with $r \in (0,1] \cap (0,r_0]$ holds for all $m \in \N$. 

Now we prove, by induction on $b$, 
that \eqref{echin.45} holds for all real $r \in (b-1, b] \cap (0,  r_0]$, 
all $m \in \N$, all $b \in \N$ such that $(b-1, b] \cap (0,  r_0]$ is nonempty.
For $b=1$ we have just proved it. 
Let $b \in \N$, $b \geq 2$, 
and assume that, for all $\b = 1, \ldots, b - 1$, 
the interval $(\b-1, \b] \cap (0, r_0]$ is nonempty 
and \eqref{echin.45} holds for all $r \in (\b-1, \b] \cap (0, r_0]$, all $m \in \N$. 
Suppose that $(b - 1, b] \cap (0, r_0]$ is nonempty, 
and let $r \in (b - 1, b] \cap (0, r_0]$. 
For $m=1$, by \eqref{pinoli.16}, 
\begin{align} 
|u_1|_{X^{2,r}_k} 
& \leq C_0 |P_1 \grad u_0|_{X^{1,r}_k}
+ C_{r,k} \Big( \| P_1 \grad u_0 \|_{L^2(B_1)}
+ |P_1 \grad u_0|_{X^{1,0}_k}
\notag \\ & \quad \ 
+ |P_1 \grad u_0|_{X^{1,0}_{k+b}}
+ \sum_{\ell=1}^{b-1} |P_1 \grad u_0|_{X^{1,r-\ell}_{k+\ell}} \Big).
\label{echin.49}
\end{align}
By \eqref{est.P.m.grad.u}, \eqref{pinoli.interpol.psi}, one has 
\begin{align*}
|P_1 \grad u_0|_{X^{1,r}_k} 
& \leq c_0 z_0 ( C_0 \| \psi \|_{H^{r + \frac32}(\S^{n-1})} 
+ C_{r,k} \| \psi \|_{H^{\frac32}(\S^{n-1})} ),
\\
|P_1 \grad u_0|_{X^{1, r - \ell}_{k+\ell}} 
& \leq c_0 z_0 ( C_0 \| \psi \|_{H^{r - \ell + \frac32}(\S^{n-1})} 
+ C_{r,k,\ell} \| \psi \|_{H^{\frac32}(\S^{n-1})} ),
\end{align*}
where we have used the inequalities  
\[
|u_0|_{X^{1,r}_{k+1}} 
\leq |u_0|_{X^{2,r-1}_{k+1}}, 
\quad 
C_{r,k} \| \psi \|_{H^{r - 1 + \frac32}(\S^{n-1})} 
\leq \| \psi \|_{H^{r + \frac32}(\S^{n-1})} + C_{r,k}' \| \psi \|_{H^{\frac32}(\S^{n-1})}
\]
(see \eqref{interpol.trick}), and similarly for the terms with $r - \ell$.
The terms $|P_1 \grad u_0|_{X^{1,0}_{k}}$, $|P_1 \grad u_0|_{X^{1,0}_{k+b}}$ 
and $\| P_1 \grad u_0 \|_{L^2(B_1)}$ are estimated in \eqref{pinoli.50}, \eqref{pinoli.51}.  
Hence \eqref{echin.49} gives
\begin{equation} \label{pinoli.60}
|u_1|_{X^{2,r}_k} 
\leq c_0 z_0 ( C_0 \| \psi \|_{H^{r + \frac32}(\S^{n-1})} 
+ C_{r,k} \| \psi \|_{H^{\frac32}(\S^{n-1})} ),
\end{equation}
and \eqref{echin.45} with $r \in (b-1,b] \cap (0, r_0]$ holds for $u_1$ 
if $B \geq C_0$ and $A_{r,k} \geq C_{r,k}$.  
Now assume that \eqref{echin.45} with $r \in (b - 1, b] \cap (0, r_0]$ 
holds for $u_1, \ldots, u_{m-1}$, for some $m \geq 2$. 
By \eqref{pinoli.16}, 
\begin{align*}
|u_m|_{X^{2,r}_k} 
& \leq C_0 \sum_{\sigma = 0}^{m-1} |P_{m-\sigma} \grad u_\sigma|_{X^{1,r}_k} 
+ C_{r,k} \sum_{\sigma = 0}^{m-1} \Big( \| P_{m-\sigma} \grad u_\sigma \|_{L^2(B_1)}
+ |P_{m-\sigma} \grad u_\sigma|_{X^{1,0}_{k}}
\\ & \quad \ 
+ |P_{m-\sigma} \grad u_\sigma|_{X^{1,0}_{k+b}} 
+ \sum_{\ell=1}^{b-1} |P_{m-\sigma} \grad u_\sigma|_{X^{1,r-\ell}_{k+\ell}} \Big),
\end{align*}
whence, by \eqref{est.P.m.grad.u}, \eqref{pinoli.est.P.m.infty}, \eqref{est.u.X.low}, 
\begin{align*}
|u_m|_{X^{2,r}_k} 
& \leq \sum_{\sigma = 0}^{m-1} (c_0 z_0)^{m-\sigma} 
\Big\{ C_{r_0} |u_\sigma|_{X^{2,r}_k} 
+ C_{r,k} \Big( \| u_\sigma \|_{H^1(B_1)} 
+ |u_\sigma|_{X^{2,0}_{k}} 
\\ & \quad \ 
+ |u_\sigma|_{X^{2,r-1}_{k+b}}
+ \sum_{\ell=1}^{b-1} |u_\sigma|_{X^{2,r-\ell}_{k+\ell}} \Big) \Big\}.
\end{align*}
Using \eqref{echin.45} for $u_1, \ldots, u_{m-1}$, 
\eqref{pinoli.interpol.psi} for $u_0$,
\eqref{est.u.n.induction} for $u_0, \ldots, u_{m-1}$, 
the interpolation inequality \eqref{interpol.trick}, 
and the inequalities $\sum_{\sigma=1}^{m-1} B^\sigma \leq 2 B^{m-1}$, 
$m \leq B^{m-1}$ for $B \geq 2$, we obtain 
\begin{equation} \label{pinoli.61}
|u_m|_{X^{2,r}_k} 
\leq B^{m-1} (c_0 z_0)^m 
\{ C_{r_0} \| \psi \|_{H^{r + \frac32}(\S^{n-1})} 
+ (C_{r_0} A_{r,k} + C_{r,k,A}) \| \psi \|_{H^{\frac32}(\S^{n-1})} \},
\end{equation}
where $C_{r_0}$ is independent of $r,b,m,k$, 
while $C_{r,k,A}$ is independent of $m$, depending on $r,k$ and on the constants 
$A_{0,k}$, $A_{r-1,k+b}$, 
$A_{r-\ell, k + \ell}$, 
$\ell = 1, \ldots, b-1$, 
all of which have already been fixed. 
Hence \eqref{echin.45} with $r \in (b-1,b] \cap (0,r_0]$ holds for $u_m$ if 
$C_{r_0} B^{m-1} \leq B^m$, 
$C_{r_0} A_{r,k} B^{m-1} \leq \frac12 A_{r,k} B^m$,
and $C_{r,k,A} B^{m-1} \leq \frac12 A_{r,k} B^m$. 
These conditions are satisfied for $B \geq 2 C_{r_0}$ 
and $A_{r,k} \geq 2 C_{r,k,A}$. 
This completes the proof of the induction step. 
Hence \eqref{echin.45} holds for all real $r \in (0,r_0]$, all $m \in \N$. 

\medskip

\emph{Third step}. It remains to prove that \eqref{echin.45} holds for $r > r_0$. 
We prove, by induction on $b$, that \eqref{echin.45} holds for all real $r \in (b-1,b] \cap (r_0,\infty)$,
all $m \in \N$, all $b \in \N$ such that $(b-1,b] \cap (r_0,\infty)$ is nonempty. 
Let $b_0$ be the minimum of such integers, that is, let $b_0$ be the integer part of $1+r_0$. 
Let $r \in (b_0-1, b_0] \cap (r_0,\infty)$. 
Proceeding like in the second step, now with $(\chi_{r>r_0}) = 1$, we get 
\begin{align*}
|u_1|_{X^{2,r}_k} & \leq \eqref{pinoli.60} 
+ C_{r,k} \| h \|_{H^{r+\frac32}(\S^{n-1})} \| \psi \|_{H^{r_0+\frac32}(\S^{n-1})},
\\
|u_m|_{X^{2,r}_k} & \leq \eqref{pinoli.61} 
+ C_{r,k} (1 + A_{r_0,k} + A_{r_0, k+1}) (Bc_0z_0)^{m-1}
\| h \|_{H^{r+\frac32}(\S^{n-1})} \| \psi \|_{H^{r_0+\frac32}(\S^{n-1})}, 
\end{align*}
where we have also used the estimates already proved for $r=r_0$
and the inequality $\sum_{\sigma=1}^{m-1} (m-\sigma) B^\sigma \leq 4 B^{m-1}$, 
which holds for all real $B \geq 2$, all integers $m \geq 2$. 
We choose the constant $A_{r,k}$ similarly as above, 
and we get \eqref{echin.45} for $r \in (b_0-1,b_0] \cap (r_0,\infty)$. 
The induction step is similar. 
\end{proof}

In the next lemma we observe that what we have proved for the function $u_m$ 
can be easily adapted to the $m$-linear map $\accentset{\circ}{u}_m$
in \eqref{u.n.circ.multilinear.map}.

\begin{lemma} \label{lemma:est.u.n.r.k.multlinear}
The $m$-linear map $\accentset{\circ}{u}_m$ in \eqref{u.n.circ.multilinear.map} satisfies  
\begin{align} 
& | \accentset{\circ}{u}_m [h_1, \ldots, h_m] |_{X^{2,r}_k} 
\leq B^m c_0^m \Big( \prod_{j=1}^m \| h_j \|_{H^{r_0+\frac32}(\S^{n-1})} \Big)   
\big( \| \psi \|_{H^{r+\frac32}(\S^{n-1})} 
+ A_{r,k} \| \psi \|_{H^{\frac32}(\S^{n-1})} \big)  
\notag \\ & \qquad 
+ (\chi_{r > r_0}) A_{r,k} B^m c_0^{m-1} 
\frac{1}{m} \Big( \sum_{j=1}^m \| h_j \|_{H^{r+\frac32}(\S^{n-1})} 
\prod_{i \neq j} \| h_i \|_{H^{r_0+\frac32}(\S^{n-1})} \Big)   
\| \psi \|_{H^{r_0 + \frac32}(\S^{n-1})}
\label{echin.45.multilinear}
\end{align}
for all $h_1, \ldots, h_m$, 
all $m \in \N$, $k \in \N_0$, all real $r, r_0$ with $r \geq 0$ and $1+r_0>n/2$. 
The constants $B, c_0, A_{r,k}, c_0$ are those of Lemma \ref{lemma:est.u.n.r.k}.
\end{lemma}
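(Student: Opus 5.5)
The plan is to repeat the proof of Lemma \ref{lemma:est.u.n.r.k} verbatim at the multilinear level. Every estimate used there (namely \eqref{pinoli.15}, \eqref{pinoli.16}, \eqref{est.P.m.grad.u}, \eqref{pinoli.est.P.m.low}, \eqref{pinoli.est.P.m.high}, \eqref{pinoli.est.P.m.infty}, \eqref{est.u.n.induction}) is applied to terms that are linear in $u_\sigma$ and in the factors of $P_{m-\sigma}$. We first state multilinear versions of the estimates on $P_m$:
\[
|\accentset{\circ}{P}_m[h_1,\ldots,h_m]|_{X^{1,r_0}_{*,0}} \leq \prod_{j} C_{r_0}\|h_j\|_{H^{r_0+\frac32}(\S^{n-1})},
\]
and, in high norm, a sum over $j$ of $\|h_j\|_{H^{r+\frac32}(\S^{n-1})}\prod_{i\ne j} C_{r_0}\|h_i\|_{H^{r_0+\frac32}(\S^{n-1})}$, with constant $C_{r}$, together with the analogous $L^\infty$ bound. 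These follow from the formula of $P_m$ in Lemma \ref{lemma:P}, because each monomial is a product of $m$ factors $\tilde h_i$, $\grad\tilde h_i$, $\la x,\grad\tilde h_i\ra$. Applying \eqref{pinoli.prod} repeatedly to products of distinct functions gives the tame distribution with one factor in high norm. This replaces the power estimate \eqref{pinoli.power}, and the factor $m$ in \eqref{pinoli.est.P.m.high} becomes the sum over $j$. Correspondingly, the multilinear version of \eqref{est.P.m.grad.u} holds with the same constants.

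Next we handle the symmetrization. Write $\accentset{\circ}{g}_m$ as the average over permutations $\sigma\in\mathfrak S_m$ given in the paper. Because all the estimates are symmetric in the $h_j$ (products in the low part, symmetric sums in the high part), it suffices to bound, for a fixed permutation, $\accentset{\circ}{u}_m$ built from the unsymmetrized recursion. We then run the same three-step induction as in Lemma \ref{lemma:est.u.n.r.k}: first $r=0$, then $r\in(0,r_0]$ by induction on $b$, then $r>r_0$ by induction on $b$. Along the way we replace $z_0^m$ by $\prod_j\|h_j\|_{H^{r_0+\frac32}(\S^{n-1})}$ and $z_0^{m-1}\|h\|_{H^{r+\frac32}(\S^{n-1})}$ by the symmetric quantity $\Sigma_m := \frac1m\sum_j \|h_j\|_{H^{r+\frac32}(\S^{n-1})}\prod_{i\ne j}\|h_i\|_{H^{r_0+\frac32}(\S^{n-1})}$ appearing in \eqref{echin.45.multilinear}. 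The induction constants $B$ and $A_{r,k}$ are chosen by exactly the same inequalities as before, such as \eqref{provided.01}, because the numerical recursion is identical.

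The main obstacle is bookkeeping the normalization $\frac1m$ in the high-norm term. In the scalar case the high term carries $z_0^{m-1}\|h\|_{r+\frac32}$ with no factor $m$, whereas the multilinear high contribution splits as a sum over which $h_j$ carries the high norm. For a term $P_{m-\sigma}\grad u_\sigma$ with index sets $I$ (of size $m-\sigma$) and $I^c$ (of size $\sigma$), the high contributions are $\sum_{j\in I}$ from $P$ (without $\frac1{|I|}$) plus $\sigma\,\Sigma_{\sigma}$ restricted to $I^c$ from the induction hypothesis on $u_\sigma$.

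Two routes handle this. The first, which I would try first, is to avoid the issue by polarization. Since $\accentset{\circ}{u}_m[h_1,\ldots,h_m]$ is symmetric multilinear, set $h=\sum_j t_jh_j/\|h_j\|_{H^{r_0+\frac32}(\S^{n-1})}$ and use the polarization formula together with \eqref{echin.45}. This loses a factor $m^m/m!\le e^m$, which can be absorbed into $B$. The lemma, however, says the constants are the same as in Lemma \ref{lemma:est.u.n.r.k}, so the loss is not acceptable in that form. I would therefore fall back to the second route, the direct induction. Here I would verify that in the scalar proof the factor multiplying $\|h\|_{H^{r+\frac32}(\S^{n-1})}$ is exactly the count of slots, divided by $m$, so the weighted sums match. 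I expect the factor $m$ present in \eqref{est.P.m.grad.u} and the bound $\sum_\sigma(m-\sigma)B^\sigma\le 4B^{m-1}$ to play exactly the role of $\sum_j$ with the $\frac1m$ average.
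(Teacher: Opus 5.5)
Your overall plan is what the paper's one-line proof intends: rerun the induction of Lemma~\ref{lemma:est.u.n.r.k} using multilinear versions of the bounds on $P_m$ and of \eqref{est.P.m.grad.u}. Your multilinear $P$-bounds are also fine. The step that fails is the reduction ``it suffices to bound, for a fixed permutation, $\accentset{\circ}{u}_m$ built from the unsymmetrized recursion''. For a fixed ordering the available estimates are \emph{not} symmetric in the $h_j$, because the slot occupied by $h_j$ determines how its high norm is weighted.

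To see this, write $a_j := \| h_j \|_{r_0+\frac32}$ and $b_j := \| h_j \|_{r+\frac32}$. In the fixed-order datum, consider the single term $\accentset{\circ}{P}_1[h_m] \grad w$, where $w$ is the $(m-1)$-linear object in $h_1, \ldots, h_{m-1}$. The only bound available (high part of \eqref{est.P.m.grad.u} times the $X^{2,r_0}_k$ bound on $w$) is $C_{r,k} (1 + A_{r_0,k}) (B c_0)^{m-1} b_m a_1 \cdots a_{m-1} \| \psi \|_{r_0+\frac32}$. By contrast, \eqref{echin.45.multilinear} allows only $A_{r,k} B^m c_0^{m-1} \frac1m \, b_m a_1 \cdots a_{m-1} \| \psi \|_{r_0+\frac32}$ for that monomial. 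Closing the induction would therefore require $A_{r,k} \gtrsim m/B$ for every $m$, which is impossible with $m$-independent constants. So the symmetric bound cannot be proved permutation by permutation. Your last paragraph (``I would verify\ldots, I expect\ldots'') leaves exactly this $\frac1m$ bookkeeping unresolved.

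The missing idea is to keep the average over $\mathfrak S_m$ in $\accentset{\circ}{g}_m$ inside every induction step. Use the symmetric hypothesis for $\accentset{\circ}{u}_\sigma$, $\sigma < m$, and average the resulting bounds.

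For fixed $\sigma$, the set $J = \pi(\{1, \ldots, \sigma\})$ is a uniformly distributed $\sigma$-subset of $\{1, \ldots, m\}$. Each index therefore lies in $J$ with probability $\sigma/m$ and in $J^c$ with probability $(m-\sigma)/m$. Set $\Sigma_m := \frac1m \sum_j b_j \prod_{i \neq j} a_i$. Then:
\begin{itemize}
\item the monomials produced by the high part of $\accentset{\circ}{P}_{m-\sigma}[h_{J^c}]$, namely $\sum_{j \in J^c} b_j \prod_{i \neq j} a_i$, average to $(m-\sigma)\Sigma_m$;
\item the monomials produced by the high part of the hypothesis on $\accentset{\circ}{u}_\sigma[h_J]$, namely $\frac1\sigma \sum_{j \in J} b_j \prod_{i \neq j} a_i$, average to $\Sigma_m$.
\end{itemize}
These are exactly the scalar coefficients $(m-\sigma) z_0^{m-1} \| h \|_{r+\frac32}$ and $z_0^{m-1} \| h \|_{r+\frac32}$ in the proof of Lemma~\ref{lemma:est.u.n.r.k}. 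The low parts give $\prod_j a_j$ in place of $z_0^m$. Hence the numerical recursion is literally the scalar one, and so is the choice of $B$ and $A_{r,k}$.

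Equivalently, you may keep a fixed order but carry non-symmetric weights $w_{m,j}$ in front of $b_j \prod_{i \neq j} a_i$. Their total $\sum_j w_{m,j}$ obeys the scalar recursion, and symmetrizing once at the end yields the $\frac1m \sum_j$ form.

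You were right to discard polarization: it costs a factor $m^m/m! \leq e^m$ and hence changes $B$.
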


\begin{proof} The proof is a simple adaptation of the previous lemmas. 
\end{proof}

For functions defined on the sphere $\S^{n-1}$, 
we use the short notation \eqref{short.norm}.

\begin{proposition} \label{prop:u}
Recall notation \eqref{short.norm}. 
Let $1+r_0 > n/2$. 
There exists $\delta_0 > 0$ such that, 
for $\| h \|_{r_0+\frac32} \leq \delta_0$,
the series $u = \sum_{m=0}^\infty u_m$, 
with $u_m$ in \eqref{prob.u.0}, \eqref{prob.u.n}, 
satisfies 
\begin{align} 
|u|_{X^{2,r}_k} 
& \leq C_0 \| \psi \|_{r+\frac32}
+ C_{r,k} \| \psi \|_{\frac32}
+ (\chi_{r > r_0}) C_{r,k} \| h \|_{r+\frac32} \| \psi \|_{r_0 + \frac32}
\label{echin.60}
\end{align}
for all $k \in \N_0$, all real $r \geq 0$, 
where $C_{r,k}$ is increasing in $r,k$, 
while $C_0, \delta_0$ depend on $r_0$ and they are independent of $r,k$.
The map $h \mapsto u$ is analytic from the set 
\[
\{ h \in H^{r+\frac32}(\S^{n-1}) \cap H^{r_0+\frac32}(\S^{n-1}) 
: \| h \|_{r_0+\frac32} < \delta_0 \}
\] 
to the Banach space of the bounded linear operators  
of $H^{r+\frac32}(\S^{n-1})$ 
into the space $\{ u \in H^1(B_1) : |u|_{X^{2,r}_k} < \infty \}$ 
endowed with the norm $\| u \|_{H^1(B_1)} + | u |_{X^{2,r}_k}$.  
The derivative of $u$ with respect to $h$ in direction $\eta$ satisfies 
\begin{align} 
|u'(h)[\eta]|_{X^{2,r}_k} 
& \leq C_0 \| \eta \|_{r_0+\frac32}
( \| \psi \|_{r+\frac32} + C_{r,k} \| \psi \|_{\frac32} )
\notag \\ & \quad 
+ (\chi_{r > r_0}) C_{r,k} ( \| \eta \|_{r+\frac32} 
+ \| \eta \|_{r_0+\frac32} \| h \|_{r+\frac32} ) \| \psi \|_{r_0 + \frac32},
\label{echin.60.lin}
\end{align}
and the second derivative satisfies 
\begin{align}
| u''(h)[\eta_1, \eta_2] |_{X^{2,r}_k} 
& \leq C_0 \| \eta_1 \|_{r_0+\frac32} \| \eta_2 \|_{r_0+\frac32}
( \| \psi \|_{r+\frac32} + C_{r,k} \| \psi \|_{\frac32} )
\notag \\ & \quad \  
+ (\chi_{r > r_0}) C_{r,k} ( \| \eta_1 \|_{r+\frac32} \| \eta_2 \|_{r_0+\frac32} 
+ \| \eta_1 \|_{r_0+\frac32} \| \eta_2 \|_{r+\frac32}
\notag\\ & \quad \ 
+ \| \eta_1 \|_{r_0+\frac32} \| \eta_2 \|_{r_0+\frac32} \| h \|_{r+\frac32} )
\| \psi \|_{r_0+\frac32}.
\label{echin.60.second.der}
\end{align}
\end{proposition}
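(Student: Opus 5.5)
The plan is to sum the estimates of Lemma \ref{lemma:est.u.n.r.k} over $m$. We choose $\delta_0$ so small that $B c_0 \delta_0 \leq 1/2$; this is possible with $\delta_0$ independent of $r,k$, because $B$ and $c_0$ are. Then $z_0 = \| h \|_{r_0+\frac32} \leq \delta_0$ gives $\sum_{m\geq1} (Bc_0z_0)^m \leq 1$ and $\sum_{m \geq 1} B^m (c_0 z_0)^{m-1} = B \sum_{m\geq 1}(Bc_0z_0)^{m-1} \leq 2B$. For $u_0 = \mathtt{PI}\psi$ we use \eqref{pinoli.interpol.psi}. Summing \eqref{echin.45} over $m \geq 1$ and adding the $u_0$ term then gives \eqref{echin.60}, by the triangle inequality for the seminorm $|\cdot|_{X^{2,r}_k}$. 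Here $C_0 = 1 + (\text{$C_0$ of \eqref{pinoli.interpol.psi}})$ is independent of $r,k$, and $C_{r,k}$ collects $A_{r,k}$, $2BA_{r,k}$ and the constant of \eqref{pinoli.interpol.psi}. One has to check that the seminorm of the series is bounded by the series of seminorms. Each $(u\zeta_k\psi_j)\circ\mathtt g_j = \sum_m (u_m\zeta_k\psi_j)\circ\mathtt g_j$ converges in $H^1(\R^n_+)$ by Lemma \ref{lemma:olive.low} and Lemma \ref{lemma:low.norm.u.n.X}. The partial sums are Cauchy in $H^{2,r}(\R^n_+)$ by the bounds just summed, so the limits coincide and the triangle inequality passes to the limit.

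For analyticity, note that $u_m(h) = \accentset{\circ}{u}_m[h,\ldots,h]$ is the restriction to the diagonal of a bounded symmetric $m$-linear map. Lemma \ref{lemma:est.u.n.r.k.multlinear} bounds its norm, as a map into the space with norm $\|\cdot\|_{H^1(B_1)} + |\cdot|_{X^{2,r}_k}$, by a quantity of the form $C^m$. More precisely, $B^m c_0^m$ times products of low norms, plus $A_{r,k}B^m c_0^{m-1}$ times one high norm, is controlled by $(Bc_0)^m\,\|\psi\|_{r+\frac32}$ multiplied by products of the norms $\|h_j\|_{r+\frac32}+\|h_j\|_{r_0+\frac32}$. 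The $H^1$ part is handled by Lemma \ref{lemma:olive.low}. So the power series $\sum_m \accentset{\circ}{u}_m[h,\ldots,h]$ converges normally for $h$ in a ball of the space $H^{r+\frac32}\cap H^{r_0+\frac32}$. To obtain analyticity on the whole set $\|h\|_{r_0+\frac32}<\delta_0$, with no smallness on the high norm, we expand around any $h_0$ in the set. The standard argument is to write $h = h_0 + \eta$ and expand $\accentset{\circ}{u}_m[h_0+\eta,\ldots]$ by multilinearity. In \eqref{echin.45.multilinear} the high norm enters only \emph{linearly}, in one factor. Hence the rearranged series converges absolutely whenever $\|h_0\|_{r_0+\frac32} + \|\eta\|_{r_0+\frac32} < \delta_0$ and $\|\eta\|_{r+\frac32}$ is finite, for instance smaller than $1$. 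This is the step I expect to require the most care: the tame structure has to be preserved under the binomial rearrangement.

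For the derivatives, we differentiate termwise, which is justified by the normal convergence above. This gives $u'(h)[\eta] = \sum_m m\,\accentset{\circ}{u}_m[h,\ldots,h,\eta]$ and $u''(h)[\eta_1,\eta_2] = \sum_m m(m-1)\accentset{\circ}{u}_m[h,\ldots,h,\eta_1,\eta_2]$. Applying \eqref{echin.45.multilinear} with $h_1=\cdots=h_{m-1}=h$ and $h_m=\eta$, the low-norm term gives $B^mc_0^m z_0^{m-1}\|\eta\|_{r_0+\frac32}(\|\psi\|_{r+\frac32}+A_{r,k}\|\psi\|_{\frac32})$. The high term, after multiplying by $m$, gives $A_{r,k}B^mc_0^{m-1}\big(\|\eta\|_{r+\frac32}z_0^{m-1} + (m-1)\|h\|_{r+\frac32}\|\eta\|_{r_0+\frac32}z_0^{m-2}\big)\|\psi\|_{r_0+\frac32}$. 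Since $Bc_0\delta_0 \leq 1/2$, the sums $\sum_m m(Bc_0z_0)^{m-1}$ and $\sum_m m^2 (Bc_0 z_0)^{m-2}$ are bounded by absolute constants, and this yields \eqref{echin.60.lin}. The second derivative is handled identically: put $\eta_1,\eta_2$ in two slots, expand the high term into the three possible placements of the high norm, and use $\sum m^3 (1/2)^{m}<\infty$. This gives \eqref{echin.60.second.der}. Since $C_0$ only absorbs factors like $Bc_0\sum_m m(1/2)^{m-1}$, it remains independent of $r,k$.
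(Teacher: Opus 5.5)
Your proposal is correct and follows the paper's own argument. You sum \eqref{echin.45} over $m$, use \eqref{pinoli.interpol.psi} for $u_0$, and fix $\delta_0 = 1/(2Bc_0)$ so that $Bc_0 z_0 \le 1/2$; the derivative bounds then come from $u_m'(h)[\eta] = m\,\accentset{\circ}{u}_m[h,\ldots,h,\eta]$, $u_m''(h)[\eta_1,\eta_2] = m(m-1)\,\accentset{\circ}{u}_m[h,\ldots,h,\eta_1,\eta_2]$ and \eqref{echin.45.multilinear}. The additional details you supply, namely passing the seminorm bound to the limit of the series and re-expanding around an arbitrary $h_0$ (where the binomial identities keep the high norm linear), are left implicit in the paper and are handled correctly.
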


\begin{proof} 
To prove \eqref{echin.60}, use 
\eqref{pinoli.interpol.psi}, 
\eqref{echin.45} 
with $B c_0 z_0 \leq 1/2$,
and fix $\delta_0 := 1 / (2 B c_0)$. 
To prove \eqref{echin.60.lin} and \eqref{echin.60.second.der}, 
recall that 
$u_m'(h)[\eta] = m \accentset{\circ}{u}_m [h, \ldots, h, \eta]$ and  
$u_m''(h)[\eta_1, \eta_2] = m(m-1) \accentset{\circ}{u}_m [h, \ldots, h, \eta_1, \eta_2]$ 
and use \eqref{echin.45.multilinear}.
\end{proof}

\begin{lemma} \label{lemma:est.grad.u.x}
Recall notation \eqref{short.norm}. 
Let $r_0, \delta_0$ be like in Proposition \ref{prop:u}.
For $\| h \|_{r_0+\frac32} \leq \delta_0$, 
the function $u$ in Proposition \ref{prop:u} satisfies 
\begin{align} 
\| (\grad u)|_{\S^{n-1}} \|_{r+\frac12}
& \leq C_0 \| \psi \|_{r+\frac32} 
+ C_r \| \psi \|_{\frac32} 
+ (\chi_{r > r_0}) C_r \| h \|_{r+\frac32} 
\| \psi \|_{r_0+\frac32}
\notag \\ 
& \leq C_0' \| \psi \|_{r+\frac32} 
+ (\chi_{r > r_0}) C_r (1 + \| h \|_{r+\frac32}) \| \psi \|_{r_0+\frac32}
\label{echin.61}
\end{align}
for all real $r \geq 0$, 
where $C_r$ is increasing in $r$, 
while $C_0, C_0', \delta_0$ are independent of $r$.
Also, 
\begin{equation} \label{echin.62}
\| \la \grad u , x \ra|_{\S^{n-1}} \|_{r+\frac12} 
\leq C_0 \| \psi \|_{r+\frac32} 
+ (\chi_{r > r_0}) C_r (1 + \| h \|_{r+\frac32}) \| \psi \|_{r_0+\frac32}.
\end{equation}
Moreover, 
\begin{align} 
& \| (\grad \{ u'(h)[\eta] \})|_{\S^{n-1}} \|_{r+\frac12}
+ \| \la \grad \{ u'(h)[\eta] \} , x \ra|_{\S^{n-1}} \|_{r+\frac12}
\notag \\ 
& \qquad  
\leq C_0 \| \eta \|_{r_0+\frac32} \| \psi \|_{r+\frac32}
+ (\chi_{r > r_0}) C_r (\| \eta \|_{r+\frac32} 
+ \| h \|_{r+\frac32} \| \eta \|_{r_0+\frac32}) 
\| \psi \|_{r_0+\frac32}
\label{echin.61.lin}
\end{align}
and 
\begin{align}
& \| (\grad \{ u''(h)[\eta_1, \eta_2] \})|_{\S^{n-1}} \|_{r+\frac12}
+ \| \la \grad \{ u''(h)[\eta_1, \eta_2] \} , x \ra|_{\S^{n-1}} \|_{r+\frac12}
\notag \\ 
& \qquad 
\leq C_0 \| \eta_1 \|_{r_0+\frac32} \| \eta_2 \|_{r_0+\frac32} \| \psi \|_{r+\frac32}
+ (\chi_{r > r_0}) C_r 
(\| \eta_1 \|_{r+\frac32} \| \eta_2 \|_{r_0+\frac32}
\notag \\ & \qquad \quad \  
+ \| \eta_1 \|_{r_0+\frac32} \| \eta_2 \|_{r+\frac32}
+ \| h \|_{r+\frac32} \| \eta_1 \|_{r_0+\frac32} \| \eta_2 \|_{r_0+\frac32}  )
\| \psi \|_{r_0+\frac32}.
\label{echin.61.second.der}
\end{align}
\end{lemma}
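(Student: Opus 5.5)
The plan is to bound the trace of $\grad u$ on $\S^{n-1}$ through the trace estimate for the non-isotropic spaces, and then to feed in Proposition \ref{prop:u}.

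\emph{Localization of the trace.} Fix $j$. On $\S^{n-1}$ one has $\zeta_0 = 1$, and $\mathtt g_j(\cdot,0)$ maps into $\S^{n-1}$. Hence
\[
((\grad u)\psi_j)\circ \mathtt g_j(\cdot,0) = \big( ((\grad u)\zeta_0\psi_j)\circ\mathtt g_j \big)(\cdot,0).
\]
We apply the trace estimate \eqref{trace.est} with $s=1$ and tangential index $r$. Its constant $C_1$ is independent of $r$, and it gives
\[
\| ((\grad u)\psi_j)\circ \mathtt g_j(\cdot,0) \|_{H^{r+\frac12}(\R^{n-1})} \leq C_1 \| ((\grad u)\zeta_0\psi_j)\circ\mathtt g_j \|_{H^{1,r}(\R^n_+)}.
\]
Summing over $j$ and using \eqref{def.norm.u} yields $\| (\grad u)|_{\S^{n-1}} \|_{r+\frac12} \leq C_1 |\grad u|_{X^{1,r}_0}$.

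\emph{Passing to $u$.} By \eqref{pinoli.grad} with $k=0$,
\[
|\grad u|_{X^{1,r}_0} \leq C_0 |u|_{X^{2,r}_0} + C_r |u|_{X^{2,0}_0} + C_{r,0} |u|_{X^{1,r}_1}.
\]
The first two terms are bounded by \eqref{echin.60} with $k=0$; this step produces a leading constant $C_0$ independent of $r$, because $k=0$ is fixed. For the third term I use $|u|_{X^{1,r}_1} \leq |u|_{X^{2,r-1}_1}$ when $r\geq1$. When $r<1$ I use instead $|u|_{X^{1,r}_1} \leq |u|_{X^{2,0}_1}$. Then \eqref{echin.60} with $k=1$ applies, and it produces $\|\psi\|_{r+\frac12}$ and $\|h\|_{r+\frac12}\|\psi\|_{r_0+\frac32}$, both multiplied by $r$-dependent constants. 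The $\psi$ term is harmless: by the interpolation trick \eqref{interpol.trick}, $C_r\|\psi\|_{r+\frac12} \leq \|\psi\|_{r+\frac32} + C_r'\|\psi\|_{\frac32}$. The $h$ term satisfies $\|h\|_{r+\frac12} \leq \|h\|_{r+\frac32}$ when $r-1>r_0$. When $r-1\le r_0<r$, the $\chi$ factor for index $r-1$ vanishes, so no $h$ term appears. This proves the first line of \eqref{echin.61}.

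The second line follows in two cases. If $r\le r_0$, then $\|\psi\|_{\frac32}\le\|\psi\|_{r+\frac32}$, so that term is absorbed into $C_0'\|\psi\|_{r+\frac32}$ with $C_0'$ independent of $r$. If $r>r_0$, then $\|\psi\|_{\frac32}\le\|\psi\|_{r_0+\frac32}$, which goes into the $\chi$ term.

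\emph{The remaining estimates.} For \eqref{echin.62} I write $\la \grad u, x\ra = \sum_i x_i \pa_{x_i} u$ and bound $|x_i\pa_{x_i}u|_{X^{1,r}_0}$ by \eqref{pinoli.prod.x.i}. This gives $C_{r_0}|\grad u|_{X^{1,r}_0} + C_r|\grad u|_{X^{1,r_0}_0}$, followed by the same trace argument. The low-norm term $|\grad u|_{X^{1,r_0}_0}$ is controlled by the estimate just proved at level $r_0$, giving $C\|\psi\|_{r_0+\frac32}$, which is absorbed when $r>r_0$. When $r\le r_0$ the $\chi$ factor kills it. The derivative estimates \eqref{echin.61.lin} and \eqref{echin.61.second.der} follow verbatim, with \eqref{echin.60.lin} and \eqref{echin.60.second.der} replacing \eqref{echin.60}. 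The low $\psi$ norms are absorbed in the same way, using $\|\eta\|_{r_0+\frac32}$ as a common factor.

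The main obstacle is the bookkeeping of constants. The coefficient of $\|\psi\|_{r+\frac32}$ must stay independent of $r$. That works only because the trace constant and the $C_0$ terms in \eqref{pinoli.grad} and \eqref{echin.60} are $r$-independent, and because every $r$-dependent term carries a strictly lower norm that can be interpolated away.
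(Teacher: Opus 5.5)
Your proposal is correct and follows the paper's proof of \eqref{echin.61}: you use the trace estimate \eqref{trace.est} with $s=1$ to reduce to $|\grad u|_{X^{1,r}_0}$, then \eqref{pinoli.grad}, \eqref{echin.60} with $k=0$ and $k=1$, the interpolation trick \eqref{interpol.trick}, and the case split $r\le r_0$ versus $r>r_0$; for $r\le r_0$ this split implicitly uses $C_r\le C_{r_0}$, and your separate treatment of $0\le r<1$ via $|u|_{X^{1,r}_1}\le|u|_{X^{2,0}_1}$ makes explicit a point the paper leaves implicit. The only real difference is in \eqref{echin.62}: you multiply by $x_i$ inside the ball via \eqref{pinoli.prod.x.i} and then take the trace, whereas the paper takes the trace first and multiplies on $\S^{n-1}$ using \eqref{prod.est.unified.Hr.S2} with $s_0=r_0+\frac12$, and both routes give the same bound.
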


\begin{proof} 
By \eqref{def.rho.k.zeta.k}, $\zeta_0=1$ on $\S^{n-1}$.  
By \eqref{def.norm.u}, 
\eqref{trace.est} with $s=1$, 
and \eqref{def.seminorm}, one has  
\begin{align}
\| (\grad u)|_{\S^{n-1}} \|_{H^{r+\frac12}(\S^{n-1})} 
& = \| (\zeta_0 \grad u)|_{\S^{n-1}} \|_{H^{r+\frac12}(\S^{n-1})}  
\notag \\ & = \sum_{j=1}^N \| (\psi_j \zeta_0 \grad u) \circ \mathtt g_j( \cdot , 0) \|_{H^{r+\frac12}(\R^{n-1})}
\notag \\ & \leq C \sum_{j=1}^N \| (\psi_j \zeta_0 \grad u) \circ \mathtt g_j \|_{H^{1,r}(\R^n_+)}
= C |\grad u|_{X^{1,r}_0}.
\label{benef.1}
\end{align}
By \eqref{pinoli.grad}, 
\eqref{echin.60} with $k=0$, 
the inequality $|u|_{X^{1,r}_k} \leq |u|_{X^{2,r-1}_k}$ for $k=1$, 
and using the interpolation inequality \eqref{interpol.Hr.S2} to estimate $\| \psi \|_{r-1+\frac32}$ 
like in \eqref{interpol.trick} if $r-1 > 0$,
we obtain the first inequality in \eqref{echin.61}. 
The second inequality in \eqref{echin.61} is obtained from the first one because 
\begin{equation} \label{echin.66}
C_r \| \psi \|_{\frac32}
\leq C_{r_0} \| \psi \|_{r + \frac32}
+ (\chi_{r > r_0}) C_r \| \psi \|_{r_0 + \frac32}
\end{equation}
for all $r \geq 0$. To prove \eqref{echin.66}, 
consider the two cases $r \in [0, r_0]$ and $r \in (r_0,\infty)$ separately, 
and use the fact that $C_r \leq C_{r_0}$ for $r \in [0, r_0]$. 
To prove \eqref{echin.62}, use \eqref{echin.61} 
and the product estimate \eqref{prod.est.unified.Hr.S2}. 
The proof of \eqref{echin.61.lin}, \eqref{echin.61.second.der} is similar. 
\end{proof}

Now that $\la \grad u , x \ra|_{\S^{n-1}}$ has been estimated,  
we consider the other terms appearing in \eqref{formula.G}.

\begin{lemma} \label{lemma:other.terms}
Recall notation \eqref{short.norm}. 
For any $s_0 > (n-1)/2$ there exists $\delta_0 > 0$ such that, 
for all $\| h \|_{s_0+1} \leq \delta_0$, all $s \geq s_0$, one has 
\begin{align*}
\| (1+h)^{-1} \|_s 
& \leq C_s (1 + \| h \|_{s}), 
\\
\| \{ (1+h)^2 + | \grad_{\S^{n-1}} h |^2 \}^{\pm \frac12} \|_{s} 
& \leq C_s (1 + \| h \|_{s+1}), 
\\
\| (1 + h + \la \grad \tilde h, x \ra)^{-1} \|_s
& \leq C_s (1 + \| h \|_{s+1}), 
\\
\| \la \grad_{\S^{n-1}} h, \grad_{\S^{n-1}} \psi \ra \|_{s} 
& \leq C_{s_0} \| h \|_{s_0+1} \| \psi \|_{s+1}
+ (\chi_{s > s_0}) C_s \| h \|_{s+1} \| \psi \|_{s_0+1},
\end{align*}
where $\tilde h = \mathtt{PI} h$. 
The constant $C_s$ is increasing in $s$,
while $C_{s_0}, \delta_0$ are independent of $s$. 
\end{lemma}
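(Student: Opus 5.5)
The plan is to reduce every estimate to the product and power estimates on the sphere, \eqref{prod.est.unified.Hr.S2} and \eqref{power.est.Hr.S2}, with low index $s_0$, together with Neumann or Taylor series. Because the constant in front of the high norm there is $C_{s_0}$, independent of $s$, the radius $\delta_0$ can be chosen independent of $s$.

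\emph{Reciprocal of $1+h$.} By \eqref{embedding.Sd}, $\|h\|_{L^\infty}\le C_{s_0}\|h\|_{s_0+1}<1/2$ for small $\delta_0$, so I expand
\[
(1+h)^{-1}=\sum_{m\ge0}(-h)^m .
\]
Applying \eqref{power.est.Hr.S2} (with $s_0$, and using $\|h\|_{s_0}\le\|h\|_{s_0+1}$) gives
\[
\|h^m\|_s\le (C_{s_0}\|h\|_{s_0})^{m-1}\|h\|_s+(m-1)C_s(C_{s_0}\|h\|_{s_0})^{m-2}\|h\|_{s_0}\|h\|_s .
\]
Summing over $m$ converges geometrically once $C_{s_0}\delta_0\le 1/2$. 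This yields $C_s(1+\|h\|_s)$, where the constant $1$ accounts for $m=0$: the constant function has finite $H^s(\S^{n-1})$ norm depending on $s$, which is why $C_s$ appears in front of $1$.

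\emph{Square-root factor.} Write
\[
(1+h)^2+|\grad_{\S^{n-1}}h|^2=1+a,\qquad a:=2h+h^2+|\grad_{\S^{n-1}}h|^2 .
\]
By \eqref{est.grad.Hr.S2} and the product estimate,
\[
\|a\|_{s_0}\le C\|h\|_{s_0+1},\qquad \|a\|_s\le C_s\|h\|_{s+1}+\dots
\]
The tangential gradient costs one derivative; this is why $\|h\|_{s+1}$ appears, and why we require $s_0>(n-1)/2$ so that $H^{s_0}$ is an algebra. Then I expand $(1+a)^{\pm1/2}$ as a binomial series $\sum\binom{\pm1/2}{m}a^m$ with bounded coefficients and apply the power estimate exactly as above.

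\emph{Factor with $\tilde h$.} The third estimate is the delicate one, because $\la\grad\tilde h,x\ra|_{\S^{n-1}}$ involves the harmonic extension. I would bound $b:=\la\grad\tilde h,x\ra|_{\S^{n-1}}$ using \eqref{benef.1}, \eqref{pinoli.grad}, \eqref{pinoli.interpol.psi} and \eqref{pinoli.prod.x.i}, taking $r=s-\frac12$ (with $s\ge s_0>\frac12$ when $n\ge2$, so that $r\ge0$). This gives
\[
\|b\|_s\le C|\grad\tilde h|_{X^{1,s-1/2}_0}\le C_0\|h\|_{s+1}+C_s\|h\|_{3/2},
\]
and at $s=s_0$, $\|b\|_{s_0}\le C\|h\|_{s_0+1}$. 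This is essentially the argument of Lemma \ref{lemma:est.grad.u.x} applied to $u=\tilde h$. The main obstacle is checking the index conditions there: that $r_0:=s_0-\frac12$ satisfies $r_0+1>n/2$, and that the constant in front of the low norm is independent of $s$. Both are satisfied since $s_0>(n-1)/2$. Then $1+h+b=1+c$ with $\|c\|_{s_0}\le C\delta_0$, and the Neumann series argument applies again, giving $C_s(1+\|h\|_{s+1})$.

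\emph{Bilinear term.} The last estimate follows directly from \eqref{prod.est.unified.Hr.S2}, applied to the components of $\grad_{\S^{n-1}}h$ and $\grad_{\S^{n-1}}\psi$ (viewed as $\R^n$-valued functions), followed by \eqref{est.grad.Hr.S2}. The lower-order terms $C_s\|\cdot\|_{1}$ that this produces are absorbed into the $\chi_{s>s_0}$ term or into the $C_{s_0}$ term according to whether $s>s_0$, exactly as in \eqref{echin.66}.
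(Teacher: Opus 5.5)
Your proposal is correct and follows essentially the same route as the paper. It expands each factor in a series controlled by \eqref{power.est.Hr.S2} with the $s$-independent constant $C_{s_0}$, so that $\delta_0$ depends only on $s_0$. For $\la \grad \tilde h, x \ra|_{\S^{n-1}}$ it uses the trace bound \eqref{benef.1} together with \eqref{pinoli.prod.x.i}, \eqref{pinoli.grad} and \eqref{pinoli.interpol.psi}, which is exactly the content of \eqref{benef.2}, \eqref{benef.3}. The bilinear term is handled by \eqref{prod.est.unified.Hr.S2} and \eqref{est.grad.Hr.S2}.

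The only slip is cosmetic. Your displayed chain for $\|b\|_s$ passes directly to $|\grad \tilde h|_{X^{1,s-1/2}_0}$, but the multiplication by $x$ must first be removed via \eqref{pinoli.prod.x.i}. You do list that estimate among your tools.
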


\begin{proof}
To estimate $(1+h)^{-1}$, we write it as a power series around $h=0$ 
and apply estimate \eqref{power.est.Hr.S2} to each term of the series. 
We estimate $(1+g)^p$, with $p = \pm \frac12$, 
$g = 2h + h^2 + |\grad_{\S^{n-1}} h|^2$, 
in the same way, then we estimate $g$ by \eqref{prod.est.unified.Hr.S2}, \eqref{est.grad.Hr.S2}.
Also $(1+g)^{-1}$, with $g = h + \la \grad \tilde h, x \ra$, 
is estimated in the same way, and,  
applying \eqref{benef.1} to $\la \grad \tilde h, x \ra$ in place of $\grad u$, 
then using \eqref{benef.2}, \eqref{benef.3}, 
one has 
\[
\| \la \grad \tilde h , x \ra|_{\S^{n-1}} \|_{s} 
\leq C_{s_0} \| h \|_{s+1} + (\chi_{s>s_0}) C_s \| h \|_{s_0+1}.
\]
The estimate for $\la \grad_{\S^2} h, \grad_{\S^2} \psi \ra$ follows from 
\eqref{prod.est.unified.Hr.S2}, \eqref{est.grad.Hr.S2}.
\end{proof}

\begin{proof}[\textbf{Proof of Theorems \ref{thm.G.in.intro}, \ref{thm.der.G.in.intro}}]
To prove \eqref{est.G.in.intro}, 
we use identity \eqref{formula.G},
estimate \eqref{echin.62} 
with $r + \frac12 = s$, $r_0 + \frac12 = s_0$ 
for the term $\la \grad u , x \ra$ on $\S^{n-1}$, 
Lemma \ref{lemma:other.terms} for the other terms appearing in \eqref{formula.G}, 
and the product estimate \eqref{prod.est.unified.Hr.S2}. 
The proof of \eqref{echin.63.lin.in.intro} and \eqref{echin.63.second.der.in.intro} 
is similar, using estimates \eqref{echin.61.lin} and \eqref{echin.61.second.der}.  
\end{proof}

\section{Appendix} 
\label{sec:appendix}

This Appendix contains the proofs of Lemmas \ref{lemma:est.tools.R.n.+}, 
\ref{lemma:est.Lap.R.n.+},
\ref{lemma:est.tools.S.n-1}. 

\begin{proof}[\textbf{Proof of Lemma \ref{lemma:est.tools.R.n.+}}] 
$(i)$ \emph{Proof of density}. Let $s,r \in \R$.  
Since $1 \leq 1 + |\xi'|^2 \leq 1 + |\xi|^2$, one has 
\[
(1 + |\xi|^2)^{r_0} \leq (1 + |\xi'|^2)^r \leq (1 + |\xi|^2)^{r_1} \quad \forall \xi \in \R^n, 
\]
where $r_0 := \min \{ 0, r \}$ and $r_1 := \max \{ 0, r \}$.
Hence 
\begin{equation} \label{inscatolati}
\| u \|_{H^{s + r_0}(\R^n)} \leq \| u \|_{H^{s,r}(\R^n)} \leq \| u \|_{H^{s + r_1}(\R^n)}
\end{equation}
for all $u \in H^{s + r_1}(\R^n)$. 
By Fourier truncation, it is immediate to prove that 
$H^{s + r_1}(\R^n)$ is dense in $H^{s,r}(\R^n)$: 
given $u \in H^{s,r}(\R^n)$, 
let $\hat u_N = \hat u$ in the ball $|\xi| < N$, 
and $\hat u_N = 0$ outside that ball. 
Let $u_N$ be the Fourier anti-transform of $\hat u_N$. 
Then $u_N \in H^\s(\R^n)$ for all $\s \in \R$,  
and $\| u - u_N \|_{H^{s,r}(\R^n)}$ tends to zero as $N \to \infty$. 
Given $\e > 0$, fix $N$ such that $\| u - u_N \|_{H^{s,r}(\R^n)} < \e/2$. 
As is known (see, e.g., \cite{Triebel.1983}, Theorem 2.3.3, page 48), 
$\mS(\R^n)$ is dense in $H^\s(\R^n)$ for all $\s \in \R$.  
Hence there exists $\ph \in \mS(\R^n)$ such that $\| u_N - \ph \|_{H^{s+r_1}(\R^n)} < \e/2$. 
By the second inequality in \eqref{inscatolati} and triangular inequality, 
one has $\| u - \ph \|_{H^{s,r}(\R^n)} < \e$. 
This proves that $\mS(\R^n)$ is dense in $H^{s,r}(\R^n)$. 

On $\R^n_+$, let $u \in H^{s,r}(\R^n_+)$, and let $v \in H^{s,r}(\R^n)$ 
with $v |_{\R^n_+} = u$. Given $\e > 0$, take $\ph \in \mS(\R^n)$ such that 
$\| v - \ph \|_{H^{s,r}(\R^n)} < \e$, and let $f := \ph |_{\R^n_+}$. 
Since $v - \ph \in H^{s,r}(\R^n)$ and $(v-\ph)|_{\R^n_+} = u-f$,  
one has
\[
\| u - f \|_{H^{s,r}(\R^n_+)} 
= \inf \{ \| w \|_{H^{s,r}(\R^n)} : w \in H^{s,r}(\R^n), \ w |_{\R^n_+} = u-f \} 
\leq \| v - \ph \|_{H^{s,r}(\R^n)} < \e.
\]
This proves that the set $\{ f : f = \ph |_{\R^n_+} \ \exists \ph \in \mS(\R^n) \}$ 
is dense in $H^{s,r}(\R^n_+)$.

\medskip

$(ii)$ \emph{Proof of \eqref{embedding.ineq}}. 
Let $v$ be a function in the Schwartz class $\mS(\R^n)$, 
and let $\hat v$ be its Fourier transform on $\R^n$.  
By Fourier inversion formula,
$\| v \|_{L^\infty(\R^n)} \leq \| \hat v \|_{L^1(\R^n)}$.  
Multiplying and dividing $\hat v(\xi)$ by 
$(1 + |\xi|^2)^{\frac{s}{2}} (1 + |\xi'|^2)^{\frac{r}{2}}$, 
and using Cauchy-Schwarz inequality, 
one has  
\[
\| \hat v \|_{L^1(\R^n)} 
\leq C_{s,r} \| v \|_{H^{s,r}(\R^n)}, 
\quad  
C_{s,r}^2 = \int_{\R^n} \frac{1}{ (1 + |\xi|^2)^s (1 + |\xi'|^2)^r } \, d\xi.
\]
For any fixed $\xi' \in \R^{n-1}$, 
the change of variable $\xi_n = (1 + |\xi'|^2)^{\frac12} t$ gives
$1 + |\xi|^2 = (1 + |\xi'|^2)(1+t^2)$ and
\begin{equation} \label{int.on.R.like.arctan}
\int_\R \frac{1}{ (1 + |\xi|^2)^s } \, d \xi_n 
= \frac{ K_s }{ (1 + |\xi'|^2)^{s - \frac12} }, \quad 
K_s := \int_\R \frac{1}{(1 + t^2)^s} \, dt.
\end{equation}
The integral $K_s$ converges for $2 s > 1$. 
Hence the integral $C_{s,r}^2$ is finite if $2(r+s-\frac12) > n-1$, i.e., $r+s > n/2$.
The assumption $v \in \mS(\R^n)$ is removed by density.
This proves \eqref{embedding.ineq} on $\R^n$. 
The uniform continuity in $\R^n$ of $v \in H^{s,r}(\R^n)$ 
and its vanishing limit as $|x| \to \infty$ 
hold because $\hat v \in L^1(\R^n)$, using the dominated convergence theorem, 
Riemann-Lebesgue lemma, and the formula for the Fourier transform of $\hat v$. 

On $\R^n_+$, let $u \in H^{s,r}(\R^n_+)$, and let $v \in H^{s,r}(\R^n)$ with $v |_{\R^n_+} = u$. 
Then $u$ is uniformly continuous in $\R^n_+$, 
$u \to 0$ as $|x| \to \infty$ in $\R^n_+$, and 
\[
\| u \|_{L^\infty(\R^n_+)} 
\leq \| v \|_{L^\infty(\R^n)} 
\leq C_{s,r} \| v \|_{H^{s,r}(\R^n)}.
\]
Taking the infimum over all such $v$ 
gives the second embedding inequality \eqref{embedding.ineq}. 

\medskip

$(iii)$ \emph{Proof of \eqref{pa.k.est}, \eqref{pa.n.est}}. 
Inequality \eqref{pa.n.est} is trivial on $\R^n$, i.e., with $\R^n_+$ replaced by $\R^n$. 
On $\R^n_+$, let $u \in H^{s+1,r}(\R^n_+)$. 
For any $v \in H^{s+1,r}(\R^n)$ with $v|_{\R^n_+} = u$, 
the distribution $\pa_{x_n} v$ belongs to $H^{s,r}(\R^n)$
and it coincides with the distribution $\pa_{x_n} u$ in $\R^n_+$. 
Hence the set $\{ \pa_{x_n} v : v \in H^{s+1,r}(\R^n)$, $v|_{\R^n_+} = u \}$  
is a subset of $\{ w \in H^{s,r}(\R^n) : w|_{\R^n_+} = \pa_{x_n} u \}$,
and, by definition \eqref{def.H.s.r.Om},
\begin{align*}
\| \pa_{x_n} u \|_{H^{s,r}(\R^n_+)} 
& = \inf \{ \| w \|_{H^{s,r}(\R^n)} : w \in H^{s,r}(\R^n), \,  w|_{\R^n_+} = \pa_{x_n} u \}
\\ 
& \leq \inf \{ \| \pa_{x_n} v \|_{H^{s,r}(\R^n)} : v \in H^{s+1,r}(\R^n), \, v|_{\R^n_+} = u \}  
\\ 
& \leq \inf \{ \| v \|_{H^{s+1,r}(\R^n)} : v \in H^{s+1,r}(\R^n), \, v|_{\R^n_+} = u \}
= \| u \|_{H^{s+1,r}(\R^n_+)}.  
\end{align*} 
The proof of \eqref{pa.k.est} is similar.

\medskip

$(iv)$ \emph{Proof of \eqref{trace.est.R.n},\eqref{trace.est}}. 
\emph{Proof of \eqref{trace.est.R.n}}.
Let $v \in \mS(\R^n)$, 
and let $\hat v \in \mS(\R^n)$ be its Fourier transform. 
For every $x_n \in \R$, the function $v(\cdot, x_n)$ is in $\mS(\R^{n-1})$, 
and its Fourier transform is 
\begin{equation} \label{hat.g}
\mF_{\R^{n-1}} \{ v(\cdot, x_n) \}(\xi') 
= \widehat{v(\cdot, x_n)} (\xi') 
= \int_\R \hat v(\xi) e^{i 2\pi \xi_n x_n} \, d\xi_n 
\quad \forall \xi' \in \R^{n-1}, \ x_n \in \R,
\end{equation}
because, by Fourier inversion formula and Fubini/Tonelli, 
\[
v(x',x_n)
= v(x) 
= \int_{\R^n} \hat v(\xi) e^{i 2\pi \la \xi , x \ra} \, d\xi 
= \int_{\R^{n-1}} \Big( \int_\R \hat v(\xi) e^{i 2\pi \xi_n x_n} \, d\xi_n \Big) 
e^{i 2\pi \la \xi' , x' \ra} \, d \xi'.
\]
By \eqref{hat.g}, 
multiplying and dividing $\hat v(\xi)$ by $(1 + |\xi|^2)^{\frac{s}{2}}$, 
with $s > \frac12$, 
using Cauchy-Schwarz and \eqref{int.on.R.like.arctan},  
for every $\xi' \in \R^{n-1}$ one has 
\begin{align}
|\widehat{v(\cdot,x_n)}(\xi')|
\leq \int_{\R} |\hat v(\xi)| \, d\xi_n 
\leq \Big( \frac{ K_s }{ (1 + |\xi'|^2)^{s-\frac12}} \Big)^{\frac12}  
\Big( \int_\R |\hat v(\xi)|^2 (1 + |\xi|^2)^s \, d\xi_n \Big)^{\frac12}.
\label{trace.Fou.01}
\end{align}
Hence
\begin{equation} \label{trace.Fou.02}
|\widehat{v(\cdot,x_n)}(\xi')|^2 (1 + |\xi'|^2)^{r+s-\frac12} 
\leq K_s \int_\R |\hat v(\xi)|^2 (1 + |\xi|^2)^s (1 + |\xi'|^2)^r \, d\xi_n
\end{equation}
for any $r \in \R$, and, integrating in $d \xi'$ over $\R^{n-1}$, 
\[
\| v(\cdot,x_n) \|_{H^{r+s-\frac12}(\R^{n-1})} 
\leq C_s \| v \|_{H^{s,r}(\R^n)}, \quad \ 
C_s = K_s^{\frac12}.
\]
This holds for all $v \in \mS(\R^n)$, all $x_n \in \R$.  
Since $\mS(\R^n)$ is dense in $H^{s,r}(\R^n)$, 
for every $x_n \in \R$ 
there exists a unique bounded linear operator 
\begin{equation} \label{T.x.n}
T_{x_n} : H^{s,r}(\R^n) \to H^{s+r-\frac12}(\R^{n-1})
\end{equation}
that extends the map $v \to v(\cdot, x_n)$ to $H^{s,r}(\R^n)$
and satisfies the same bound.
For $x_n=0$ we get \eqref{def.trace.H.s.r.R.n}, \eqref{trace.est.R.n}. 
Moreover, one has the following property of uniform continuity in $x_n$.
Let $v \in H^{s,r}(\R^n)$, let $x_n, x_n^* \in \R$, and $\e > 0$. 
Take $w \in \mS(\R^n)$ such that 
$\| v-w \|_{H^{s,r}(\R^n)} < \e / (4 C_s)$. 
By triangular inequality, 
\begin{equation} \label{cont.trace.1}
\| T_{x_n} v - T_{x_n^*} v \| 
\leq \| T_{x_n} (v-w) \| + \| T_{x_n} w - T_{x_n^*} w \| + \| T_{x_n^*} (w-v) \|, 
\end{equation}
where the norms are those of $H^{s+r-\frac12}(\R^{n-1})$. 
By the inequality satisfied by the operators in \eqref{T.x.n}, 
both the first and the third term on the RHS of \eqref{cont.trace.1} are $< \e/4$. 
Since $w$ is a Schwartz function, using \eqref{hat.g}, one proves that 
the second term in the RHS of \eqref{cont.trace.1} 
is $< \e/2$ for $|x_n-x_n^*|$ sufficiently small. 
Thus, for any $v \in H^{s,r}(\R^n)$, any $x_n^* \in \R$, 
$T_{x_n} v$ converges to $T_{x_n^*} v$ 
in $H^{s+r-\frac12}(\R^{n-1})$ as $x_n \to x_n^*$.

\emph{Proof of \eqref{trace.est}}. 
To deal with $\R^n_+$, we first study the validity of identity \eqref{hat.g} outside the Schwartz class. 
Let $v \in H^{s,r}(\R^n)$, and let $(v_k)$ be a sequence in $\mS(\R^n)$ converging to $v$ in $H^{s,r}(\R^n)$. 
Assume that $s, s+r \geq 0$, so that $H^{s,r}(\R^n) \subseteq L^2(\R^n)$ with continuous inclusion. 
On one hand, since $v \in L^2(\R^n)$, by Fubini/Tonelli, 
for a.e.\ $x_n \in \R$, the function $v(\cdot, x_n)$ is defined a.e.\ in $\R^{n-1}$, 
it belongs to $L^2(\R^{n-1})$, and $f_k(x_n) := \| (v-v_k)(\cdot,x_n) \|_{L^2(\R^{n-1})}$ 
is finite for a.e.\ $x_n \in \R$, $f_k$ belongs to $L^2(\R)$, and 
\[
\| f_k \|_{L^2(\R)} = \| v-v_k \|_{L^2(\R^n)} 
\leq \| v-v_k \|_{H^{s,r}(\R^n)} \to 0 \quad (k \to \infty).
\]
Since $f_k \to 0$ in $L^2(\R)$, there exists a subsequence $(f_{k_j})$ 
converging to zero pointwise a.e.\ in $\R$. 
This means that $v_{k_j}(\cdot, x_n)$ converges to $v(\cdot,x_n)$ in $L^2(\R^{n-1})$, for a.e.\ $x_n \in \R$.
Then, for a.e.\ $x_n \in \R$, applying the Fourier transform in $\R^{n-1}$ and Plancherel, 
$\mF_{\R^{n-1}} \{ v_{k_j}(\cdot, x_n) \}$ converges to 
$\mF_{\R^{n-1}} \{ v(\cdot, x_n) \}$ in $L^2(\R^{n-1})$. 
This limit concerns the LHS of \eqref{hat.g}. 
On the other hand, $\hat v \in L^2(\R^n)$ because $v \in L^2(\R^n)$, 
and therefore, by Fubini/Tonelli, for a.e.\ $\xi' \in \R^{n-1}$, 
the function $\hat v(\xi', \cdot)$ is defined a.e.\ in $\R$, 
and it belongs to $L^2(\R)$. 
In addition, for $s > \frac12$, the second inequality in \eqref{trace.Fou.01} holds, 
whence, taking the square, multiplying by $(1+|\xi'|^2)^{r+s-\frac12}$, with $r \geq 0$, 
and integrating in $d \xi'$ over $\R^{n-1}$, we obtain that 
\begin{equation} \label{trace.Fou.03}
\int_{\R^{n-1}} \| \hat v(\xi', \cdot) \|_{L^1(\R)}^2 \, d\xi' 
\leq \int_{\R^{n-1}} (1+|\xi'|^2)^{r+s-\frac12} \| \hat v(\xi', \cdot) \|_{L^1(\R)}^2 \, d\xi' 
\leq K_s \| v \|_{H^{s,r}(\R^n)}^2 
< \infty.
\end{equation}
Hence $\| \hat v(\xi', \cdot) \|_{L^1(\R)}$ is finite for a.e.\ $\xi' \in \R^{n-1}$.  
As a consequence, for a.e.\ $\xi' \in \R^{n-1}$, 
the integral $\ph(\xi', x_n) := \int_{\R} \hat v(\xi', \xi_n) e^{i 2\pi \xi_n x_n} \, d\xi_n$
in the RHS of \eqref{hat.g} is finite for all $x_n \in \R$, it satisfies 
\begin{equation} \label{trace.Fou.05}
|\ph(\xi',x_n)| \leq \| \hat v(\xi', \cdot) \|_{L^1(\R)},
\end{equation} 
and, by dominated convergence, it is continuous in $x_n \in \R$.
Moreover, by \eqref{trace.Fou.03} and \eqref{trace.Fou.05}, 
$\ph(\cdot, x_n)$ belongs to $L^2(\R^{n-1})$ for all $x_n \in \R$, 
and, by dominated convergence, the function $\R \to L^2(\R^{n-1})$, 
$x_n \mapsto \ph(\cdot, x_n)$ is continuous. 
Let $\ph_k$ be defined like $\ph$, but with $\hat v_k$ in place of $\hat v$. 
Repeating this argument with the difference $\hat v_k - \hat v$ in place of $\hat v$, 
we obtain that  
\begin{align} 
\label{trace.Fou.06}
|(\ph_k - \ph)(\xi', x_n)| 
& \leq \|(\hat v_k - \hat v)(\xi', \cdot) \|_{L^1(\R)}, 
\\
\| (\ph_k - \ph)(\cdot, x_n) \|_{L^2(\R^{n-1})}^2
& \leq \int_{\R^{n-1}} \| (\hat v_k - \hat v)(\xi', \cdot) \|_{L^1(\R)}^2 \, d\xi' 
\leq K_s \| v_k - v \|_{H^{s,r}(\R^n)}^2 
\to 0.
\label{trace.Fou.07}
\end{align}
Thus, $\ph_k(\cdot, x_n) \to \ph(\cdot, x_n)$ in $L^2(\R^{n-1})$ for all $x_n$, uniformly in $x_n \in \R$. 
This concerns the RHS in \eqref{hat.g}. 
Now \eqref{hat.g} holds for all $v_k$, and hence, in particular, for all $v_{k_j}$. 
Then, for a.e.\ $x_n \in \R$, 
taking the limit in the $L^2(\R^{n-1})$ norm as $j \to \infty$, 
we obtain that identity \eqref{hat.g} holds in $L^2(\R^{n-1})$, 
i.e., for a.e.\ $x_n \in \R$, 
the LHS and the RHS of \eqref{hat.g} are the same element of $L^2(\R^{n-1})$. 

Moreover, since $v_{k} \to v$ in $H^{s,r}(\R^n)$ 
and the operators in \eqref{T.x.n} are continuous, 
$T_{x_n} v$ is the limit in the $H^{s+r-\frac12}(\R^{n-1})$ norm of $T_{x_n} v_{k}$, 
for all $x_n \in \R$.  
Hence $T_{x_n} v_{k} \to T_{x_n} v$ also in the $L^2(\R^{n-1})$ norm, 
and, by Plancherel, $\mF_{\R^{n-1}}(T_{x_n} v_{k}) \to \mF_{\R^{n-1}}(T_{x_n} v)$ in the same norm. 
On the other hand, the Schwartz functions $v_{k}$ satisfy \eqref{hat.g}, 
whence 
$\mF_{\R^{n-1}}(T_{x_n} v_{k}) 
= \mF_{\R^{n-1}} \{ v_{k}(\cdot, x_n) \} 
= \ph_{k}(\cdot,x_n)$. 
We have already proved that $\ph_k(\cdot, x_n) \to \ph(\cdot, x_n)$ in the $L^2(\R^{n-1})$ norm 
for all $x_n \in \R$. 
Thus, for all $x_n \in \R$, 
$\mF_{\R^{n-1}}(T_{x_n} v)$ and $\ph(\cdot,x_n)$ are the limit in the $L^2(\R^{n-1})$ norm 
of the same sequence, so that they coincides in $L^2(\R^{n-1})$. 
Now, $\mF_{\R^{n-1}}(T_{x_n} v) = \ph(\cdot,x_n)$ in $L^2(\R^{n-1})$ for all $x_n \in \R$,
and, by \eqref{hat.g},  
$\mF_{\R^{n-1}} \{ v(\cdot, x_n) \} = \ph(\cdot,x_n)$ in $L^2(\R^{n-1})$ for a.e.\ $x_n \in \R$. 
Hence, for a.e.\ $x_n \in \R$, 
$\mF_{\R^{n-1}}(T_{x_n} v)$ and $\mF_{\R^{n-1}} \{ v(\cdot, x_n) \}$ coincides in $L^2(\R^{n-1})$. 
As a consequence, for a.e.\ $x_n \in \R$,  
\begin{equation} \label{surprise}
T_{x_n} v = v(\cdot , x_n) 
\end{equation}
in $L^2(\R^{n-1})$, and therefore a.e.\ in $\R^{n-1}$. 
  
Now consider $v \in H^{s,r}(\R^n)$ with $v|_{\R^n_+} = 0$ in the sense of distributions.  
Taking test functions of the form $\psi(x') \eta(x_n)$, we deduce that, for a.e.\ $x_n \in (0,\infty)$,  
$v(\cdot, x_n)$ is zero a.e.\ in $\R^{n-1}$. Then, by \eqref{surprise}, 
$T_{x_n} v = 0$ for a.e.\ $x_n \in (0,\infty)$, and therefore, 
by the property of continuity in $x_n$ proved above, 
$Tv=0$ (recall that $T$ is $T_{x_n}$ with $x_n=0$). 

The last observation allows one to define the trace $Tu$ of a function $u \in H^{s,r}(\R^n_+)$ 
as the trace $Tv$ of any $v \in H^{s,r}(\R^n)$ such that $v|_{\R^n_+} = u$. 
This definition is well posed because, if two functions $v_1, v_2 \in H^{s,r}(\R^n)$ 
satisfy $v_i|_{\R^n_+} = u$, then their difference belongs to $H^{s,r}(\R^n)$ 
and vanishes in $\R^n_+$, therefore its trace also vanishes, and hence $T v_1 = T v_2$. 
Moreover, any $v \in H^{s,r}(\R^n)$ such that $v|_{\R^n_+} = u$ satisfies \eqref{trace.est.R.n}, 
and taking the infimum over all those $v$ gives \eqref{trace.est}.

\medskip

$(vi)$ \emph{Proof of \eqref{reflex.est}}. 
Let $v \in \mS(\R^n)$, and let $f(x) = v(x', |x_n|)$ in $\R^n$. 
One has 
\[
\| f \|_{L^2(\R^n)}^2 
= \int_{\R^n} |f|^2 \, dx 
= 2 \int_{\R^n_+} |v|^2 \, dx 
= 2 \| v \|_{L^2(\R^n_+)}^2 
\leq 2 \| v \|_{L^2(\R^n)}^2.
\]
For any multi-index $\alpha \in \N_0^{n-1}$, 
one has $\pa_{x'}^\alpha f(x) = (\pa_{x'}^\alpha v) (x',|x_n|)$ in $\R^n$, 
and the same basic argument gives  
\[
\| \pa_{x'}^\alpha f \|_{L^2(\R^n)}^2 
\leq 2 \| \pa_{x'}^\alpha v \|_{L^2(\R^n)}^2.
\]
By induction on the dimension $n$, it is not difficult to prove that  
\[
(1 + |\xi'|^2)^m = \sum c_{m,\alpha} (\xi'^\alpha)^2, \quad \ 
c_{m,\alpha} = \frac{m!}{\alpha! (m-|\alpha|)!},
\]
for all $\xi' \in \R^{n-1}$, all $m \in \N$,    
where the sum is over all multi-indices $\alpha \in \N_0^{n-1}$ of length $|\alpha| \leq m$, 
and the usual multi-index notation is used.
Therefore
\[
\| f \|_{H^{0,m}(\R^n)}^2 
= \sum_{|\a| \leq m} c_{m,\a} \int_{\R^n} |\hat f(\xi) \xi'^\a |^2 \, d\xi 
= \sum_{|\a| \leq m} \frac{ c_{m,\a} }{ (2\pi)^{|\alpha|} } \| \pa_{x'}^\alpha f \|_{L^2(\R^n)}^2,
\]
and 
\[
\| f \|_{H^{0,m}(\R^n)}^2 
\leq 2 \| v \|_{H^{0,m}(\R^n)}^2.
\]
For all $m \in \N_0$, by density, the linear map $\mS(\R^n) \to H^{0,m}(\R^n)$, $v \to f$, 
has a unique bounded extension to $H^{0,m}(\R^n)$, with the same bound. 
By complex interpolation of linear bounded operators, 
this also holds for any $r = m + \th$, $\th \in (0,1)$, with the same bound. 
The proof of Riesz-Thorin interpolation theorem can be easily adapted 
to the spaces $H^{0,r}(\R^n)$, using the Schwartz functions $\mS(\R^n)$ as a common dense subset, 
and introducing the complex parameter $z$ in the Fourier inversion formula as 
$v(z)(x) = \int_{\R^n} \hat v(\xi) (1+|\xi'|^2)^{\frac12 (z-\th)} \, e^{i 2\pi \xi \cdot x} \, d\xi$.
Alternatively, one obtains the same conclusion using directly the operator $\Lm'_r$ in \eqref{def.X.s.m}
instead of $\pa_{x'}^\alpha$. 

The partial derivative $\pa_{x_n} f$, as a distribution on $\R^n$, is represented by the function
\[
\pa_{x_n} f = \pa_{x_n} v \quad \text{for } x_n > 0, 
\qquad 
\pa_{x_n} f(x) = - (\pa_{x_n} v)(x',-x_n) \quad \text{for } x_n < 0,
\] 
because, integrating by parts with test functions on $\R^n$, 
the two boundary terms at $x_n = 0$ cancel out. 
Then the argument above applies to $\pa_{x_n} f$ and $\pa_{x_n} v$, 
and therefore, for $s = 0,1$, $r \geq 0$, there exists a unique linear bounded operator $R$ 
of $H^{s,r}(\R^n)$ into itself extending the map $v \to f$, satisfying 
\begin{equation} \label{reflex.est.R.n} 
\| f \|_{H^{s,r}(\R^n)}^2 \leq 2 \| v \|_{H^{s,r}(\R^n)}^2. 
\end{equation} 

Now let $v \in H^{s,r}(\R^n)$ with $v|_{\R^n_+} = 0$. We prove that $Rv = 0$. 
By definition, $Rv$ is the limit of $R v_k$ in $H^{s,r}(\R^n)$,    
where $(v_k)$ is any sequence in $\mS(\R^n)$ converging to $v$ in $H^{s,r}(\R^n)$. 
One has
\[
\| Rv_k \|_{L^2(\R^n)}^2 = 2 \| v_k \|_{L^2(\R^n_+)}^2 
= 2 \| v_k - v \|_{L^2(\R^n_+)}^2 
\leq 2 \| v_k - v \|_{L^2(\R^n)}^2 \to 0,
\]
and therefore $Rv = \lim Rv_k = 0$ in $L^2(\R^n)$.  
This observation allows one to define the extension by reflection $R u$ 
of a function $u \in H^{s,r}(\R^n_+)$ 
as the image $R v$ of any $v \in H^{s,r}(\R^n)$ such that $v|_{\R^n_+} = u$. 
This definition is well posed because, if two functions $v_1, v_2 \in H^{s,r}(\R^n)$ 
satisfy $v_i|_{\R^n_+} = u$, then their difference belongs to $H^{s,r}(\R^n)$ 
and vanishes in $\R^n_+$, therefore its image by $R$ also vanishes, 
and hence $R v_1 = R v_2$. 
Moreover, any $v \in H^{s,r}(\R^n)$ such that $v|_{\R^n_+} = u$ satisfies \eqref{reflex.est.R.n}, 
and taking the infimum over all those $v$ gives \eqref{reflex.est}.

\medskip

$(iv)$ \emph{Proof of \eqref{equiv.norm.H.0.r}, \eqref{equiv.norm.H.1.r}, \eqref{equiv.norm.H.r.zero}}.
\emph{Proof of \eqref{equiv.norm.H.0.r}}.
We use the operators $\Lm'_r$ in \eqref{def.X.s.m}, which we briefly revisit here. 
First, we define $\Lm'_r$ on $\mS(\R^{n-1})$ as the Fourier multiplier 
$\Lm'_r v = \mF_{\R^{n-1}}^{-1} (\lm \mF_{\R^{n-1}} v)$ 
of symbol $\lm(\xi') = (1+|\xi'|^2)^{\frac{r}{2}}$. 
Next, we extend this definition to functions $v \in \mS(\R^n)$ of $n$ real variables by setting 
$(\Lm'_r v)(x',x_n) := \Lm'_r \{ v(\cdot, x_n) \} (x')$ 
(as we do when we define partial derivatives of a smooth function of more than one real variable), 
and, using \eqref{hat.g}, we observe that $\Lm_r'$ on $\mS(\R^n)$ is the Fourier multiplier 
$\Lm_r' v = \mF_{\R^n}^{-1}(\lm \mF_{\R^n} v)$ of the same symbol $\lm(\xi')$ as above. 
The fact that both the Fourier transform in $\R^n$ and that in $\R^{n-1}$ are used 
is related to Remark 1 in Section 4.2.1, page 218, of \cite{Triebel.1983}.  
We observe that $\int_{\R^m} (\Lm_r' v) \psi  
= \int_{\R^m} v (\Lm_r' \psi) $ for all $v, \psi \in \mS(\R^m)$, 
for both $m=n-1$ and $m=n$, 
and therefore we define $\Lm'_r v$ for $v \in \mS'(\R^m)$ as the distribution 
$\la \Lm_r' v, \psi \ra := \la v, \Lm_r' \psi \ra$ for all $\psi \in \mS(\R^m)$.  
The identities $\Lm_r' v = \mF_{\R^m}^{-1}(\lm \mF_{\R^m} v)$, $m=n-1, n$,  
continue to hold in the sense of distributions.
Moreover, for $v \in H^{0,r}(\R^n)$, $r \geq 0$, 
the distribution $\Lm_r' v$ is a function in $L^2(\R^n)$, 
and $(\Lm_r' v)(x) = \Lm_r' \{ v(\cdot, x_n) \} (x')$ 
in the sense of distributions. Hence 
\begin{align}
\| v \|_{H^{0,r}(\R^n)}^2 
= \| \Lambda_r' v \|_{L^2(\R^n)}^2 
& = \int_{\R} \| (\Lambda_r' v)(\cdot , x_n) \|_{L^2(\R^{n-1})}^2 \, dx_n 
\notag \\ 
& = \int_{\R} \| \Lambda_r' \{ v(\cdot , x_n) \} \|_{L^2(\R^{n-1})}^2 \, dx_n 
= \int_\R \| v(\cdot , x_n) \|_{H^r(\R^{n-1})}^2 \, dx_n, 
\label{pinolo.0.r.R.n}
\end{align}
which is identity \eqref{equiv.norm.H.0.r} on $\R^n$. 
On $\R^n_+$, given $u \in H^{0,r}(\R^n_+)$, 
its trivial extension $u_0$ 
defined as $u_0 = u$ in $\R^n_+$ and $u_0 = 0$ in $\R^n \setminus \R^n_+$ 
belongs to $H^{0,r}(\R^n)$ and it satisfies 
\[
\| u_0 \|_{H^{0,r}(\R^n)}^2 
= \int_\R \| u_0(\cdot , x_n) \|_{H^r(\R^{n-1})}^2 \, dx_n 
= \int_0^\infty \| u(\cdot , x_n) \|_{H^r(\R^{n-1})}^2 \, dx_n. 
\]
Hence $\| u_0 \|_{H^{0,r}(\R^n)} \leq \| v \|_{H^{0,r}(\R^n)}$ for any other $v \in H^{0,r}(\R^n)$ 
such that $v|_{\R^n_+} = u$, and therefore $u_0$ realizes the infimum of definition \eqref{def.H.s.r.Om}.
\emph{Proof of \eqref{equiv.norm.H.1.r}}. 
On $\R^n$, one has 
\begin{equation} \label{equiv.norm.H.1.r.R.n} 
\| v \|_{H^{1,r}(\R^n)}^2 
= \| v \|_{H^{0,r+1}(\R^n)}^2 + a \| \pa_{x_n} v \|_{H^{0,r}(\R^n)}^2 
\quad \forall v \in H^{1,r}(\R^n),
\end{equation}
where $a = (2\pi)^{-2}$  
(use Fourier transform and write $|\xi|^2$ as $|\xi'|^2 + \xi_n^2$).  
On $\R^n_+$, let $u \in H^{1,r}(\R^n_+)$, and let $v = Ru$, 
where $R$ is in \eqref{reflex.est}. 
Using \eqref{equiv.norm.H.1.r.R.n}, 
\eqref{pinolo.0.r.R.n}, 
\eqref{equiv.norm.H.0.r},  
\begin{align*}
\| v \|_{H^{1,r}(\R^n)}^2 
& = \int_\R \| v(\cdot, x_n) \|_{H^{r+1}(\R^{n-1})}^2 \, d x_n 
+ a \int_\R \| \pa_{x_n} v(\cdot, x_n) \|_{H^r(\R^{n-1})}^2 \, d x_n 
\notag \\ 
& = 2 \int_0^\infty \| u(\cdot, x_n) \|_{H^{r+1}(\R^{n-1})}^2 \, d x_n 
+ 2 a \int_0^\infty \| \pa_{x_n} u(\cdot, x_n) \|_{H^r(\R^{n-1})}^2 \, d x_n
\\ & = 2 \| u \|_{H^{0,r+1}(\R^n_+)}^2 
+ 2 a \| \pa_{x_n} u \|_{H^{0,r}(\R^n_+)}^2. 
\end{align*}
By definition \eqref{def.H.s.r.Om} and inequality \eqref{reflex.est}, one has  
\[
\| u \|_{H^{1,r}(\R^n_+)}^2 
\leq \| Ru \|_{H^{1,r}(\R^n)}^2
\leq 2 \| u \|_{H^{1,r}(\R^n_+)}^2,
\]
and \eqref{equiv.norm.H.1.r} is proved. 
\emph{Proof of \eqref{equiv.norm.H.r.zero}}. It follows immediately from 
\eqref{equiv.norm.H.0.r}, \eqref{equiv.norm.H.1.r}.

\medskip

$(vi)$ \emph{Proof of \eqref{prod.est}, \eqref{prod.est.below.r.0}}. 
\emph{Proof of \eqref{prod.est}}. 
On $\R^n$, let $u,v \in H^{1,r}(\R^n)$, $r \geq r_0$.
The Fourier transform of the product $uv$ is the convolution 
$\int_{\R^n} \hat u(\xi - \eta) \hat v(\eta) \, d\eta$, 
and 
\begin{equation} \label{int.uv.norm}
\| uv \|_{H^{1,r}(\R^n)}^2 
= \int_{\R^n} \left| \int_{\R^n} \hat u(\xi - \eta) \hat v(\eta) 
(1 + |\xi|^2)^{\frac12} (1+ |\xi'|^2)^{\frac{r}{2}} \, d\eta \right|^2 d\xi.
\end{equation}
Now 
\begin{align}
(1 + |\xi|^2)^{\frac12}
& \leq (1 + |\xi - \eta|^2)^{\frac12} + (1 + |\eta|^2)^{\frac12},
\label{key.basic.est.1} 
\\
(1+ |\xi'|^2)^{\frac{r}{2}} 
& \leq 4 (1+ |\xi'-\eta'|^2)^{\frac{r}{2}} + C_r (1 + |\eta'|^2)^{\frac{r}{2}} 
\quad \forall \xi, \eta \in \R^n.
\label{key.basic.est.r}
\end{align}
To prove \eqref{key.basic.est.1}, consider its square, 
use triangular inequality $|\xi| \leq |\xi-\eta| + |\eta|$ and its square to bound $1 + |\xi|^2$,
and use the fact that $|x| \leq (1 + |x|^2)^{\frac12}$ for $x=\eta$ and $x = \xi-\eta$.  
Inequality \eqref{key.basic.est.r} is also elementary, 
and it is proved in \cite{Baldi.Haus.AIHP.2023}, inequality (B.10) and Lemma B.5, 
where the slightly stronger version 
with $|\eta'|^r$ instead of $(1 + |\eta'|^2)^{\frac{r}{2}}$ is proved, 
and where the constant $C_r$ is written explicitly; 
$C_r$ is a continuous increasing function of $r \in [0, \infty)$, 
with $C_r = 1$ for $r \in [0,1]$ and $C_r \to \infty$ as $r \to \infty$. 
The product of the LHS  of \eqref{key.basic.est.1} and \eqref{key.basic.est.r} 
is bounded by the product of the corresponding RHS, 
which is the sum of 4 products. 
Then the internal integral in \eqref{int.uv.norm} is bounded by the sum of 4 integrals, 
and, since $(a_1 + \ldots + a_4)^2 \leq 4 (a_1^2 + \ldots + a_4^2)$ for any $a_i \in \R$, 
one has 
\[
\eqref{int.uv.norm} \leq 
4 \sum_{i,j = 1,2} \int_{\R^n} \Big( \int_{\R^n} |\hat u(\xi - \eta)| |\hat v(\eta)| \sigma_i \tau_j 
\, d\eta \Big)^2 d\xi, 
\]
where $\sigma_1, \sigma_2$ are the two terms in the RHS  of \eqref{key.basic.est.1}
and $\tau_1, \tau_2$ are those in the RHS  of \eqref{key.basic.est.r}.
In the case $\sigma_1 \tau_1$, multiplying and dividing by $\sigma_2 (1 + |\eta'|^2)^{\frac{r_0}{2}}$, 
and applying H\"older's inequality, we get  
\begin{align*}
\int_{\R^n} \Big( \int_{\R^n} |\hat u(\xi - \eta)| |\hat v(\eta)| \sigma_1 \tau_1 \, d\eta \Big)^2 d\xi
\leq 4^2 C_{r_0}^2 \| u \|_{H^{1,r}(\R^n)}^2 \| v \|_{H^{1,r_0}(\R^n)}^2, 
\end{align*}
where 
\begin{equation} \label{C.r.0.integral}
C_{r_0}^2 := \int_{\R^n} \frac{1}{ (1 + |\eta|^2)(1 + |\eta'|^2)^{r_0} } \, d\eta 
= \int_{\R^{n-1}} \frac{\pi}{ (1 + |\eta'|^2)^{r_0 + \frac12} } \, d\eta 
\end{equation}
is finite for $2 (r_0 + \frac12) > n-1$. 
The second identity in \eqref{C.r.0.integral} holds by \eqref{int.on.R.like.arctan}. 
In the case $\sigma_1 \tau_2$, we multiply and divide by 
$\sigma_2 (1 + |\xi' - \eta'|^2)^{\frac{r_0}{2}}$ and, proceeding similarly,  
we find that the corresponding integral is bounded by 
the square of $C_{r_0} C_r \| u \|_{H^{1,r_0}(\R^n)} \| v \|_{H^{1,r}(\R^n)}$, 
where $C_r$ is the constant in \eqref{key.basic.est.r}, 
and $C_{r_0}$ is in \eqref{C.r.0.integral}. 
In fact, using \eqref{int.on.R.like.arctan} and H\"older's inequality, 
for every $\xi \in \R^n$ one has 
\begin{multline}
\int_{\R^n} \frac{1}{ (1 + |\eta|^2)(1 + |\xi' - \eta'|^2)^{r_0} } \, d\eta 
= \int_{\R^{n-1}} \frac{\pi}{ (1 + |\eta'|^2)^{\frac12} (1 + |\xi' - \eta'|^2)^{r_0} } \, d\eta' 
\\
\leq \pi \Big( \int_{\R^{n-1}} \frac{1}{ (1 + |\eta'|^2)^{\frac{p}{2}} } \, d \eta' \Big)^{\frac{1}{p}} 
\Big( \int_{\R^{n-1}} \frac{1}{ (1 + |\xi' - \eta'|^2)^{r_0 q} } \, d\eta' \Big)^{\frac{1}{q}}
= C_{r_0}^2 
\label{Holder.C.r.0.integral}
\end{multline} 
with $p = 2 r_0 + 1$, $\frac{1}{p} + \frac{1}{q} = 1$.
The cases $\sigma_2 \tau_1$, $\sigma_2 \tau_2$ are analogous. 
This proves \eqref{prod.est} with $\R^n$ instead of $\R^n_+$. 
To obtain \eqref{prod.est}, let $u, v \in H^{1,r}(\R^n_+)$, 
and consider the functions $R u, R v$, where $R$ is the 
extension by reflection in \eqref{reflex.est}. 
The product $(Ru)(Rv)$ satisfies the estimate \eqref{prod.est} on $\R^n$ 
that we have just proved, and, by \eqref{reflex.est}, 
we deduce that the product $uv$ satisfies \eqref{prod.est}. 
\emph{Proof of \eqref{prod.est.below.r.0}}.
To prove \eqref{prod.est.below.r.0}, we proceed similarly, but without exceeding $r$ 
in the powers of $\eta'$: for example, in the case $\sigma_1 \tau_1$, 
we multiply and divide by 
$\sigma_2 (1 + |\eta'|^2)^{\frac{r}{2}} (1 + |\xi'-\eta'|^2)^{\frac{r_0 - r}{2}}$. 
As above, we use \eqref{int.on.R.like.arctan} and H\"older's inequality, 
and we choose H\"older's exponents $p,q$ giving the power $r_0+\frac12$ 
like in \eqref{C.r.0.integral}, \eqref{Holder.C.r.0.integral}. 

\medskip

$(vii)$ \emph{Proof of \eqref{est.prod.infty.0}, \eqref{est.prod.infty.1}}.
\emph{Proof of \eqref{est.prod.infty.0}}. On $\R^n$, one has 
\begin{equation} \label{prod.infty.standard.R.n}
\| g v \|_{H^r(\R^n)} \leq 2 \| g \|_{L^\infty(\R^n)} \| v \|_{H^r(\R^n)} 
+ C_r \| g \|_{W^{b,\infty}(\R^n)} \| v \|_{L^2(\R^n)}
\end{equation}
for all $v \in H^r(\R^n)$, all $g \in W^{b,\infty}(\R^n)$, 
where $r \geq 0$ is real, $b$ is the smallest integer such that $b \geq r$, 
and $C_r$ is an increasing function of $r$. 
Estimate \eqref{prod.infty.standard.R.n} is proved, e.g., in Lemma B.2 of \cite{Baldi.Haus.AIHP.2023}. 
Then \eqref{est.prod.infty.0} follows by using identity \eqref{equiv.norm.H.0.r} 
and applying estimate \eqref{prod.infty.standard.R.n} (with $\R^{n-1}$ instead of $\R^n$) 
to the product $f(\cdot, x_n) u(\cdot, x_n)$ in the integral, 
and H\"older's inequality (or triangular inequality for the $L^2(0,\infty)$ norm). 
\emph{Proof of \eqref{est.prod.infty.1}}. 
Use the first inequality in \eqref{equiv.norm.H.1.r}, 
estimate separately each product $fu$, $(\pa_{x_n} f) u$ and $f(\pa_{x_n} u)$ 
by \eqref{est.prod.infty.0}, then use the second inequality in \eqref{equiv.norm.H.1.r}. 

\medskip

$(viii)$ \emph{Proof of \eqref{composition.est.s.r}}. 
On $\R^n$, given a diffeomorphism $f$ of $\R^n$ of the form $f(x) = a + A x + g(x)$, 
one has 
\begin{equation} \label{comp.est.Rn}
\| u \circ f \|_{H^r(\R^n)} 
\leq C_{r,f} ( \| u \|_{H^r(\R^n)} + \| g \|_{H^{r+1+r_0}(\R^n)} \| u \|_{L^2(\R^n)})
\end{equation} 
for all $u \in H^r(\R^n)$, 
all real $r \geq 0$, 
where $C_{r,f}$ depends on $r,|A|, \| g \|_{H^{1+r_0}(\R^n)}$
and $r_0 > n/2$.
Estimate \eqref{comp.est.Rn} is classical, and it can be proved in this way. 
For $r=0$, use the change of variable $f(x) = y$ in the integral 
giving the $L^2(\R^n)$ norm of $u \circ f$, 
and the $L^\infty(\R^n)$ norm of the Jacobian determinant $\det D(f^{-1})$
of the inverse transformation, 
to get $\| u \circ f \|_{L^2(\R^n)} \leq C_0 \| u \|_{L^2(\R^n)}$ 
for some $C_0$ depending on $f$.  
For $r=1$, we note that 
\begin{equation} \label{comp.temp.01}
\grad (u \circ f)(x) = [Df(x)]^T (\grad u)\circ f(x)
= A^T (\grad u)\circ f(x) + [Dg(x)]^T (\grad u)\circ f(x)
\end{equation}
for all $x \in \R^n$, then we proceed like in the $r=0$ case 
to estimate the $L^2(\R^n)$ norm of \eqref{comp.temp.01}, 
and we get 
$\| u \circ f \|_{H^1(\R^n)} \leq C_1 \| u \|_{H^1(\R^n)}$ 
for some $C_1 \geq C_0$ depending on $f$.  
Then, by the classical interpolation theorems for linear operators, 
$\| u \circ f \|_{H^r(\R^n)} \leq C_1 \| u \|_{H^r(\R^n)}$
for all real $r \in [0,1]$.  
Assume that \eqref{comp.est.Rn} holds for some real $r \geq 0$. 
The norm $\| u \circ f \|_{H^{r+1}(\R^n)}$ is controlled by $\| u \circ f \|_{H^r(\R^n)}
+ \| \grad(u \circ f) \|_{H^r(\R^n)}$, and we use identity \eqref{comp.temp.01} 
and triangular inequality. 
To estimate the product $(Dg)^T (\grad u)\circ f$,
we use the product estimate 
\begin{equation} \label{comp.temp.02}
\| v w \|_{H^r(\R^n)} 
\leq C_{r_0} \| v \|_{H^{r_0}(\R^n)} \| w \|_{H^r(\R^n)} 
+ C_r \| v \|_{H^{r+r_0}(\R^n)} \| w \|_{L^2(\R^n)}
\end{equation}
with $v = (Dg)^T$ and $w = (\grad u) \circ f$. 
Estimate \eqref{comp.temp.02} is proved similarly as \eqref{prod.est}, \eqref{prod.est.below.r.0},
using \eqref{key.basic.est.r} (in fact, the proof of \eqref{comp.temp.02} is simpler
because no coordinate has a special role). 
By the interpolation inequality of $H^r(\R^n)$ spaces, 
the product $\| g \|_{H^{r+1+r_0}} \| u \|_{H^1}$ 
is bounded by $\| g \|_{H^{r+2+r_0}} \| u \|_{H^0}$ 
$+ \| g \|_{H^{r_0+1}} \| u \|_{H^{r+1}}$,
and \eqref{comp.est.Rn} follows by induction. 
On $\R^n_+$, estimate \eqref{composition.est.s.r} is immediately deduced from 
\eqref{equiv.norm.H.0.r},
\eqref{equiv.norm.H.1.r}, 
applying \eqref{comp.est.Rn} with $\R^{n-1}$ instead of $\R^n$; 
for $s=1$, we also use the fact that
$\pa_{x_n} \{ u(f(x'),x_n) \} = (\pa_{x_n} u)(f(x'),x_n)$. 
\end{proof}

\begin{proof}[\textbf{Proof of Lemma \ref{lemma:est.Lap.R.n.+}}] 
This is essentially Theorem 4.2.2 of \cite{Triebel.1983}, page 221, in the case $m=1$, $A = \Delta$, 
$p=q=2$, $\s=0$, $s \in \{0,1\}$, 
adapted to the non-isotropic spaces $H^{s,r}(\R^n_+)$. 
Note that the spaces $H^{s,r}(\R^n_+)$ are already used by Triebel 
in Step 1 of the proof of Theorem 4.2.2. 
The proof of our lemma consists in slightly modifying the proof in \cite{Triebel.1983}, 
using $r \in [0, \infty)$ as the higher regularity parameter, 
and paying attention to the constants in the various inequalities along the proof. 
Also, the two functions $u_0, u_1$ in Step 2 in \cite{Triebel.1983}, page 222, 
corresponding to low/high frequency components of the function $u$, 
must be treated separately regarding their regularity in $x'$;  
in particular, the factor $C^r$ in \eqref{meran.04} comes from the estimate of the term $u_0$. 
\end{proof}

\begin{proof}[\textbf{Proof of Lemma \ref{lemma:est.tools.S.n-1}}]
\emph{Proof of \eqref{interpol.Hr.S2}}. 
Recall the definition \eqref{def.H.s.manifold} of the norm, 
apply the standard interpolation inequality 
$\| f \|_{H^s(\R^{n-1})} 
\leq \| f \|_{H^{s_0}(\R^{n-1})}^{1-\th}
\| f \|_{H^{s_1}(\R^{n-1})}^\th$
of the spaces $H^s(\R^{n-1})$ 
to each function $f = (u \psi_j) \circ \mathtt g_j(\cdot, 0)$, 
then use H\"older's inequality 
\[
\sum_{j=1}^N a_j b_j 
\leq \Big( \sum_{j=1}^N a_j^p \Big)^{\frac{1}{p}} 
\Big( \sum_{j=1}^N b_j^q \Big)^{\frac{1}{q}}
\]
with $p = 1/(1-\th)$, $q = 1/\th$, 
$a_j = \| (u \psi_j) \circ \mathtt g_j \|_{H^{s_0}(\R^{n-1})}^{1-\th}$, 
$b_j = \| (u \psi_j) \circ \mathtt g_j \|_{H^{s_1}(\R^{n-1})}^\th$.

\emph{Proof of \eqref{prod.est.unified.Hr.S2}}.
By \eqref{res.unity.in.the.annulus}, 
\begin{align*}
(u v \psi_j) \circ \mathtt g_j (\cdot, 0) 
= \sum_{\ell \in \mA_j} \tilde u_{\ell j} \, \tilde v_j,
\quad 
\tilde u_{\ell j} := (u \psi_\ell) \circ \mathtt g_j (\cdot, 0), 
\quad 
\tilde v_j := (v \psi_j) \circ \mathtt g_j (\cdot, 0).
\end{align*}
On $\R^{n-1}$, one has the well-known product estimate  
\begin{equation} \label{echin.25}
\| \tilde u_{\ell j} \tilde v_j \|_{H^{s}(\R^{n-1})} 
\leq 
C_{s_0} \| \tilde u_{\ell j} \|_{H^{s_0}(\R^{n-1})} 
\| \tilde v_j \|_{H^{s}(\R^{n-1})} 
+ (\chi_{s \geq s_0}) C_s \| \tilde u_{\ell j} \|_{H^{s}(\R^{n-1})} 
\| \tilde v_j \|_{H^{s_0}(\R^{n-1})},
\end{equation}
which can be proved similarly as \eqref{prod.est.unified} 
(in fact, the proof of \eqref{echin.25} is slightly easier).
Since $\mathtt g_j = \mathtt g_\ell \circ \mathtt f_\ell \circ \mathtt g_j$, one has  
\[
\tilde u_{\ell j} 
= (u \psi_\ell) \circ \mathtt g_\ell \circ \mathtt f_\ell \circ \mathtt g_j( \cdot ,0)
= (u \psi_\ell) \circ \mathtt g_\ell (\tilde{\mathtt T}_{\ell j}( \cdot ) , 0)
= \tilde u_\ell \circ \tilde{\mathtt T}_{\ell j},
\]
where $\tilde u_\ell := (u \psi_\ell) \circ \mathtt g_\ell (\cdot, 0)$
and $\tilde{\mathtt T}_{\ell j}$ is defined in \eqref{rotat.09}. 
By Lemma \ref{lemma:rotat.tilde} and the composition estimate 
\eqref{composition.est.s.r}, one has 
\begin{equation}  \label{echin.26}
\| \tilde u_{\ell j} \|_{H^{s_0}(\R^{n-1})} 
\leq C_{s_0} \| \tilde u_\ell \|_{H^{s_0}(\R^{n-1})},
\quad 
\| \tilde u_{\ell j} \|_{H^{s}(\R^{n-1})} 
\leq C_s \| \tilde u_\ell \|_{H^{s}(\R^{n-1})}. 
\end{equation}
Using \eqref{echin.26} into \eqref{echin.25} and taking the sum over $j, \ell$
gives \eqref{prod.est.unified.Hr.S2}.

\emph{Proof of \eqref{power.est.Hr.S2}}.
For $m=2$, \eqref{power.est.Hr.S2} is \eqref{prod.est.unified.Hr.S2} with $v = u$;
\eqref{power.est.Hr.S2} with $m+1$ instead of $m$ is deduced 
from \eqref{power.est.Hr.S2} by applying \eqref{prod.est.unified.Hr.S2} with $v = u^m$.

\emph{Proof of \eqref{est.grad.Hr.S2}}. Let $u \in H^{s+1}(\S^{n-1})$. 
Denote $u_0(x) := u(x/|x|)$ its $0$-homogeneous extension to $\R^n \setminus \{ 0 \}$, 
so that $\grad_{\S^{n-1}} u = \grad u_0$ on $\S^{n-1}$. 
By \eqref{def.H.s.manifold}, we have to estimate the sum over $j=1, \ldots, N$ of 
$\| ((\grad u_0) \psi_j) \circ \mathtt g_j(\cdot, 0) \|_{H^s(\R^{n-1})}$. 
The function $((\grad u_0) \psi_j) \circ \mathtt g_j(\cdot, 0)$ 
is the result of the composition 
$T ( \mathtt G_j ( \Psi_j ( \grad u_0 )))$, where 
$T \colon f \mapsto f(\cdot , 0)$ is the trace operator in \eqref{def.trace.H.s.r.R.n},
$\mathtt G_j$ is the composition operator $f \mapsto f \circ \mathtt g_j$, 
and $\Psi_j$ is the multiplication operator $f \mapsto f \psi_j$. 
The composition of these operators with the gradient operator satisfies the following identities.
First, one has $\Psi_j (\grad f) = \grad (\Psi_j f) - (\grad \psi_j) f$. 
Second, $\mathtt G_j(\grad f) = (D \mathtt g_j)^{-T} \grad( \mathtt G_j f)$. 
Third, $T (\grad f) = (\grad_{y'} (T f) , T( \pa_{y_n} f ))$. 
Applying these identities, one has that $((\grad u_0) \psi_j) \circ \mathtt g_j(\cdot, 0)$ 
$= S_{1,j} + S_{2,j}$ $+ S_{3,j}$, where 
$S_{1,j}$ is the product of the matrix $T((D \mathtt g_j)^{-T})$ with the vector 
$( \grad_{y'} \{ T (\mathtt G_j (\Psi_j u_0)) \} , 0 )$, 
$S_{2,j}$ is the product of the same matrix $T( (D \mathtt g_j)^{-T})$ with the vector 
$( 0, T ( \pa_{y_n} \{ \mathtt G_j (\Psi_j u_0) \} ) )$, 
and $S_{3,j}$ is $- T( \mathtt G_j \{ (\grad \psi_j) u_0 \} )$. 
By \eqref{prod.infty.standard.R.n}, 
\begin{align*}
\| S_{1,j} \|_{H^s(\R^{n-1})} 
& \leq C_0 \| \grad_{y'} \{ T (\mathtt G_j (\Psi_j u_0)) \} \|_{H^s(\R^{n-1})} 
+ C_s \| \grad_{y'} \{ T (\mathtt G_j (\Psi_j u_0)) \} \|_{L^2(\R^{n-1})} 
\\
& \leq C_0 \| T (\mathtt G_j (\Psi_j u_0)) \|_{H^{s+1}(\R^{n-1})} 
+ C_s \| T (\mathtt G_j (\Psi_j u_0)) \|_{H^1(\R^{n-1})}, 
\end{align*}
and therefore, recalling the definition \eqref{def.norm.u} of the norm on $\S^{n-1}$, 
the sum over $j=1, \ldots, N$ of the terms $S_{1,j}$ is bounded by 
$C_0 \| u \|_{H^{s+1}(\S^{n-1})} + C_s \| u \|_{H^1(\S^{n-1})}$.  
Concerning $S_{2,j}$, we observe that $\pa_{y_n} (\mathtt G_j u_0) = 0$ 
because, by \eqref{explicit.mathtt.f.inv},  
the vector $\pa_{y_n} \mathtt g_j(y)$ is parallel to $\mathtt g_j(y)$, 
and $\la x, \grad u_0(x) \ra = 0$, as $u_0$ is a $0$-homogeneous function. 
Hence $T( \pa_{y_n} \{ \mathtt G_j (\Psi_j u_0) \}) 
= T ( \pa_{y_n} ( \mathtt G_j \psi_j)) T (\mathtt G_j u_0)$. 
We proceed similarly as in the proof of \eqref{prod.est.unified.Hr.S2}: 
we insert $1 = \sum_{\ell \in \mA_j} \psi_\ell$, 
and write $\mathtt g_j = \mathtt g_\ell \circ \mathtt f_\ell \circ \mathtt g_j$, 
so that $\mathtt g_j (\cdot, 0) = \mathtt g_\ell (\tilde{\mathtt T}_{\ell j} (\cdot), 0)$, 
with $\tilde{\mathtt T}_{\ell j}$ defined in \eqref{rotat.09}. 
Then we use the product estimate \eqref{prod.infty.standard.R.n}, 
Lemma \ref{lemma:rotat.tilde} and the composition estimate 
\eqref{composition.est.s.r}, and we obtain that 
the sum over $j=1, \ldots, N$ of the terms $\| S_{2,j} \|_{H^s(\R^{n-1})}$ 
is bounded by the sum over $j, \ell$ with $\ell \in \mA_j$ 
of $C_s \| T ( \mathtt G_\ell ( \Psi_\ell u_0) ) \|_{H^s(\R^{n-1})}$.
The terms $S_{3,j}$ are estimated similarly as $S_{2,j}$. 
Thus we obtain 
\[
\| \grad_{\S^{n-1}} u \|_{H^s(\S^{n-1})} 
\leq C_0 \| u \|_{H^{s+1}(\S^{n-1})} + C_s \| u \|_{H^1(\S^{n-1})}
+ C_s \| u \|_{H^s(\S^{n-1})}.
\]
To absorb the last term, we use \eqref{interpol.Hr.S2} with $s_0 = 0$, $s_1 = s+1$, 
and proceed like in \eqref{interpol.trick}.

\emph{Proof of \eqref{embedding.Sd}}. 
On $\R^{n-1}$ one has the embedding inequality 
$\| f \|_{L^\infty(\R^{n-1})} \leq C_{s_0} \| f \|_{H^{s_0}(\R^{n-1})}$, 
and every $f \in H^{s_0}(\R^{n-1})$ is a continuous function 
(see the proof of \eqref{embedding.ineq}).
On $\S^{n-1}$, by \eqref{res.unity.in.the.annulus}, one has $u = \sum_{j=1}^N u \psi_j$, 
and 
\begin{align*}
\| u \|_{L^\infty(\S^{n-1})} 
\leq \sum_{j=1}^N \| u \psi_j \|_{L^\infty(\S^{n-1})}
& = \sum_{j=1}^N \| (u \psi_j) \circ \mathtt g_j(\cdot,0) \|_{L^\infty(\R^{n-1})}
\\ 
& \leq C_{s_0} \sum_{j=1}^N \| (u \psi_j) \circ \mathtt g_j(\cdot,0) \|_{H^{s_0}(\R^{n-1})}
= C_{s_0} \| u \|_{H^{s_0}(\S^{n-1})},
\end{align*}
because $K_j \cap \S^{n-1} = \{ \mathtt g_j(y',0) : (y',0) \in A_0 \}$ 
and $\psi_j = 0$ outside $K_j$, 
$\psi_j \circ \mathtt g_j = 0$ outside $A_0$.  
Moreover $(u \psi_j) \circ \mathtt{g}_j(\cdot, 0)$ is continuous for all $j$, 
whence we deduce that $u(x) \to u(x_0)$ as $x \to x_0$. 
\end{proof}

\bigskip

\emph{Statements and Declarations.} 
The authors state that there is no conflict of interest. 
No data was used for the research described in the article.

\bigskip

\bigskip

\begin{flushright}

\textbf{Pietro Baldi}

Dipartimento di Matematica e Applicazioni ``R. Caccioppoli''

University of Naples Federico II

Via Cintia, Monte Sant'Angelo, 80126 Naples, Italy

pietro.baldi@unina.it

ORCID 0000-0002-9644-3935

\medskip

\textbf{Vesa Julin} 

Department of Mathematics and Statistics

University of Jyv\"askyl\"a

P.O. Box 35, 40014 Jyv\"askyl\"a, Finland

vesa.julin@jyu.fi

ORCID 0000-0002-1310-4904

\medskip

\textbf{Domenico Angelo La Manna}

Dipartimento di Matematica e Applicazioni ``R. Caccioppoli''

University of Naples Federico II

Via Cintia, Monte Sant'Angelo, 80126 Naples, Italy

domenicoangelo.lamanna@unina.it

ORCID 0000-0003-1900-2025

\end{flushright}


\begin{thebibliography}{99}

\bibitem{Alazard.Burq.Zuily1}
T.\ Alazard, N.\ Burq, C.\ Zuily, 
\emph{Cauchy theory for the water waves system in an analytic framework}, 
Tokyo J.\ Math. 45 (2022), no.1, 103-199.

\bibitem{Alazard.Delort}
T.\ Alazard, J.-M.\ Delort, 
\emph{Sobolev estimates for two dimensional gravity water waves}, 
Asterisque, 374 (2015), viii + 241.

\bibitem{Alazard.Metivier}
T.\ Alazard, G.\ Metivier, 
\emph{Paralinearization of the Dirichlet to Neumann operator, 
and regularity of the three dimensional water waves}, 
Comm.\ Partial Differential Equations 34 (2009), no.\,10-12, 1632-1704.

\bibitem{Baldi.Haus.AIHP.2023}
P.\ Baldi, E.\ Haus, 
\emph{Size of data in implicit function problems 
and singular perturbations for nonlinear Schr\"odinger systems},
Ann.\ Inst.\ H.\ Poincar\'e Anal.\ Non Lin\'eaire 40 (2023), 1051-1091.

\bibitem{BJL} 
P.\ Baldi, V.\ Julin, D.A.\ La Manna, 
\emph{Liquid Drop with Capillarity and Rotating Traveling Waves}, 
Arch.\ Rational Mech.\ Anal.\ 250 (2026), 4.

\bibitem{Berti.Maspero.Ventura} 
M.\ Berti, A.\ Maspero, P.\ Ventura,
\emph{On the analyticity of the Dirichlet-Neumann operator and Stokes waves}, 
Atti Accad.\ Naz.\ Lincei Cl.\ Sci.\ Fis.\ Mat.\ Natur.\ 33 (2022), no.\,3, 611-650.

\bibitem{Beyer.Gunther} 
K.\ Beyer, M.\ G\"unther,
\emph{On the Cauchy Problem for a Capillary Drop. Part I: Irrotational Motions},
Math.\ Meth.\ Appl.\ Sci.\ 21 (1998), 1149-1183.

\bibitem{Cagnetti.Mora.Morini}
F.\ Cagnetti, M.G.\ Mora, M.\ Morini, 
\emph{A second order minimality condition for the Mumford-Shah functional},  
Calc.\ Var.\ 33 (2008), 37-74.

\bibitem{Calderon}
A.P.\ Calderon, 
\emph{Cauchy integrals on Lipschitz curves and related operators},
Proc.\ Nat.\ Acad.\ Sci.\ USA 75 (1977), 1324-1327.

\bibitem{Coifman.Meyer}
R.\ Coifman, Y.\ Meyer, 
\emph{Nonlinear harmonic analysis and analytic dependence}, 
Proc.\ Symp.\ Pure Math., 43 (1985), 71-78.

\bibitem{Coutand.Shkoller} 
D.\ Coutand, S.\ Shkoller, 
\emph{Well-posedness of the free-surface incompressible Euler equations with or without surface tension}, 
J.\ Amer.\ Math.\ Soc.\ 20 (2007), 829–930.

\bibitem{Craig.Nicholls}
W.\ Craig, D.\ Nicholls, 
\emph{Travelling two and three dimensional capillary gravity water waves}, 
SIAM J.\ Math.\ Anal.\ 32 (2000), no.\,2, 323-359.

\bibitem{Craig.Schanz.Sulem}
W.\ Craig, U.\ Schanz, C.\ Sulem, 
\emph{The modulational regime of three-dimensional water waves and the Davey-Stewartson system}, 
Ann.\ Inst.\ H.\ Poincar\'e Anal.\ Non Lin\'eaire 14 (1997), no.\,5, 615-667.

\bibitem{Feola.Montalto.Terracina}
R.\ Feola, R.\ Montalto, S.\ Terracina, 
\emph{Time quasi-periodic three-dimensional traveling gravity water waves},
preprint, arxiv:2509.10318.

\bibitem{Garcia.Hassainia.Roulley}
C.\ Garcia, Z.\ Hassainia, E.\ Roulley,
\emph{Dynamics of vortex cap solutions on the rotating unit sphere},
J.\ Differential Equations 417 (2005), 1-63.

\bibitem{Groves.Nilsson.Pasquali.Wahlen} 
M.\ Groves, D.\ Nilsson, S.\ Pasquali, E.\ Wahl\'en, 
\emph{Analytical Study of a generalised Dirichlet-Neumann operator 
and application to three-dimensional water waves on Beltrami flows}, 
J.\ Differential Equations 413 (2024), 129–189.

\bibitem{Henrot.Pierre} 
A.\ Henrot, M.\ Pierre, 
\emph{Shape Variation and Optimization: A Geometrical Analysis}.
EMS Tracts in Mathematics, 28, Europ.\ Math.\ Soc., 2018.

\bibitem{Huang.Karakhanyan.jets}
Y.\ Huang, A.\ Karakhanyan, 
\emph{The well-posedness of cylindrical jets with surface tension}, 
preprint, arxiv:2311.13748.

\bibitem{Huang.Karakhanyan.cone}
Y.\ Huang, A.\ Karakhanyan, 
\emph{The Dirichlet-Neumann operator for Taylor’s Cone}, 
preprint, arxiv:2402.07005.

\bibitem{LaScala}
G.\ La Scala, 
\emph{Two-dimensional capillary liquid drop: Craig-Sulem formulation on $\T^1$ 
and bifurcations from multiple eigenvalues of rotating waves}, 
preprint, arxiv:2505.11650.

\bibitem{Lannes.book}
D.\ Lannes, 
\emph{The Water Waves Problem: Mathematical Analysis and Asymptotics}, 
Math.\ Surveys and Monographs 188, Amer.\ Math.\ Soc., 2013.

\bibitem{Lions.Magenes.volume.1}
J.L.\ Lions, E.\ Magenes,
\emph{Non-homogeneous boundary value problems and applications}, 
Die Grundlehren d. math. Wissenschaften. Band 183, 1972.

\bibitem{Moon.Wu}
G.\ Moon, Y.\ Wu.
\emph{Global Bifurcation of Steady Surface Capillary Waves on a 2D Droplet}, 
Nonlinearity 39 (2026), 015005. 

\bibitem{Pasquali}
S.\ Pasquali, 
\emph{Analytical study of a generalized Dirichlet-Neumann operator 
for three-dimensional water waves with vorticity},
preprint, arxiv:2502.09370.

\bibitem{Shao.On.the.Cauchy}
C.\ Shao,
\emph {On the Cauchy Problem of Spherical Capillary Water Waves}, 
Forum Mathematicum 38 (2026), no.\,3, 727-769.

\bibitem{Shao.Toolbox}
C.\ Shao,
\emph{Toolbox of para-differential calculus on compact Lie groups},
J.\ Funct.\ Anal. 290 (2026), no.\,1, 111176.

\bibitem{Taylor.volume.1}
M.E.\ Taylor,
\emph{Partial Differential Equations, Volume I. Basic Theory}, 
Appl.\ Math.\ Sci.\ 115, Springer, 1996. 

\bibitem{Taylor.volume.3}
M.E.\ Taylor,
\emph{Partial Differential Equations, Volume III. Nonlinear Equations}, 
Appl.\ Math.\ Sci.\ 117, Springer, 1996. 

\bibitem{Triebel.1983} 
H.\ Triebel, 
\emph{Theory of Function Spaces}, 
Birkh\"auser, Springer Basel AG, 2010, reprint of the 1983 edition.

\bibitem{Yang}
H.\ Yang, 
\emph{Cauchy Problem for Cylinder-like Capillary Jets},
preprint, arxiv:2410.12506.

\end{thebibliography}
\end{document}